\newtheorem{theorem}{Theorem}
\newtheorem{corollary}[theorem]{Corollary}
\newtheorem{definition}[theorem]{Definition}
\newtheorem{lemma}[theorem]{Lemma}
\newtheorem{proposition}[theorem]{Proposition}
\newtheorem{claim}[theorem]{Claim}
\theoremstyle{remark}
\newtheorem{remark}[theorem]{Remark}
\newtheorem{example}[theorem]{Example}
 \renewcommand{\phi}{\varphi}
\newcommand{\E}{\mathbb{E}}
\renewcommand{\P}{\mathbb{P}}
\newcommand{\N}{\mathbb{N}}
\newcommand{\R}{\mathbb{R}}
\newcommand{\C}{\mathbb{C}}
\newcommand{\be}{\begin{equation}}
\newcommand{\ee}{\end{equation}}
\DeclareMathOperator{\supp}{supp}
\newcommand{\bea}{\begin{eqnarray}}
\newcommand{\bes}{\begin{subequations}}
\newcommand{\ees}{\end{subequations}}
\newcommand{\bgt}{\begin{gather}}
\newcommand{\egt}{\begin{gather}}
\newcommand{\eea}{\end{eqnarray}}
\newcommand{\beaa}{\begin{eqnarray*}}
\newcommand{\eeaa}{\end{eqnarray*}}
\renewcommand{\epsilon}{\varepsilon}
\newcommand{\fourIdx}[5]{%
\setbox1=\hbox{\ensuremath{^{#1}}}%
 \setbox2=\hbox{\ensuremath{_{#2}}}%
 \setbox5=\hbox{\ensuremath{#5}}%
 \hspace{\ifnum\wd1>\wd2\wd1\else\wd2\fi}%
 \ensuremath{\copy5^{\hspace{-\wd1}\hspace{-\wd5}#1\hspace{\wd5}#3}%
 _{\hspace{-\wd2}\hspace{-\wd5}#2\hspace{\wd5}#4}%
 }}
\numberwithin{equation}{section}
\numberwithin{theorem}{section}
\renewcommand{\subset}{\subseteq}
\renewcommand{\supset}{\supseteq}
\newcommand\dist{\hbox{\rm dist}}
\begin{document}

\title{Structure of optimal martingale transport plans in general dimensions}


\author{Nassif Ghoussoub}
\address{Nassif Ghoussoub: Department of Mathematics\\ University of British Columbia\\ Vancouver, V6T 1Z2 Canada}
\email{nassif@math.ubc.ca}
\author{Young-Heon Kim}
\address{Young-Heon Kim: Department of Mathematics\\ University of British Columbia\\ Vancouver, V6T 1Z2 Canada\\
\newline \& School of Mathematics, Korea Institute for Advanced Study, Seoul, Korea.}
\email{yhkim@math.ubc.ca}
\author{Tongseok Lim}
\address{Tongseok Lim: Department of Mathematics\\ University of British Columbia\\ Vancouver, V6T 1Z2 Canada}
\email{lds@math.ubc.ca}
\date{Revised March 15, 2016.}

\thanks{The two first-named authors are partially supported by grants from the 
Natural Sciences and Engineering Research Council of Canada (NSERC). Y. H. Kim is also supported 
by an Alfred P. Sloan Research Fellowship.
T. Lim is partly supported by a doctoral graduate fellowship from the University of British Columbia. 
Part of this research has been done while the authors  were visiting 
the Fields institute in Toronto for the thematic program on ``Calculus of Variations'' in Fall 2014. We are thankful for the hospitality and the great research environment that the institute provided. 
\copyright 2015 by the authors.
}

\begin{abstract} Given two probability measures $\mu$ and $\nu$ in ``convex order" on $\R^d$, we study the profile of  one-step martingale plans $\pi$ on $\R^d\times \R^d$ that optimize the expected value of the modulus of their increment among all martingales having $\mu$ and $\nu$ as marginals. While there is a great deal of results for the real line (i.e., when $d=1$),  much less is known in the richer and more delicate higher dimensional case that we tackle in this paper.  We show that many structural results can be obtained whenever a natural dual optimization problem is attained, provided the initial measure $\mu$ is absolutely continuous with respect to the Lebesgue measure. One such a property is that $\mu$-almost every $x$ in $\R^d$ is transported by the optimal martingale plan into a probability measure $\pi_x$ concentrated on the  extreme points of the closed convex hull of its support. This will be established in full generality in the 2-dimensional case, and also for any $d\geq 3$ as long as the marginals are in ``subharmonic order". In some cases, $\pi_x$ is supported on the vertices of a $k(x)$-dimensional polytope, such as when the target measure is discrete.
Many of the proofs rely on a remarkable decomposition of ``martingale supporting'' Borel subsets of $\R^d\times \R^d$ into a collection of mutually disjoint components by means of a ``convex paving" of the source space.  If the martingale is optimal, then each of the components in the decomposition supports a restricted optimal martingale transport for which the dual problem is attained. These decompositions are used to obtain structural results in cases where duality is not attained.  On the other hand, they can also be related to higher dimensional Nikodym sets.  

\end{abstract}
\maketitle
\noindent\emph{Keywords:} Optimal Transport, Martingale, Choquet boundary, Duality, Convex paving. 

\tableofcontents

\section{Introduction}

We study the profile  
 of one-step martingales $\pi$ on $\R^d\times \R^d$ that optimize the expected value of the modulus of their increment, among all martingales with two given marginals $\mu$ and $\nu$ in convex order. More precisely, we  investigate the structure of conditional probabilities
 $(\pi_x)_{x \in \supp \mu}$ 
 on $\R^d$ which describe how a given particle at $x$ is propagated under such transport plans. 
These questions originate in mathematical finance and are variations on the original  Monge-Kantorovich 
problem, where one considers all couplings of the given marginals and not only those of martingale type \cite{{Mo1781}}, \cite{gm}, \cite{Vi03, Vi09}. However, unlike solutions of the Monge-Kantorovich
problem, which are often supported on graphs (such as the well-known Brenier solution \cite{br} for the cost given by the squared distance), the additional martingale constraint forces the transport to split the elements of the initial measure $\mu$. One cannot therefore expect --but in trivial cases-- that optimal martingale plans be supported on graphs. 

These questions are motivated by problems in mathematical  finance, which call for no-arbitrage lower  (or upper) bounds on the price of a forward starting straddle, given today's vanilla call prices at the two relevant maturities. Just like in the Monge-Kantorovich theory for optimal transport, these problems have dual counterparts, whose financial interpretation amount to constructing the most (or least) expensive semi-static hedging strategy which sub-replicates the payoff of the forward starting straddle for any realization of the underlying forward price process.

The minimization and maximization problems are quite different, though by now well understood  when the marginals are probability measures on the real line, at least in the case of one-step martingales. 
 We refer to Hobson-Neuberger \cite{HoNe11}, Hobson-Klimmek \cite{HoKl12}, and  Beiglb{\"o}ck-Juillet \cite{bj}. For the multi-step case, see Beiglb{\"o}ck-Henry-Labordere-Penkner \cite{BeHePe11}. The dynamic case have been also studied by  Galichon-Henri-Labordere-Touzi \cite{GaHeTo11} and Dolinsky--Soner \cite{ds1, ds2}.   The two cases studied are when the cost is either $c(x, y)=|x-y|$, which is the main focus of this paper, or the case when the cost satisfies the so-called Mirlees condition. Note that the one-dimensional case is closely related to Skorohod embedding problems \cite{Obloj}, since real valued martingales can be realized as adequately stopped Brownian paths. See for example Hobson \cite{Ho11}, Beiglb{\"o}ck-Cox-Huesmann \cite{bch} and Beiglb{\"o}ck-Henry-Labordere-Touzi \cite{BeHeTo}.

Surprisingly, much less is known in the case where the marginals are supported on higher dimensional Euclidean spaces $\R^d$. In this direction,  Lim \cite{Lim} considered the optimal martingale transport problem under radially symmetric marginals on $\R^d$, while Ghoussoub-Kim-Lim consider in \cite{GKL1} the corresponding optimal Skorokhod embedding. 
 In this paper, we shall tackle the following general optimization problem associated to a cost function $c:\R^d\times \R^d \to \R$:
\begin{equation}\label{MGTP}
\mbox{Maximize / Minimize} \,\,\, \text{cost}[\pi] = \int_{\R^d\times \R^d} c(x,y) \,d\pi(x,y)\quad\mbox{over}\quad \pi\in MT(\mu,\nu).
\end{equation}
Here $MT(\mu,\nu)$ is the set of {\it martingale transport plans}, that is the set of probabilities $\pi$
on $\R^d \times \R^d$ with marginals $\mu$ and $\nu$, such that for $\mu$-almost $x\in\R^d$, 
the component $\pi_x$ of its disintegration $(\pi_x)_x$ with respect to $\mu$, i.e. $d\pi(x,y)=d\pi_x(y)d\mu(x)$, has its   barycenter at $x$; in other words, for any convex function  $\varphi$ on $\R^d$, one has
$\varphi(x) \le \int_{\R^d} \varphi(y)\,d\pi_x (y).$\\
One can also use the probabilistic notation, which amounts to 
\begin{align}\label{opt}
\text{Maximize / Minimize}\quad  
\E_{\rm P} \,c(X,Y)
\end{align}
over all martingales $(X,Y)$ on a probability space $(\Omega, {\mathcal F}, P)$ into $\R^d \times \R^d$ (i.e. $E[Y|X]=X$) with laws $X \sim \mu$ and $Y \sim \nu$ (i.e., $P(X\in A)=\mu (A)$ and $P(Y\in A)=\nu (A)$ for all Borel set $A$ in $\R^d$). Note that in this case, the disintegration of $\pi$ can be written as the conditional probability   $\pi_x (A) = \P(Y\in A|X=x)$. 

A classical theorem of Strassen \cite{St65} states that the set $MT(\mu, \nu)$ of martingale transports is nonempty if and only if the marginals $\mu$ and $\nu$ are in {\it convex order}, that is if
\begin{enumerate}
\item $\mu$ and $\nu$ are probability measures with finite first moments, and 
\item  $\int_{\R^d} \varphi\,d\mu\leq \int_{\R^d} \varphi\, d\nu$ for every convex function $\varphi$  on $\R^d$.
\end{enumerate}
In that case we will write $\mu\leq_C\nu$, which is sometimes called the {\it Choquet order for convex functions}. Note that $x$ is the barycenter of a measure $\nu$ if and only if $\delta_x \leq_C\nu$, where $\delta_x$ is Dirac measure at $x$. .

We will mostly consider the Euclidean distance cost
$c(x,y) = |x - y|$
unless stated otherwise, although 
some of the results below hold for more general costs. We shall use the term optimization in problem (\ref{MGTP}) whenever the result holds for either maximization  
or minimization. 
We shall  be more specific otherwise, since  it will soon become very clear that the two cases can sometimes be fundamentally different.
The following theorem summarizes the main structural result when $\mu$ and $\nu$ are one-dimensional marginals. Hobson-Neuberger \cite{HoNe11} were first to deal with the maximization case while Beiglb\"ock-Juillet \cite{bj} and  D.~Hobson and M.~Klimmek \cite{HoKl12} deal with the context of minimization.

\begin{theorem} \label{beju} {\rm (Beiglb\"ock-Juillet \cite{bj}, Hobson-Neuberger \cite{HoNe11}, Hobson-Klimmek \cite{HoKl12})} Assume that $\mu$ and $\nu$ are probability measures in convex order on $\R$, and that $\mu$ is continuous.
There exists then a unique optimal martingale transport plan  $\pi$ for the cost function 
$c(x, y) =|x-y|$, 
such that:
 
 \begin{enumerate}
  \item If $\pi$ is a minimizer,  then its disintegration satisfies $|\supp \pi_x| \leq 3$ for every $x \in \R$. More precisely, $\pi$ can be decomposed into $\pi_{stay} +\pi_{go}$, where $\pi_{stay} = (Id \circ\times Id)_\#(\mu \wedge \nu)$ (this measure is concentrated on the diagonal of $\R^2$) and $\pi_{go}$ is concentrated on ${\rm graph}(T_1)\cup {\rm graph}(T_2)$
where $T_1, T_2$ are two real-valued functions.
\item   If $\pi$ is a maximizer,  then its disintegration satisfies $|\supp \pi_x| \leq 2$ for every $x \in \R$, and $\pi$ is concentrated on ${\rm graph}(T_1)\cup {\rm graph}(T_2)$
where $T_1, T_2$ are two real-valued functions.
\end{enumerate}

\end{theorem}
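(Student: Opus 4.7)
My plan is to combine duality for martingale optimal transport with a pointwise geometric analysis that exploits the piecewise-affine structure of $c(x,y)=|x-y|$ in the $y$-variable.

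Existence of optimizers is standard: $MT(\mu,\nu)$ is convex, tight, and closed under weak convergence, and the linear cost functional attains its extrema. For duality, I would invoke the martingale Kantorovich duality theorem (Beiglb\"ock--Henry-Labord\`ere--Penkner in general): there exist Borel $\phi \in L^1(\mu)$, $\psi \in L^1(\nu)$, and $h:\R\to\R$ such that, for the maximization problem,
\[ \phi(x) + \psi(y) + h(x)(y-x) \geq |x-y| \quad \text{for all } x,y, \]
with equality $\pi$-a.s., and the reversed inequality in the minimization problem. A $c$-convex regularization in dimension one further allows one to take $\psi$ convex (resp.\ concave) in the max (resp.\ min) case.

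Next I analyze the pointwise profile of $\pi_x$. Define $F_x(y) := \phi(x) + \psi(y) + h(x)(y-x) - |x-y|$; by the equality case, $\supp \pi_x \subseteq \{F_x = 0\}$. Since $|x-\cdot|$ is affine of slope $\pm 1$ on either side of $y=x$, the restriction of $F_x$ to each open half-line equals $\psi$ plus an affine function of $y$, hence is convex in the maximization case. A nonnegative convex function that vanishes at two distinct points must vanish on the interval between them, forcing $\psi$ to be affine there; off a $\mu$-negligible set of $x$ this is excluded, so in the max case $|\supp \pi_x| \leq 2$, with at most one support point on each side of $x$. In the min case, after sign flips, $F_x$ on each half-line is again nonnegative and convex, and at the kink $y=x$ one has $F_x(x) = -\phi(x) - \psi(x)$, so $y=x$ is an additional candidate zero, giving $|\supp \pi_x| \leq 3$. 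The extra atom at $x$ is exactly the stay component: its total mass is bounded by $\mu \wedge \nu$, and a martingale-admissible perturbation---legitimate since $\mu$ is continuous---shows the bound is saturated at optimality, so $\pi_{stay} = (\mathrm{Id}\times\mathrm{Id})_\#(\mu \wedge \nu)$.

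Finally, I would invoke a Borel selection theorem (Jankov--von Neumann) to extract measurable $T_1, T_2 : \R \to \R$ with $T_1(x) \leq x \leq T_2(x)$ such that $\pi$ (or $\pi_{go}$) is concentrated on $\mathrm{graph}(T_1) \cup \mathrm{graph}(T_2)$. Uniqueness then follows because the martingale barycenter identity together with the prescribed $\nu$-marginal determines the locations and masses of the two outer support points $\mu$-a.e.---this is the left-curtain / right-curtain mechanism of Beiglb\"ock--Juillet. The main obstacle is the pointwise structural step, which requires dual optimizers regular enough (convex or concave in $y$) to count zeros of $F_x$. This reduction is clean in one dimension, and its failure in higher dimensions, where $\psi$ need not be convex and the kink set of $|x-y|$ becomes a hyperplane through $x$, is precisely what motivates the convex-paving machinery developed in the body of this paper.
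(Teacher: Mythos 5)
This theorem is stated in the paper as a summary of results due to Beiglb\"ock--Juillet, Hobson--Neuberger, and Hobson--Klimmek; the paper does not supply a proof, so the comparison is against the correct argument in those references rather than against a proof in this paper. Your proposal has two genuine gaps.

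The central one is your first step: you invoke the Beiglb\"ock--Henry-Labord\`ere--Penkner duality as though it delivered dual \emph{optimizers} $(\phi,\psi,h)$ with equality $\pi$-a.s. It does not. It gives only that there is no duality gap; attainment of the dual can fail even in dimension one with $\mu$ continuous, as the paper itself records in Example~\ref{ah} (take $\mu=\nu$ uniform on $[0,1]$: no measurable $\gamma$ works). This is exactly the difficulty that forces the correct proofs to proceed either via the pointwise variational (martingale monotonicity) principle applied directly to the support of $\pi$, or by first decomposing $\R$ into the Beiglb\"ock--Juillet irreducible intervals — the connected components of $\{P_\nu > P_\mu\}$ — on each of which duality \emph{is} attained, and only then running the contact-set analysis componentwise. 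Without one of these reductions, your objects $F_x$ are not known to exist, so the entire pointwise argument never gets off the ground. (The present paper is, in a sense, a higher-dimensional generalization of that very repair: its Theorems~\ref{th:k} and \ref{th:disintegration} recover local dual attainment on the irreducible convex paving.)

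The second gap is the regularity you assign to $\psi$ in the minimization case. In the max problem one can take $\beta(y)=\sup_x\{\alpha(x)+\gamma(x)(y-x)+|x-y|\}$, a supremum of convex functions and hence convex — fine. In the min problem the analogous extremal choice is an \emph{infimum} of convex functions, which is neither convex nor concave in general, so ``$\psi$ concave'' is unjustified, and with it the claim that $F_x$ is convex on each half-line. The actual one-dimensional arguments in \cite{bj} and \cite{HoKl12} do not rely on global concavity of the dual potential; they use the monotonicity property of the support (plus, in \cite{HoKl12}, an explicit left-monotone/right-monotone construction). Relatedly, your dismissal of $\psi$ being affine on an interval ``off a $\mu$-negligible set'' and your one-line uniqueness argument both lean on the irreducible decomposition and the left-curtain structure that you have not set up; as written they are assertions, not proofs.
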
 

Our main goal in this paper is to consider higher dimensional analogues of the above result. 
In \cite{Lim},  Lim showed that the above theorem extends, in the case of minimization, to the setting where the marginals are radially symmetric on $\R^d$ and $c(x, y)=|x-y|^p$ for $0<p\leq 1$.
The general case is wide open and our goal is to work towards establishing the following:\\

\noindent {\bf Conjecture 1:}  {\it Consider the cost function 
$c(x, y) =|x-y|$  and assume that $\mu$ is absolutely continuous with respect to the Lebesgue measure on $\R^d$ ($\mu << \mathcal{L}^d$). If $\pi$ is a martingale transport that optimizes $(\ref{MGTP})$.  Then for $\mu$-almost every $x$, $\supp \pi_x$ coincides with the  set of extreme points of the convex hull of $\supp \pi_x$, i.e., 
${\rm supp}\, \pi_x = {\rm Ext}\,\bigg{(} {\rm conv}({\rm supp}\, \pi_x) \bigg{)}$.
}
\begin{remark} If $\supp \pi_x$ is bounded for $\mu$-almost all $x$ (which is the case in particular when the target measure $\nu$ is compactly supported), then ${\rm conv}({\rm supp}\, \pi_x) = \overline{{\rm conv}}({\rm supp}\, \pi_x)$. In this case, the set of extreme points $ {\rm Ext}\,\bigg{(} \overline{{\rm conv}}({\rm supp}\, \pi_x) \bigg{)}$ is also called the Choquet boundary of the compact convex set $\overline{{\rm conv}}({\rm supp}\, \pi_x)$. Our conjecture can therefore be rephrased as: For $\mu$ a.e. $x$, $\supp \pi_x$ is equal to the Choquet boundary of its closed convex hull.
\end{remark}

Note that for the minimization problem,  
we can and will assume that $\mu \wedge \nu =0$ 
since any minimizing martingale transport for problem \eqref{MGTP} must let the support of $\mu \wedge \nu$ stay put. See \cite{bj} or \cite{Lim} for a proof.
  One can then easily see that in the one dimensional case, the above conjecture reduces to Theorem \ref{beju} since then the dimension of the linear span of $\supp \pi_x$ is one and the Choquet boundary consists of exactly two points, unless of course $\supp \pi_x$ is a singleton. 

We shall be able to prove the above conjecture in many important cases, in particular, when a natural dual optimization problem is attained (Theorem \ref{thm: main extremal}),   
or when the linear span of  $\supp \pi_x$ has full dimension (Corollary \ref{cor: dim d-1}). Another case where the answer is affirmative is in dimension $d=2$ (Theorem \ref{th: Choquet 2d.0}) provided the second marginal has compact support. The conjecture also holds partially (Theorem \ref{cor: local Choquet in SH}) when the marginals are in ``subharmonic order," that is if 
$$\int_{\R^d} \varphi\,d\mu\leq \int_{\R^d} \varphi\, d\nu \quad \hbox{for every subharmonic function $\varphi$  on $\R^d$.}
$$

We actually expect to have a more rigid structure in the case of minimization.  Indeed, Lim \cite{Lim} showed that  in this case, assuming $\mu\wedge \nu =0$, we also have $| \supp \pi_x | \le 2$ for $\mu$-almost all $x$,  
 whenever the marginals are radially symmetric on $\R^d$ and $c(x, y)=|x-y|^p$ for $0<p\leq 1$.  The general case remains open as we propose the following:\\
 
 \noindent {\bf Conjecture 2 (Minimization):}  {\it  Consider the cost function 
$c(x, y) =|x-y|$  and assume that $\mu$ is absolutely continuous with respect to the Lebesgue measure on $\R^d$, that  $\mu \wedge \nu =0$. If $\pi$ is a martingale transport that minimizes $(\ref{MGTP})$. Then for $\mu$ almost every x, the set $\supp \pi_x$ consists of  
 $k+1$ points  
that form the vertices of a $k$-dimensional polytope,  where $k:=k(x)$ is the dimension of the linear span of $\supp \pi_x$
and therefore, the minimizing solution is unique.}\\

We shall give a partial answer to the above conjecture under the assumption that the target measure $\nu$ is discrete. Actually, in this case the result holds true in both the maximization and minimization cases  (Theorem \ref{th: finite images}).  
 We note however that --unlike the minimization case-- one cannot always expect in higher dimensions neither the uniqueness of a maximizer (Example \ref{ex: nonunique max}), nor a polytope-type structure for 
  $\supp \pi_x$ (Example \ref{rm: no finite for max}), even when the marginals are radially symmetric.

Just like in the Monge-Kantorovich theory, the above optimization problem (\ref{MGTP}) has a dual formulation, which will be crucial to our analysis. And similarly to that theory, the dual problem can be studied independently of the primal problem and without any underlying reference measures. Recall that for the quadratic cost studied by Brenier, the dual problem amounts to considering convex functions $\beta$, their Fenchel-Legendre duals $\alpha:=\beta^*$ and the set  $\Gamma =\{(x,y)\in \R^d\times \R^d; \beta(y) +\alpha (x)=\langle x, y\rangle\}$, which happens to be the graph of the subdifferential of $\beta$. Similar but more complicated phenomena arise in our situation. We shall work with the following notions.

For a subset $\Gamma$ in $\R^d\times \R^d$, we shall denote by $\Gamma_x$, the fiber $\Gamma_x:=\{y\in \R^d; \, (x,y)\in \Gamma\}$. For a Borel set $\Gamma \subset \R^d \times \R^d$, 
we write $X_\Gamma :=$ proj$_X \Gamma$, $Y_\Gamma :=$ proj$_Y \Gamma$, i.e. $X_\Gamma$ is the projection of $\Gamma$ on the first coordinate space $\R^d$, and $Y_\Gamma$ on the second. 

\begin{definition}\label{def: E m}  \rm
Let $c: \R^d \times \R^d \to \R$ be a cost function and let $X, Y \subset \R^d$ be two Borel sets. 
 \begin{enumerate}
 \item We say that a triplet of measurable functions $(\alpha, \gamma, \beta)$ is {\it an admissible triple on $X\times Y$,} if $\alpha : X\to \R$, $\beta: Y\to \R$, and $\gamma: X\to \R^d$ satisfy the following inequality
  \begin{equation}\label{eq: min dual condition supp}
 \beta(y)-\alpha(x) -\gamma(x)(y-x) \leq c(x,y)\,\, \hbox{for all $(x, y)\in  X \times Y$.}
 \end{equation}
 We shall denote by $E_m(c, X, Y)$ the set of all such {\em admissible dual  triples}. A similar definition holds when the inequality is reversed, and the set of those triplets will be denote by $E_M(c, X, Y)$. Note that $E_M(c, X, Y)=E_m(-c, X, Y)$.
\item  For an admissible triple $(\alpha, \gamma, \beta)$, we will consider the set where equality holds, that is
 \begin{align}\label{eq: Gamma triple}
 \Gamma_{(\alpha, \gamma, \beta)} : = \{ (x, y) \in X \times Y \ | \ \beta(y)  - \alpha(x)  - \gamma(x) \cdot (y-x) = c(x,y)\}.
\end{align}
We shall sometimes allow $\gamma$ to be a set-valued function. In this case, the above inequality/equality will mean that they actually hold for any vector $b$ in $\gamma (x)$.  
\item Any non-empty subset of $\Gamma_{(\alpha, \gamma, \beta)}$ will be called a {\em $c$-contact layer for $(\alpha, \gamma, \beta)$ in $X\times Y$}. When the ambient space is not specified, it means that it is simply $X_\Gamma \times Y_\Gamma$. \\
We shall sometimes say that a set $\Gamma$ is {\em $c$-exposed by the admissible triple $(\alpha, \gamma, \beta)$} if it is contained in $\Gamma_{(\alpha, \gamma, \beta)}$.
\end{enumerate}
\end{definition}
\noindent Denoting $E_m=E_m(c, \R^d, \R^d)$, one can then show (see for example \cite{BeHePe11}) that if the cost $c$ is lower semi-continuous, then for the minimization problem, 
 \begin{eqnarray}\label{No.gap.1}
&&\min\left\{ \int_{\R^d\times \R^d} c(x,y) \,d\pi;\, \pi\in MT(\mu,\nu)\right\} \\
&& \qquad \qquad   =\sup \left\{\int_{\R^d}\beta d\nu -\int_{\R^d}\alpha d\mu;\,\, (\alpha, \gamma, \beta) \in E_m\, \,  \hbox{for some $\gamma \in C_b(\R^d, \R^d)$} \right\}.
\end{eqnarray}
Similarly, if the cost $c$ is upper semi-continuous, then 
 \begin{eqnarray}\label{No.gap.2}
&&\max\left\{ \int_{\R^d\times \R^d} c(x,y) \,d\pi;\, \pi\in MT(\mu,\nu)\right\} \\
&& \qquad \qquad   =\inf \left\{\int_{\R^d}\beta d\nu -\int_{\R^d}\alpha d\mu;\,\, (\alpha, \gamma, \beta) \in E_M\, \,  \hbox{for some $\gamma \in C_b(\R^d, \R^d)$} \right\}.
\end{eqnarray}
 Note that if $\pi$ is an optimal martingale measure and if the corresponding dual problem is attained on a triplet $(\alpha, \gamma, \beta)$, then it is easy to see that there exists a Borel subset $\Gamma \subset \R^d \times \R^d$ with full $\pi$-measure that is a $c$-contact layer for $(\alpha, \gamma, \beta)$,  namely,  
  \begin{equation}\label{eq: dual.OK}
\beta(y)-\alpha(x) -\gamma(x)(y-x)= c(x,y)\,\, \hbox{if and only if $(x, y)\in \Gamma$.}
 \end{equation}
We shall show that such $c$-contact layers have a specific extremal structure.  As a result, any martingale transport $\pi \in MT(\mu, \nu)$ which is concentrated on a $c$-contact layer, when $c(x,y) = \pm |x-y|$, will satisfy the conjecture (1).

Recall that in the Monge-Kantorovich theory for mass transport, the dual problem is normally attained, and the ``corresponding $c$-contact layer" is a set of the form 
$
\Gamma=\{(x,y); \beta (y)-\alpha (x)=c(x,y)\}, 
$
where $\beta$ and $\alpha$ related through $c$-Legendre duality, which let them inherit some of the regularity properties of $c$. We shall follow a similar methodology here by defining  and exploiting in Section \ref{Martingale.transform} a notion of {\it martingale  $c$-Legendre duality} between the function $\beta$ and the pair $(\alpha, \gamma)$. This will allow us to establish the regularity properties needed to analyze the structure of $c$-layer sets.


However, unlike the Monge-Kantorovich setting, attainment of the dual problem does not often hold for optimal martingale transports --at least in the maximization problem-- even in the one-dimensional case, as shown in \cite{BeHePe11}. See Example~\ref{ah} below. We therefore explore whether dual attainment can happen locally, which is sufficient to imply Conjecture (1). We prove in Section~\ref{S: local dual} 
that it is indeed the case under suitable assumptions on the marginals, such as when they are comparable for the order  induced by subharmonic functions; see Theorem~\ref{cor: local Choquet in SH}.  


More importantly, we then proceed to consider the general case by establishing a remarkable decomposition for any Borel set $\Gamma$ supporting a given optimal martingale transport $\pi$ into disjoint components  $\{\Gamma_C\}_{C\in I}$ in such a way that each piece is a $c$-contact layer for  
an admissible triplet $(\alpha_C, \gamma_C, \beta_C)$. 
What is remarkable is that this decomposition into 
$c$-contact layers can be established  in full generality  (i.e., for any cost function)
 and without any reference to a martingale transport problem or even to any reference measure. The decomposition is done through an equivalence relation on the projection $X_\Gamma$ of $\Gamma$ on the first coordinate,  that is
 induced by a well chosen {\it irreducible convex paving}, that is 
 a collection of mutually disjoint convex subsets in $\R^d$ that covers $X_\Gamma$. 
See  Theorem~\ref{th:k} for the precise statement.

We note that this result can be seen as a generalization of the decomposition of Beiglb\"ock-Juillet  \cite{bj} in the one-dimensional case $d=1$, where the disintegration comes from restricting the measures $\mu, \nu$ onto open subintervals of $\R$ obtained by examining the potential functions for $\mu, \nu$.  Like theirs, our decomposition applies to any cost function $c$ and not only to $c(x, y) =|x-y|$.  
 It is however quite different since it depends on the support of the martingale measure $\pi$ that we start with. More importantly, our decomposition needs not be countable (Example \ref{nikodym}) which creates additional and interesting complications for the higher dimensional cases.   

We shall use the above decomposition to establish the above stated conjectures under various conditions. For example, Conjecture 1 holds in dimension $d=2$ (Theorem~\ref{th: Choquet 2d.0}), and also in the case where the dimensions of all components $(C)_{C\in I}$ are  $d$-dimensional (see Corollary~\ref{cor: dim d-1}).

Remarkably, the results discussed so far do not distinguish between the minimization and maximization problems (except that we assume that $\mu \wedge \nu =0$ in the case of minimization). The  previously mentioned decomposition can be used to prove Conjecture 2) in either the minimization and maximization case, provided  the target measure $\nu$ has a countable support (Theorem~\ref{th: finite images}). However, as mentioned above, we believe  that these two problems are quite different, at least in terms of finding finer structural results for each of the cases. 

Back to the martingale problem, we then consider the disintegration $\{\pi_C\}_{C\in I}$ of any martingale measure $\pi$ along the above described  decomposition of its 
support
 $\Gamma$ (Theorem~\ref{th:disintegration}).
This then gives a canonical decomposition of the optimal martingale problem into a collection of non-interactive martingale problems where  duality is attained for each piece $\pi_C$ in $MT(\mu_C, \nu_C)$. 

In the next section, we give the precise statements of our results. In Section \ref{Martingale.transform}, we introduce and study the notion of {\it martingale  $c$-transforms}, which will be used to improve the regularity properties of admissible triples. This will be 
used in Section \ref{triplet} to analyze the structure of $c$-contact layers that are exposed by such triples. We apply these results in Section \ref{ss: structure from dual}  to the case where the dual problem is attained, proving that Conjecture (1) holds in this situation. In Section 6, we give a setting where the dual problem is attained locally, showing Conjecture (1) for a case where the marginals are in subharmonic order. 
  In Section~\ref{s: decomposition}, 
we establish the decomposition, as well as the existence of admissible triplets exposing each of the components.   Section~\ref{s: structure} is devoted to proving under various additional conditions, structural results for sets where optimal martingale transports concentrate, while Section~\ref{s:disintegration} deals with the disintegration of martingales along this decomposition and 
 how it is related to the presence of  Nikodym sets.   

The authors are extremely grateful to Luigi Ambrosio for several pertinent remarks and discussions regarding the results of this paper. The first-named author also thanks David Preiss for very helpful discussions and insight.  

\section{Main results}

To discuss our main results, we first introduce  a few definitions. We also borrow some of the notation from \cite{bj}.
\begin{definition}
\label{def1.3}
For $A \subset \R^d$, we shall write $V(A)$ for the lowest-dimensional affine space containing $A$. Also define $IC(A) := int(conv(A))$ and $CC(A) := cl(conv(A))$, where again the interior or closure is taken in the topology of $V(A)$, where the topology of a set $A$ is with respect to the Euclidean metric topology of $V(A)$ (and not with respect to the whole space $\R^d$). 
\end{definition}
If $A = \{x\}$, then $IC(A) = \{x\}$ since we consider the interior of a singleton set is itself in the topology of $0$-dimensional space. 

 In reality, we will be dealing with the vertical fibers  $\Gamma_x =\{ y \in \R^d \ | \ (x, y) \in \Gamma\}$ of a certain class of Borel sets $\Gamma \subset \R^d \times \R^d$, on which martingale measures $\pi \in MT(\mu,\nu)$ would be concentrated. The constraint that $x$ is the barycenter of $\pi_x$, which is normally supported on $\overline \Gamma_x$, 
naturally leads us to the following definition. Recall that for a Borel set $\Gamma \subset \R^d \times \R^d$, 
we write $X_\Gamma :=$ proj$_X \Gamma$, $Y_\Gamma :=$ proj$_Y \Gamma$, i.e. $X_\Gamma$ is the projection of $\Gamma$ on the first coordinate space $\R^d$, and $Y_\Gamma$ on the second. 

\begin{definition} 
\rm
\label{def: martingale supporting}   We say that a Borel  set $\Gamma \subset \R^d \times \R^d$ is a {\em martingale  supporting set,} if
\begin{align}\label{eq: martingale supporting}
 \hbox{ for every $x\in X_\Gamma$, $x \in IC(\Gamma_x)$}.
\end{align}
We let ${\mathcal S}_{MT}$ denote the class of all martingale supporting sets.
\end{definition}

Our first main result shows that martingale supporting sets that are $c$-contact layers enjoy special structural properties. A key step established in Section \ref{Martingale.transform} is to show that an exposing admissible triple can be extended and regularized via a notion of {\it martingale  $c$-Legendre transform}, so that it verifies the needed differentiability properties. 

 \begin{theorem}[\bf Regularization of admissible triples via martingale -Legendre transform]
 \label{when.duality} Let $c$ be a cost function on $\R^d$ such that $x\mapsto c(x, y)$,  resp. $y \mapsto c(x, y)$,  is locally Lipschitz, where the Lipschitz constants are uniformly bounded in $y$ and respectively, in $x$.  
  Let $\Gamma$ be a Borel set in ${\mathcal S}_{MT}$ that is a $c$-contact layer, and suppose that $X_\Gamma \subset \Omega :=  IC(Y_\Gamma)$ with $\Omega$ being an open set in $\R^d$.
  Then, 
   \begin{enumerate}
\item  There exist a locally Lipschitz function $\alpha:  \Omega \to \R$,  a locally bounded $\gamma: \Omega \to \R^d$, and $\beta: \R^d \to \R$,  such that $\Gamma$ is a $c$-contact layer for the triplet  ($\alpha, \gamma, \beta$).
 
 \item   If the admissible triple is in $E_M (c, X_\Gamma, Y_\Gamma)$, and if $y \mapsto c(x, y)$ is assumed to be convex, then $\beta$ can be taken to be a convex function on $\R^d$.  
 
 \item If $c(x,y)=|x-y|$ and the admissible triple is in $E_m (c, X_\Gamma, Y_\Gamma)$, then $ \alpha =\beta$ on $\Omega$. 

   \end{enumerate}
  
\end{theorem}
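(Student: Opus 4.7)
The plan is to build a \emph{martingale $c$-Legendre transform} of the admissible triple $(\alpha,\gamma,\beta)$, first regularizing $\beta$ on $\R^d$, then regularizing $\alpha$ and $\gamma$ on $\Omega$ via a concave (or convex) envelope construction. Define
\[
\tilde\beta(y):=\inf_{x\in X_\Gamma}\bigl\{\alpha(x)+\gamma(x)\cdot(y-x)+c(x,y)\bigr\}
\]
in the $E_m$ case, and analogously with $\sup$ for $E_M$. The admissibility inequality combined with the equality on $\Gamma$ forces $\tilde\beta=\beta$ on $Y_\Gamma$, so $\Gamma$ remains a $c$-contact layer for $(\alpha,\gamma,\tilde\beta)$, now admissible on $X_\Gamma\times\R^d$. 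Part (2) is then immediate: in the $E_M$ case with $y\mapsto c(x,y)$ convex, each term in the supremum defining $\tilde\beta$ is convex in $y$, so $\tilde\beta$ is convex.

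For each $x\in\Omega$, set $h_x(y):=\tilde\beta(y)-c(x,y)$ and define $\tilde\alpha(x)$ to be the value at $y=x$ of the concave envelope of $h_x$ (convex envelope in the $E_M$ case), with $\tilde\gamma(x)$ a supergradient of this envelope at $x$. For $x\in X_\Gamma$, admissibility plus equality on $\Gamma_x$ make the affine function $y\mapsto\alpha(x)+\gamma(x)\cdot(y-x)$ a concave majorant of $h_x$ coinciding with it on $\Gamma_x$; since $x\in IC(\Gamma_x)$, Jensen's inequality pins the envelope at $x$ to the value $\alpha(x)$, giving $\tilde\alpha=\alpha$ and $\gamma(x)\in\partial\tilde\alpha(x)$ on $X_\Gamma$. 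Hence $\Gamma$ is still a $c$-contact layer for $(\tilde\alpha,\tilde\gamma,\tilde\beta)$. The local Lipschitz regularity of $\tilde\alpha$ on $\Omega$ follows from the uniform local Lipschitz hypothesis on $c$ in $x$, and the local boundedness of $\tilde\gamma$ from standard properties of supergradients of locally Lipschitz concave functions; this yields part (1).

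For part (3), with $c(x,y)=|x-y|$ in the $E_m$ case, I would prove $\tilde\alpha=\tilde\beta$ on $\Omega$. The inequality $\tilde\alpha(x)\ge\tilde\beta(x)$ comes for free from $h_x(x)=\tilde\beta(x)$. For the reverse, given any convex combination $x=\sum_i\lambda_i z_i$ and any $x'\in X_\Gamma$, admissibility yields $\tilde\beta(z_i)\le\alpha(x')+\gamma(x')\cdot(z_i-x')+|z_i-x'|$; summing with weights $\lambda_i$ and using $\sum_i\lambda_i(z_i-x)=0$ together with the triangle inequality $|z_i-x'|-|z_i-x|\le|x-x'|$ gives
\[
\sum_i\lambda_i\bigl[\tilde\beta(z_i)-|z_i-x|\bigr]\le\alpha(x')+\gamma(x')\cdot(x-x')+|x-x'|.
\]
Taking infimum over $x'\in X_\Gamma$ and supremum over such convex combinations shows the concave envelope of $h_x$ at $x$ is at most $\tilde\beta(x)$, so $\tilde\alpha=\tilde\beta$ on $\Omega$. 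Any supergradient $\tilde\gamma(x)$ of the envelope at $x$ then makes $(\tilde\alpha,\tilde\gamma,\tilde\beta)$ admissible on $\Omega\times\R^d$.

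The hard part will be to control the envelope construction uniformly over $x\in\Omega$: one must verify that the concave envelope of $h_x$ is finite at every $x\in\Omega$ (using $\Omega=IC(Y_\Gamma)$ together with admissibility bounds applied at any fixed reference $x'\in X_\Gamma$) and that its superdifferential at $x$ is nonempty with locally uniform bounds, and then choose $x\mapsto\tilde\gamma(x)$ Borel measurably via a standard measurable selection theorem. A subsidiary technical point is that in $E_M$ without convexity of $c$ in $y$, the analogous argument must be run with convex envelopes and subgradients, with care taken that the sup defining $\tilde\beta$ yields a finite function on all of $\R^d$.
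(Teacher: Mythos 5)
Your construction is in essence the paper's martingale $c$-Legendre transform (Section~\ref{Martingale.transform}), applied in the opposite order: the paper first forms $\beta_c=(\alpha_c,\gamma_c)$ from $\beta$ and then $\beta_{cc}$, whereas you first form $\tilde\beta=(\alpha,\gamma)_c$ and then its dual $(\tilde\alpha,\tilde\gamma)$; the two orderings produce essentially the same regularized triple. Your direct triangle-inequality computation in part (3) is exactly the paper's observation that the constant $\delta_2$ in Theorem~\ref{thm: alpha beta difference}(3) vanishes when $c(x,y)=|x-y|$, and your argument that any supergradient selection preserves the contact-layer equality (because $(\tilde\gamma(x)-\gamma(x))\cdot(y-x)\ge 0$ on $\Gamma_x$ and $x\in IC(\Gamma_x)$ force equality) parallels the proof of item (4) of that theorem. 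Part (2) is handled identically in both.

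The gap is in part (1). You assert local Lipschitz regularity of $\tilde\alpha$ and local boundedness of $\tilde\gamma$ as consequences of ``standard properties of supergradients of locally Lipschitz concave functions,'' but $\tilde\alpha$ is not a concave function of $x$: it is the concave envelope of $h_x(\cdot)=\tilde\beta(\cdot)-c(x,\cdot)$, a function that \emph{changes with $x$}, evaluated at a \emph{moving} base point, so no off-the-shelf fact applies. There is also a circularity to break: to Lipschitz-estimate $\tilde\alpha$ one must bound $\tilde\gamma$, and to bound $\tilde\gamma$ one must control the envelope. The paper resolves this in the proof of Theorem~\ref{thm: alpha beta difference}(1) by a specific ordering --- first trapping $\alpha_c$ above and below using finitely many points of $Y_\Gamma$ whose open convex hull contains a neighbourhood of $x$ (this is exactly where the hypothesis $\Omega=IC(Y_\Gamma)$ enters), then deducing local boundedness of $\gamma_c$ from the same finite points, and only then deriving the Lipschitz modulus $|\alpha_c(x)-\alpha_c(x_0)|\le\left((|\gamma_c(x)|\vee|\gamma_c(x_0)|)+C\right)|x-x_0|$ by combining the uniform Lipschitz hypothesis on $x\mapsto c(x,y)$ with the bound on $\gamma_c$. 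You flag this as ``the hard part'' to verify, which is honest, but as written the proof of item (1) does not do it; filling it in would essentially reproduce the substance of the paper's lemma.
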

This will allow us to prove the following structural result.

\begin{theorem}[\bf Extremal structure of a martingale supporting $c$-contact layer]
\label{thm: main extremal}  
Let $c(x,y)=\pm |x-y|$ and assume $\Gamma$ is a $c$-contact layer in ${\mathcal S}_{MT}$.  Then for $\mathcal{L}^d$ - a.e. $x$ in $X_\Gamma$, the  closure $\overline{\Gamma_x}$ of $\Gamma_x$  coincides with the  set of extreme points of the convex hull of  $\overline{\Gamma_x}$, 
i.e., 
$\overline{\Gamma_x} = {\rm Ext} \, \big{(}{\rm conv}(\overline{\Gamma_x})\big{)}$.

In particular, if $\mu$ is a probability measure that is absolutely continuous with respect to the Lebesgue measure, and  if the dual problem is attained, then for any $\pi\in MT(\mu, \nu)$ that is a solution of \eqref{MGTP} for either the minimization or maximization problem,  then  for $\mu$ - a.e. $x$, ${\rm supp}\, \pi_x = {\rm Ext}\,\big{(} {\rm conv}({\rm supp}\, \pi_x) \big{)}$. \end{theorem}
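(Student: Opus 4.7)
The plan is to apply Theorem~\ref{when.duality} to replace the exposing triple by a regular one, then combine Rademacher's theorem with convex-analytic arguments tailored to the cost $|x-y|$. After a standard preparatory reduction (restricting $\Gamma$, if necessary, to arrange the hypothesis $X_\Gamma\subseteq\Omega:=IC(Y_\Gamma)$ of Theorem~\ref{when.duality}), we obtain an admissible triple $(\alpha,\gamma,\beta)$ still exposing $\Gamma$, with $\alpha$ locally Lipschitz on $\Omega$, $\gamma$ locally bounded on $\Omega$, and $\beta$ either convex on $\R^d$ (in the $E_M$/maximization case) or equal to $\alpha$ on $\Omega$ (in the $E_m$/minimization case). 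By Rademacher's theorem, $\alpha$ is then differentiable at $\mathcal{L}^d$-a.e.~$x_0\in X_\Gamma$.

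Fix such a good $x_0$ and set $g_{x_0}(y):=\alpha(x_0)+\gamma(x_0)\cdot(y-x_0)+|y-x_0|$, which is convex. The admissibility relations read $\beta(y)\ge g_{x_0}(y)$ (respectively $\le$, in the min case) on the relevant set, with equality on $\Gamma_{x_0}$; continuity of $\beta$ near $\Omega$ (automatic from convexity in the max case, and from $\beta=\alpha$ being locally Lipschitz in the min case) extends equality to the closure $\overline{\Gamma_{x_0}}$. The inclusion $\mathrm{Ext}(\mathrm{conv}(\overline{\Gamma_{x_0}}))\subseteq\overline{\Gamma_{x_0}}$ is Milman's theorem; I would establish the reverse by contradiction. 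Suppose some $y_0\in\overline{\Gamma_{x_0}}$ is not extreme, so by Carath\'eodory $y_0=\sum_i\lambda_i p_i$ nontrivially with $p_i\in\overline{\Gamma_{x_0}}$. Since $g_{x_0}$ is strictly convex transverse to rays from $x_0$ and merely linear along them, the contact equation $\beta=g_{x_0}$ at all $\{y_0,p_i\}$, combined with the global inequality $\beta\ge g_{x_0}$ and subdifferential calculus for convex $\beta$, forces the $p_i$ to lie on a single ray emanating from $x_0$. The martingale-supporting hypothesis $x_0\in IC(\overline{\Gamma_{x_0}})$ then supplies an additional $\tilde y\in\overline{\Gamma_{x_0}}$ off this ray, and a first-order perturbation $x=x_0+tv$ of the admissibility inequality, applied jointly at $(x_0,\tilde y)$ and $(x_0,y_0)$, symmetrized in $\pm v$ to cancel the ill-behaved $\gamma$ contribution, and using the expansion $\alpha(x_0+tv)=\alpha(x_0)+t\,\nabla\alpha(x_0)\cdot v+o(t)$, produces the desired contradiction.

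For the second assertion, when $\pi\in MT(\mu,\nu)$ is optimal and the dual problem is attained by some triple, there is a Borel set $\Gamma$ of full $\pi$-measure satisfying \eqref{eq: dual.OK}. Since $\mu<<\mathcal{L}^d$ and $\pi$ is a martingale, after discarding a $\mu$-null set one may arrange $\Gamma\in\mathcal{S}_{MT}$ with the preparatory assumption on $X_\Gamma$ satisfied; the first part of the theorem then gives $\overline{\Gamma_x}=\mathrm{Ext}(\mathrm{conv}(\overline{\Gamma_x}))$ for $\mu$-a.e.~$x$. Since $\supp\pi_x\subseteq\overline{\Gamma_x}$ for $\mu$-a.e.~$x$, and extremality in a larger closed convex set descends to any closed subset (a point extreme in a set remains extreme in any smaller convex set containing it), the conclusion $\supp\pi_x=\mathrm{Ext}(\mathrm{conv}(\supp\pi_x))$ follows. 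The principal obstacle is the central step: $|y-x_0|$ is linear along rays from $x_0$, so strict-convexity arguments alone cannot exclude non-extremal configurations collinear with $x_0$; the genuinely delicate point is coupling convex-analytic information on the $y$-side with a first-order $x$-side perturbation, while the weak regularity of $\gamma$ (merely locally bounded, not continuous) forces reliance on a symmetrization trick to cancel its contribution.
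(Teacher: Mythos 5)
Your framework agrees with the paper's skeleton (regularize via Theorem~\ref{when.duality}, apply Rademacher to $\alpha$, rule out non-extremal fibre points by contradiction), but the two central steps of your sketch do not close.

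The $y$-side convexity argument does not produce the ray alignment, even in the maximization case. If $\beta$ is convex, $\beta\ge g_{x_0}$ everywhere, and $\beta=g_{x_0}$ at $y_0=\sum_i\lambda_ip_i$ and at each $p_i$, all that Jensen's inequality gives is that the two nonnegative Jensen gaps of $\beta$ and of $g_{x_0}$ across the $p_i$ agree, which is no obstruction; subgradient inclusion $\nabla g_{x_0}(p_i)\in\partial\beta(p_i)$ is likewise compatible with any non-radial configuration. Concretely, take $x_0=0$, $\gamma(x_0)=0$, $g_{x_0}(y)=|y|$, $p_1=(1,1)$, $p_2=(1,-1)$, $y_0=(1,0)=\tfrac12p_1+\tfrac12p_2$, $\beta(y)=|y|$: every hypothesis of your step holds, yet the $p_i$ are not radially aligned with $y_0$. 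In the minimization case $\beta$ is not even convex, so that branch of your argument is simply unavailable. The paper's Lemma~\ref{lem: reg to Choquet} works on the $x$-side instead: for fixed $y$ in the fibre, $x'\mapsto c(x',y)+\gamma(x')\cdot(y-x')+\alpha(x')$ has an extremum at $x$, so its gradient vanishes, and averaging these first-order conditions against the barycentric weights of $y_0=\sum p_iy_i$ kills the (linear-in-$y$) contributions of $\gamma$ and $\alpha$, leaving $\sum_ip_i\tfrac{x-y_i}{|x-y_i|}\cdot u=1$ with $u=\tfrac{x-y_0}{|x-y_0|}$, which does force alignment. In the toy example this combination equals $1-\tfrac{1}{\sqrt2}\ne 0$, the contradiction you actually need.

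The deeper gap is that the $x$-side first-order condition requires $\gamma$ to be differentiable at $x$, and local boundedness is far from enough; this is precisely what your ``symmetrize in $\pm v$'' is meant to bridge, but it does not. The offending term in the difference quotient is $D_{t,u}\gamma(x)\cdot(y-x)$, which is a priori only $O(1/t)$; averaging over $\pm v$ replaces it by $\tfrac{\gamma(x+tv)-\gamma(x-tv)}{t}\cdot(y-x)$, still $O(1/t)$, while taking a second fibre point $\tilde y$ can cancel the $\gamma$-term only when $x_0$ lies on the segment $[y_0,\tilde y]$, which you cannot arrange. What the paper actually establishes in Lemma~\ref{lem: alpha gamma} is that at every point $x$ where $\alpha$ admits a one-sided first-order expansion (hence a.e.\ by Rademacher), the projection of $\gamma$ onto $V(\Gamma_x)$ is directionally differentiable. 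This is not a generic regularity fact but a consequence of the martingale-supporting condition $x\in IC(\Gamma_x)$: one writes $x+t_+u$ and $x+t_-u$ (with $t_+>0>t_-$) as convex combinations of fibre points, sums the admissibility inequalities at scales $t>0$ and $t<0$ against those weights, and sandwiches the upper and lower one-sided difference quotients of $\gamma\cdot u$ between each other using the control on $\alpha$. That two-scale sandwich is the genuinely delicate step your proposal is missing; it is structurally different from a $\pm v$ symmetrization and cannot be replaced by one. Without it, Lemma~\ref{lem: reg to Choquet} cannot be invoked and the proof does not close.
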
 
 We shall see that the dual problem is not always attained. However, a localized version of the above theorem will allow us to deal with a case where the marginals are in subharmonic order. Actually, by letting $P_\mu$ be the Newtonian potential of a probability measure $\mu$, we shall be able to deduce the following result (see Section \ref{S: local dual}).   

\begin{theorem}[\bf Case of marginals in subharmonic order]
Assume $\mu\le_{SH} \nu$ where $\mu, \nu$ are probability measures with compact support on $\R^d$ such that  $\mu <<\mathcal{L}^d$ ($d\geq 3$), and that the open set $\{x \,|\, P_\nu(x)-P_\mu(x) >0\} $ has the full measure of $\mu$.  If $\pi \in M(\mu, \nu)$ is an optimal solution for the problem  \eqref{MGTP}, where  the cost function is $c(x,y) = \pm |x-y|$,  then  for $\mu$ - a.e. $x$, ${\rm supp}\, \pi_x = {\rm Ext}\,\big{(} {\rm conv}({\rm supp}\, \pi_x) \big{)}$.
\end{theorem}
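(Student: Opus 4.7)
The plan is to reduce this theorem to the previously established Theorem~\ref{thm: main extremal} by localizing on sublevel sets of the potential gap. Set $U := \{x \in \R^d : P_\nu(x) - P_\mu(x) > 0\}$; by hypothesis $\mu(U) = 1$, and $U$ is open (the difference of Newtonian potentials is lower semicontinuous in dimension $d \geq 3$). Exhaust $U$ by a countable family of compact sets $K_n$ on which $P_\nu - P_\mu \geq \delta_n > 0$. It suffices to prove the extremal structure of $\supp \pi_x$ for $\mu$-almost every $x \in K_n$, since taking a countable union then gives the conclusion on all of $U$.

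Next, I would produce a local dual certificate on each $K_n$. Let $\Gamma$ be a Borel support of the optimal plan $\pi$, set $\Gamma_n := \Gamma \cap (K_n \times \R^d)$, and write $\pi_n := \pi\llcorner \Gamma_n$. Then $\pi_n$ is a martingale transport between $\mu_n := \mu|_{K_n}$ and $\nu_n := (\proj_y)_\# \pi_n$, and since any modification of $\pi_n$ within $MT(\mu_n, \nu_n)$ yields a modification of $\pi$ within $MT(\mu, \nu)$, the restricted plan $\pi_n$ inherits optimality from $\pi$. The crucial step is to show that the restricted problem on $MT(\mu_n, \nu_n)$ has its dual attained. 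Here the strict potential gap $P_{\nu_n} - P_{\mu_n} \geq \delta_n$ plays the role of a Slater-type interior condition: it ensures strict subharmonic domination in a quantified sense and rules out the degeneracy observed in Example~\ref{ah} where dual attainment fails. Granted such local dual attainment, Theorem~\ref{when.duality} regularizes the exposing triple to admissible $(\alpha_n, \gamma_n, \beta_n)$ on $K_n$, so $\Gamma_n$ is a martingale supporting $c$-contact layer; Theorem~\ref{thm: main extremal} then gives $\supp \pi_x = \mathrm{Ext}(\mathrm{conv}(\supp \pi_x))$ for $\mu_n$-a.e.\ $x \in K_n$.

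The main obstacle I expect is establishing the local dual attainment from the strict potential gap, which is precisely where the subharmonic order hypothesis plays its essential role. One natural route is to build dual candidates by judicious perturbations of the Newtonian potentials themselves: since $-\Delta(P_\nu - P_\mu) = c_d(\nu - \mu)$ in the sense of distributions and $P_\nu - P_\mu \geq \delta_n$ on $K_n$, this positive harmonic-error reservoir can be used either to regularize candidate dual triples or to produce a minimizing sequence whose dual values are uniformly bounded and compact. An alternative route is a direct variational compactness argument in the spirit of \cite{BeHePe11}, using the uniform lower bound $\delta_n$ to obtain a priori control on the dual maximizers. In either approach the delicate point is to pass from the strict potential gap to quantitative control on the dual optimizers without losing the martingale constraint, so that the hypotheses of Theorem~\ref{when.duality} and Theorem~\ref{thm: main extremal} can be invoked on each $K_n$.
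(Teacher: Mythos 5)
Your high-level strategy matches the paper's: localize to compact subsets of $U=\{P_\nu - P_\mu > 0\}$ where the potential gap is uniformly bounded below, prove local attainment of the dual there, regularize the exposing triple via Theorem~\ref{when.duality}, and invoke Theorem~\ref{thm: main extremal} (through Theorem~\ref{relaxed}). You also correctly identify the role of the potential gap as the mechanism that restores the compactness of the dual that fails in Example~\ref{ah}. The countable covering at the end is also how the paper finishes.

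However, the proof has a genuine gap precisely where you flag "the main obstacle": you do not actually establish local dual attainment; you only speculate about two possible routes. This is the entire content of the theorem -- everything before and after is soft. The paper's argument here is specific and essential: take a maximizing sequence $(\alpha_n,\gamma_n,\beta_n)$ for the global dual; regularize via Theorem~\ref{thm: alpha beta difference} so that $\alpha_n$ is Lipschitz, $\gamma_n$ bounded, and $\beta_n \le \alpha_n \le \beta_n + \delta$; form the convex function
$\chi_n(y) = \sup_{x\in\Omega}\{-\alpha_n(x) - \gamma_n(x)\cdot(y-x)\}$
and normalize by an affine shift so $\chi_n(x_0)=0$, $\chi_n\ge 0$; then use the Newtonian potential identity
\begin{align*}
\int \chi_n \, d(\nu-\mu) = \int \chi_n\,\Delta(P_\nu - P_\mu) = \int (\Delta\chi_n)\,(P_\nu - P_\mu),
\end{align*}
which combined with the near-optimality of the dual sequence bounds the right side by a constant $C_2$. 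The uniform lower bound $P_\nu - P_\mu \ge \epsilon_B$ on a ball $B \subset U$ then gives $\int_B \Delta\chi_n \le C_2/\epsilon_B$, and Proposition~\ref{prop: Lap bound} -- a quantitative estimate for convex functions that controls $\max_B \chi_n$ by $\int \Delta\chi_n$ -- yields uniform $L^\infty$ bounds on $\chi_n$, hence on $\alpha_n,\beta_n,\gamma_n$ on a smaller ball. Koml\'os' theorem then produces an a.e.\ convergent (Ces\`aro) subsequence, and a careful re-definition of $\beta$ via the martingale $c$-transform on the ball yields a genuine $c$-dual in the sense of Definition~\ref{def: local problem}. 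None of this is a standard compactness argument that can simply be cited; the interplay of the convex conjugate $\chi_n$, the Newtonian-potential integration by parts, the Laplacian-to-$\sup$ estimate, and Koml\'os is precisely what makes the subharmonic-order hypothesis work. Without at least this chain, the claim of local dual attainment is unsubstantiated, and so is the theorem.

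A secondary point: your localization passes to the restricted transport problem on $MT(\mu|_{K_n}, \nu_n)$ and asserts that this restricted problem has its dual attained. The paper instead proves the weaker and cleaner statement that $\pi$ \emph{admits a $c$-dual on a ball $B$} in the sense of Definition~\ref{def: local problem}, which avoids claiming dual attainment for a separate restricted primal problem (for which Strassen's theorem and the duality Lemma~\ref{lem: duality gap} would have to be re-verified for sub-probability marginals). Your version could be made to work, but it adds unnecessary overhead; in any case, the substance lies in the a priori estimate on the dual sequence, which is missing.
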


Since martingale supporting sets $\Gamma$ in ${\mathcal S}_{MT}$ are not always $c$-contact layers even when they are concentration sets for optimal martingale transports (\cite{BeHePe11} or Example~\ref{ah} below), we investigate the possibility of decomposing such sets into  ``irreducible components" such that each component becomes a $c$-contact layer.  For that, we introduce the concept of a {\it convex paving}.  
 
 \begin{definition} Let $\Phi$ be a family of mutually disjoint open convex sets in $\R^d$ (Recall that  here, the openness of a set $C$ is with respect to the space $V(C)$). Given a set $\Gamma \subset \R^d\times \R^d$, we shall say that $\Phi$ is a convex paving for $\Gamma$ provided
 \begin{enumerate}
 \item $X_\Gamma \subset \bigcup_{C \in \Phi} C$.
 \item each $C \in \Phi$ contains at least one element $x$ in $X_\Gamma$ ($C$ is then denoted $C(x)$).
  \item For any $z, x \in  X_\Gamma$, we have: $IC(\Gamma_z) \cap C(x) \neq \emptyset \Rightarrow 
 IC(\Gamma_z) \subset C(x)$. 
  \end{enumerate}
 \end{definition}
 \noindent Note that such a paving clearly defines an equivalent relation on $X_\Gamma$ by simply defining $x \sim_\Phi x'$ if and only if $C(x) = C(x')$. The corresponding equivalent classes are then $[x] = C(x) \cap X_\Gamma$.\\
 There can be many convex pavings of a set $\Gamma \subset \R^d\times \R^d$; take for example $\Phi := \{\R^d\}$ which however doesn't give much information about $\Gamma$.  We therefore  
 introduce the following concept.
\begin{definition} For a fixed set $\Gamma \subset \R^d\times \R^d$, we shall say that $\Phi$ is an irreducible convex paving for $\Gamma$ if for any other convex paving $\Psi$ for $\Gamma$, we have the following property: 
If $C \in \Phi$, $D \in \Psi$ are such that $C \cap D \neq \emptyset$, then necessarily $C \subset D$. 
\end{definition}
Note that an irreducible convex paving for a set $\Gamma$ is necessarily unique. 
As to their existence, we shall show in Section~\ref{s: decomposition}
 the following result.
 
 \begin{theorem} 
\label{general.dec.}   \rm 
For every martingale supporting set $\Gamma$ in ${\mathcal S}_{MT}$, there exists a unique irreducible convex paving for $\Gamma$.
 
\end{theorem}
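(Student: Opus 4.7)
The plan is to construct the irreducible paving explicitly by a two-level iterative ``absorption'' procedure and to verify its defining properties; uniqueness of the irreducible paving is immediate from the definition, as remarked in the text, so the content of the theorem is existence.

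First, for each $x \in X_\Gamma$, I would define a candidate piece $C(x)$ iteratively: set $C_0(x) := IC(\Gamma_x)$, which is nonempty and open convex in its affine span since $x \in IC(\Gamma_x)$ by the martingale-supporting hypothesis, and let
\[
C_{n+1}(x) := IC\Big(C_n(x) \;\cup \bigcup\{IC(\Gamma_z) : z \in X_\Gamma,\ IC(\Gamma_z) \cap C_n(x) \ne \emptyset\}\Big),
\]
with $C(x) := \bigcup_{n\ge 0} C_n(x)$. The key technical lemma I would establish first is that the relative interior of the convex hull of a family of convex sets, each open in its own affine span and all sharing a common point, is itself open convex in the joint affine span and contains each member of the family. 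Using this lemma, induction shows every $C_n(x)$ is open convex in its affine span; since $\dim \mathrm{aff}(C_n(x))$ is nondecreasing and bounded by $d$, it stabilizes after finitely many steps, and $C(x)$ is open convex in a well-defined affine subspace. Property 3 holds for $C(x)$ by construction. A parallel induction yields the crucial \emph{minimality characterization}: $C(x)$ is the smallest open convex set in its affine span that contains $x$ and satisfies absorption; in particular, for any convex paving $\Psi$ of $\Gamma$ with piece $D \ni x$, one has $C(x) \subseteq D$.

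Second, the family $\{C(x)\}_{x \in X_\Gamma}$ need not yet be a partition, because $y \in C(x)$ may occur with $C(y) \subsetneq C(x)$. I would introduce on $X_\Gamma$ the preorder $y \preceq x \iff y \in C(x)$, which by minimality yields $C(y) \subseteq C(x)$, and let $\sim$ denote its equivalence closure, with classes $[x]$. For each equivalence class $[x]$, define $P([x])$ as the smallest open convex absorbing set containing $[x]$, produced by re-running the iterative absorption procedure starting from $\bigcup_{y \in [x]} C(y)$. The family $\Phi := \{P([x]) : [x] \text{ an equivalence class}\}$ then covers $X_\Gamma$, each piece contains an element of $X_\Gamma$, and property 3 is automatic by construction. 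Irreducibility follows from the minimality characterization: for any other paving $\Psi$ and piece $D \in \Psi$ meeting $P([x])$, chain-following along $\sim$ combined with absorption inside $D$ yields $[x] \subseteq D$, and hence $P([x]) \subseteq D$ by minimality.

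The main obstacle will be the mutual disjointness of the $P([x])$, i.e., verifying that $P([x]) \cap P([y]) \ne \emptyset$ forces $[x] = [y]$. The delicate point is that a shared point may lie in an iterated convex hull without belonging to any single $IC(\Gamma_z)$, so the equivalence cannot be propagated by a single absorption step. My proposed resolution is to apply the minimality characterization at the level of classes: starting from $P([x]) \cup P([y])$, a further iterative absorption produces an open convex absorbing set $Q$ containing $[x] \cup [y]$, and by minimality of both $P([x])$ and $P([y])$ (each the smallest such set containing its own class), one obtains $P([x]) = Q = P([y])$. A chain-following argument inside $Q$, using $\sim$ and absorption, then identifies all points of $X_\Gamma$ lying in $Q$ as belonging to a single equivalence class, forcing $[x] = [y]$ and completing the construction.
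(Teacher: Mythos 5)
Your construction of the pointwise pieces $C(x)$ is correct and, in fact, produces exactly the paper's sets: both constructions yield the smallest open convex absorbing set containing $x$ (you prove minimality of yours; the paper's inductive chain argument shows the same for its $C_n$'s), and minimal such sets are unique. Where your proof genuinely breaks down is the mutual disjointness, which is the crux of the theorem.

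You correctly identify disjointness as the main obstacle, but your resolution does not work. After forming $Q$ by absorbing $P([x]) \cup P([y])$, minimality of $P([x])$ only gives $P([x]) \subseteq Q$, not $Q \subseteq P([x])$; to get the reverse inclusion you would need $P([x])$ to contain $[y]$, which is precisely what is in question, so the argument is circular. The fallback ``chain-following inside $Q$'' is also unsubstantiated: $Q$ is built by repeatedly taking convex hulls, and a point $z \in X_\Gamma \cap Q$ may lie strictly in the hull without $IC(\Gamma_z)$ meeting any of the sets out of which $Q$ was built, so there is no obvious way to chain $z$ to $x$ via the preorder $\preceq$. The paper's proof avoids this entirely by a \emph{synchronized} construction: at stage $n$ it defines the equivalence relation $\sim_n$ (by finite chains of $C_{n-1}$'s) and $C_n(x) = IC\bigl[\bigcup_{x'\sim_n x} C_{n-1}(x')\bigr]$ together, so disjointness is automatic: if $z \in C(x) \cap C(x')$ then $z \in C_N(x) \cap C_N(x')$ for some $N$, which by definition is a (length-zero) chain, so $x \sim_{N+1} x'$, forcing $C_m(x) = C_m(x')$ for all $m \geq N+1$, hence $C(x) = C(x')$. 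Your two-stage approach — first build the $C(x)$'s pointwise, then retrofit an equivalence relation — loses this built-in compatibility.

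As an aside, your worry that ``$y \in C(x)$ may occur with $C(y) \subsetneq C(x)$'' is in fact unfounded once disjointness is in hand: $y \in C(x)$ gives $C(y) \subseteq C(x)$ by minimality, and since $y \in C(y) \cap C(x) \neq \emptyset$, disjointness forces $C(y) = C(x)$, so $\preceq$ is already symmetric and the entire $P([x])$ detour collapses to $P([x]) = C(x)$. But of course you cannot invoke this without first proving disjointness, which is exactly what your argument fails to do. The remedy is to couple the definitions of $\sim_n$ and $C_n$ as the paper does, rather than separating them.
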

Now, a key property of optimal transport plans in Monge-Kantorovich theory is that they are concentrated on Borel sets that are {\em $c$-cyclically monotone}, which is a property that describes every finite collection of points in the concentration set \cite{Vi03}. Similarly, a key property of an optimal  martingale transport $\pi \in MT(\mu, \nu)$ -- due to Beiglb\"ock and Juillet \cite{bj} -- is a {\it monotonicity property} enjoyed by every finite collection of points in their support. It implies in particular, that there exists a set $\Lambda$ of full $\pi$-measure in $\R^d\times \R^d$  such that  each one of its finite subsets is a $c$-contact layer.
This is one of the consequences of the variational lemma in \cite{bj}, where duality on finite sets is obtained via linear programming (see \cite{bj} and \cite{HoKl12}). 
We therefore introduce the following combinatorial counterpart of cyclic monotonicity for martingale transport.

\begin{definition}\label{exposable} \rm A subset $\Lambda$ of $\R^d\times \R^d$ is said to be {\em $c$-finitely exposable} for some cost function $c$, if each one of its finite subsets is a $c$-contact layer.
\end{definition} 

The following proposition describes the combinatorial nature of the support of of optimal martingale transports. 

\begin{proposition}\label{pp}
Let  $\pi \in MT(\mu, \nu)$ be an optimal martingale transport for Problem \eqref{MGTP}. Assuming the cost $c$ continuous, then there exists a $c$-finitely exposable concentration set $\Lambda$ for $\pi$. 
\end{proposition}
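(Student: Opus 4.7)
My plan rests on two classical ingredients from the martingale optimal transport literature: a \emph{martingale monotonicity principle} asserting that $\pi$ concentrates on a set where no finite subfamily admits an improving rearrangement, and a \emph{finite-dimensional linear programming duality} converting this into the existence of exposing admissible triples on each finite subset.

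\emph{Step 1 (Monotonicity principle).} First I would show that any optimizer $\pi$ of \eqref{MGTP} is concentrated on a Borel set $\Lambda$ with the following property: for every finite family $F=\{(x_i,y_i)\}_{i=1}^{n}\subset \Lambda$ and every \emph{martingale competitor} of the empirical measure $\tilde\pi_F:=\frac{1}{n}\sum_i \delta_{(x_i,y_i)}$---i.e., any finitely supported measure $\tilde\pi'$ on $\R^d\times\R^d$ sharing both marginals with $\tilde\pi_F$ and whose disintegration has the same conditional barycenter at each $x\in X_F$---one has $\int c\, d\tilde\pi_F\leq \int c\, d\tilde\pi'$ (reverse inequality in the maximization case). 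I would prove this via a Borel measurable-selection argument in the spirit of Beiglb\"ock--Juillet \cite{bj} and Beiglb\"ock--Henry-Labord\`ere--Penkner \cite{BeHePe11}: if no such $\Lambda$ of full $\pi$-mass existed, candidate competitors could be selected measurably on a subset of positive $\pi$-measure and glued back into a strictly improving modification of $\pi$, contradicting its optimality. Continuity of $c$ suffices to make the relevant selection Borel.

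\emph{Step 2 (Finite LP duality).} Fix an arbitrary finite $F\subset\Lambda$. The monotonicity of Step 1 says precisely that $\tilde\pi_F$ solves the finite-dimensional linear program
\[
\min\Big\{ \textstyle \sum_{i,j} q_{ij}\,c(x_i,y_j)\ :\ q_{ij}\ge 0,\ \sum_j q_{ij}=\tfrac{1}{n},\ \sum_j q_{ij}(y_j-x_i)=0,\ \sum_i q_{ij}=\tilde r_j\Big\},
\]
where $\tilde r_j$ is the mass that $\tilde\pi_F$ puts on $y_j$ (with the analogous $\max$ program in the maximization case). Because this is a finite-dimensional LP there is no duality gap, and dual optimizers exist: Lagrange multipliers $\alpha(x_i)\in\R$, $\beta(y_j)\in\R$ and $\gamma(x_i)\in\R^d$, dual respectively to the first-marginal, second-marginal, and (vector-valued) barycenter constraints. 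Dual feasibility produces the admissibility inequality \eqref{eq: min dual condition supp} on all of $X_F\times Y_F$, while complementary slackness forces equality on the active pairs $(x_i,y_i)\in F$. Thus $F\subset \Gamma_{(\alpha,\gamma,\beta)}$ with $(\alpha,\gamma,\beta)\in E_m(c,X_F,Y_F)$ (resp.\ $E_M$), so $F$ is a $c$-contact layer.

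\emph{Conclusion and main obstacle.} Combining the two steps, $\Lambda$ is a $c$-finitely exposable concentration set for $\pi$, as required. The principal difficulty lies in Step 1: setting up a Borel-measurable selection of candidate improving competitors and verifying that the glued modification remains a valid martingale transport of $(\mu,\nu)$. Step 2 is then routine once one correctly identifies the $d$-vector dual multiplier $\gamma(x_i)$ with the $d$ scalar barycenter constraints at each $x_i$.
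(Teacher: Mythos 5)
Your proposal is correct and follows exactly the route the paper takes: it invokes the martingale monotonicity principle (the paper cites Beiglb\"ock--Juillet \cite{bj} and Zaev \cite{Z} for the Borel set $\Lambda$, which you sketch via measurable selection) and then applies finite-dimensional LP duality, with $\alpha,\beta,\gamma$ as multipliers for the marginal and barycenter constraints and complementary slackness giving equality on $F$, to conclude that every finite subset of $\Lambda$ is a $c$-contact layer. The only difference is that the paper treats both ingredients as known and cites them, whereas you unpack the arguments.
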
 
Indeed, it is shown in  \cite{bj} (see also \cite{Z}) that there exists a Borel set $\Lambda$ in $\R^d \times \R^d$ 
with $\pi(\Lambda)=1$, that satisfies a certain monotonicity property, which is  the martingale counterpart of the $c$-cyclic monotonicity that is inherent to the Monge-Kantorovich theory. As mentioned above, by the duality theorem of linear programming, this property is equivalent to saying that every finite subset of $\Lambda$ is a $c$-contact layer. 


Since duality is not attained in general, an optimal martingale transport measure is  not necessarily concentrated on a $c$-contact layer $\Gamma \in \mathcal{S}_{MT}$. On the other hand,  we can and will assume that it is concentrated on a set $\Gamma \in \mathcal{S}_{MT}$ whose finite subsets are $c$-contact layers. This leads to the question of finding ``maximal" components of $\Gamma$ that are $c$-contact layers. It turns out that this is indeed the case as we show that 
$\Gamma_C:=\Gamma \cap (C \times \R^d)$ is a $c$-contact layer for any component $C$ of the irreducible convex paving $\Phi$ of $\Gamma$. It is summarized in the following theorem.

\begin{theorem} 
\label{th:k} Let  $\Gamma$ be a $c$-finitely exposable set in ${\mathcal S}_{MT}$, 
then there exists an irreducible convex paving $\Phi$ for  $\Gamma$ such that for every convex component $C$ in $\Phi$,
the set $\Gamma \cap (C\times \R^d)$ is a $c$-contact layer. 
\end{theorem}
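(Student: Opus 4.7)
The plan is to take the irreducible convex paving $\Phi$ whose existence is guaranteed by Theorem~\ref{general.dec.}, fix an arbitrary component $C \in \Phi$, and construct an admissible triple $(\alpha_C, \gamma_C, \beta_C)$ on $X_{\Gamma_C} \times Y_{\Gamma_C}$ that exposes $\Gamma_C := \Gamma \cap (C\times \R^d)$. Existence of such a triple, together with the irreducibility of $\Phi$, will give the conclusion. Throughout I treat the minimization case (the triple lies in $E_m$); the maximization case is symmetric.

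The central tool is the hypothesis that every finite $F \subset \Gamma$ is a $c$-contact layer, which yields an \emph{exhaustion by finite exposures}. Using separability, I would fix a countable dense subset $\{(x_k,y_k)\}_{k\geq 1} \subset \Gamma_C$ (with $\{x_k\}$ dense in $X_{\Gamma_C}$ and $\{y_k\}$ dense in $Y_{\Gamma_C}$), and set $F_n = \{(x_k,y_k) : 1 \leq k \leq n\}$. For each $n$ pick an admissible triple $(\alpha_n,\gamma_n,\beta_n) \in E_m(c,\R^d,\R^d)$ exposing $F_n$. After subtracting an affine function of the form $a + b\cdot x$ from $\alpha_n$ and the corresponding adjustment from $\beta_n$ and $\gamma_n$ (all of which preserves the admissibility inequality and the equality set), I may normalize so that $\alpha_n(x_1) = 0$ and $\gamma_n(x_1) = 0$.

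The key step is deriving local uniform bounds for the sequences $(\alpha_n|_{\{x_k\}})$, $(\gamma_n|_{\{x_k\}})$, $(\beta_n|_{\{y_k\}})$. Here is where the irreducibility of $\Phi$ and the martingale-supporting property $x \in IC(\Gamma_x)$ are indispensable. For each $x_k$, write $x_k$ as a (strict) convex combination $x_k = \sum_i \lambda_i y^{(i)}$ of finitely many points $y^{(i)} \in \Gamma_{x_k}$, which is possible because $x_k \in IC(\Gamma_{x_k})$. Summing the exposure equations $\beta_n(y^{(i)}) = \alpha_n(x_k) + \gamma_n(x_k)\cdot (y^{(i)}-x_k) + c(x_k,y^{(i)})$ against the weights $\lambda_i$ yields the barycentric identity
\begin{equation*}
\sum_i \lambda_i \beta_n(y^{(i)}) = \alpha_n(x_k) + \sum_i \lambda_i\, c(x_k,y^{(i)}),
\end{equation*}
so that $\alpha_n(x_k)$ is determined, up to the continuous cost $c$, by the values of $\beta_n$ on a \emph{fixed} finite collection of $y$'s; and conversely $\beta_n(y_k) = \alpha_n(x) + \gamma_n(x)(y_k-x) + c(x,y_k)$ for any $x \in X_{\Gamma_C}$ with $(x,y_k) \in \Gamma$. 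Iterating between the $x$- and $y$-variables along paths inside the \emph{same} component $C$ (which is connected and accessible through such barycentric relations, because of the paving rule $IC(\Gamma_z)\subset C$), the values $\alpha_n(x_k)$ and $\beta_n(y_k)$ become controlled by the normalization at $x_1$ together with continuous cost increments. A Cauchy-Schwarz/interior-point argument then bounds $|\gamma_n(x_k)|$ locally in $k$ (the bound deteriorates only if one approaches the relative boundary of $C$, which is exactly what irreducibility prevents from happening within a fixed compact piece).

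With local uniform bounds secured, a diagonal subsequence extraction produces pointwise limits $\alpha_\infty(x_k)$, $\gamma_\infty(x_k)$, $\beta_\infty(y_k)$ on the dense sets, and passing to the limit in the inequality~\eqref{eq: min dual condition supp} (continuity in $(x,y)$ of $c$) gives the admissibility inequality on $\{x_k\} \times \{y_k\}$ with equality on $\bigcup_n F_n$. The final step is to promote this triple from the dense set to all of $X_{\Gamma_C} \times Y_{\Gamma_C}$ and to verify the exposure identity on $\Gamma_C$. This I would do by applying the \emph{martingale $c$-Legendre transform} machinery of Section~\ref{Martingale.transform} (cf.\ Theorem~\ref{when.duality}) to the dense-set triple: the transform extends $\beta_\infty$ to a function on $\R^d$, and produces a locally Lipschitz $\alpha_\infty$ and locally bounded $\gamma_\infty$ on $IC(Y_{\Gamma_C})\supset X_{\Gamma_C}$, with the extended triple still admissible and still achieving equality on a set containing $\Gamma_C$ (by continuity of $c$ and density of $\{(x_k,y_k)\}$ in $\Gamma_C$).

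The main obstacle will be the second step above: establishing uniform, $n$-independent local bounds on $\gamma_n$ using only the information that the finite subsets $F_n$ are exposed and that all $x_k \in C$. Without irreducibility of the paving, one could imagine the $\gamma_n$ drifting to infinity as $n\to \infty$; the reason it cannot is precisely the paving rule $IC(\Gamma_z)\cap C \neq \emptyset \Rightarrow IC(\Gamma_z)\subset C$, which guarantees that the barycentric relations used to propagate bounds never ``escape'' $C$. Carefully quantifying this closure property inside $C$, in the flavor already used to prove Theorem~\ref{general.dec.}, is the crux of the argument.
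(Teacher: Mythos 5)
Your proposal captures the correct high-level picture -- finite exposability, propagation of control via barycentric relations along chains within a component $C$, and a compactness argument to pass to a limiting triple -- and you rightly flag the quantitative bound on $\gamma_n$ as the crux. However, there is a genuine gap in the passage from the limit on a countable dense subset to the exposure of all of $\Gamma_C$, and this gap is not repairable by the $c$-Legendre transform machinery as sketched.

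The difficulty is the following. Suppose you have extracted a diagonal subsequence so that $(\alpha_\infty,\gamma_\infty,\beta_\infty)$ exist on the dense sets $\{x_k\}$, $\{y_k\}$ with the admissibility inequality on the product and equality on the chosen dense subset of $\Gamma_C$. For a general $(x,y) \in \Gamma_C$ with $x \notin \{x_k\}$ or $y \notin \{y_k\}$, you approximate by $(x_{k_n},y_{k_n}) \to (x,y)$ along the dense set; equality holds at each $n$, but to pass to the limit in $\beta_\infty(y_{k_n}) - \alpha_\infty(x_{k_n}) - \gamma_\infty(x_{k_n})\cdot(y_{k_n}-x_{k_n}) = c(x_{k_n},y_{k_n})$ you need control on $\gamma_\infty(x_{k_n})$. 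The martingale $c$-Legendre transform of Section~\ref{Martingale.transform} upgrades $\alpha$ to locally Lipschitz, but $\gamma_c$ is only locally \emph{bounded}, not continuous. So $\gamma_\infty(x_{k_n})$ may have a cluster value $\gamma^*$ that is not in $\gamma_c(x)$, and there is no reason the infimum defining $\beta_{cc}(y)$ is attained at $(x,\gamma_c(x))$. The contact-layer \emph{inequality} is stable under pointwise limits, but \emph{equality} is not, precisely because $\gamma$ can oscillate on the dense set.

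The paper's proof of Theorem~\ref{th:k} (via Theorem~\ref{local.layers}) sidesteps this entirely. Instead of a countable dense subset and diagonal extraction, it uses Tychonoff's theorem on the (uncountable) product $K := \prod_{y\in Y_G} K_y$, where $K_y$ are the compact intervals supplied by Lemma~\ref{3.14}: the sets $\Psi_H := \{\beta \in K : \beta \text{ is } c\text{-admissible for } H\}$ are nonempty and closed for each finite $H \subset G$, have the finite intersection property, and a point $\beta$ in the intersection is defined on \emph{all} of $Y_G$ simultaneously. Then for \emph{each} $x \in X_G$ the paper constructs $\alpha(x)$ and $\gamma(x)$ directly -- $\alpha(x)$ from the value of an affine function $L_x^H$ at $x$ for a finite $H$, and $\gamma(x)$ as a point in $\bigcap_F \gamma_F(x)$ via another finite-intersection argument on compact sets -- with no limit ever taken in the $x$ or $y$ variable. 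This point-by-point construction is what actually delivers equality on all of $\Gamma_C$, and it is the feature your proposal is missing. The quantitative propagation you sketch corresponds to Lemma~\ref{3.27} and Proposition~\ref{prop: quant L} in the paper, which track how error constants accumulate along chains of intersecting convex sets $C_n(x_k)$; the three-part induction hypothesis there is considerably more delicate than a Cauchy--Schwarz argument.
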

\begin{remark} \label{rm: compare with Rockafellar} 
  Theorem~\ref{th:k} can be seen as a martingale counterpart to a celebrated result of Rockafellar \cite{Rock} in the Monge-Kantorovich theory for mass transport, which essentially says that the property of $c$-cyclical monotonicity that characterizes the support of optimal transport plans are somewhat ``$c$-contact layers" exposed by a pair of functions, one being $c$-convex and the other being its $c$-Legendre transform. Here, {\it $c$-finite exposability} replaces  {\it $c$-cyclic monotonicity}, while ``exposing" martingale supporting sets require a new notion of duality between a function $\beta$ and a pair of functions $(\alpha, \gamma)$.
  However, in the martingale case, the whole support is not necessarily a $c$-contact layer, but every irreducible component is.  
  \end{remark}
\noindent Theorems~\ref{when.duality} and \ref{th:k} yield several structural results in general dimensions such as the following.
Note that the attainability of the dual problem is not assumed here.
 \begin{corollary}[\bf dimensional result]\label{cor: dim d-1}
Let $\pi$ be a solution of the optimization problem \eqref{MGTP} with $c(x,y) = |x - y|$ and suppose $\mu$ is absolutely continuous with respect to the Lebesgue measure. Then for $\mu$-almost every $x$ in $\R^d$, 
\begin{enumerate}[(1)] 
 \item the Hausdorff dimension of  $\supp \pi_x$ is at most $d-1$, and
 \item If $dim\,V(\supp \pi_x)=d$, then \, ${\rm supp}\, \pi_x = {\rm Ext}\,\big{(} {\rm conv}({\rm supp}\, \pi_x) \big{)}$. 
  \end{enumerate} 
\end{corollary}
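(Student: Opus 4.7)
The plan is to combine the decomposition of a martingale supporting set into $c$-contact layer components (Theorem~\ref{th:k}) with the extremal structure for such layers (Theorem~\ref{thm: main extremal}), partitioning according to the affine dimension of the paving component containing $x$.

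First I would apply Proposition~\ref{pp} to fix a Borel $c$-finitely exposable concentration set $\Gamma \in \mathcal{S}_{MT}$ for $\pi$, choosing it by a standard disintegration argument so that $\overline{\Gamma_x}=\supp \pi_x$ for $\mu$-a.e.\ $x$. Theorem~\ref{th:k} then produces an irreducible convex paving $\Phi$ of $\Gamma$ with the crucial property that each $\Gamma_C := \Gamma \cap (C \times \R^d)$, $C\in\Phi$, is itself a $c$-contact layer in $\mathcal{S}_{MT}$. I would then split $X_\Gamma$ according to $k(x) := \dim V(C(x))$, using the geometric observation that $x \in IC(\Gamma_x) \subset C(x)$ forces $V(\supp \pi_x)=V(\Gamma_x) \subset V(C(x))$, and hence $\dim V(\supp \pi_x) \le k(x)$.

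On the region $\{k(x) \le d-1\}$ the conclusion is immediate: $\supp \pi_x$ sits in an at-most-$(d-1)$-dimensional affine subspace, so $\dim_H \supp \pi_x \le d-1$, while the hypothesis of (2) cannot occur. On the region $\{k(x)=d\}$, each component $C$ is a non-empty open subset of $\R^d$; since $\Phi$ consists of disjoint open sets in the second-countable space $\R^d$, at most countably many such $C$ arise. Applying Theorem~\ref{thm: main extremal} to each corresponding $\Gamma_C$ yields $\overline{\Gamma_x}=\mathrm{Ext}\!\bigl(\mathrm{conv}(\overline{\Gamma_x})\bigr)$ for $\mathcal{L}^d$-a.e.\ $x \in X_\Gamma \cap C$; taking a countable union of the $\mathcal{L}^d$-null exceptional sets and invoking $\mu \ll \mathcal{L}^d$ gives $\supp \pi_x=\mathrm{Ext}\!\bigl(\mathrm{conv}(\supp \pi_x)\bigr)$ for $\mu$-a.e.\ $x$ in this region, settling~(2). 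Part~(1) on this region follows because the extreme points of a $d$-dimensional convex set in $\R^d$ lie on its topological boundary, a Lipschitz hypersurface of Hausdorff dimension at most $d-1$.

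The main obstacle is the bookkeeping in the lower-dimensional regime: irreducible paving components of dimension $k<d$ may form an uncountable family (the Nikodym phenomenon flagged elsewhere in the paper), so Theorem~\ref{thm: main extremal} cannot be invoked uniformly across them. What rescues the argument is that no uniform statement is needed on this region: for part~(1) the dimension bound is automatic from the ambient inclusion $V(\supp \pi_x) \subset V(C(x))$, and the hypothesis of part~(2) itself forces $k(x)=d$, removing the low-dimensional components from consideration.
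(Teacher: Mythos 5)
Your proposal is correct and follows essentially the same route as the paper's proof: fix a $c$-finitely exposable, martingale-supporting concentration set $\Gamma$, decompose via the irreducible convex paving from Theorem~\ref{th:k}, apply Theorem~\ref{thm: main extremal} on the (necessarily countably many) $d$-dimensional components $C$ to get the extremal structure and hence the $(d-1)$-dimensional bound there, and observe that the dimension bound is automatic (and the hypothesis of (2) vacuous) wherever $\dim V(C(x)) \le d-1$. Your version is a bit more explicit than the paper's about the countability of the full-dimensional components and about why the hypothesis of (2) places $x$ in the full-dimensional regime, but there is no substantive difference in approach.
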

\begin{proof}
Indeed, there exists  $\Gamma \in  \mathcal{S}_{MT}$ with $\pi(\Gamma)=1$ that is $c$-finitely exposable, and such that  $\overline{\Gamma_x} = \supp \pi_x$ for $\mu$ a.e. $x$ (See Appendix A). Now, consider those points $x$ with $\dim V(\Gamma_x) =d$. 
 In this case, the disjoint sets $C(x)$ in Theorem~\ref{th:k} are open sets in $\R^d$ and so, the restriction of $\mu$ to each of the components is again absolutely continuous.  Theorems~\ref{when.duality} and  \ref{thm: main extremal} can then be applied. Note now that the  the set of extreme points has  dimension at most $d-1$. This shows that for $\mu$-a.e. $x$ in the open set $\bigcup_{\dim V(C)=d}C$, we have that  $\dim (\overline{\Gamma_x}) \le d-1$. The property also obviously holds outside that set, which means that item (1) is also verified. 
\end{proof}
A more involved application of the decomposition is a complete solution of Conjecture 1) in two dimensions, namely the following, which is proved in Section 8. 

\begin{theorem} 
\label{th: Choquet 2d.0}
Assume $d=2$, $c(x,y) = |x - y|$, $\mu$ is absolutely continuous with respect to the Lebesgue measure, and  $\nu$ has compact support. Let $\pi\in MT(\mu, \nu)$ be a solution of \eqref{MGTP},  then  for $\mu$ - a.e. $x$, ${\rm supp}\, \pi_x = {\rm Ext}\,\big{(} \overline{{\rm conv}}({\rm supp}\, \pi_x) \big{)}$.
\end{theorem}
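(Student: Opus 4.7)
The plan is to apply the 2D decomposition theorem and examine each possible dimension of the support fiber separately. First I would invoke Proposition \ref{pp} (together with the construction recalled in Appendix A) to choose a Borel concentration set $\Gamma\in\mathcal{S}_{MT}$ for $\pi$ that is $c$-finitely exposable and satisfies $\overline{\Gamma_x}=\supp\pi_x$ for $\mu$-a.e.\ $x$; the compact support of $\nu$ makes each $\overline{\Gamma_x}$ compact, so the closure in the target statement is harmless. Theorem \ref{th:k} then produces an irreducible convex paving $\Phi$ of $\Gamma$ whose components $C$ have $\dim C\in\{0,1,2\}$, and for each $C\in\Phi$ the piece $\Gamma_C:=\Gamma\cap (C\times\R^2)$ is itself a $c$-contact layer in $\mathcal{S}_{MT}$.

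Next I would split $X_\Gamma$ according to the intrinsic dimension of the fibers, $E_k:=\{x\in X_\Gamma:\dim V(\Gamma_x)=k\}$ for $k=0,1,2$, and treat each separately. The case $E_0$ is immediate, since $\Gamma_x=\{x\}$ forces $\pi_x=\delta_x$ and the conclusion is trivial. The case $E_2$ is covered directly by Corollary \ref{cor: dim d-1}, which yields $\supp\pi_x=\textrm{Ext}(\textrm{conv}(\supp\pi_x))$ for $\mu$-a.e.\ $x\in E_2$.

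The heart of the argument is the case $E_1$, where $\supp\pi_x$ lies on a unique line $\ell(x)$ through $x$ with (unoriented) direction $d(x)$. The Borel measurability of the map $d$ on $E_1$ should follow from the measurable structure of the irreducible paving (Theorem \ref{general.dec.}). For each fixed direction $d_0$, set $E_1^{d_0}:=d^{-1}(d_0)$. Since $\mu\ll\mathcal{L}^2$, Fubini disintegrates $\mu|_{E_1^{d_0}}$ along the pencil of parallel lines in direction $d_0$, producing on $\mathcal{L}^1$-almost every such line $\ell$ an absolutely continuous one-dimensional conditional measure $\mu_\ell^{d_0}$. Because $\Gamma_x\subset\ell(x)$ for $x\in E_1^{d_0}\cap\ell$, the restriction $\pi_\ell$ of $\pi$ to $\{(x,y):x\in E_1^{d_0}\cap\ell\}$ is supported in $\ell\times\ell$ and is a one-dimensional martingale transport from $\mu_\ell^{d_0}$ to its second marginal $\nu_\ell^{d_0}$. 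Global optimality of $\pi$ forces $\pi_\ell$ to be optimal for the 1D problem on $\ell$ (otherwise one could paste a better 1D martingale back into $\pi$). Theorem \ref{beju} then yields $|\supp\pi_{\ell,x}|\leq 2$ for $\mu_\ell^{d_0}$-a.e.\ $x$ -- directly in the maximization case, and in the minimization case after invoking the standing assumption $\mu\wedge\nu=0$ to exclude the three-point ``stay'' component. On the line $\ell$ a two-point support automatically equals its own Choquet boundary, so Fubini integration in the transverse direction promotes this to a $\mu|_{E_1^{d_0}}$-a.e.\ statement.

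\textbf{The main obstacle} is the last step: integrating over directions to conclude that the ``bad'' subset of $E_1$ is $\mu$-null. The difficulty is that the pushforward $d_*\mu|_{E_1}$ can have a nontrivial non-atomic part, so that each individual $E_1^{d_0}$ may carry zero $\mu$-mass while their union has positive $\mu$-measure -- exactly the Nikodym-type phenomenon flagged after Theorem \ref{general.dec.}. The resolution I would pursue is to combine the per-direction Fubini with the disintegration of $\mu|_{E_1}$ through $d$: for $d_*\mu$-a.e.\ $d_0$ the conditional measure of $\mu|_{E_1}$ on $\{d=d_0\}$ is supported on $E_1^{d_0}$ and inherits enough absolute continuity from $\mu\ll\mathcal{L}^2$ for the Fubini-plus-Theorem \ref{beju} argument to apply conditionally; integrating the conditional null sets against $d_*\mu$ then gives a globally $\mu$-null exceptional set in $E_1$, completing the proof.
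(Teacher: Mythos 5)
Your overall scaffolding is sound and aligns with the paper's strategy up to the 1D case: producing a $c$-finitely exposable concentration set $\Gamma\in\mathcal{S}_{MT}$ (Proposition~\ref{pp} and Appendix A), invoking the irreducible convex paving (Theorem~\ref{th:k}), and dispatching the $0$- and $2$-dimensional strata quickly (the paper stratifies by $\dim V(C(x))$ rather than by $\dim V(\Gamma_x)$, but that is a minor bookkeeping difference). You also correctly identify the one-dimensional stratum as the crux. Where your proposal diverges essentially from the paper is in how that stratum is handled: you restrict $\pi$ to pencils of parallel lines of a fixed direction and call on the one-dimensional Theorem~\ref{beju}, while the paper never invokes the one-dimensional result at all. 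Instead it observes that each $1$-dimensional component $C(x)$ of the paving is itself a $c$-contact layer in $\mathcal{S}_{MT}$, applies the regularization and extremal-structure theorems (Theorem~\ref{when.duality} together with Corollary~\ref{structure}) to get the Choquet property for $\mathcal{L}^1$-a.e.\ point of each segment, and then transfers the $\mathcal{L}^1$-null exceptional sets to an $\mathcal{L}^2$-null set by a \emph{local} argument: pick a Lebesgue point $x_0$ of the alleged bad set, note that in a small ball $B(x_0,\epsilon)$ the disjoint segments $C(x)$ of length $\geq 2\delta\gg\epsilon$ are necessarily nearly parallel, and apply the bi-Lipschitz flattening map (Lemma~\ref{4.4}) plus Fubini. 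The measurability you would need for your direction field $d$ is, in the paper's version, supplied by the measurability of the width function $\delta(x)$ (Proposition~\ref{prop:meas w}) and of the set $E_\pi$ (Proposition~\ref{prop:measurable E}).

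The gap in your proposal is concentrated exactly at the step you flag as ``the main obstacle,'' and your proposed resolution does not close it. You claim that for $d_*\mu$-a.e.\ $d_0$ the conditional measure of $\mu|_{E_1}$ on $\{d=d_0\}$ ``inherits enough absolute continuity'' to run a Fubini-plus-Theorem~\ref{beju} argument along lines of direction $d_0$. This is false in general. When $d_*\mu$ is non-atomic, the level sets $\{d=d_0\}$ are $\mathcal{L}^2$-null and the conditional measures on them are singular with respect to $\mathcal{L}^2$; worse, there is no mechanism forcing those conditional measures, when sliced further along lines of direction $d_0$, to be non-atomic. A level set $\{d=d_0\}$ could very well be a curve transverse to the direction $d_0$, in which case each line of direction $d_0$ meets it in at most one point and the doubly-conditioned measures $\mu_\ell^{d_0}$ are Dirac masses; Theorem~\ref{beju} requires $\mu$ continuous and cannot be applied. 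This is precisely the Nikodym-type pathology discussed in Example~\ref{nikodym}, and a global disintegration over directions has no defense against it. What rescues the two-dimensional case is not a direction-wise Fubini but the geometric fact (used locally in the paper's proof) that disjoint segments of length $\geq 2\delta$ all passing through an $\epsilon$-ball, $\epsilon\ll\delta$, must make small angles with one another, which makes the flattening map of Lemma~\ref{4.4} bi-Lipschitz and hence legitimizes the Fubini. Your argument does not use the disjointness of the $C(x)$'s at all, and it is exactly that disjointness, exploited locally near a Lebesgue point, that does the work.

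Two smaller remarks. First, your claim that ``global optimality of $\pi$ forces $\pi_\ell$ to be optimal on $\ell$ (otherwise one could paste a better $1$D martingale back into $\pi$)'' needs more care, since $\pi_\ell$ is a conditional, not a restriction to a positive-measure set; a pasting argument along a measurable disintegration is possible but requires the machinery of Theorem~\ref{th:disintegration} (which in the paper is set up along the map $x\mapsto \overline{C(x)}$ into $\mathcal{K}(\R^d)$, not along a direction field). Second, even granting the pasting, you would need to exclude three-point supports in the minimization case by invoking $\mu\wedge\nu=0$; you mention this, and it is fine, but it is another place where the paper's contact-layer route is cleaner since it never appeals to Theorem~\ref{beju}.
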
 
The decomposition also allows us to  give  in Section 8  the following positive answer to Conjecture 2, whenever  the target measure is discrete. Note that in this case, the result holds true in both the maximization and minimization problems. 

 \begin{theorem} \label{th: finite images}
 Let $c(x,y) = |x - y|$  (or more generally for $c(x,y)= |x-y|^p$ with $p\ne 2$), suppose $\mu$ is absolutely continuous with respect to the Lebesgue measure, and that $\nu$ is discrete;  i.e. $\nu$ is supported on a countable set. If $\pi \in MT(\mu, \nu)$ is an optimizer for \eqref{MGTP}, then   for $\mu$ - a.e. $x$, $\supp \pi_x$ consists of exactly $d+1$ points which are vertices of a $d$-dimensional polytope in $\R^d$, and therefore the optimal solution is unique.
\end{theorem}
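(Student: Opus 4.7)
The strategy is to exploit the combinatorial rigidity forced by the discreteness of $\nu$ together with the affine geometry of $\R^d$, bypassing the full decomposition machinery. My first step would be to invoke Proposition~\ref{pp} to obtain a Borel $c$-finitely exposable set $\Gamma\in\mathcal{S}_{MT}$ with $\pi(\Gamma)=1$; since $\nu$ is discrete, each fibre $\Gamma_x\subset\supp\nu$ is countable, and since $\mu\ll\mathcal{L}^d$ I may freely discard any countable union of proper affine subspaces.

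To establish the upper bound $|\Gamma_x|\le d+1$ for $\mu$-a.e.\ $x$, I would argue by contradiction: if $B:=\{x:|\Gamma_x|\ge d+2\}$ had $\mu(B)>0$, the countability of $\supp\nu$ would force at least one set $A_\tau:=\{x\in B:\{y_0,\dots,y_{d+1}\}\subset\Gamma_x\}$, indexed by a $(d+2)$-tuple $\tau=(y_0,\dots,y_{d+1})$ in $\supp\nu$, to have positive Lebesgue measure. Any $d+2$ points in $\R^d$ are affinely dependent, so I would pick $\lambda_0,\dots,\lambda_{d+1}$, not all zero, with $\sum_i\lambda_i=0$ and $\sum_i\lambda_i y_i=0$. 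For any pair $x,x'\in A_\tau$, the finite set $\{x,x'\}\times\{y_0,\dots,y_{d+1}\}$ lies in $\Gamma$, and $c$-finite exposability yields an admissible triple $(\alpha,\gamma,\beta)$ with
\[
\beta(y_i)=\alpha(z)+\gamma(z)\cdot(y_i-z)+c(z,y_i),\qquad z\in\{x,x'\},\ i=0,\dots,d+1.
\]
The $\lambda_i$-weighted sum collapses the $\alpha(z)$ and $\gamma(z)$ contributions via the dependence relations, leaving $h_\tau(x):=\sum_i\lambda_i|x-y_i|^p=\sum_i\lambda_i\beta(y_i)=h_\tau(x')$, so $h_\tau$ is constant on $A_\tau$. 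Since $h_\tau$ is real-analytic on the connected open set $\R^d\setminus\{y_0,\dots,y_{d+1}\}$ (for $d\ge 2$), constancy on a positive-measure subset propagates to constancy on the whole set. For $p=1$, the vanishing of $\sum\lambda_i$ and $\sum\lambda_i y_i$ force $h_\tau(x)=O(|x|^{-1})$ at infinity, so $h_\tau\equiv 0$; expanding $|x-y_j|$ in $r=|x-y_j|$ near $y_j$ isolates a non-smooth radial term $\lambda_j r$ whose presence contradicts the smoothness of the other summands unless $\lambda_j=0$, contradicting non-triviality of $\lambda$. An analogous template should handle the remaining $p\ne 2$, whereas $p=2$ is excluded because $\sum\lambda_i|x-y_i|^2=\sum\lambda_i|y_i|^2$ is automatically constant.

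Once the upper bound is in place, the matching lower bound and simplex geometry follow from a second countability argument: for every finite $S\subset\supp\nu$ with $|S|\le d$, or with $|S|=d+1$ but $\dim V(S)<d$, the slice $\{x:\Gamma_x=S\}$ lies in the proper affine subspace $V(S)$ and is therefore $\mathcal{L}^d$-null; summing over these countably many $S$ and using $x\in IC(\Gamma_x)$ will show that $\mu$-a.e.\ $x$ has $\Gamma_x$ comprising exactly $d+1$ points that span a $d$-simplex containing $x$ in its interior. Uniqueness of $\pi$ will follow from the standard averaging trick: $(\pi+\pi')/2$ is also optimal whenever $\pi,\pi'$ are, and the upper bound applied to this average forces $\supp\pi_x=\supp\pi'_x$ for $\mu$-a.e.\ $x$; uniqueness of barycentric coordinates on a $d$-simplex then pins down $\pi_x$ from its support and barycenter. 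I expect the main obstacle to be the analytic-trivialization step across the entire family $p\ne 2$: one must rule out any algebraic cancellation that could make a non-trivial $\sum_i\lambda_i|x-y_i|^p$ globally constant on $\R^d\setminus\{y_0,\dots,y_{d+1}\}$, using both the decay/growth at infinity and the singular structure of $|x-y_j|^p$ near each $y_j$.
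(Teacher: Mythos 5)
Your proposal is correct and reaches the same conclusion, but it takes a genuinely lighter route on the crucial step. The paper first invokes the full decomposition machinery (Theorems~\ref{general.dec.} and~\ref{th:k}) to conclude that $\Gamma\cap(C(x_0)\times\R^d)$ is a $c$-contact layer, which supplies a \emph{single} admissible triple $(\alpha,\gamma,\beta)$ valid on all of $B=A\cap C(x_0)$; the constancy of $g(x)=\sum_{i=0}^d p_i|x-y_i|-|x-y|$ on $B$ is then read off from the equalities $|x-y_i|+\gamma(x)\cdot(y_i-x)+\alpha(x)=\lambda_i$. You instead apply $c$-finite exposability (Proposition~\ref{pp}) directly to the finite set $\{x,x'\}\times\{y_0,\dots,y_{d+1}\}$ for each pair $x,x'\in A_\tau$, observing that the relations $\sum_i\lambda_i=0$ and $\sum_i\lambda_i y_i=0$ annihilate the $\alpha(z)$ and $\gamma(z)$ terms regardless of which triple is chosen, giving $\sum_i\lambda_ic(x,y_i)=\sum_i\lambda_i\beta(y_i)=\sum_i\lambda_ic(x',y_i)$. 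This bypasses the convex--paving theory entirely and is a real simplification: the pairwise argument produces the same constancy of $h_\tau$ on $A_\tau$ with only the combinatorial input that every finite subset of $\Gamma$ is a $c$-contact layer. You also supply an explicit non-constancy argument for $p=1$ (decay at infinity forces $h_\tau\equiv 0$, and the isolated singularity of $\lambda_j|x-y_j|$ at $y_j$ then forces each $\lambda_j=0$), whereas the paper simply asserts that $g$ is ``not constant everywhere, which is not the case'' without proof. Both your write-up and the paper defer the general $p\ne 2$ non-constancy to an ``analogous'' argument; for $p$ not an even integer the singularity at $y_j$ already does the job, while even integers $p\ge 4$ need the observation that for $d+2$ affinely dependent points the Vandermonde-type relations $\sum\lambda_i=\sum\lambda_i y_i=0$ cannot be compatible with the vanishing of the higher polynomial coefficients of $\sum\lambda_i|x-y_i|^p$ unless $\lambda=0$ --- a point worth making explicit. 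The lower bound, simplex geometry, and uniqueness via the averaging trick agree with the paper (the latter using the countable set $V_J$ of lower-dimensional affine spans spanned by finite subsets of $\supp\nu$, same as your argument); one small hygiene point is that you also need a \emph{regular} concentration set in the sense of Definition~\ref{def: mg mon}, so that $\overline{\Gamma_x}=\supp\pi_x$ and $x\in IC(\Gamma_x)$ hold everywhere on $X_\Gamma$, not just $c$-finite exposability.
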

Now we give a couple of examples, which illustrate that the above stated conjectures could be the best structural results we can hope for. 
\begin{example}
\label{rm: no finite for max}
  The polytope-like structure of the support required in Conjecture 2 
  does not hold in general for the corresponding maximization problem.  
 Indeed, since 
$\frac{1}{2} (|x-y| - 1)^2 \geq 0$, we have
\begin{align}\label{u}
\frac{1}{2} |y|^2 - \frac{1}{2}|x|^2 + 1- x \cdot (y-x) \geq |x-y| \quad \hbox{on $\R^d \times \R^d$,} 
\end{align}
with equality on the set $\{(x,y); |x-y|=1\}$. The functions $\alpha (x) = \frac{1}{2}|x|^2 - 1$, $\beta(y) = \frac{1}{2} |y|^2 $ and $\gamma (x) = x$ then form a dual triplet for the maximization problem with cost $|x-y|$. This means that every martingale $(X, Y)$ with $|X-Y|=1$ a.s. is optimal for the maximization problem corresponding to its own marginals $X \sim \mu$ and $Y \sim \nu$.
 Hence, $\supp \pi_x$  is not in general a discrete set,  and indeed, $\supp \pi_x$ can attain the Hausdorff dimension $d-1$.
\end{example}
We now consider the uniqueness question in Conjecture 2, and whether it could hold for the maximization problem. In \cite{bj} it is shown  that when $d=1$ the solution of the martingale transport problem \eqref{MGTP} is unique for both max/min problem under the assumption that $\mu$ is absolutely continuous. Also, it is reported in \cite{Lim} that in the minimization problem with radially symmetric marginals $(\mu, \nu)$, the minimizer is again unique in any dimension. We note however that, unlike the minimization case, one cannot expect the uniqueness of a maximizing martingale measure in higher dimensions, even in the radially symmetric case, as the following example indicates.

\begin{example}
\label{ex: nonunique max} Let $\mu$ be a radially symmetric probability measure on $\R^2 \simeq   \C$ such that $\mu(\{0\}) = 0$. Let $z_1= \cos \frac{\pi}{4} +i \sin \frac{\pi}{4},\,  z_2 = \cos \frac{\pi}{4} - i \sin \frac{\pi}{4}, \, z_3 = - z_1$ and \, $z_4 = - z_2$, and define the probability measures $\pi_1$ and $\pi_2$ on $\C \times \C$, whose disintegrations $\pi^1_x$ and $\pi^2_x$ for each $x \in \C, x \neq 0,$  are given by,
\begin{align*}
\pi^1_x = \frac{1}{4} \delta_{x + \frac{x}{|x|} z_1} + \frac{1}{4} \delta_{x + \frac{x}{|x|} z_2} + \frac{1}{4}\delta_{x + \frac{x}{|x|} z_3} + \frac{1}{4}\delta_{x + \frac{x}{|x|} z_4} \\
\pi^2_x = \frac{1}{8} \delta_{x + \frac{x}{|x|} z_1} + \frac{3}{8} \delta_{x + \frac{x}{|x|} z_2} + \frac{1}{8}\delta_{x + \frac{x}{|x|} z_3} + \frac{3}{8}\delta_{x + \frac{x}{|x|} z_4}.
\end{align*}
Then, by the discussion in Example~\ref{rm: no finite for max}, one can see that both $\pi_1$ and $\pi_2$ are optimal for the maximization problem corresponding to $\mu$ and $\nu:=\nu_1=\nu_2$, where $d \nu_i (y) = \int_{\C} \pi^i_x (y)\, d\mu(x)$, $i=1,2$, hence, the maximizer is not unique.
\end{example}

Finally, we consider in Section 9 whether  one can perform a disintegration of $\pi$ with respect to the decomposition $\{ \Gamma_C\}_{C\in \Phi}$ into components $(\pi_C)_C$ in such a way that each $\pi_C$ is a  probability measure supported on $\Gamma_C:=\Gamma \cap (C \times \R^d)$ and $\pi_C\in MT(\mu_C, \nu_C)$, where $\mu_C, \nu_C$ are suitable 
probability measures  in convex order, with  $\mu_C$ is supported on  $X_C : = X_\Gamma \cap C$ and $\nu_C$ on $Y_{\Gamma_C}$. The advantage of this decomposition is that  If $\pi$ is optimal for  problem (\ref{MGTP}) in MT($\mu, \nu$), then $\pi_C$ is optimal for the same problem on $MT(\mu_C, \nu_C)$, with the added property that $\Gamma_C$ is a $c$-contact layer, which means that duality is attained for each $\pi_C$. 
The decomposition of $\Gamma$ given by 
Theorem~\ref{th:disintegration} 
was motivated by a similar one proposed by Beiglb\"ock-Juillet \cite{bj} in the one dimensional case $(d=1)$.  Our decomposition is however quite different since it depends on the concentration set $\Gamma$ for $\pi$, while in their case the decomposition depends only on the marginals $\mu$ and $\nu$. Theirs is also a countable partition, which makes the restricted problems much more amenable to analysis.   
Actually, the intervals in their decomposition are simply the connected components of the set where the potentials of $\mu$ and $\nu$ are different on the real line. However, in the higher dimensional cases our decomposition can be uncountable, and that's why we talk about a disintegration as  opposed to a decomposition. Moreover, the induced probability measures $\mu_C$'s can be Dirac measures (see Example \ref{nikodym}), which means that Theorem \ref{thm: main extremal} may not be applicable to each piece $\pi_C$ even if duality is attained for the restricted problem. We refer to Section 9 for the challenges and the interesting questions arising from this fundamental decomposition.
 

\section{The martingale  $c$-Legendre transform}\label{Martingale.transform}

In this section, we investigate properties of the admissible triplet of functions that appear in the dual martingale problem and their associated contact layers. Note that in the case of standard mass transport problems, the contact layer is determined by a potential function and its $c$-Legendre transform, whose regularity properties are inherited from those of $c$, and which can be studied independently of the primal transport problem. A similar methodology works in our setiing, once we introduce an appropriate Legendre duality.

\begin{definition} \rm Let $Y$ be a Borel set in $\R^d$ such that $\Omega := IC(Y)$ is open in $\R^d$, and let $\beta : Y \to \R$ be a Borel function such that for some $s \in \R, t \in \R^d, x_0 \in \Omega$, we have
\begin{align}\label{bound}
\beta(y) \leq c(x_0, y) +t \cdot (y - x_0)  + s \quad \hbox{for all $y \in Y$.}
\end{align}
\begin{enumerate}
\item The {\it martingale  $c$-Legendre dual of the function $\beta$ on $\Omega$} is the pair $\beta_c:=(\alpha_c, \gamma_c)$, where $\alpha_c:\Omega \to \R$ is given by 
\begin{align}\label{eq: bar alpha.1}
\alpha_c (x) := \inf \{a \in \R : \exists b \in \R^d \,\,\text{ such that }\,\,\beta(y) - c(x, y) \le b \cdot (y-x) + a, \,\,\forall y\in Y\}, 
\end{align}
and $\gamma_c: \Omega \to \R^d$ is the possibly set-valued function defined by 
\begin{align}\label{eq: bar gamma.1}
\gamma_c(x) := \{b \in \R^d : \beta(y)- c(x,  y) &\le b \cdot (y-x) + \alpha_c(x), \,\, \forall y \in Y \}.  
\end{align}

\item The {\it martingale  $c$-Legendre dual of a pair of functions $(\alpha, \gamma):\Omega \to \R\times \R^d$} is the function $(\alpha, \gamma)_{_{c}}: \R^d \to \R$ defined by
\begin{align}\label{double.transform}
(\alpha, \gamma)_{_{c}} (y) :=
\inf_{x\in \Omega, b \in \gamma (x) } \{ c(x,  y) +  b   \cdot (y-x) +  \alpha (x)\}.   
\end{align}
\item We shall denote by $\beta_{cc}$ the martingale  $c$-Legendre dual of the pair  $\beta_c=(\alpha_c, \gamma_c)$, and say that $\beta$ is {\it martingale  $c$-convex on $Y$}, if $\beta=\beta_{cc}$ on $Y$. 
\end{enumerate}
\end{definition}
In order to emphasize the analogy with the standard Fenchel-Legendre duality, we shall write 
\[
\beta_c (x, y)=(\alpha_c, \gamma_c)(x,y):=\alpha_c(x) +\gamma_c(x)(y-x).
\]

\begin{theorem}\label{thm: alpha beta difference} Assume that $(x,y) \mapsto c(x,y)$ is continuous and   $x \mapsto c(x, y)$ (respectively $y\mapsto c(x,y)$) is locally Lipschitz with local Lipschitz constants  uniformly bounded in $y$ (respectively in $x$). 
Let $Y$ be a Borel set in $\R^d$ such that $\Omega := IC(Y)$ is open in $\R^d$, and let $\beta : Y \to \R$ be a Borel function satisfying (\ref{bound}), and $\beta_c=(\alpha_c, \gamma_c)$ its martingale  $c$-Legendre dual. Then 
\begin{enumerate}

\item $\alpha_c$ is  locally Lipschitz  in $\Omega$, while $\gamma_c$ and $\beta_{cc}$ are locally bounded in $\Omega$.

\item $\beta \leq \beta_{cc}$ on $Y$, and 
\begin{equation}
\hbox{$\beta_{cc}(y)-\beta_c (x, y)\leq c(x,y)$\quad for all  $(x, y)\in  \Omega \times \R^d$.}
\end{equation}
In other words, the triple $(\beta_c, \beta_{cc})=(\alpha_c, \gamma_c, \beta_{cc}) \in E_m (c,  \Omega, \R^d)$.
\item   
 $ \beta_{cc} (x) -\delta_1 \le  \alpha_c (x)  \le \beta_{cc}(x) +  \delta_2$ for all $x \in \Omega$, where
 \[
\delta_1 = \sup_{x \in \Omega} c(x, x)\quad {\rm and} \quad  \delta_2  = \sup_{x, x' \in \Omega,  y \in Y } [ c(x,y) - c(x, x')  - c(x', y)].
\]
 \item  Let $X \subset \Omega$ and let $( \alpha, \gamma)$ be defined on $X$ such that 
 $( \alpha, \gamma,\, \beta)\in E_m (c,  X, Y)$, then $\alpha(x) \ge \alpha_c (x)$ on $X$.
 Moreover, if a $c$-contact layer $\Gamma \subset \Gamma_{(\alpha, \gamma, \beta)}$
 belongs to ${\mathcal S}_{MT}$, then
  \[
 \hbox{$\alpha(x) = \alpha_c (x)$ \,and \,$\gamma(x) \subset \gamma_c(x)$ \,on $X_\Gamma$, \,$\beta_{cc} = \beta$ on $Y_\Gamma$, \,and \,$\Gamma \subset \Gamma_{(\alpha_c, \gamma_c,  \beta_{cc})}.$}
 \] 
   
\label{cccc} \item The function $\beta_{cc}$ is martingale  $c$-convex on $\R^d$, that is $\beta_{cc} = \beta_{cccc}$ \,\,on $\R^d$.\label{cccc}
 
\end{enumerate}

\end{theorem}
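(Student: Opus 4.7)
The plan is to prove the involution $\beta_{cc}=\beta_{cccc}$ on $\R^d$ in two steps: first establish $\beta_{cc}\le \beta_{cccc}$ by a direct appeal to item (2), and then show $\beta_{cccc}\le \beta_{cc}$ by comparing the pair-valued transforms $(\beta_{cc})_{c}$ and $\beta_c$ on $\Omega$.

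For the first inequality, I would verify that $\beta_{cc}$, viewed as a Borel function on $\R^d$, itself satisfies the admissibility condition \eqref{bound}. Fixing any $x_0\in\Omega$ and any $b\in\gamma_c(x_0)$, the definition \eqref{double.transform} directly gives $\beta_{cc}(y)\le c(x_0,y)+b\cdot(y-x_0)+\alpha_c(x_0)$ for all $y\in\R^d$, so \eqref{bound} holds with $t=b$ and $s=\alpha_c(x_0)$. Applying item (2) with $\beta_{cc}$ in place of $\beta$ and $\R^d$ in place of $Y$ (so that $IC(\R^d)=\R^d$ replaces $\Omega$), one obtains $\beta_{cc}\le (\beta_{cc})_{cc}=\beta_{cccc}$ on $\R^d$.

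For the reverse inequality, write $(\alpha^{(3)},\gamma^{(3)}):=(\beta_{cc})_c$. The key claim is that $\alpha^{(3)}=\alpha_c$ on $\Omega$ and $\gamma_c(x)\subset \gamma^{(3)}(x)$ for every $x\in\Omega$. To get $\alpha^{(3)}\le \alpha_c$, take $x\in\Omega$ and any $b\in\gamma_c(x)$; by the observation of the previous paragraph, $\beta_{cc}(y)-c(x,y)\le b\cdot(y-x)+\alpha_c(x)$ holds for every $y\in\R^d$, so $(\alpha_c(x),b)$ is admissible in the infimum \eqref{eq: bar alpha.1} defining $\alpha^{(3)}(x)$. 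This simultaneously yields $\alpha^{(3)}(x)\le \alpha_c(x)$ and $b\in\gamma^{(3)}(x)$. For the reverse $\alpha^{(3)}\ge\alpha_c$, I would invoke $\beta\le \beta_{cc}$ on $Y$ (item (2)): any pair $(a,b)$ admissible for $\beta_{cc}$ satisfies $\beta(y)-c(x,y)\le b\cdot(y-x)+a$ on $Y$ a fortiori, forcing $a\ge \alpha_c(x)$.

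With these identities in hand, the second inequality is immediate from \eqref{double.transform}: restricting the infimum defining $\beta_{cccc}(y)$ to those pairs $(x,b)$ with $x\in\Omega$ and $b\in\gamma_c(x)\subset\gamma^{(3)}(x)$, and using $\alpha^{(3)}(x)=\alpha_c(x)$ there, yields $\beta_{cccc}(y)\le \inf_{x\in\Omega,\,b\in\gamma_c(x)}\{c(x,y)+b\cdot(y-x)+\alpha_c(x)\}=\beta_{cc}(y)$ for every $y\in\R^d$. The main bookkeeping concern, which I do not expect to present a real obstacle, is the domain change from $\Omega=IC(Y)$ to $IC(\R^d)=\R^d$ when iterating the transform and the propagation of the admissibility hypothesis \eqref{bound} at each step; beyond this, the argument is essentially the standard Fenchel-involution trick adapted to the pair-valued martingale $c$-Legendre duality.
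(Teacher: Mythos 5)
Your proposal addresses only item (5) of the theorem; items (1)--(4) are not treated (and are in fact invoked: you use the nonemptiness and boundedness of $\gamma_c$ from (1) and the inequality $\beta\le\beta_{cc}$ from (2)). If the assignment was the whole theorem, this is a real omission.

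For item (5) itself, your argument is correct and is essentially the same approach as the paper's, just spelled out more carefully. The paper establishes $\beta_{cc}\le\beta_{cccc}$ by citing item (2) as you do; for the reverse inequality it picks an approximating sequence $(x_n,b_n)$ for $\beta_{cc}(z)$ and asserts that $\beta_{cccc}(z)\le\beta_{cc}(z)$ ``readily'' follows. That step is only immediate once one knows that $(\alpha_c(x_n),b_n)$ is admissible \emph{with equality} in the infimum defining $(\beta_{cc})_c(x_n)$, which is exactly the identity $\alpha^{(3)}=\alpha_c$ and inclusion $\gamma_c(x)\subset\gamma^{(3)}(x)$ that you prove explicitly from $\beta\le\beta_{cc}$ on $Y$. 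Your write-up thus fills a small gap in the paper's exposition rather than diverging from it; the bookkeeping about replacing $Y$ by $\R^d$ and $\Omega$ by $\R^d=IC(\R^d)$ when iterating is handled correctly.
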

\begin{proof} (1) We first show  that $\alpha_c$ is locally bounded in $\Omega$.  For $x\in\Omega = IC(Y)$, we may choose $\{y_1,...,y_s\} \subset Y$ such that
\begin{align}\label{ag}
x \in U := IC(\{y_1,...,y_s\})\, \text{ and }\,  \hbox{$U$ is open in  $\R^d$.}  
\end{align}
\noindent Since $x =\sum_i t_i y_i$, $\sum_i t_i =1$, $t_i \ge 0$, 
 it is clear that    $\alpha_c(z) \ge M(z):=   \min_{y_i} [\beta(y_i) - c (z, y_i)]$
for all $z \in U$.  In view of the continuity of $c$, this yields that $\alpha_c$ is locally lower  bounded. \\
We now prove  that  $\alpha_c$ is locally upper bounded. Indeed, fix $R>0$ and let $x \in \Omega, y \in Y$ be such that $|x_0|, |x| <R$. 
By the { local Lipschitz property of $c$ in $x$, i.e.
\begin{align*}
 |c(x, y) - c(x_0, y)| \le C|x-x_0|
\end{align*}
for some $C=C(R)>0$ 
and for all $|x| < R$, 
we have that
}
\begin{align}\label{dual3}
s + t  \cdot (y-x_0) \ge \beta(y) - c(x_0, y) \ge  \beta(y)  - c(x , y) -  C|x-x_0|.
\end{align}
Thus,
\begin{align*}
s + C|x-x_0|+t \cdot (x-x_0) + t \cdot (y-x) \ge \beta(y) - c(x, y). 
\end{align*}
The definition of   $\alpha_c$ gives
\begin{align}\label{alpha}
s +   C |x-x_0|+t \cdot (x-x_0) \ge  \alpha_c (x).
\end{align}
In particular, $\alpha_c$ is locally upper bounded, hence 
locally bounded. \\
Note now that $\gamma_c(x)$ is a set valued  function, and it is clearly closed and convex for each $x \in \Omega$. To see the local boundedness of $\gamma_c$, use \eqref{ag} and let $V$ be a small neighbourhood of $x$ whose closure is in $U$. Since $\alpha_c$ is bounded on $V$, there exists a constant $C$ such that  
\begin{align}
b \cdot (y_i-z) \geq C, \quad  \,\,\forall z \in V,\, i=1,2,...,s, \quad \forall b \in \bar \gamma(z) 
\end{align}
which says that $\gamma_c$ is bounded on $V$, thus locally bounded on $\Omega$.  To show that $\gamma_c(x)$ is nonempty  for any $x\in \Omega$, choose an approximating sequence $\{a_n\}\subset \R$ for $\alpha_c(x)$ and corresponding $\{b_n\} \subset \R^d$, in such a way that  
$\beta(y) - c(x, y) \le b_n \cdot (y-x) + a_n$ and $a_n \searrow \alpha_c(x)$. Now the above argument shows that $\{b_n\}$ must be bounded, hence its accumulation points must be in $\gamma_c(x)$. \\
We now show that $\alpha_c$ is locally Lipschitz. Since $\alpha_c$ is finite in $\Omega$, the above argument showing the local boundedness for $\alpha_c$ can be repeated, giving \eqref{alpha} for any $x , x_0 \in \Omega$, $s= \alpha_c(x_0)$ and $ t\in \gamma_c(x_0)$;
\begin{align*}
 \alpha_c(x_0) +  C |x-x_0|+\gamma_c(x_0) \cdot (x-x_0) \ge  \alpha_c(x).
\end{align*}
 By interchanging $x$ and $x_0$, we get 
\begin{align*}
| \alpha_c(x) - \alpha_c(x_0) |  \leq \left((| \gamma_c (x)| \vee |\gamma_c(x_0)|) +  C \right) |x-x_0|.
\end{align*}
Therefore, the local boundedness of $\gamma_c$ implies that  $\alpha_c$ is locally Lipschitz in $\Omega$. If furthermore $x\mapsto c(x, y)$ is Lipschitz (with Lipschitz constant uniformly in $y$) and $\gamma_c$ is bounded, then the above estimate shows that  $\alpha_c$ is Lipschitz in $\Omega$. \\
As for $\beta_{cc}$, it is clear that it is measurable and  locally upper bounded. It is also clear that 
\begin{align}
(\alpha_c, \gamma_c,  \beta_{cc})\in E_m (c,  \Omega, \R^d) \text{ \,and \,} \beta \le \beta_{cc} \text{ on } Y.
\end{align}
We now show that $\beta_{cc}$ is locally bounded In $\Omega$, by following a similar argument as for $\alpha_c$. 
First,  let $ x \in \Omega, y\in Y,  y'\in \Omega$. By the local Lipschitz property of $c$ in $y$, i.e.
\begin{align*}
 |c(x, y) - c(x, y')| \le C|y-y'|,
\end{align*}
for some $C=C(R)>0$,
and for all $|y|, |y'| < R$, 
we see
 \begin{align}\label{eq: beta and} \nonumber
 \beta(y) &\le  c(x,  y) +  \gamma_c(x) \cdot (y-x) +  \alpha_c(x) \\
 & \le c(x, y') + \gamma_c(x) \cdot (y'-x) + \alpha_c(x) + \gamma_c(x)  \cdot(y-y')+ C|y-y'|.
\end{align}
Now, since $y' \in \Omega = IC(Y)$, one can choose $\{ y_1, ..., y_s\} \subset Y$ such that 
\begin{align*}
 y' \in W= IC(\{ y_1, ..., y_s\}) \quad \hbox{ and $W$ is open in $\R^d$.}
\end{align*}
Thus \eqref{eq: beta and} implies, after putting $y_i$'s in place of $y$ and summing up with appropriate weights, that 
\begin{align*}
\min_{y_i} \beta (y_i) \le \beta_{cc}(y') + C\max_{y_i}|y_i-y'|, 
\end{align*}
hence yielding the local lower boundedness, thus the local boundedness of $\beta_{cc}$ in $\Omega$. This completes the proof of  the items (1) and (2). 

  In order to establish (3), we first note that the inequality $\beta_{cc}  - \delta_1 \le \alpha_c$ on $\Omega$ follows from the fact that $(\alpha_c, \gamma_c, \beta_{cc})\in E_m (c,  \Omega, \R^d)$. For the other inequality,  notice that for each $x \in \Omega$ and an arbitrary $\epsilon >0$, 
there is $x_\epsilon \in \Omega$ and $b \in  \gamma_c(x_\epsilon)$ (which we will simply write as $ \gamma_c(x_\epsilon)$ in the sequel), such that 
\begin{align}\label{eq: beta epsilon bound}
    \alpha_c(x_\epsilon) +  \gamma_c(x_\epsilon)\cdot (x-x_\epsilon) + c(x_\epsilon, x) -\epsilon \le  \beta_{cc} (x).
\end{align}
Let $a_\epsilon (x) := \alpha_c(x_\epsilon) +   \gamma_c(x_\epsilon)\cdot (x-x_\epsilon) + c(x_\epsilon, x)$, and 
consider for $z\in Y$, the function
\begin{align*}
 L(z) =  a_\epsilon (x) +  \gamma_c (x_\epsilon) (z - x) + c(x, z).
\end{align*}
Then,
\begin{align*}
  \beta_{cc}(z) - L(z) & \le  \alpha_c(x_\epsilon) +  \gamma_c(x_\epsilon)\cdot (z-x_\epsilon) + c(x_\epsilon, z)\\
 & \ \ \ 
 -(\alpha_c(x_\epsilon) + \gamma_c(x_\epsilon)\cdot (x-x_\epsilon) + c(x_\epsilon, x))
 - \gamma_c(x_\epsilon) (z-x) - c(x, z)\\
 & \le c(x_\epsilon, z) - c(x_\epsilon, x) -c(x,z) \le \delta_2.
\end{align*}
Hence $ \beta_{cc} (z) \le L(z) +\delta_2$, and therefore $\beta(z) \le L(z) + \delta_2$ for $z \in Y$ by item (1).   From the definition of $ \alpha_c$, this implies
$  \alpha_c (x) \le a_\epsilon (x) + \delta_2$, and 
 from \eqref{eq: beta epsilon bound}, we have
$  \alpha_c (x) \le  \beta_{cc} (x) +\epsilon +\delta_2. $
%
 Since $\epsilon$ is arbitrary, the proof of (3) is complete. 
 
To prove (4), first note that if $X \subset \Omega$ and $( \alpha, \gamma, \beta)\in E_m (c,  X, Y)$, then the definition of $\alpha_c$ obviously implies that $\alpha \ge \alpha_c$ on $X$. Now assume that $\Gamma \subset \Gamma_{(\alpha, \gamma, \beta)}, \Gamma \in S_{MT}$ and in particular, for each $x \in X_\Gamma$, $x \in IC(\Gamma_x)$. Let $x =\sum_i t_i y_i$, $\sum_i t_i =1$, $t_i \ge 0$, $y_i \in \Gamma_x$, 
and observe that
\begin{align}
\label{qw1}\beta(y_i)- c(x, y_i)  &= \gamma(x) \cdot (y_i-x) + \alpha(x)\\
\label{qw2}\beta(y_i)- c(x, y_i)  & \le  \gamma_c(x) \cdot (y_i-x) +  \alpha_c(x)
\end{align}
where the first identity is due to the definition of $\Gamma_{(\alpha, \gamma, \beta)}$ and the second inequality is due to the definition of $\beta_c=(\alpha_c, \gamma_c)$. 
Summing up the above relations with the weights $t_i$, we get 
\begin{align*}
 \alpha(x) = \sum_i t_i \left(\beta(y_i)- c(x, y_i) \right)\le \alpha_c(x). 
\end{align*}
\noindent As $\alpha_c \le \alpha$ on $X$, this shows $\alpha(x) = \alpha_c(x)$ on $X_\Gamma$ and hence $\gamma(x) \subset \gamma_c(x)$ on $X_\Gamma$. Then for $x \in X_\Gamma$, by subtracting \eqref{qw1} from \eqref{qw2}, we get $(\gamma_c(x) - \gamma(x)) \cdot (y-x) \ge 0 \text{ for all } y \in \Gamma_x$. But since $x \in IC(\Gamma_x)$, this implies
\begin{align*}
(\gamma_c(x) - \gamma(x)) \cdot (y-x) = 0 \text{ for all } y \in \Gamma_x.
\end{align*}
In other words, the projection of $\gamma(x)$ and $\gamma_c(x)$ onto the affine subspace generated by $\Gamma_x$ are equal. Now note that \eqref{qw2} obviously holds for $\beta_{cc}$ in place of $\beta$. Again by subtraction, we get $\beta_{cc}(y) \le \beta(y)$ for all $y \in \Gamma_x$. As the reverse inequality is already shown, 
 we see that $\beta=\beta_{cc}$ on $Y_\Gamma$. Moreover, if $(x,y) \in \Gamma$, in other words if $(x,y)$ satisfies \eqref{qw1}, then the above discussion implies that \eqref{qw1} holds with $(\alpha_c, \gamma_c,\, \beta_{cc})$. In other words, $(x,y) \in \Gamma_{(\alpha, \gamma_c, \, \beta_{cc})}$. \\
For item (5), we first note that $\beta_{cc}$ is defined on $\R^d$ and by item (2), we have $\beta_{cc} \leq \beta_{cccc}$. For the reverse inequality, fix $z \in \R^d$. Then by definition of $\beta_{cc}$, there exist a sequence $\{x_n\}$ in $\Omega$ and $b_n \in \gamma_c (x_n)$, $n\ge 1$, such that
\begin{align*}
 \beta_{cc} (y) &\le c(x_n,  y) +  b_n   \cdot (y-x_n) +  \alpha_c (x_n) \quad \text{for every $y \in \R^d$, and}\\
  \beta_{cc} (z) &=\lim_{n \to \infty} c(x_n,  z) +  b_n   \cdot (z-x_n) +  \alpha_c (x_n).
 \end{align*}
 This readily implies that $\beta_{cc} (z) = \beta_{cccc} (z)$, completing the proof of the theorem.
 
\end{proof}
\begin{remark} Note that both costs $c(x,y) = |x - y|$ and $c(x,y) = - |x - y|$ satisfy the above hypothesis, and in both cases, i.e., $c(x,y) =\pm|x-y|$, we have that $\delta_1 =0$. Moreover, $\delta_2 \le 2 {\rm diam} (\Omega)$ if $c(x,y) =- |x-y|$. \\
On the other hand, if $c(x,y) = |x-y|$,  then $\delta_2 =0$, which means that $\alpha_c =\beta_{cc}$ on $\Omega$.    In particular, by Theorem \ref{thm: alpha beta difference} \eqref{cccc}, the duality theorem becomes
  \begin{eqnarray*}\label{No.gap.10}
\min\left\{ \int_{\R^d\times \R^d} |x-y| \,d\pi \, ;\, \pi\in MT(\mu,\nu)\right\} 
=\sup \left\{\int_{\R^d}\beta \, d(\nu -\mu)\, ; \, \hbox{$\beta$ is martingale  $c$-convex}
\right\},
\end{eqnarray*}
which can be seen as the counterpart of the Kantorovich-Rubenstein duality formulation in standard transport theory, whenever the cost is given by a distance function. 
\end{remark}

\begin{remark}\label{localization} {\bf(Localization)} Let $K$ be a compact set in $\Omega$ and let $\alpha_c^K$ and $\gamma_c^K$ be the restrictions of $\alpha_c$ and $\gamma_c$ on $K$, then $(\alpha_c^K, \gamma_c^K,\, \beta_{cc}^K)\in E_m (c,  K, \R^d)$, where 
$$\beta_{cc}^K (y) := \inf_{x\in K } \{ c(x,  y) +  \gamma_c^K(x)   \cdot (y-x) +  \alpha_c^K(x)\}.$$
Consequently, $\alpha_c^K$ is  Lipschitz  in $K$, and $\gamma_c^K$ is  bounded in $K$. Moreover,  
$\beta_{cc}^K$ is Lipschitz (resp., locally Lipschitz) in $\R^d$ provided $y \mapsto c(x,y)$ is Lipschitz (resp., locally Lipschitz) in $\R^d$. 

Indeed, from the definition of $\beta_{cc}^K$, the boundedness of $\gamma_c$ on $K$ and the local Lipschitz  assumption on $y\mapsto c(x, y)$ (uniformly in $x$), we see that $\beta_{cc}^K$ is the infimum of local Lipschitz functions parametrized by $x \in K$ with the local Lipschitz constant uniform in $x$. This shows that $\beta_{cc}^K$ is locally Lipschitz in $\R^d$. If in addition, $c$ is Lipschitz, then by the same reasoning $\beta_{cc}^K$ is Lipschitz in $\R^d$. 

\end{remark} 

\section{Extremal structure of a $c$-contact layer}\label{triplet}

We first deal with the differentiability properties of an admissible triple $(\alpha, \gamma, \beta)$.
The next lemma shows that essentially $\gamma$ is differentiable in an appropriate sense, wherever $\alpha$ is. This property will be crucial in the proof of Theorem~\ref{thm: main extremal}.

 \begin{lemma}
 \label{lem: alpha gamma}
Suppose $x \mapsto c(x,y)$ is differentiable at $x$ whenever $x \neq y$, and  
assume that $\Gamma$ is a set in ${\mathcal S}_{MT}$ that is a $c$-contact layer for a triple $(\alpha, \gamma, \beta) \in E_m(c, \Omega, \R^d)$, where ${\alpha} : \Omega \to \R$, ${\beta} : \R^d \to \R$, and ${\gamma} : \Omega \to \R^d$. Fix $x\in X_\Gamma$, and let $V$ be the vector subspace of $\R^d$ corresponding to the affine space $V(\Gamma_x)$, and assume $dim(V) \geq 1$. 
Assume there is $s \in V$ such that 
\begin{align}\label{lower}
\alpha (x') \   \le  \ 
s \cdot (x'-x) + \alpha(x) +o(|x'-x|)\, \text{ as }\, x'  \rightarrow x \, \text{ in }\, V(\Gamma_x) .
\end{align}
  Let $\text{proj}_V \gamma$ be the orthogonal projection of the value of $\gamma$ on  $V$.  Then $\alpha$ and $\text{proj}_V \gamma$ have a directional derivative at $x$ in every direction $u \in V$.  \end{lemma}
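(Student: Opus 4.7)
The plan is to compare the contact-layer equality on $\Gamma$ with the admissibility inequality at nearby points, and use the barycentric structure $x \in IC(\Gamma_x)$ to cancel cross terms. The conclusion will then follow by sandwiching the difference quotient between the upper gradient hypothesis and a derived lower gradient.

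First I would take any $y \in \Gamma_x$ and any $x' \in \Omega$, and subtract the contact-layer equality at $(x, y)$ from the admissibility inequality at $(x', y)$ to obtain
\[
\alpha(x') - \alpha(x) \;\geq\; (\gamma(x) - \gamma(x')) \cdot (y - x) + \gamma(x') \cdot (x' - x) + c(x, y) - c(x', y).
\]
Since $\dim V \geq 1$ and $x \in IC(\Gamma_x)$, I would pick finitely many points $y_0, \dots, y_k \in \Gamma_x$, all distinct from $x$, that affinely span $V(\Gamma_x)$, together with positive weights $\lambda_i$ summing to $1$ and satisfying $\sum_i \lambda_i y_i = x$. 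Averaging the inequality with these weights kills the $(\gamma(x) - \gamma(x')) \cdot (y - x)$ cross term; a first-order Taylor expansion of $c$ in its first argument at each $(x, y_i)$, valid by the differentiability hypothesis since $y_i \neq x$, then yields
\[
\alpha(x') - \alpha(x) \;\geq\; [\gamma(x') - K] \cdot (x' - x) + o(|x' - x|), \qquad K := \sum_i \lambda_i \nabla_x c(x, y_i).
\]

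Next I would combine this lower bound with the upper gradient hypothesis $\alpha(x') \leq \alpha(x) + s \cdot (x' - x) + o(|x' - x|)$, which gives
\[
[\gamma(x') - \sigma] \cdot (x' - x) \;\leq\; o(|x' - x|), \qquad \sigma := s + K.
\]
Specializing to $x' = x + tu$ for $u \in V$ and dividing by $t$ of the appropriate sign produces the one-sided controls
\[
\limsup_{t \to 0^+} \gamma(x+tu) \cdot u \;\leq\; \sigma \cdot u, \qquad \liminf_{t \to 0^-} \gamma(x+tu) \cdot u \;\geq\; \sigma \cdot u.
\]
Substituted into the derived lower bound, these produce the sandwich
\[
[\gamma(x + tu) - K] \cdot u + o(1) \;\leq\; \frac{\alpha(x + tu) - \alpha(x)}{t} \;\leq\; s \cdot u + o(1) \qquad \text{for } t > 0,
\]
together with the corresponding reversed sandwich for $t < 0$.

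Finally, I would close this sandwich by exploiting the flexibility in the barycentric representation: different valid choices of $\{(\lambda_i, y_i)\}$ give different values of $K \cdot u$, each producing a one-sided upper bound on $\gamma(x + tu) \cdot u$ from the right and a one-sided lower bound from the left. Intersecting these constraints, together with the two-sided character of the hypothesis on $\alpha$, forces $\lim_{t \to 0^\pm} \gamma(x + tu) \cdot u = \sigma \cdot u$; this pins down the directional derivative of $\alpha$ at $x$ in direction $u$ as $s \cdot u$, and, by running the same argument with $u$ ranging over a basis of $V$, also yields the directional derivative of $\text{proj}_V \gamma$ at $x$ in direction $u$. The main obstacle will be precisely this closing of the sandwich: the one-sided $\gamma$-bounds sit on complementary sides of $x$, so extracting a two-sided limit requires carefully combining the two-sided nature of the hypothesis on $\alpha$ with the barycentric freedom to vary $K \cdot u$ in order to match the one-sided cluster values of $\gamma(x+tu)\cdot u$ from both sides of $x$.
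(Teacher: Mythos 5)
There is a genuine gap, and it originates in the very step you flag as the place where you "kill the cross term." By averaging over a barycentric representation of $x$ itself ($\sum_i \lambda_i y_i = x$), the term $(\gamma(x)-\gamma(x'))\cdot(y-x)$ vanishes entirely, and what survives on the right-hand side after setting $x'=x+tu$ and dividing by $t$ is the quantity $\gamma(x+tu)\cdot u - K\cdot u + o(1)$ — the \emph{value} of $\gamma\cdot u$ near $x$, not its difference quotient $D_{t,u}(\gamma\cdot u)(x) = (\gamma(x+tu)\cdot u - \gamma(x)\cdot u)/t$. From that point on you have no handle whatsoever on the difference quotient of $\gamma$, so the claimed directional derivative of $\text{proj}_V\gamma$ is simply out of reach; even showing $\lim_{t\to 0}\gamma(x+tu)\cdot u$ exists would not yield the derivative, and your argument does not even close that weaker statement. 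Moreover, the one-sided bounds you do extract sit on the wrong sides for the $\alpha$ sandwich: you obtain $\limsup_{t\to 0^+}\gamma(x+tu)\cdot u \le \sigma\cdot u$ and $\liminf_{t\to 0^-}\gamma(x+tu)\cdot u \ge \sigma\cdot u$, while closing the sandwich for the right derivative of $\alpha$ would require a \emph{lower} bound on $\liminf_{t\to 0^+}\gamma(x+tu)\cdot u$ (and an upper bound on $\limsup_{t\to 0^-}$), because the inequality $D_{t,u}\alpha(x)\ge [\gamma(x+tu)-K]\cdot u + o(1)$ for $t>0$ only pins down $\liminf_{t\downarrow 0}D_{t,u}\alpha$ if you can bound that quantity from below. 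Varying $K\cdot u$ over barycentric representations only tightens the inequalities in the direction you already have them — it cannot generate bounds on the opposite side — so the "barycentric freedom" does not rescue the argument.

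The paper avoids this entirely by choosing barycentric representations of the \emph{shifted} points $x+t_+u$ and $x+t_-u$ with $t_+>0>t_-$, rather than of $x$ itself. The cross term then contributes $(t-t_\pm)D_{t,u}(\gamma\cdot u)(x)$, whose coefficient $-t_\pm$ is nonzero and of \emph{both} signs across the two choices. Feeding this into the two-sided hypothesis on $\alpha$ yields four inequalities which, precisely because $t_+$ and $t_-$ have opposite signs, bound $\limsup$ and $\liminf$ of $D_{t,u}(\gamma\cdot u)$ from both above and below on each side, forcing all four cluster values to coincide; the derivative of $\alpha$ then drops out, and a parallel argument with barycentric representations along an orthogonal $v\in V$ handles $D_{t,u}(\gamma\cdot v)$. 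In short: the key idea you are missing is to place the barycenter at $x+t_\pm u$ so that the cross term survives as a multiple of the difference quotient of $\gamma\cdot u$; that surviving term is the only bridge between the known behavior of $\alpha$'s difference quotient and the unknown behavior of $\gamma$'s.
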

\begin{proof}
 By the duality assumption 	for the  minimization problem, for all $x' \in \Omega$ and all $(x,y) \in \Gamma$,  
\begin{align}\label{dual11} 
c(x',y) + \gamma(x') \cdot (y-x') +\alpha(x')\   \geq  \  
 c(x,y) + \gamma(x) \cdot (y-x) +\alpha(x).
\end{align}
 Choose a unit vector $u \in V$ and let $x' = x+tu$. Then \eqref{dual11} is rewritten as
\begin{align}\label{dual2a} 
\frac{\alpha(x+tu) - \alpha(x)}{t}  \ge  
\frac{\gamma(x+tu) - \gamma(x)}{t} \cdot (x+tu-y) + \gamma(x)\cdot u - \frac{c(x+tu,y)-c(x,y)}{t} \,\,\text{if}\,\, t>0\\
\label{dual2b} \frac{\alpha(x+tu) - \alpha(x)}{t}  \le  
\frac{\gamma(x+tu) - \gamma(x)}{t} \cdot (x+tu-y) + \gamma(x)\cdot u - \frac{c(x+tu,y)-c(x,y)}{t} \,\,\text{if}\,\, t<0
\end{align}
Let us use the notation $D_{t,u}f(x) = \frac{f(x+tu) - f(x)}{t}$. 
Now the assumption \eqref{lower} says that
\begin{align}\label{af}
\limsup_{t \downarrow 0} D_{t,u} \alpha(x)  \le s \cdot u \le      \liminf_{t \uparrow 0} D_{t,u} \alpha(x).
\end{align}
\noindent Since $x \in int(conv(\Gamma_x))$,  there exists $y_1,...,y_k \in \Gamma_x \setminus \{x\}$, $p_1,...,p_k \geq 0$, $q_1,...,q_k \geq 0$, $\Sigma p_i = 1$, $\Sigma q_i = 1$, $t_+ > 0$, $t_- <0$, such that 
\begin{align*}
x + t_+u = \Sigma p_i y_i\\
x + t_-u = \Sigma q_i y_i.
\end{align*}
Note that the first term on the right side of \eqref{dual2a} and \eqref{dual2b} is linear in $y$, so by summing up the $y_i$'s with the  weights $p_i$'s or $q_i$'s, we get (and we write $\gamma_1(x) := \gamma(x) \cdot u$)
\begin{align}\label{dual3a} 
D_{t,u}\alpha(x) \ge D_{t,u}\gamma_1(x) (t - t_\pm) + C_\pm(t)  \,\,\text{if}\,\, t>0\\
\label{dual3b} D_{t,u}\alpha(x) \le D_{t,u}\gamma_1(x) (t - t_\pm) + C_\pm(t)   \,\,\text{if}\,\, t<0
\end{align}
Here $C_+(t), C_-(t)$ are functions of $t\neq0$, but have limits as $t \to 0$ by the differentiability assumption on the cost. Write $C_\pm = \lim_{t\rightarrow0} C_\pm(t)$, respectively.\\
By taking $\limsup_{t\downarrow 0}$ in \eqref{dual3a} and $\liminf_{t\uparrow 0}$ in \eqref{dual3b} and by recalling  that $t_+>0, t_-<0$, we have
\begin{align*}
\limsup_{t\downarrow 0} D_{t,u}\alpha(x) &\ge (- t_+)  \liminf_{t\downarrow 0} D_{t,u}\gamma_1(x) + C_+ \\
\limsup_{t\downarrow 0} D_{t,u}\alpha(x) &\ge (- t_-)  \limsup_{t\downarrow 0} D_{t,u}\gamma_1(x) + C_- \\
\liminf_{t\uparrow 0} D_{t,u}\alpha(x) &\le (- t_+)  \limsup_{t\uparrow 0} D_{t,u}\gamma_1(x) + C_+ \\
\liminf_{t\uparrow 0} D_{t,u}\alpha(x) &\le (- t_-)  \liminf_{t\uparrow 0} D_{t,u}\gamma_1(x) + C_-.
\end{align*}
\noindent This and \eqref{af} combine to give 
\begin{align*}
\liminf_{t\uparrow 0} D_{t,u}\gamma_1(x) &\ge \limsup_{t\downarrow 0} D_{t,u}\gamma_1(x) \ge
\liminf_{t\downarrow 0} D_{t,u}\gamma_1(x)\\ 
&\ge \limsup_{t\uparrow 0} D_{t,u}\gamma_1(x)
\ge \liminf_{t\uparrow 0} D_{t,u}\gamma_1(x), 
\end{align*}
that is $\gamma_1 = \gamma \cdot u$ is differentiable at $x$ in the direction $u$. Knowing this, we then take  
$\liminf_{t\downarrow 0}$ in \eqref{dual3a} and $\limsup_{t\uparrow 0}$ on \eqref{dual3b} 
to get
\begin{align*}
\liminf_{t\downarrow 0} D_{t,u}\alpha(x) &\ge (- t_+)  \nabla_u \gamma_1(x) + C_+ \\
\limsup_{t\uparrow 0} D_{t,u}\alpha(x) &\le (- t_+)  \nabla_u \gamma_1(x) + C_+. 
\end{align*}
Combining this with \eqref{af}, we get the differentiability of $\alpha$ at $x$ in the direction $u$.\\
Next, choose any unit vector $v\in V$ orthogonal to $u$ and let $\gamma_2(x) := \gamma(x) \cdot v$. We want to show that $\nabla_u \gamma_2(x)$ exists. We proceed just as before; for some $k \in \N$, there exists $y_1,...,y_k \in \Gamma_x \setminus \{x\}$, $p_1,...,p_k \geq 0$, $q_1,...,q_k \geq 0$, $\Sigma p_i = 1$, $\Sigma q_i = 1$, $t_+ > 0$, $t_- <0$, such that 
\begin{align*}
x + t_+v = \Sigma p_i y_i\\
x + t_-v = \Sigma q_i y_i.
\end{align*}
By summing up the $y_i$'s and the weights $p_i$'s or $q_i$'s as before, we get this time  
\begin{align}\label{dual4a} 
D_{t,u}\alpha(x) \ge t D_{t,u}\gamma_1(x) - t_\pm D_{t,u}\gamma_2(x) + C_\pm(t)  \,\,\text{if}\,\, t>0\\
\label{dual4b} D_{t,u}\alpha(x) \le  t D_{t,u}\gamma_1(x) - t_\pm D_{t,u}\gamma_2(x) + C_\pm(t)   \,\,\text{if}\,\, t<0
\end{align}
 Taking $\limsup_{t\downarrow 0}$ in \eqref{dual4a} and $\liminf_{t\uparrow 0}$ in \eqref{dual4b}
 and recalling $t_+>0, t_-<0$,  and the existence of $\lim_{t\to 0} D_{t, u} \gamma_1 (x)$, we see that 
\begin{align*}
\nabla_u\alpha(x) &\ge (- t_+)  \liminf_{t\downarrow 0} D_{t,u}\gamma_2(x) + C_+ \\
\nabla_u\alpha(x) &\ge (- t_-)  \limsup_{t\downarrow 0} D_{t,u}\gamma_2(x) + C_- \\
\nabla_u\alpha(x) &\le (- t_+)  \limsup_{t\uparrow 0} D_{t,u}\gamma_2(x) + C_+ \\
\nabla_u\alpha(x) &\le (- t_-)  \liminf_{t\uparrow 0} D_{t,u}\gamma_2(x) + C_-, 
\end{align*}
which implies differentiability of $\gamma_2 = \gamma \cdot v$ at $x$ in the direction $u$. Now choose an orthonormal basis $\{u, v_1,...,v_m\}$ of $V$ and write proj$_V \gamma = (\gamma \cdot u)u + \Sigma (\gamma \cdot v_i)v_i$. We observed that each component of proj$_V \gamma$ is directionally-differentiable. This completes the proof. 
\end{proof}

\begin{remark}

For the maximization problem, we need to reverse the inequalities in \eqref{lower} and \eqref{dual11}, and then  proceed in the same way. Hence, the lemma is proved.
\end{remark}

 We now restrict our attention to the cases $c(x,y) = \pm|x - y|$ in trying to describe the profile of a set $\Gamma$ that is a $c$-contact layer.
 
 \begin{lemma} 
\label{lem: reg to Choquet}
Let $\Gamma \in \mathcal{S}_{MT}$, $\Omega$ an open set in $\R^d$ containing $X_\Gamma$, $\alpha : \Omega \rightarrow \R$ and $\gamma : \Omega \rightarrow \R^d$ be two functions. 
 Let $\beta : \R^d \rightarrow \R$ be either   
\begin{align}\label{ab}
\beta (y) = \sup_{x\in \Omega} \{|x - y| + \gamma(x) \cdot (y-x) + \alpha(x)\};
\end{align}
or 
\begin{align}\label{ac}
\beta (y) = \inf_{x\in \Omega} \{|x - y| + \gamma(x) \cdot (y-x) + \alpha(x)\}.
\end{align}
Assume that $\Gamma$ satisfies
\begin{align}\label{aa}
\beta (y) = |x - y| + \gamma(x) \cdot (y-x) + \alpha(x) \,\text{ for all } \,(x,y) \in \Gamma .
\end{align}
If $\alpha$ and $\gamma$ are differentiable at $x\in X_\Gamma$, then   the closure $\overline{\Gamma_x}$ coincides with the  set of extreme points of the convex hull of $\overline{\Gamma_x}$, i.e., 
$\overline{\Gamma_x} = {\rm Ext} \, \big{(}{\rm conv}(\overline{\Gamma_x})\big{)}$. 
\end{lemma}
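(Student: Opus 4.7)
The plan is to exploit differentiability of $\alpha$ and $\gamma$ at $x$ to extract a pointwise Euler--Lagrange identity along $\Gamma_x\setminus\{x\}$, and then use the martingale--supporting condition $x\in IC(\Gamma_x)$ to force the extremal structure. Since $\beta(y)=|x-y|+\gamma(x)\cdot(y-x)+\alpha(x)$ for every $(x,y)\in\Gamma$ and $\beta$ is the sup or inf of the map $f_y(x'):=|x'-y|+\gamma(x')\cdot(y-x')+\alpha(x')$, for each such $y\ne x$ the point $x'=x$ is a critical point of $f_y$ on the open set $\Omega$. As $|x'-y|$ is smooth at $x'=x$ when $y\ne x$, setting $\nabla_{x'}f_y(x)=0$ yields the identity
\begin{equation}\label{eq:id}
A(y-x)\;-\;\frac{y-x}{|y-x|}\;=\;v,\qquad A:=D\gamma(x)^T,\ v:=\gamma(x)-\nabla\alpha(x),
\end{equation}
and by continuity it persists on $\overline{\Gamma_x}\setminus\{x\}$; in particular $|A(y-x)-v|=1$ there.

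Next I would deduce $|v|<1$ strictly whenever $\dim V(\Gamma_x)\ge 1$ (the case $\dim V(\Gamma_x)=0$ being trivial), which will simultaneously force $x\notin\overline{\Gamma_x}$. Since $x\in IC(\Gamma_x)$, a Carath\'eodory-type argument supplies affinely independent points $y_1,\dots,y_{k+1}\in\Gamma_x\setminus\{x\}$ (with $k=\dim V(\Gamma_x)$) and positive weights $\lambda_i$ summing to $1$ with $\sum_i\lambda_i y_i=x$; none of the $y_i$ can coincide with $x$, else they would span an affine space of dimension at most $k-1$, contradicting their affine independence. Writing $u_i:=y_i-x$ and $p_i:=u_i/|u_i|$, applying $A$ to $\sum_i\lambda_i u_i=0$ and substituting \eqref{eq:id} into each term yields $v=-\sum_i\lambda_i p_i$. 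Strict convexity of the Euclidean norm gives $|v|\le 1$ with equality only if all $p_i$ coincide, which is incompatible with $\sum_i\lambda_i u_i=0$ and $\lambda_i>0$; hence $|v|<1$ strictly. Any sequence $y_n\in\Gamma_x\setminus\{x\}$ with $y_n\to x$ would then force $|v|=\lim_n|A(y_n-x)-v|=1$, contradicting the strict bound, so $x\notin\overline{\Gamma_x}$.

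Finally I would show extremality by contradiction. Suppose $y_0\in\overline{\Gamma_x}$ admits a nontrivial decomposition $y_0=\lambda y_1+(1-\lambda)y_2$ with $y_1\ne y_2$ in $\overline{\Gamma_x}$ and $\lambda\in(0,1)$. Since $x\notin\overline{\Gamma_x}$, each $y_i\ne x$, so \eqref{eq:id} applies at each. Linearity of $A$ combined with substitution gives
\begin{equation*}
\frac{u_0}{|u_0|}\;=\;\lambda\,\frac{u_1}{|u_1|}+(1-\lambda)\,\frac{u_2}{|u_2|},
\end{equation*}
and strict convexity of the norm forces $u_1/|u_1|=u_2/|u_2|$. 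Thus $y_1,y_2$ lie on a common ray from $x$, say $u_i=a_ip$ with $a_1\ne a_2>0$ and $|p|=1$; substituting into \eqref{eq:id} gives $a_iAp=v+p$ for two distinct values $a_1,a_2$, which forces $v+p=0$ and hence $|v|=1$, contradicting the previous step. This proves $\overline{\Gamma_x}\subseteq\mathrm{Ext}(\mathrm{conv}(\overline{\Gamma_x}))$; the reverse inclusion is Milman's converse to the Krein--Milman theorem, applicable when $\overline{\Gamma_x}$ is compact (the relevant setting here, cf.\ the remark following Conjecture~1). The main technical step is the strict inequality $|v|<1$ from the martingale--supporting condition; once it is in hand the extremality argument is a short algebraic consequence of \eqref{eq:id}.
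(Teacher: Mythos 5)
Your proposal follows essentially the same strategy as the paper's: differentiate the sup/inf optimality condition at $x$ (which is a critical point of $x'\mapsto|x'-y|+\gamma(x')\cdot(y-x')+\alpha(x')$ for each $y\in\Gamma_x\setminus\{x\}$), obtain a first-order identity coupling the unit vectors $\frac{y-x}{|y-x|}$ to the differentials of $\alpha,\gamma$, and then exploit strict convexity of the Euclidean ball together with $x\in IC(\Gamma_x)$. Your repackaging via the single quantity $v=\gamma(x)-\nabla\alpha(x)$ and the a priori bound $|v|<1$ is a clean way to organize the non-staying property $x\notin\overline{\Gamma_x}$ and the extremality argument uniformly for both the max and min cases, whereas the paper handles the non-staying step by a separate argument in each case and then proves that $\overline{\Gamma_x}$ lies on the boundary of its closed convex hull.

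There are two small issues. First, your negation of extremality is too restrictive: you write a non-extreme point $y_0\in\overline{\Gamma_x}$ as $\lambda y_1+(1-\lambda)y_2$ with $y_1,y_2\in\overline{\Gamma_x}$, but the definition of extreme point only yields $y_1,y_2\in\mathrm{conv}(\overline{\Gamma_x})$, and a two-point decomposition by members of $\overline{\Gamma_x}$ need not exist (take $\overline{\Gamma_x}$ equal to the three vertices of a triangle together with its centroid). The fix is standard: apply Carath\'eodory to get $y_0=\sum_i p_i z_i$ with $z_i\in\overline{\Gamma_x}$, $p_i>0$, and (after dropping repetitions of $y_0$) at least two $z_i$ distinct from $y_0$; then $p_0=\sum_ip_i\hat z_i$ with $\hat z_i$ unit vectors, strict convexity forces $\hat z_i=p_0$ for all $i$, and the distinct radii $a_i$ on the common ray force $Ap_0=0$ and $|v|=1$, your contradiction. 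This is exactly how the paper proceeds (it works with $y=\sum p_iy_i$ directly). Second, the appeal to Milman's theorem for the inclusion $\mathrm{Ext}(\mathrm{conv}(\overline{\Gamma_x}))\subset\overline{\Gamma_x}$ is unnecessary (and introduces a compactness hypothesis the lemma does not have): for any set $A$, an extreme point $z$ of $\mathrm{conv}(A)$ must lie in $A$, since writing $z=\sum p_ia_i$ with $a_i\in A$ and peeling off one $a_i$ exhibits $z$ as a nontrivial convex combination unless $z$ equals some $a_i$. With those two adjustments, your argument is correct and essentially the paper's.
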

\begin{proof}
First note that, for any closed set $A$ in $\R^d$, it is clear that ${\rm Ext} \, \big{(}{\rm conv}(A)\big{)} \subset A$. To show the 
 reverse inclusion in our setting, we define the ``tilted cone"
\begin{align*}
\zeta (x,y) = \zeta_x (y) = \zeta_y (x) := |x - y| + \gamma(x) \cdot (y-x) + \alpha(x).
\end{align*}
The duality condition  \eqref{aa} with \eqref{ab} tells us the following: if $(x,y) \in \Gamma$, then for all $x' \in \Omega$,
\begin{align} \label{ad}
\zeta_{x'} (y) \leq \zeta_x (y).
\end{align}
Or \eqref{aa} with \eqref{ac} we get the reverse inequality.\\
Note that since $\zeta_x (y)$ is continuous, the same inequality holds for all $y \in \overline{\Gamma_x}$.
This obviously implies that,  if  $y \in \overline{\Gamma_x}$ and $x \neq y$, then the gradient with respect to $x$ vanishes:
\begin{align}
\nabla \zeta_y (x) = 0
\end{align}
and in fact \eqref{ad} also implies that  if  $y \in \overline{\Gamma_x}$, then necessarily $x \neq y$. (If $x=y$, then the function 
$\zeta_y (x)$ strictly increases as $x$ moves along the direction 
$\nabla_x [\gamma(x) \cdot (y-x) + \alpha(x)]$.)
We may call this as non-staying property or unstability,  for the maximization problem. For the minimization problem,  without loss of generality we already assumed that $x \notin \Gamma_x$, but in fact $x \notin \overline{\Gamma_x}$ as well, by \eqref{ae} below.\\
Now suppose the lemma is false. Then we can find $\{y, y_0,..., y_s\} \subset \overline{\Gamma_x}$ for some $s\geq1$ with $y=\Sigma^s _{i=0} p_i y_i$, $\Sigma^s _{i=0} p_i=1$. Choose a minimum $s$ such that all $p_i > 0$. Now taking directional derivative in the direction $u = \frac{x-y}{|x-y|}$ gives
\begin{align*}
\nabla_u \zeta_y (x) = \nabla_u \zeta_{y_i} (x) = 0 \,\,\forall i=0,1,...,s. 
\end{align*}
We compute
\begin{align*}
\nabla_u \zeta_{y_i} (x) = \frac{x-y_i}{|x-y_i|} \cdot u + \nabla_u \gamma(x) \cdot (y_i-x) - \gamma(x)\cdot u + \nabla_u \alpha (x).
\end{align*}
Then, by the linearity of $y \mapsto \nabla_u \gamma(x) \cdot y$, the equation $\nabla \zeta_y (x) = 0$ simply becomes
\begin{align*}
1 = \sum^s _{i=0} p_i \frac{x-y_i}{|x-y_i|} \cdot \frac{x-y}{|x-y|}.
\end{align*}
As $\frac{x-y}{|x-y|}$ is a unit vector and all $p_i > 0$, this can hold only if all $y_i$ lie on the ray emanated from $x$. The minimality of $s$ then implies that $s=1$, hence $\{y, y_0,y_1\} \subset \overline{\Gamma_x}$ would lie on a ray emanating from $x$, which is a contradiction, once we prove the following claim:
\begin{align}\label{ae}
\text{$\overline{\Gamma_x}$ is contained in the topological boundary of the closed convex hull of $\overline{\Gamma_x}$.}
\end{align}
Recall that here the topology is not the topology in $\R^d$ but the topology in $V:=V(\Gamma_x)$.  If  our claim is false and assuming first that dim$(V) \geq 2$, we can find  $y \in \Gamma_x \cap IC(\Gamma_x)$ as a barycenter of  a triangle joining 3 points $y_0,y_1,y_2$ in $\Gamma_x$. But the above argument implies that $y_0,y_1,y_2$ have to be aligned, which is a contradiction.
 If dim$(V) = 1$, then  as $x \in IC(\Gamma_x)$, we can find $\{y, y_0,y_1\} \subset \Gamma_x$ such that $x$ and $y$ are in the interior of the line segment $\overline{y_0 y_1}$. But then again by above, $\{y, y_0,y_1\}$ must lie on the ray (i.e. half-line) emanated from $x$, a contradiction. Finally, we cannot have dim$(V) = 0$ since this simply means that $\Gamma_x = \{x\}$, 
  but as we already showed above that $x \notin \Gamma_x$ in the case of maximization, while we already assumed without loss of generality that $x \notin \Gamma_x$ in the case of minimization. 
\end{proof}

Finally,  
the following result follows immediately from Theorem~\ref{thm: alpha beta difference}, Lemmata~\ref{lem: alpha gamma} and 
~\ref{lem: reg to Choquet}.

\begin{corollary}\label{structure} Let $c(x,y)=\pm |x-y|$ and assume $\Gamma$ is a $c$-contact layer in ${\mathcal S}_{MT}$. 
 If   $X_\Gamma \subset \Omega :=  IC(Y_\Gamma)$ with $\Omega$ being an open set in $\R^d$, then 
for $\mathcal{L}^d$ - a.e. $x$ in $\Omega$,   the closure $\overline{\Gamma_x}$ coincides with the  set of extreme points of the convex hull of $\overline{\Gamma_x}$, i.e., 
$\overline{\Gamma_x} = {\rm Ext} \, \big{(}{\rm conv}(\overline{\Gamma_x})\big{)}$. 
 \end{corollary}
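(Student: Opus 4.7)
The plan is to chain together the three results cited: Theorem~\ref{thm: alpha beta difference}, Lemma~\ref{lem: alpha gamma}, and Lemma~\ref{lem: reg to Choquet}. Since $E_M(c,\cdot,\cdot)=E_m(-c,\cdot,\cdot)$ and the hypotheses are symmetric under $c\mapsto -c$, I would argue only the case $c(x,y)=|x-y|$ in $E_m$ form; the maximization case $c(x,y)=-|x-y|$ is identical after reversing inequalities and replacing infima by suprema.

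First, applying Theorem~\ref{thm: alpha beta difference} to the admissible triple exposing $\Gamma$, I replace it by its regularized martingale $c$-Legendre version $(\alpha_c,\gamma_c,\beta_{cc})$ on $\Omega\times\R^d$. Item (1) of that theorem gives that $\alpha_c$ is locally Lipschitz on $\Omega$ and $\gamma_c$ locally bounded; item (4) gives $\Gamma\subset\Gamma_{(\alpha_c,\gamma_c,\beta_{cc})}$. By construction,
\[
\beta_{cc}(y)=\inf_{x\in\Omega,\,b\in\gamma_c(x)}\{c(x,y)+b\cdot(y-x)+\alpha_c(x)\}
\]
is precisely of the infimum form \eqref{ac} required by Lemma~\ref{lem: reg to Choquet}, and the pointwise contact identity \eqref{aa} holds on $\Gamma$.

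By Rademacher's theorem, the locally Lipschitz function $\alpha_c$ is differentiable at $\mathcal{L}^d$-a.e.\ $x\in\Omega$. At any such $x\in X_\Gamma$, condition \eqref{lower} of Lemma~\ref{lem: alpha gamma} holds with equality for $s=\nabla\alpha_c(x)$. Since $c(x,y)=\pm|x-y|$ is differentiable in $x$ away from the diagonal, Lemma~\ref{lem: alpha gamma} applies and yields directional differentiability at $x$ of $\alpha_c$ and of $\proj_{V(\Gamma_x)}\gamma_c$ in every direction $u\in V(\Gamma_x)$.

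Finally, I would apply Lemma~\ref{lem: reg to Choquet} at each such $x$ to conclude $\overline{\Gamma_x}=\mathrm{Ext}(\mathrm{conv}(\overline{\Gamma_x}))$. The main subtlety, and the step I expect to require the most care, is reconciling the hypothesis of Lemma~\ref{lem: reg to Choquet} (which nominally demands that $\alpha$ and $\gamma$ themselves be differentiable at $x$) with what Lemma~\ref{lem: alpha gamma} actually provides (directional differentiability of $\alpha_c$, and of the projection of $\gamma_c$ onto $V(\Gamma_x)$). Inspection of the proof of Lemma~\ref{lem: reg to Choquet} confirms that this weaker information is enough: the derivative of $\gamma$ enters only through the linear form $\nabla_u\gamma(x)\cdot(y_i-x)$ with $y_i\in\Gamma_x$, so only the component of $\nabla_u\gamma(x)$ along $V(\Gamma_x)$ is tested, and the directional derivative of $\alpha_c$ is needed only along directions in $V(\Gamma_x)$. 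This delivers the asserted extremality for $\mathcal{L}^d$-a.e.\ $x\in\Omega$.
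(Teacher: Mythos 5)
Your argument matches the paper's intended proof: the corollary is stated to ``follow immediately'' from Theorem~\ref{thm: alpha beta difference}, Lemma~\ref{lem: alpha gamma}, and Lemma~\ref{lem: reg to Choquet}, and this is exactly the chain you reconstruct — Rademacher on the locally Lipschitz $\alpha_c$ at $\mathcal{L}^d$-a.e.\ $x\in\Omega$, then Lemma~\ref{lem: alpha gamma} to obtain directional differentiability of $\alpha_c$ and of $\proj_{V(\Gamma_x)}\gamma_c$, then Lemma~\ref{lem: reg to Choquet}. You also correctly identify and resolve the genuine mismatch between Lemma~\ref{lem: reg to Choquet}'s nominal full-differentiability hypothesis and the merely directional information Lemma~\ref{lem: alpha gamma} provides: the only small detail left implicit in your write-up is that $\gamma_c$ is a priori set-valued, so one should first pass to a single-valued measurable selection before invoking Lemmas~\ref{lem: alpha gamma} and~\ref{lem: reg to Choquet} (item~(4) of Theorem~\ref{thm: alpha beta difference} guarantees any such selection still exposes $\Gamma$, since $(\gamma_c(x)-\gamma(x))\cdot(y-x)=0$ for $y\in\Gamma_x$, and still realizes the infimum form of $\beta_{cc}$ on $Y_\Gamma$).
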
  
\section{Structure of optimal martingale supporting sets when the dual is attained}\label{ss: structure from dual}

The goal of this section is to prove Theorem~\ref{thm: main extremal}  
which shows that dual attainment in the optimization problem \eqref{MGTP} implies that  any optimal martingale transport is concentrated on a $c$-contact layer, and therefore has a specific extremal structure. We start by collecting the properties verified by a well chosen concentration set of a martingale measure. The proof is given in Appendix (A).

\begin{lemma}\label{treatment}
Let $\pi \in$ MT$(\mu, \nu)$ and let $\Lambda \subset \R^d\times \R^d$ be a Borel set with $\pi(\Lambda) =1$. Then there exists a Borel set $\Gamma \subset \Lambda$ with $\pi(\Gamma) =1$ such that the map $x\mapsto \pi_x$ is measurable and defined everywhere on $X_\Gamma$ in such a way that:
\begin{enumerate}
\item $\overline{\Gamma_x} = \supp \pi_x$  for all $x \in X_\Gamma$,
\item $\Gamma \in {\mathcal S}_{MT}$, that is $x \in IC(\Gamma_x)$ for all $x \in X_\Gamma$,
\item If we assume that $\mu << \mathcal{L}^d$, then $\Gamma$ can be chosen in such a way that $X_\Gamma \subset IC(Y_\Gamma)$.
\item If in addition $\pi$ is a solution of the optimization problem \eqref{MGTP}, then $\Gamma$ can be chosen to be finitely $c$-exposable.
\end{enumerate}
\end{lemma}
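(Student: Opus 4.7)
The plan is to construct $\Gamma$ in stages, tackling properties (1)--(4) in turn while ensuring each refinement preserves what has already been established.

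First, I invoke the disintegration theorem to obtain a Borel-measurable family $\{\pi_x\}$ with $\pi = \int \pi_x\,d\mu$ and $\pi_x(\Lambda_x) = 1$ for $\mu$-a.e.\ $x$. The set $S := \{(x,y) : y \in \supp \pi_x\}$ is Borel, because $y \in \supp \pi_x$ iff $\pi_x(B(y,r)) > 0$ for every rational $r>0$, and $x\mapsto \pi_x(B(y,r))$ is Borel. Defining $\Gamma_1$ to be $\Lambda \cap S$ restricted to the $\mu$-conull set on which $\pi_x(\Lambda_x) = 1$, each fiber $\Gamma_{1,x}$ is a $\pi_x$-full subset of $\supp \pi_x$, hence dense in it, so $\overline{\Gamma_{1,x}} = \supp \pi_x$. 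This is (1).

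For (2), the martingale condition $x = \int y\,d\pi_x(y)$ makes $x$ the barycenter of $\pi_x$, and a standard minimality-of-affine-span argument shows $x \in IC(\supp\pi_x)$: if $x$ sat on a proper face of $CC(\supp\pi_x)$, then $\pi_x$ would have to be concentrated on that face, contradicting the minimality of $V(\supp \pi_x)$. Carath\'eodory then writes $x$ in the $V(\supp\pi_x)$-relative interior of a simplex with vertices in $\supp \pi_x$; using density from (1) I can perturb those vertices to nearby points of $\Gamma_{1,x}$ while preserving the relative-interior property, giving $x \in IC(\Gamma_{1,x})$. Restricting to the $\mu$-conull set of $x$ on which this succeeds produces $\Gamma_2$ satisfying (1) and (2).

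Step (3) is, I expect, the main obstacle. The naive move $X_\Gamma \mapsto X_\Gamma \cap IC(Y_\Gamma)$ is self-referential, since shrinking $X_\Gamma$ shrinks $Y_\Gamma$ and hence potentially $IC(Y_\Gamma)$, inviting a possibly non-terminating iteration. The trick is to stabilize by anchoring the closure $\overline{Y_\Gamma}$ to $\supp \nu$. First intersect $\Gamma_2$ with $\R^d \times \supp \nu$ to get $\Gamma_2'$, still $\pi$-conull, with $Y_{\Gamma_2'} \subset \supp \nu$ and $\nu(Y_{\Gamma_2'}) = 1$, so $\overline{Y_{\Gamma_2'}} = \supp \nu$. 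Since $\mu << \mathcal{L}^d$ and $X_{\Gamma_2'} \subset \text{conv}(\supp \nu)$, we must have $V(\supp \nu) = \R^d$ (otherwise $\text{conv}(\supp \nu)$ would be $\mathcal{L}^d$-null, contradicting $\mu(X_{\Gamma_2'}) = 1$). Hence $\partial(\text{conv}(\supp \nu))$ is $\mathcal{L}^d$-null and thus $\mu$-null, so I restrict $X$-coordinates to $IC(\supp \nu) = \text{int}(\text{conv}(\supp \nu))$, obtaining a $\pi$-conull $\Gamma_3$. Crucially, this $X$-restriction leaves both $\nu(Y_{\Gamma_3}) = 1$ and $Y_{\Gamma_3} \subset \supp \nu$ intact, so $\overline{Y_{\Gamma_3}} = \supp \nu$; since a convex set in $\R^d$ has the same interior as its closure, $\text{int}(\text{conv}(Y_{\Gamma_3})) = \text{int}(\text{conv}(\supp \nu))$, yielding $IC(Y_{\Gamma_3}) = IC(\supp \nu) \supset X_{\Gamma_3}$. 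A single restriction suffices.

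Finally, for (4) under the optimality assumption, Proposition~\ref{pp} (the Beiglb\"ock--Juillet monotonicity principle, combined with finite-dimensional LP-duality promoting finite monotonicity to $c$-finite exposability) furnishes a $\pi$-conull Borel set $B$ that is $c$-finitely exposable. Setting $\Gamma := \Gamma_3 \cap B$ and restricting once more to the $\mu$-conull set of $x$ with $\pi_x(\Gamma_x) = 1$, the density-plus-Carath\'eodory argument for (1) and (2) reruns verbatim, while (3) survives because $\nu(Y_\Gamma) = 1$ and $Y_\Gamma \subset \supp \nu$ still hold, so $\overline{Y_\Gamma} = \supp \nu$ once again.
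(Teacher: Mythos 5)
Your proof is correct and follows the same overall architecture as the paper's (disintegrate $\pi$, build the Borel set $S=\{(x,y):y\in\supp\pi_x\}$ via the sets $\{\pi_x(B_r(y))>0\}$, intersect with $\Lambda$ to get (1) and (2), and invoke Beiglb\"ock--Juillet/Zaev for (4)), but it handles item (3) by a genuinely different mechanism.

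For (3), the paper restricts $X_\Gamma$ to its set of Lebesgue density points $X'$ and uses the fact that a density point of a set of positive Lebesgue measure lies in the interior of its convex hull, yielding $X'\subset IC(X')\subset IC(Y_{\Gamma'})$ directly (the last inclusion because every $x\in X'$ already satisfies $x\in IC(\Gamma_x)\subset \mathrm{conv}(Y_{\Gamma'})$, and $V(X')=\R^d$). You instead anchor to $\supp\nu$: you observe $\overline{Y_\Gamma}=\supp\nu$, that $V(\supp\nu)=\R^d$ forced by $\mu\ll\mathcal L^d$, and that $\partial(\mathrm{conv}(\supp\nu))$ is Lebesgue-null, so restricting $X$-coordinates to $\mathrm{int}(\mathrm{conv}(\supp\nu))$ loses no $\mu$-mass; the identity $IC(Y_\Gamma)=IC(\supp\nu)$ then follows from $ri(C)=ri(\overline C)$ for convex $C$. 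Both routes terminate in one pass, so neither suffers the self-referential regress you worried about. The paper's density-point argument is slightly more self-contained (it never needs to mention $\nu$ or the negligibility of convex boundaries explicitly, though it uses the latter implicitly); your version makes the stabilizing role of the closure $\overline{Y_\Gamma}=\supp\nu$ clearer and extends without change to the final restriction in step (4).

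One small simplification you might prefer in step (2): instead of perturbing Carath\'eodory vertices into the dense set $\Gamma_{1,x}$, just note that $\overline{\Gamma_{1,x}}=\supp\pi_x$ gives $\overline{\mathrm{conv}(\Gamma_{1,x})}=\overline{\mathrm{conv}(\supp\pi_x)}$ and hence, by $ri(C)=ri(\overline C)$, that $IC(\Gamma_{1,x})=IC(\supp\pi_x)$; the barycenter fact then places $x$ there immediately, with no perturbation argument needed.
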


This leads us to use  the following terminology.

\begin{definition}\label{def: mg mon}\rm 
Let $\pi $ be a martingale transport plan in MT$(\mu, \nu)$. We shall say that 
\begin{enumerate}
\item $\Gamma$ is a {\it regular concentration set for $\pi$} 
if $\Gamma$ satisfies (1), (2), (3) in Lemma \ref{treatment}. 
\item 
 $\Gamma$ is a {\it martingale-monotone regular concentration set for $\pi$} (or simply $\Gamma$ is {\it martingale-monotone regular for $\pi$}) if $\Gamma$ also satisfies (4).
\end{enumerate}
\end{definition}

As mentioned in the introduction, there is a dual formulation for problem (\ref{opt}), just like in the Monge-Kantorovich theory for (non-martingale) mass transport.  

 \begin{lemma} 
 (see e.g. \cite{BeHePe11})\label{lem: duality gap} 
 Let $\mu$ and $\nu$ be two probability measures on $\R^d$ in convex order, and let $c:\R^d\times \R^d \to \R$ be a cost function that is lower semi-continuous, then 
\begin{eqnarray}\label{No.gap}
&&\min\left\{ \int_{\R^d\times \R^d} c(x,y) \,d\pi;\, \pi\in MT(\mu,\nu)\right\} \\
&& \qquad \qquad   =\sup \left\{\int_{\R^d}\beta d\nu -\int_{\R^d}\alpha d\mu;\,\, (\alpha, \gamma, \beta) \in E_m\, \,  \hbox{for some $\gamma \in C_b(\R^d, \R^d)$} \right\},\nonumber
\end{eqnarray}
and the minimization problem is attained at some martingale transport $\pi$. A similar result holds for the cost maximization problem, provided $c$ is upper semi-continuous, and $E_m$ is replaced by $E_M$. Furthermore, 
 
 \begin{enumerate} 
 
 \item If the dual problem is attained, then there is a concentration set $\Gamma$ for $\pi$ that is a $c$-contact layer. 
 
 \item Conversely, if $G\subset \R^d\times \R^d$ is a $c$-contact layer and $\pi^*(G)=1$ for some  $\pi^*\in$ MT$(\mu,\nu)$, then $\pi^*$ is an optimal martingale transport.
 
 \end{enumerate} 
 \end{lemma}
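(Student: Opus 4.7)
\smallskip

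\textbf{Plan for the proof.} The lemma has three ingredients: the no-duality-gap equality, primal attainment, and the two statements (1)--(2) relating dual optimizers to $c$-contact layers. The strategy is to prove them in that order, treating the minimization case only (the maximization case follows via $c \mapsto -c$). My main tool for the no-gap statement will be a Hahn--Banach / Fenchel--Rockafellar argument in $C_b(\R^d\times\R^d)$, and for primal attainment I rely on weak compactness of $MT(\mu,\nu)$. The heart of the lemma, items (1) and (2), then follows from a single algebraic identity together with the martingale property.

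\smallskip

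\textbf{Step 1 (no gap).} Weak duality $\min_{\text{prim}} \ge \sup_{\text{dual}}$ is immediate: for any $(\alpha,\gamma,\beta)\in E_m$ with $\gamma\in C_b(\R^d,\R^d)$ and any $\pi \in MT(\mu,\nu)$, integrate the inequality $\beta(y)-\alpha(x)-\gamma(x)\cdot(y-x)\le c(x,y)$ against $\pi$. Using $\pi$-marginals and the martingale property $\int \gamma(x)\cdot(y-x)\,d\pi(x,y)=\int \gamma(x)\cdot(E_\pi[Y\mid X=x]-x)\,d\mu(x)=0$, this yields $\int\beta\,d\nu-\int\alpha\,d\mu \le \int c\,d\pi$. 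For the reverse direction I would first reduce to bounded continuous $c$ (by a standard monotone approximation, using that $c$ is LSC), then apply a convex separation argument in $C_b(\R^d\times\R^d)$ to the linear subspace $\mathcal{A}=\{\alpha\oplus\beta+\gamma(x)\cdot(y-x): (\alpha,\gamma,\beta)\text{ admissible}\}$; any bounded continuous $f$ with $\int f\,d\pi\le 0$ for all $\pi\in MT(\mu,\nu)$ can be expressed (in the closure) as an element of $-\mathcal{A}$, which is the classical Strassen-type result underlying the martingale duality and is carried out in detail in \cite{BeHePe11}. The passage to LSC $c$ then proceeds by monotone approximation from below using $c_n:=\min(c,n)\wedge(\text{continuous bounded})$, using lower semicontinuity of $\pi\mapsto\int c\,d\pi$ on probability measures.

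\smallskip

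\textbf{Step 2 (primal attainment).} The set $MT(\mu,\nu)$ is convex and weakly closed in $\mathcal{P}(\R^d\times\R^d)$: the marginal constraints are closed under weak convergence, and the martingale constraint $\int \varphi(x)(y-x)\,d\pi=0$ for $\varphi\in C_b(\R^d,\R^d)$ is also preserved. Tightness follows from tightness of $\mu$ and $\nu$, so $MT(\mu,\nu)$ is weakly compact. Since $c$ is LSC and bounded below on sets with fixed first moments (after absorbing a linear correction $c\mapsto c+ A(|x|+|y|)$, which changes the primal and dual by the same constant), the map $\pi\mapsto\int c\,d\pi$ is weakly LSC, hence attains its minimum on $MT(\mu,\nu)$.

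\smallskip

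\textbf{Step 3 (items (1) and (2)).} Suppose the dual is attained at some $(\alpha,\gamma,\beta)$ and let $\pi$ be a primal minimizer. Then the pointwise inequality $\beta(y)-\alpha(x)-\gamma(x)\cdot(y-x)\le c(x,y)$ together with the equality of the two integrals (proved in Step 1, using the martingale identity $\int\gamma(x)\cdot(y-x)\,d\pi=0$) forces $\pi$-a.e.\ equality; taking a Borel version of the equality set produces $\Gamma\subset\Gamma_{(\alpha,\gamma,\beta)}$ with $\pi(\Gamma)=1$, which is exactly a $c$-contact layer. Conversely, if $\pi^*\in MT(\mu,\nu)$ is concentrated on a $c$-contact layer $G$ exposed by some admissible $(\alpha,\gamma,\beta)\in E_m$, then $\int c\,d\pi^* = \int[\beta(y)-\alpha(x)-\gamma(x)\cdot(y-x)]\,d\pi^* = \int\beta\,d\nu-\int\alpha\,d\mu$, while for any competitor $\pi\in MT(\mu,\nu)$ the same computation gives $\int c\,d\pi \ge \int\beta\,d\nu-\int\alpha\,d\mu$. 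Hence $\pi^*$ is optimal.

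\smallskip

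\textbf{Expected difficulty.} Steps 2 and 3 are routine; the only genuine obstacle is Step 1, specifically the separation argument that produces the dual triple in the right regularity class (with $\gamma\in C_b$). A careful handling of the reduction from LSC $c$ to bounded continuous $c$, and the identification of the closure of $\mathcal{A}$ in $C_b$ modulo the martingale null functions, is the delicate part — but this is precisely what is carried out in \cite{BeHePe11}, which I would invoke rather than reprove.
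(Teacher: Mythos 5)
Your proposal matches the paper's proof closely: like the paper, you invoke \cite{BeHePe11} for the no-gap identity and primal attainment, and your Step~3 reproduces exactly the two arguments the paper actually gives — complementary slackness forces $\pi$-a.e.\ equality (item~(1)), and integrating the dual inequality against a competitor while integrating the equality against $\pi^*$ gives item~(2). One small imprecision worth fixing: in item~(2) you assume the exposing triple lies in $E_m=E_m(c,\R^d,\R^d)$, but by the paper's convention a $c$-contact layer is by default exposed by a triple admissible only on $X_G\times Y_G$; the paper handles this by observing that $\pi^*(G)=1$ forces $\mu(X_G)=\nu(Y_G)=1$, so every competitor $\pi\in MT(\mu,\nu)$ is automatically supported on $X_G\times Y_G$ and the integration step remains valid without assuming global admissibility.
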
 
 \begin{proof}  For \eqref{No.gap} see \cite{BeHePe11}. Let us show the items (1) and (2).  Note that if the dual problem is attained at functions $\alpha$, $\beta$ such that the triplet $(\alpha, \gamma, \beta)$ is in $E_m(c, \R^d, \R^d)$, then since 
\begin{equation}\label{basic.ineq}
 \beta(y) -\alpha(x) -\gamma(x)(y-x) \leq c(x,y) \quad  \hbox{for all $(x, y)\in \R^d\times \R^d$},
 \end{equation}
$\mu$ and $\nu$ are the marginals of some optimal $\pi$ in  MT$(\mu,\nu)$, and $\int \gamma(x) \cdot (y-x)  \,d\pi(x,y) = 0$ (due to the martingale condition),   we then have 
$$ \int_{\R^d\times \R^d}\{\beta(y)-\alpha(x)  -\gamma(x)(y-x)\}d\pi(x,y) =\int_{\R^d\times \R^d} c(x,y)d\pi (x,y).$$
It follows that 
\begin{equation}\label{basic.eq}
 \beta(y) -\alpha(x) -\gamma(x)(y-x) = c(x,y) \quad \hbox{for $\pi$ a.e \, $(x, y)\in \R^d\times \R^d$},
 \end{equation}
hence the equality holds on  a concentration set $\Gamma$ of $\pi$. 

Conversely, if $G\subset \R^d\times \R^d$ and $\pi^*(G)=1$ for some  $\pi^*\in$ MT$(\mu,\nu)$ and if there exists a triplet $(\alpha, \gamma, \beta)$ in $E_m(c, X_G, Y_G)$ with equality  (\ref{basic.eq}) holding on $G$, then $\pi^*$ is an optimal solution of the primal problem in 
(\ref{No.gap}).  Indeed, let $\pi \in$ MT$(\mu,\nu)$ and let $H$ be   such that $\pi(H)=1$. As $\mu(X_G) = 1$ and 
$\nu(Y_G) = 1$, by restriction we can then assume that $X_H \subset X_G$ and  $Y_H \subset Y_G$, hence by  integrating 
(\ref{basic.ineq}) with $\pi$, we get
\begin{align*}
\int_{\R^d\times \R^d} c(x,y) \,d\pi(x,y) \   \ge 
\int_{\R^d} \beta(y) \,d\nu(y) - \int_{\R^d}\alpha(x) \,d\mu(x).
\end{align*}
(Again $\int \gamma(x) \cdot (y-x)  \,d\pi(x,y) = 0 $ since $\pi \in$ MT$(\mu,\nu)$).  However, by integrating (\ref{basic.ineq}) with $\pi^*$ and since we have equality on $G$, we get
\begin{align*}
\int_{\R^d\times \R^d} c(x,y) \,d\pi^* = \int_{\R^d\times \R^d}\{\beta(y)-\alpha(x)  -\gamma(x)(y-x)\}d\pi^*= \int_{\R^d} \beta(y) \,d\nu - \int_{\R^d}\alpha(x) \,d\mu.
\end{align*}
This shows that $\pi^*$ is optimal.  Hence, every martingale measure that is concentrated on a $c$-contact layer is optimal. On the other hand, there exist optimal martingale measures that do not concentrate on  $c$-contact layers (\cite{BeHePe11}).  
\end{proof}
This suggests that dual attainability is actually a property of the support of the optimal martingale transport and not of the measure itself.
Now an obvious but important remark is that any subset of a $c$-contact layer is also a $c$-contact layer. The same holds for dual attainment in the martingale transport problem. 
Indeed,  if $\pi \in MT(\mu, \nu)$  and $B$ is a Borel set, we denote by $\pi_B$ its restriction on $B \times \R^d$, and we let $\mu_B, \nu_B$ be the first and second marginals of $\pi_B$.  Then we introduce the following: 

\begin{definition}\label{def: local problem}\rm 
Let $\pi \in MT(\mu, \nu)$ be given, and let $B$ be a Borel set.  We say that an admissible triple  $(\alpha, \gamma, \beta) \in E_m(c, B, \R^d)$  is  {\em $c$-dual to $\pi$ on $B$}, if the following holds:
 \begin{align}\label{eq: dualineq12} 
\int_{\R^d} \beta(y) \,d\nu_B (y)  - \int_{\R^d}\alpha (x)\,d\mu_B (x)   = \int_{\R^d\times \R^d} c(x,y)\,d\pi_B (x,y).
\end{align}
If such a triple exists, then we say that $\pi$ admits a {\em c-dual on $B$}. Note that in this case, 
 $\pi_B(\Gamma_{(\alpha, \gamma, \beta)})=\mu(B) $, 
 that is,  $\pi_B$ is concentrated on a $c$-contact layer.
\end{definition}
Now, we can deduce the following. 

\begin{theorem} \label{relaxed} Let $c(x,y)=\pm |x-y|$ and $\mu$ be a probability measure that is absolutely continuous with respect to the Lebesgue measure. If $\pi\in MT(\mu, \nu)$ is a solution of \eqref{MGTP} for either the minimization or maximization problem that admits a $c$-dual on a Borel subset $B$, then for $\mu$-almost all $x \in B$,  ${\rm supp}\, \pi_x = {\rm Ext}\,\big{(} {\rm conv}({\rm supp}\, \pi_x) \big{)}$
.
\end{theorem}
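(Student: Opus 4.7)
The plan is to reduce Theorem~\ref{relaxed} to the global extremal-structure result of Theorem~\ref{thm: main extremal} by applying the latter not to $\pi$ itself but to a regular concentration set for the restricted measure $\pi_B$.

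First I would invoke the hypothesis that $\pi$ admits a $c$-dual $(\alpha,\gamma,\beta)$ on $B$: by Definition~\ref{def: local problem} this means $\pi_B$ is concentrated on the $c$-contact layer $\Lambda:=\Gamma_{(\alpha,\gamma,\beta)}$, i.e.\ $\pi_B(\Lambda)=\mu(B)$. Since $\mu\ll\mathcal{L}^d$, the normalized first marginal $\mu_B/\mu(B)$ is absolutely continuous and $\pi_B/\mu(B)$ is a genuine martingale probability measure in $MT(\mu_B/\mu(B),\nu_B/\mu(B))$. Applying Lemma~\ref{treatment} to this martingale probability measure, starting from the full-measure Borel set $\Lambda$, produces a Borel subset $\tilde{\Gamma}\subset\Lambda$ with $\pi_B(\tilde{\Gamma})=\mu(B)$ such that $\overline{\tilde{\Gamma}_x}=\supp\pi_{B,x}$ for every $x\in X_{\tilde{\Gamma}}$, $\tilde{\Gamma}\in\mathcal{S}_{MT}$, and $X_{\tilde{\Gamma}}\subset IC(Y_{\tilde{\Gamma}})$.

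The decisive observation is that $\tilde{\Gamma}$, being a subset of the $c$-contact layer $\Lambda$, is itself a $c$-contact layer; combined with $\tilde{\Gamma}\in\mathcal{S}_{MT}$, this places it exactly in the hypothesis of Theorem~\ref{thm: main extremal}, which yields
\[
\overline{\tilde{\Gamma}_x} \;=\; \mathrm{Ext}\bigl(\mathrm{conv}(\overline{\tilde{\Gamma}_x})\bigr) \qquad \text{for $\mathcal{L}^d$-a.e.\ } x\in X_{\tilde{\Gamma}}.
\]
Because $\mu_B\ll\mathcal{L}^d$ and the disintegrations of $\pi_B$ and $\pi$ agree on $B$, so that $\overline{\tilde{\Gamma}_x}=\supp\pi_x$ for $\mu$-a.e.\ $x\in B$, the conclusion $\supp\pi_x=\mathrm{Ext}(\mathrm{conv}(\supp\pi_x))$ follows $\mu$-a.e.\ on $B$.

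Granted Theorem~\ref{thm: main extremal}, this reduction is essentially mechanical: optimality of $\pi$ is not directly used, the sole role of the duality hypothesis being to produce the $c$-contact layer on which $\pi_B$ concentrates. The one point requiring care is that the set produced by Lemma~\ref{treatment} must be chosen \emph{inside} $\Lambda$, so that it simultaneously inherits the $c$-contact property and lies in $\mathcal{S}_{MT}$; this is already built into the refining form of that lemma, so no real obstacle remains beyond the work already carried out in Sections~\ref{Martingale.transform}--\ref{ss: structure from dual}.
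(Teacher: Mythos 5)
Your proposal is correct and follows essentially the same route as the paper: invoke the $c$-dual to obtain a $c$-contact layer $\Lambda$ of full $\pi_B$-measure, refine it via Lemma~\ref{treatment} to a subset $\tilde\Gamma\subset\Lambda$ in $\mathcal{S}_{MT}$ with $X_{\tilde\Gamma}\subset IC(Y_{\tilde\Gamma})$ and $\overline{\tilde\Gamma_x}=\supp\pi_x$ for $\mu$-a.e.\ $x\in B$, and then apply the extremal-structure result. The only cosmetic difference is that you cite Theorem~\ref{thm: main extremal} where the paper cites Corollary~\ref{structure}, but these are interchangeable here since Lemma~\ref{treatment} supplies exactly the condition $X_{\tilde\Gamma}\subset IC(Y_{\tilde\Gamma})$ that Corollary~\ref{structure} needs.
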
 
\begin{proof}  Let $(\alpha, \gamma, \beta)$ be a $c$-dual to $\pi$ on $B$ and let $\Lambda$ be its contact layer. Then $\Lambda$ contains the full measure (that is, $\mu(B)$) of $\pi_B$. Apply Lemma  \ref{treatment} to get $\Gamma \subset \Lambda$ in such a way that $\pi_B (\Gamma) = \mu(B)$, $\Gamma \in {\mathcal S}_{MT}$,  $X_\Gamma \subset \Omega := IC(Y_\Gamma)$ and $\supp \pi_x = \overline{\Gamma_x}$ for $\mu$ a.e. $x \in B$. Now since $\Gamma$ is also a $c$-contact layer, Corollary \ref{structure} applies to get the claimed result.
 \end{proof}

\begin{remark} \label{local} Note that the above theorem shows that Conjecture (1) is valid provided {\it duality is attained locally}. In other words, if for any $x$ in the support of $\mu$, there exists a ball $B$ centered at $x$ such that the optimal martingale measure $\pi$ admits a $c$-dual on $B$. This refinement will be used in the next section. On the other hand, there exists an optimal martingale measure where ``local dual attainment" does not hold on any neighborhood. This can be seen with the following example given in  \cite{BeHePe11}. 
\end{remark} 

\begin{example}\label{ah}
 Let $\mu= \nu$ be two identical probability measures on the interval $[0,1]$, then the only martingale (say $\pi$) from $\mu$ to itself is the identity transport, hence it is obviously the solution of the maximization problem with respect to the distance cost, and its support is $\Gamma = \{(x,x) : x \in [0,1]\}$. If now $\{\alpha, \gamma, \beta\}$ is a solution to the dual problem, then 
\begin{align*}
\beta(y)&\geq |x-y|+ \gamma(x) \cdot (y-x) + \alpha(x)\,\, \forall x \in [0,1], \forall y \in [0,1] ; \\
\beta(y)&= |x-y| + \gamma(x) \cdot (y-x) + \alpha(x)\,\, \forall (x,y) \in \Gamma . 
\end{align*}
The above relations easily yield that for any $0<a<b<1$, we have 
$\gamma(a) +2 \leq \gamma(b)$, which means that 
it is impossible to define a suitable real-valued function $\gamma$ for a.e. $x$ in $[0,1]$.
\end{example}

\section{When the marginals are in subharmonic order}\label{S: local dual}

In this section, we consider a case where the dual martingale problem is attained --at least locally-- which will allow us to apply Theorem \ref{relaxed} and verify that conjecture (1) holds in that particular case. We consider the following ``balayage order" between probability measures, that is stronger and more natural than the convex order, at least in higher dimensions. 
We say that probability measures $\mu$ and $\nu$ are in {\it subharmonic order}, 
$\mu \le_{SH} \nu$,  if
\begin{equation}\label{PSH}
\hbox{$\int_{\R^d} \varphi\,d\mu\leq \int_{\R^d} \varphi\, d\nu$ for every subharmonic function $\varphi$  on $\R^d$.} 
\end{equation}
For simplicity, we shall assume that $\mu$ and $\nu$ have compact support so as to avoid integrability issues. 
Since convex functions are subharmonic, it is clear that  $\mu \leq_{SH} \nu \Rightarrow \mu \leq_{C} \nu$ and that the two notions are equivalent in one-dimension.\\
  Note that if $(B_t)_t$ is a $d$-dimensional Brownian motion with initial distribution $\mu$ and if $\nu$ is the distribution of $B_T$ where $T$ is a stopping time such that $(B_{T\wedge t})_t$ is a uniformly integrable martinagle, then $\mu \leq_{SH} \nu$. Such stopping times are normally called {\it standard}. The converse is also true  and belongs to a family of results known as Skorokhod embeddings (e.g., see Ob{\l}{\'o}j \cite{Obloj}). 
   In other words, (\ref{PSH}) is essentially equivalent to 
  \begin{equation}
  \hbox{$\mu \sim B_0$ and $\nu \sim B_T$ for a (possibly randomized) standard stopping time $T$.}
  \end{equation} 
We now consider the  
Newtonian potential (or simply, potential) $P_\mu$ of a probability measure $\mu$ with compact support, that is 
$$ P_\mu (x) = \frac{1}{d(2-d)w_d}\int_{\R^d} |x-y|^{2-d} \,d\mu(y), $$ 
in such a way that in dimension $d\geq 3$, we have $\Delta P_\mu =\mu$ (in the sense of distributions). Note that (\ref{PSH}) then implies 
that 
\begin{equation}
P_\mu(x) \leq P_\nu(x), \quad \forall x \in \R^d.
\end{equation}
The converse is also true at least for $d\geq 3$. See Falkner \cite{Falk}.  \\
Finally, note that if we consider an elliptic operator
$ L_t = \sum_{ij} a_{ij}(t) \partial_i \partial_j$
corresponding to a one-parameter family of positive matrices $(a_{ij}(t))$, $t >0$, and if $\mu$, $\mu_t$ are measures with densities $\rho$, $\rho_t$ respectively, where
 \begin{align}\label{eq: parabolic}
 \begin{cases}
\partial_t \rho_t - L_t \rho_t = 0      & \text{ for $t>0$ and  in $\R^d$}, \\
    \rho_0 = \rho,  & \text{}
\end{cases}
\end{align}
then one can easily verify that 
 $\mu\le_{SH} \mu_t$. Actually, one can show that 
  \begin{equation}
P_\mu(x)<P_{\mu_t} (x),  \quad \forall x \in \R^d. 
\end{equation}
The importance of such a strict inequality will be clear thereafter. 
The following is the main result of this section.
\begin{theorem}\label{cor: local Choquet in SH}
Assume $\mu\le_{SH} \nu$ where $\mu, \nu$ are probability measures with compact support on $\R^d$ such that  $\mu <<\mathcal{L}^d$. Assume the function $P_\nu - P_\mu$ lower semi-continuous and consider the open set $U: =\{ x \in \R^d \ | P_\nu(x) - P_\mu(x) >0\}$. If $\pi \in M(\mu, \nu)$ is an optimal solution for the minimization problem  \eqref{MGTP}, where  the cost function is either  $c(x,y) = |x-y|$ or $c(x,y) =- |x-y|$,  then:
\begin{enumerate} 
 
 \item For each $x \in U$, there exists a ball $B$ centered at $x$ such that $\pi$ admits a $c$-dual on $B$.  
 \item For $\mu$ - a.e. $x \in U$,  ${\rm supp}\, \pi_x = {\rm Ext}\,\big{(} {\rm conv}({\rm supp}\, \pi_x) \big{)}$.
  In particular, Conjecture (1) holds if $\mu(U)=1$. 
 \end{enumerate} 

\end{theorem}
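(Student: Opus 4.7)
The plan is to establish the dual-attainment statement~(1) and deduce~(2) from it by a standard covering argument. Given~(1), cover the open set $U$ by a countable family of balls $\{B_n\}$ each admitting a $c$-dual for $\pi$, apply Theorem~\ref{relaxed} on each $B_n$ to produce a $\mu$-full subset on which $\supp \pi_x = {\rm Ext}\big({\rm conv}(\supp \pi_x)\big)$, and take the union to obtain~(2) on $U$; Conjecture~(1) then follows in the case $\mu(U)=1$. The entire content of the theorem is therefore~(1).

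To prove~(1), fix $x_0 \in U$. By Lemma~\ref{treatment} pick a martingale-monotone regular concentration set $\Gamma$ for $\pi$, so that $\Gamma \in \mathcal S_{MT}$ and every finite subset of $\Gamma$ is a $c$-contact layer. Theorem~\ref{th:k} then supplies an irreducible convex paving $\Phi$ of $X_\Gamma$ such that for every $C \in \Phi$ the piece $\Gamma_C := \Gamma \cap (C\times\R^d)$ is itself a $c$-contact layer. The geometric heart of the argument is the claim
\begin{equation}\label{eq: fullDcor}
\dim V(C(x_0)) = d \quad \text{for every } x_0 \in U,
\end{equation}
i.e., the paving component $C(x_0)$ is open in the ambient $\R^d$. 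Granting~\eqref{eq: fullDcor}, the fibres $\{\Gamma_x\}_{x \in X_{\Gamma_{C(x_0)}}}$ together affinely span $\R^d$, so $\Omega := IC(Y_{\Gamma_{C(x_0)}})$ is open in $\R^d$ and contains $X_{\Gamma_{C(x_0)}}$. Theorem~\ref{when.duality} now applies to $\Gamma_{C(x_0)}$ and yields an admissible triple $(\alpha,\gamma,\beta)\in E_m(c,\Omega,\R^d)$ with $\alpha$ locally Lipschitz and $\gamma,\beta$ locally bounded on $\Omega$, exposing $\Gamma_{C(x_0)}$. Pick an open ball $B=B(x_0,r)$ with $\bar B\subset \Omega$, using the localization of Remark~\ref{localization} to make $\beta$ globally bounded on a compact set containing $\supp\nu_B$. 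Integrating the saturation identity $\beta(y)-\alpha(x)-\gamma(x)\cdot(y-x)=c(x,y)$ against $\pi_B$ and using the martingale relation $\int\gamma(x)\cdot(y-x)\,d\pi_B(x,y)=0$ then produces exactly the equality of Definition~\ref{def: local problem}, so $(\alpha,\gamma,\beta)$ is $c$-dual to $\pi$ on $B$; this proves~(1), and Theorem~\ref{relaxed} then yields~(2) on $B$.

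The main obstacle is~\eqref{eq: fullDcor}, the higher-dimensional counterpart of the observation that in dimension one the Beiglb\"ock--Juillet irreducible components coincide with the connected components of $\{P_\nu > P_\mu\}$. My plan is to argue by contradiction: if $V := V(C(x_0))$ were a proper affine subspace of $\R^d$, then by the defining property of the paving $\Gamma_x \subset V$ for every $x \in C(x_0)\cap X_\Gamma$, so the martingale started from each such $x$ stays inside $V$. Testing the subharmonic order $\mu\le_{SH}\nu$ against a carefully chosen subharmonic function supported in a thin slab transverse to $V$ and localized near $x_0$ should then force $P_\nu(x_0)-P_\mu(x_0)=0$, contradicting $x_0 \in U$. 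The lower-semicontinuity assumption on $P_\nu-P_\mu$ enters here to upgrade the pointwise strict positivity into a uniform lower bound on an open neighborhood, so that the subharmonic mismatch is genuinely concentrated at $x_0$ and cannot be absorbed by negligible exceptional fibres of the paving. Once~\eqref{eq: fullDcor} is settled, all remaining analytic work is packaged in the martingale $c$-Legendre machinery of Section~\ref{Martingale.transform} and the extremal-structure results of Section~\ref{triplet}.
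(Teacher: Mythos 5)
Your proposal diverges fundamentally from the paper's proof, and the divergence opens a gap that I do not think can be closed along the route you sketch.

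The paper establishes dual attainability on a ball $B\subset U$ by a direct compactness argument on a maximizing sequence $(\alpha_n,\gamma_n,\beta_n)$ for the global dual problem: after the Legendre-type normalizations from Theorem~\ref{thm: alpha beta difference} and Remark~\ref{localization} one introduces the convex Lipschitz functions $\chi_n(y)=\sup_{x\in\Omega}\{-\alpha_n(x)-\gamma_n(x)\cdot(y-x)\}$, proves the uniform bound
\begin{equation*}
\int \Delta\chi_n\,(P_\nu-P_\mu)\;=\;\int\chi_n\,(d\nu-d\mu)\;\le\;C_2,
\end{equation*}
and uses the strict positivity and lower semicontinuity of $P_\nu-P_\mu$ on a ball $B_r(x_0)\subset U$ to convert this into $\int_{B_r(x_0)}\Delta\chi_n\le C_2/\epsilon_B$. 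Proposition~\ref{prop: Lap bound} then yields a uniform sup bound for $\chi_n$ on a smaller ball, hence uniform bounds for $\alpha_n,\gamma_n,\beta_n$; a Koml\'os extraction plus a transform $\beta_{X,n}$ gives a genuine $c$-dual on $B$. The paving decomposition of Theorems~\ref{th:k} and \ref{lem: partition} is never invoked in this proof. Your proposal instead reduces everything to a geometric claim, your equation~\eqref{eq: fullDcor}, asserting that $\dim V(C(x_0))=d$ for every $x_0\in U$, and you concede that you do not prove it (``should then force\ldots'', ``My plan is\ldots'').

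That claim is the real obstacle and I do not think your sketch addresses it. The contradiction argument you outline tries to detect that the paving component through a single point $x_0$ is low-dimensional by testing $\mu\le_{SH}\nu$ against a subharmonic function localized in a thin slab. But $C(x_0)$ and, a fortiori, the set $\{x\in X_\Gamma: \Gamma_x\subset V(C(x_0))\}$ of starting points whose transitions remain inside the subspace $V(C(x_0))$, both have $\mathcal L^d$-measure zero because $\mu\ll\mathcal L^d$; they are invisible to any integral test. What could carry a contradiction is a positive-$\mu$-measure union of low-dimensional components, exactly the Nikodym-type stacking of Example~\ref{nikodym}, but then the slab test cannot be localized at one $x_0$ and the argument dissolves. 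Moreover, the paving $\{C(x)\}$ depends on the chosen concentration set $\Gamma$, not just on $(\mu,\nu,\pi)$, so equation~\eqref{eq: fullDcor} is not even a well-posed statement about $\pi$ alone without fixing $\Gamma$. In short: proving \eqref{eq: fullDcor}, even in the weakened form ``for $\mu$-a.e.\ $x_0\in U$,'' seems at least as hard as the full structural conjecture (indeed the paper only handles the codimension-$\le1$ case of the paving in Theorem~\ref{th: Choquet codim 1}, and then only under an extra measurability hypothesis). The potential-theoretic hypothesis is what makes the paper's analytic estimate work, and it is used quantitatively in the Laplacian bound, not as a geometric dimension constraint on the paving. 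Your plan would need a genuinely new idea to bridge this gap.

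Your derivation of~(2) from~(1), by covering $U$ with countably many balls admitting a $c$-dual and applying Theorem~\ref{relaxed}, is correct and essentially identical to the paper's final step.
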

\begin{remark}\label{rmk: diffusion}
The assumption that $\nu$ is compactly supported can be replaced with appropriate decay conditions on $P_\nu- P_\mu$ and $\nabla (P_\nu - P_\mu)$. In particular, Conjecture 1 holds for $\mu$ and $\nu = \mu_t$ from the diffusion example in \eqref{eq: parabolic} for $d\ge 3$, if the initial measure $\mu$ is absolutely continuous and compactly supported. Note that the 2-dimensonal case is true in full generality, that is when the marginals are simply in convex order (See Section 7).
\end{remark} 

\begin{proof}[Proof of Thereom~\ref{cor: local Choquet in SH}] Denoting $E_m=E_m(c, \R^d, \R^d)$, we have from Lemma~\ref{lem: duality gap} that  
 \begin{eqnarray}\label{No.gap.1}
&&l := \min\left\{ \int_{\R^d\times \R^d} c(x,y) \,d\pi;\, \pi\in MT(\mu,\nu)\right\} \\
&& \qquad \qquad   =\sup \left\{\int_{\R^d}\beta d\nu -\int_{\R^d}\alpha d\mu;\,\, (\alpha, \gamma, \beta) \in E_m\, \,  \hbox{for some $\gamma \in C_b(\R^d, \R^d)$} \right\}.
\end{eqnarray}
Let $\pi$ be an optimal solution for the minimization problem, and let $\Gamma$ be a martingale-monotone regular concentration set for $\pi$ (as in Defintion~\ref{def: mg mon}).
Fix a bounded open set $\Omega$ which is sufficiently large such that $\supp (\mu) \subset \Omega$.
We shall show that  for each $x_0 \in U\cap \Omega$, there exists a ball $B=B(x_0) \subset U\cap \Omega$ centred at $x$, such that $\pi$ has a $c$-dual on $B$. \\
For that consider a maximizing sequence for the dual problem, that is admissible triples  $(\alpha_n,  \gamma_n,  \beta_n)\in E_m(c, \R^d, \R^d)$ such that 
\begin{align}\label{eq: l}
 l = \lim_{n\to \infty}  \int \beta_n d\nu  - \int \alpha_n d\mu . 
\end{align}
In view of Theorem~\ref{thm: alpha beta difference} and remark \ref{localization}, we can assume that the triplet $(\alpha_n, \gamma_n, \beta_n)\in E_m(c, \Omega, \R^d)$, that  
$\alpha$ is Lipschitz  in $\Omega$, $\gamma$ is bounded in $\Omega$, and that
 \begin{equation}\label{same}
 \beta (x)  \le   \alpha (x)  \le \beta(x) +  \delta \quad \hbox{for all $x \in \Omega$, }
 \end{equation}
  where $0\leq \delta \leq 2 {\rm diam} (\Omega)$. Note that $\delta=0$ if $c(x,y) = |x-y|$. \\
 We consider the convex function
\begin{align*}
\chi_n (y) : = \sup_{x \in \Omega} \,\{ -\alpha_n(x)  - \gamma_n (x) \cdot (y-x)\}.
\end{align*}
Since  $\alpha_n$ and $\gamma_n$ are bounded and the set $\Omega$ is bounded, the functions
$\chi_n$ are Lipschitz on $\R^d$. 
Note also that by adding a sequence of affine functions $L_n$ (since $L_n(y) = L_n(x) + \nabla L_n(x) \cdot (y-x)$) the new sequence $(\alpha_{n} + L_{n}, \beta_{n}+L_{n}, \gamma_{n} + \nabla L_{n})$ will still have the same properties. By adding appropriate affine function $L_n$ and their gradients to the triple $(\alpha_n,  \gamma_n, \beta_n)$, we may therefore assume  that  
$$\chi_n (x) =0 \quad \hbox{and \quad $\chi_n \ge 0$ for every $n$. }
$$
We now show that a subsequence of $\alpha_n, \gamma_n$ converge locally in $U\cap \Omega$.
We first establish suitable estimates on $\chi_n$. Consider the Lipschitz function $q(y):= \sup_{x \in \Omega} c(x,y)$ and note that 
\begin{align}\label{eq: chi ineq}
- \alpha_n (y) \le \chi_n (y) \quad \forall y \in \Omega \quad \text{ and }\quad   \chi_n (y)       \le  q(y) - \beta_n (y) \quad \forall y \in \R^d.
\end{align}
Hence,
\begin{align*}
0\le  \int \chi_n (d\nu - d\mu) \le  -\int \beta_n d \nu + \int \alpha_n d\mu +  C_1, 
\end{align*}
where $C_1=\int q(y)d\nu(y) < \infty$,  since $q$ is Lipschtiz and $\nu$ has finite first moment.
Since $(\alpha_n,  \gamma_n, \beta_n)$ is a maximizing sequence, then 
for all sufficiently large $n$,
\begin{align*}
 0\le  \int \chi_n  (d\nu - d\mu ) \le  -l+ C_1 +1 =: C_2.
\end{align*}
Hence,
\begin{align}\label{eq: Delta upper}
C_2 \ge  \int \chi_n ( d\nu - d\mu)  = \int \chi_n \, \Delta (P_\nu-P_\mu) =
 \int \Delta \chi_n  (P_\nu - P_\mu)
 \end{align}
where $\Delta \chi_n$ is the distributional Laplacian of the convex function $\chi_n$. For the second last equality 
note that $\Delta P_\mu =\mu, \, \Delta P_\nu =\nu$, and for the last equality note that $\chi_n$ is convex Lipschitz and $ P_\nu-P_\mu$, $\nabla (P_\nu- P_\mu)$ decays to zero at infinity  by assumption, enabling us to integrate by parts. \\
Now fix  $x_0 \in U\cap \Omega$ and pick  a closed ball $B:=B_r (x_0) \subset U \cap \Omega$ of radius $r$, centered at $x_0$.  
Since $P_\nu-P_\mu$ is lower-semicontinuous and strictly positive on $U$, we have $\epsilon_{B} : = \min_{B} [P_\nu -P_\mu] >0$  
which, in view of \eqref{eq: Delta upper}, implies  that  
\begin{align*}
 \int_{B_r(x_0)} \Delta \chi_n  \le\frac{C_2}{ \epsilon_{B}}. 
\end{align*}
Now, modulo approximating it by smooth convex function, we can assume that $\chi_n$ is smooth and apply  Proposition~\ref{prop: Lap bound}
 to conclude that $\chi_n$ is bounded in a smaller ball $B_{r'}(x_0)$, uniformly in $n$. 
In view of (\ref{same}) and \eqref{eq: chi ineq}, the uniform boundedness of $\chi_n$ then implies the uniform boundedness of $\alpha_n, \beta_n$ on $B_{r'}(x_0)$. 
 Moreover, since 
\begin{align*}
 -\alpha_n(x)  - \gamma_n (x) \cdot (y-x) \leq \chi_n (y) \le C, \quad \forall x,y \in B_{r'}(x_0),
\end{align*}
  we also find that $\gamma_n$ is  uniformly bounded in $n$ on a smaller ball $B=B_{r''}(x_0)$, $r'' < r' < r$, in such a way that the sequences $(\alpha_n, \gamma_n,  \beta_n)_n$ are all uniformly bounded on $B$. \\
Apply now Koml\'os theorem, which states that every $L^1$-bounded sequence of real functions has a subsequence such that the arithmetic means of all its subsequences converge pointwise almost everywhere. Since the arithmetic means of  $\alpha_n$, $\beta_n$, $\gamma_n$ also yield a maximizing sequence of admissible triples for \eqref{eq: l}, 
we can therefore assume that the original functions $\alpha_n$, $\beta_n$ and $\gamma_n$ converge $\mathcal{L}^d$ a.e. in $B$ to, say, $\overline{\alpha}$, $\overline{\beta}$, and $\overline{\gamma}$ on $X \subset B$ where $\mathcal{L}^d (B \setminus X) =0$. Notice that these limits are bounded in $X$. \\
It is not clear, however, that this triple $(\overline{\alpha}, \overline{\gamma}, \overline{\beta})$ will give the desired one, especially because $\overline{\beta}$ is only defined in $X$, not in $\R^d$.   We thus proceed as follows.
Define
\begin{align*}
 \beta_{X, n} = \inf_{x \in X} \{ c(x, y) + \alpha_n(x) + \gamma_n (x) \cdot (y-x)\}.
\end{align*}
Notice that since  $\alpha_n$, $\gamma_n$ are bounded in $X$ uniformly in $n$ and $y\mapsto c(x, y)$ is Lipschitz in $\R^d$ with uniformly bounded  Lipschitz constants for $x \in X$, we immediately see that 
the function  $y \in \R^d \mapsto \beta_{X,n}(y)$ is Lipschitz (uniformly in $n$) and is uniformly bounded on each compact set. Therefore, there exists a subsequence, which we still denote by $\beta_{X,n}$, that converges to a Lipschitz function $\overline{\beta}_X$ uniformly on each  
compact set in $\R^d$. 
Moreover, from the definition of $\beta_{X,n}$, the triple $(\alpha_n, \gamma_n,  \beta_{X,n})$ satisfy 
\begin{align*}
  \beta_{X,n}(y)-\alpha_n(x)  -\gamma_n(x)(y-x) \le c(x,y) \quad \forall (x, y) \in X \times \R^d.
\end{align*}
Thus $(\alpha_n, \gamma_n,  \beta_{X,n}) \in E_m(c, X, \R^d)$. Also, taking the limit as $n\to \infty$, the above inequality still holds in the limit, and so  the triple
$(\overline{\alpha}, \overline{\gamma}, \overline{\beta}_X)\in E_m ( c, X, \R^d)$. \\ 
To show that  the triple  $(\overline{\alpha}, \overline{\gamma}, \overline{\beta}_X)$  is a $c$-dual to $\pi$ on $B$ (in the sense of Definition~\ref{def: local problem}), it remains to verify 
\eqref{eq: dualineq12}. For this, observe from the definition of $\beta_{X,n}$ that 
$\beta_n (y) \le  \beta_{X, n} (y)$ for all $ y \in \R^d.$
Thus, 
\begin{align*}
 \int \beta_n d\nu_B -\int \alpha_n d\mu_B \le \int \beta_{X,n} d\nu_B - \int \alpha_n d\mu_B  \le \int c(x, y) d\pi_B(x,y).
\end{align*}
Noting that the maximizing sequence of admissible triple $(\alpha_n,  \gamma_n,  {\beta}_n)$  for 
\eqref{eq: l} is also a
maximizing sequence of admissible triple for $\pi_B$, i.e.,
\begin{align*}
\lim_{n\to \infty}  \int\beta_{n} (y) \,d\nu_B(y) -\int\alpha_n(x)\, d\mu_B (x) &= \int c(x,y) \,d\pi_B (x,y),
\end{align*}
we therefore have that
\begin{align*}
\lim_{n\to \infty}  \int \beta_{X,n} (y) \,d\nu_B(y) -\int \alpha_n(x)\, d\mu_B (x) = \int c(x,y) \,d\pi_B (x,y).
\end{align*}
To bring the limit inside the integrals, recall that $\beta_{X,n}$ is uniformly Lipschitz (in $n$) and $\alpha_n$ is uniformly bounded, $\mu(B \setminus X)=0$ and $\nu_B$ has finite first moment. Thus, by the dominated convergence theorem, 
\begin{align*}
 \int \overline{\beta}_X (y) \,d\nu_B(y) -\int \overline{\alpha}(x)\, d\mu_B (x) = \int c(x,y) \,d\pi_B (x,y).
\end{align*}
Therefore, the triple $(\overline{\alpha}, \overline{\gamma}, \overline{\beta}_X)$ is a $c$-dual to $
\pi$ on $B$, proving the item (1). Then by Theorem \ref{relaxed}, for $\mu$ a.e. $x \in B$ we have that ${\rm supp}\, \pi_x = {\rm Ext}\,\big{(} {\rm conv}({\rm supp}\, \pi_x) \big{)}$. As $U$ can be covered by countably many such balls $B$,  for $\mu$ a.e. $x \in U$ we have that ${\rm supp}\, \pi_x = {\rm Ext}\,\big{(} {\rm conv}({\rm supp}\, \pi_x) \big{)}$, proving the item (2). 
\end{proof}

 \section{A canonical decomposition for the support of martingale transports}\label{s: decomposition}

We have shown in the last sections that Conjecture 1 holds whenever the dual problem is  (locally) attained. In this section, we shall decompose an optimal martingale transport $\pi$ into components on which an induced martingale transport problem is defined in such a way that its dual problem is attained. For that, we shall first associate to any Borel set $\Gamma \in \mathcal{S}_{MT}$ a unique irreducible convex paving $\Phi$. We then show that if every finite subset of $\Gamma$ is a $c$-contact layer (a property satisfied by a concentration set of an optimal martingale measure), then every  subset $\Gamma_C=\Gamma \cap (C \times \R^d)$ where $C$ is a component of the convex paving $\Phi$, is a $c$-contact layer. \\

\noindent{\bf 6.1 Irreducible convex pavings associated to martingale supporting sets:}
Let $\Gamma$ be a Borel set in ${\mathcal S}_{MT}$. We start by defining an equivalence relation on $X_\Gamma$. For each $x \in X:=X_\Gamma$, we define inductively an increasing sequence of convex open sets $(C_n(x))_n$ in the following way:

Start with the trivial equivalence relation $x\sim_0x'$ iff $x=x'$. 
Let $C_0(x) := IC(\Gamma_x)$ and recall that if $\Gamma_x = \{x\}$, then $C_0(x) = \{x\}$.
Now define the following equivalence relation on $X$:
$x \sim_1x' $ if there exist finitely many $x_1,...x_k$ in $X$ such that the following chain condition holds:
\begin{align*}
&C_0(x) \cap C_0(x_1) \neq \emptyset, \\
&C_0(x_i) \cap C_0(x_{i+1}) \neq \emptyset \,\,\forall i=1,2,...k-1,\\
&C_0(x_k) \cap C_0(x') \neq \emptyset . 
\end{align*}
We then consider the open convex hull:
$$C_1(x) := IC[\bigcup_{x' \sim_1 x} C_0(x')].$$
Note that $x \sim_1 x'$ implies $C_1(x) = C_1(x')$. Unfortunately, the convex sets $C_1(x)$ do not determine the equivalence classes. In particular, they may not be mutually disjoint for elements that are not equivalent for $\sim_1$. So,
we proceed to define $\sim_2$ in a similar way:
$x \sim_2 x' $ \, if there exist finitely many $x_1,...x_k$ in $X$ such that the following chain condition holds:
\begin{align*}
&C_1(x) \cap C_1(x_1) \neq \emptyset,  \\
&C_1(x_i) \cap C_1(x_{i+1}) \neq \emptyset \,\,\forall i=1,2,...k-1, \\
&C_1(x_k) \cap C_1(x') \neq \emptyset;
\end{align*}
and we set
$$C_2(x) := IC[\bigcup_{x' \sim_2 x} C_1(x')].$$
Again, $\sim_2$ is an equivalence relation and one can easily see that 
\begin{itemize}
\item $x \sim_1x' \Rightarrow x \sim_2x' $
\item $x \sim_2 x' \Rightarrow C_2(x) = C_2(x')$
\item $C_1(x) \subset C_2(x)$.
\end{itemize}
But still, the sets $C_2(x)$ may not be mutually disjoint for non-equivalent $x's$. We continue inductively in a similar fashion by defining equivalence relations $\sim_n$ for $n=1,2,...$ and their corresponding  classes 
$$C_n(x) := IC[\bigcup_{x' \sim_n x} C_{n-1}(x')].$$
It is easy to check that we have the following properties for each $n$, 
\begin{align*}
x \sim_n x' &\Rightarrow x \sim_{n+1} x' \\
x \sim_n x' &\Rightarrow C_n(x) = C_n(x') \\
C_n(x) &\subset C_{n+1}(x).
\end{align*}
Finally, define the  equivalence  relation  
$$\hbox{$x \sim x'$ if $x \sim_n x'$ for some $n$,}
$$
and its corresponding convex sets
\begin{align}\label{eq: C x}
 C(x) := \lim_{n \to \infty} C_n(x) = \cup_{n=0}^\infty C_n(x).
\end{align}
Now, we show that $\Psi=\{C(x)\}_{x\in X}$ is an irreducible convex paving for $\Gamma$. 
\begin{theorem}
\label{lem: partition} The canonical relation $\sim$ on $X_\Gamma$ and the components $(C(x))_ {x\in X_\Gamma}$ satisfy the following:
\begin{enumerate}
\item $x \sim x' \Rightarrow C(x) = C(x') $, and $x \nsim x' \Rightarrow C(x) \cap C(x') = \emptyset$.
\item $C(x)$ are mutually disjoint, that is either $C(x) = C(x')$ \,or  \,$C(x) \cap C(x') = \emptyset$.
\item $x' \in X \cap C(x)$ if and only if $x' \sim x$. 
\item $\Phi=\{C(x)\}_{x\in X}$ is an irreducible convex paving for $\Gamma$. 
\item $C_n(x) =  IC[\bigcup_{x' \sim_n x} \Gamma_{x'}]$ \,\,for $n\geq 0$\, and \,$C(x) = IC[\bigcup_{x' \sim x} \Gamma_{x'}]$.   In particular, $\Gamma_x \subset \overline{C(x)}$.
\end{enumerate}
\end{theorem}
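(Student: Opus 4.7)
The plan is to establish parts (1)--(3) directly from the inductive construction, then prove (5) by induction using two technical lemmas about the $IC$ operator, and finally deduce (4) from (5).

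For (1)--(3), the key observation is that $C_n(x) \cap C_n(x') \neq \emptyset$ forces $x \sim_{n+1} x'$ via the trivial chain of length zero. Combined with the monotonicity $C_n(x) \subseteq C_{n+1}(x)$ and the fact that $x \sim_n x'$ implies $C_m(x) = C_m(x')$ for all $m \geq n$, (1) follows: if $p \in C(x) \cap C(x')$ then $p \in C_N(x) \cap C_N(x')$ for $N$ large enough, forcing $x \sim_{N+1} x'$. Part (2) is immediate. For (3): if $x' \sim x$ then $x' \in C_0(x') \subseteq C(x') = C(x)$ by the martingale-supporting property $x' \in IC(\Gamma_{x'})$; conversely, if $x' \in X \cap C(x)$, then $x' \in C_n(x) \cap C_0(x')$ for some $n$, so $C_n(x) \cap C_n(x') \neq \emptyset$, giving $x \sim x'$.

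For (5), I proceed by induction on $n$. The base case $n=0$ is by definition. The inductive step rests on the following identity: for any family $\{A_i\}$ of sets with $A_i \subseteq \overline{IC(A_i)}$ (closure taken in $V(A_i)$),
$$ IC\bigl[\bigcup_i IC(A_i)\bigr] = IC\bigl[\bigcup_i A_i\bigr]. $$
The containment ``$\subseteq$'' follows from $IC(A_i) \subseteq conv(A_i) \subseteq conv(\bigcup_i A_i)$. The reverse uses $A_i \subseteq \overline{IC(A_i)} \subseteq \overline{conv(\bigcup_j IC(A_j))}$, giving $conv(\bigcup_i A_i) \subseteq \overline{conv(\bigcup_j IC(A_j))}$, together with the standard fact $int_V(\overline{K}) = int_V(K)$ for convex $K$ with nonempty interior in $V$ (the affine hulls agree because $V(A_i) = V(IC(A_i))$). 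Applying this with the inductive hypothesis $C_{n-1}(x') = IC[\bigcup_{x'' \sim_{n-1} x'} \Gamma_{x''}]$ and the transitivity identity $\bigcup_{x' \sim_n x}\bigcup_{x'' \sim_{n-1} x'} \Gamma_{x''} = \bigcup_{x'' \sim_n x} \Gamma_{x''}$ yields $C_n(x) = IC[\bigcup_{x' \sim_n x} \Gamma_{x'}]$. For the limit formula $C(x) = IC[\bigcup_{x' \sim x} \Gamma_{x'}]$, observe that $B_n := \bigcup_{x' \sim_n x} \Gamma_{x'}$ is increasing, and the affine hulls $V(B_n)$ stabilize at some $V$ for $n \geq n_0$ (since bounded by $\dim \R^d$). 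Then $\bigcup_n IC(B_n) = IC(\bigcup_n B_n)$ by a simplex argument: any point $p$ in the $V$-interior of $conv(\bigcup_n B_n) = \bigcup_n conv(B_n)$ admits $d'+1$ affinely independent points in $V$ ($d' = \dim V$) whose open convex hull contains $p$, each lying in some $conv(B_{n_i})$, so $p \in IC(B_N)$ for $N = \max n_i$. The stages with $n < n_0$ are absorbed into $C_{n_0}(x)$, and the inclusion $\Gamma_x \subseteq \overline{C(x)}$ falls out of $\Gamma_x \subseteq \overline{IC[\bigcup_{x' \sim x} \Gamma_{x'}]}$.

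For (4), the three convex paving axioms are routine: coverage from $x \in C_0(x) \subseteq C(x)$; nonemptiness is trivial; and for $z, x \in X_\Gamma$, $IC(\Gamma_z) \cap C(x) \neq \emptyset$ means $C_0(z) \cap C_n(x) \neq \emptyset$ for some $n$, so $z \sim_{n+1} x$, giving $C_0(z) \subseteq C(x)$. For irreducibility, let $\Psi$ be another convex paving, and for each $z \in X_\Gamma$ let $D(z) \in \Psi$ be the unique element containing $z$; the paving axiom for $\Psi$ applied at $z$ yields $IC(\Gamma_z) \subseteq D(z)$. If $C_0(z) \cap C_0(z') \neq \emptyset$, then $D(z) \cap D(z') \neq \emptyset$, forcing $D(z) = D(z')$ by mutual disjointness. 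Iterating along a $\sim_1$-chain shows the whole $\sim_1$-class of $x$ lies in $D(x)$; an induction on $n$ propagates this to every $\sim_n$-class, and hence to every $\sim$-class. Using (5), $C(x) = IC[\bigcup_{x' \sim x} IC(\Gamma_{x'})]$ with each $IC(\Gamma_{x'}) \subseteq D(x)$, so convexity of $D(x)$ forces $C(x) \subseteq D(x)$, establishing the irreducibility criterion. I expect the main obstacle to be verifying the two $IC$-commutation lemmas carefully, because $IC$ involves taking interiors in varying affine hulls; in particular, one must argue that the lower-dimensional stages ($n < n_0$) are absorbed into the stabilized higher-dimensional stage and that the simplex argument for increasing unions works only after the affine dimension stabilizes.
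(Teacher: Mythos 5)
Your proposal is correct and follows essentially the same path as the paper's proof, with two minor reorganizations: you prove (5) before (4) so that the identity $C(x) = IC[\bigcup_{x'\sim x}\Gamma_{x'}]$ can be cited when establishing irreducibility, whereas the paper proves (4) first via the inductive claim ``$C_n(z)\cap D\neq\emptyset \Rightarrow C_n(z)\subset D$''; and you spell out the three paving axioms for $\Phi$ explicitly, which the paper treats as routine. Your $IC$-commutation lemma with the hypothesis $A_i\subseteq\overline{IC(A_i)}$ is a slightly more cautious version of the paper's slicker argument via the identities $IC(A)=IC(CC(A))$ and $CC(\bigcup CC(A_i))=CC(\bigcup A_i)=CC(\bigcup IC(A_i))$, but the hypothesis is automatically satisfied (since $A\subset cl(conv(A))=\overline{IC(A)}$), so the two lemmas are interchangeable here; your dimension-stabilization discussion for the increasing union $C(x)=\bigcup_n C_n(x)$ also fills in a step the paper compresses into the assertion $C(x)=IC[C(x)]$.
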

\begin{proof}
The fact that $x \sim_n x' \Rightarrow C_n(x) = C_n(x')$ gives the first part of (1). If there exists a $z \in C(x) \cap C(x')$, then there is $N$ such that $z \in C_N(x) \cap C_N(x')$, implying $x \sim_{N+1} x'$ and verifying the second part of (1) of which (2) and (3) are obvious consequences.\\
To prove (4), let $\Psi$ be any convex paving of $\Gamma$ and let $z, x \in X_\Gamma$, $D \in \Psi$ be such that $C(z) \cap D(x) \neq \emptyset$. We must show that $C(z) \subset D(x) $. We claim that, for any $n \geq 0$, 
$$(\ast) \qquad \qquad C_n(z) \cap D(x) \neq \emptyset \Rightarrow C_n(z) \subset D(x), \text{\, for every }\,z,x \in X_\Gamma.$$
Indeed, it is true for $n=0$ by definition. Assume that $(\ast)$ is true for some $n$, and suppose 
$C_{n+1}(z) \cap D(x) \neq \emptyset$. Note that $C_{n}(z) \subset D(z)$, and so if $w \sim_{n+1} z$, by $(\ast)$ we have that $C_{n}(w) \subset D(z)$. As $C_{n+1}(z) = IC[\bigcup_{w \sim_{n+1} z} C_{n}(w)]$, this readily implies that 
$C_{n+1}(z) \subset D(z)$, but then $D(z) \cap D(x) \neq \emptyset$ and hence $D(z) = D(x)$. This proves $(\ast)$ for every $n \geq 0$. Now if $C(z) \cap D(x) \neq \emptyset$, then for all large $n$ $C_n(z) \cap D(x) \neq \emptyset$, hence by $(\ast)$ we get that $C_n(z) \subset D(x)$. Therefore $C(z) \subset D(x)$ which proves the irreducibility of $\Phi$.\\
For (5), let $(A_i)_{i \in I}$ be any family of sets in $\R^d$, where $I$ is an index set. Then it is easy to see that
$$IC(A_i) = IC(CC(A_i)), \text{ and}$$
$$CC(\bigcup_{i\in I} CC(A_i)) = CC(\bigcup_{i\in I} A_i) = CC(\bigcup_{i\in I} IC(A_i)).$$
But note that $A \subset B$ does not imply $IC(A) \subset IC(B)$ in general. The above implies in particular
$$IC(\bigcup_{i\in I}A_i) = IC(\bigcup_{i\in I} IC(A_i)).$$
In addition, a simple induction shows that for every $n \geq 0$, we have
$$C_n(x) = IC[\bigcup_{x' \sim_n x} \Gamma_{x'}]. $$
Indeed, it is true for $n=0$ by definition. Suppose $C_n(x) = IC[\bigcup_{x' \sim_n x} \Gamma_{x'}]. $ Now by definition,
$$C_{n+1}(x) = IC[\bigcup_{x' \sim_{n+1} x} C_n(x')] =  IC[\bigcup_{x' \sim_{n+1} x} IC( \bigcup_{x'' \sim_{n} x'} \Gamma_{x''})] =   IC[\bigcup_{x' \sim_{n+1} x} ( \bigcup_{x'' \sim_{n} x'} \Gamma_{x''})].   $$
But $\bigcup_{x' \sim_{n+1} x} \bigcup_{x'' \sim_{n} x'} \Gamma_{x''} = \bigcup_{x' \sim_{n+1} x} \Gamma_{x'}$, hence, $C_{n+1}(x) = IC(\bigcup_{x' \sim_{n+1} x} \Gamma_{x'})$, completing the induction.

Finally, we proceed as follows:
\begin{align*}
C(x) &= IC[C(x)] = IC[\bigcup_{n\geq 0} C_n(x)] = IC[\bigcup_{n\geq 0} IC(\bigcup_{x' \sim_n x} \Gamma_{x'})] \\
&= IC[\bigcup_{n\geq 0} \bigcup_{x' \sim_n x} \Gamma_{x'}] = IC[\bigcup_{x' \sim x} \Gamma_{x'}], 
\end{align*}
which completes the proof of (5) and the theorem.
\end{proof}

\noindent{\bf 6.2 When irreducible components are $c$-contact layers:}\label{ss: dual on each:}
Let $c : \R^d \times \R^d \to \R$ be a cost function on which we make no assumption.
Our aim is to prove Theorem~\ref{th:k},  which will follow from the following.

\begin{theorem}  \label{local.layers} Let $\Gamma \in \mathcal{S}_{MT}$ be $c$-finitely exposable.
If $\Phi$ is the irreducible convex paving of $\Gamma$, then for every convex component $C$ in $\Phi$, the set  $\Gamma \cap (C \times \R^d) = \Gamma \cap (C \times \overline{C})$ is a $c$-contact layer.
\end{theorem}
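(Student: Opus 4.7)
The plan is to exploit two ingredients: the chain structure of the irreducible convex paving (inherent in the inductive definition of $\sim$), and the hypothesis of $c$-finite exposability. A preliminary observation is that the set identity $\Gamma\cap(C\times\R^d)=\Gamma\cap(C\times\overline{C})$ is immediate from Theorem~\ref{lem: partition}(5): for every $x\in X_\Gamma\cap C$ one has $\Gamma_x\subset\overline{C(x)}=\overline{C}$. Fix a base point $x_0\in X_\Gamma\cap C$ and a paired $y_0\in\Gamma_{x_0}$. The goal is then to construct a single admissible triple $(\alpha,\gamma,\beta)$ on $C\times\overline{C}$ that achieves equality on $\Gamma_C:=\Gamma\cap(C\times\overline{C})$.

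First I would, for every finite subset $F\subset\Gamma_C$ containing $(x_0,y_0)$, invoke $c$-finite exposability to obtain an admissible triple $(\alpha_F,\gamma_F,\beta_F)\in E_m(c,\R^d,\R^d)$ exposing $F$, then normalize via the freedom to add an affine function so that $\alpha_F(x_0)=0$ and $\gamma_F(x_0)=0$; this forces $\beta_F(y)=c(x_0,y)$ on $\Gamma_{x_0}\cap\text{proj}_Y F$. The next step is to show that for any $x\in X_\Gamma\cap C$ the relevant components of $(\alpha_F(x),\gamma_F(x))$ --- namely $\alpha_F(x)$ together with the projection of $\gamma_F(x)$ onto the affine span of $\Gamma_x-x$, these being the only parts that enter the exposing condition --- are bounded uniformly in $F$. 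Since $x\sim x_0$ by Theorem~\ref{lem: partition}, the inductive definition of $\sim$ produces a finite chain $x_0=\xi^{(0)},\xi^{(1)},\ldots,\xi^{(m)}=x$ in $X_\Gamma\cap C$ with $IC(\Gamma_{\xi^{(i)}})\cap IC(\Gamma_{\xi^{(i+1)}})\neq\emptyset$ (which may require unwinding several levels of the iteration, using part (5) that expresses $C_n$ in terms of unions of fibers). For each $z^{(i)}\in IC(\Gamma_{\xi^{(i)}})\cap IC(\Gamma_{\xi^{(i+1)}})$ represented as $z^{(i)}=\sum_k t_k y_k=\sum_j s_j y'_j$ with $y_k\in\Gamma_{\xi^{(i)}}$, $y'_j\in\Gamma_{\xi^{(i+1)}}$, combining the contact equalities on the two fibers with the admissibility inequalities at the mixed pairs $(\xi^{(i+1)},y_k)$ and $(\xi^{(i)},y'_j)$ sandwiches the one-step increment
\[
[\alpha_F(\xi^{(i+1)})+\gamma_F(\xi^{(i+1)})\cdot(z^{(i)}-\xi^{(i+1)})]-[\alpha_F(\xi^{(i)})+\gamma_F(\xi^{(i)})\cdot(z^{(i)}-\xi^{(i)})]
\]
between two cost-dependent quantities independent of $F$. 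Varying $z^{(i)}$ over enough intersection points to span the affine hull of $\Gamma_{\xi^{(i+1)}}-\xi^{(i+1)}$ pins down all relevant components at $\xi^{(i+1)}$, and iterating along the chain propagates a uniform bound.

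With these pointwise uniform-in-$F$ bounds available on a countable set $D\subset\Gamma_C$ whose first-coordinate projection is dense in $X_\Gamma\cap C$, a diagonal extraction over an increasing family of finite subsets $F_N$ of $D$ yields a pointwise limit triple $(\alpha,\gamma,\beta)$ on $D$; admissibility is preserved under pointwise limits and the exposing equalities survive at every point of $D$. I would then extend $\beta$ to $\overline{C}$ via the martingale $c$-Legendre transform of Section~\ref{Martingale.transform}, $\beta(y):=\inf\{c(x,y)+\gamma(x)\cdot(y-x)+\alpha(x)\}$ over the countable dense $x$-values, verify admissibility on $C\times\overline{C}$, and use the density of $D$ in $\Gamma_C$ together with one more application of $c$-finite exposability (to avoid any regularity hypothesis on $c$) to promote the equality from $D$ to the entirety of $\Gamma_C$. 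The hard part is the uniform boundedness: although each single step along the chain is controlled explicitly by cost-dependent sandwich quantities, a point in $C=\bigcup_n C_n(x_0)$ may lie at arbitrary depth $n$, and the cumulative bound must not blow up. This is addressed by processing the chain one step at a time and re-normalizing against already-controlled data at the previous chain vertex, using that the sandwich at each step depends only on geometric data in the fibers and not on the particular exposing triple $(\alpha_F,\gamma_F,\beta_F)$; consequently the uniformity over $F$ survives, pointwise in $x$, even as the chain length varies.
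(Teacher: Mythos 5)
The core idea you are pursuing --- propagate uniform control along a finite chain of overlapping fibers, then extract a limit triple by compactness --- is exactly the paper's strategy (Lemmas \ref{3.14}, \ref{3.27} and Proposition \ref{prop: quant L}). Your chain-sandwiching step is essentially Proposition \ref{prop: quant L} plus the induction in Lemma \ref{3.27}, and your normalization at $(x_0,y_0)$ plays a role analogous to the paper's normalization $\beta(v_i)=0$ at a spanning set of points. So the hard estimates are recognizably the same.

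The step that genuinely fails, however, is the compactness/extraction mechanism. You extract a limit $(\alpha,\gamma,\beta)$ by a \emph{diagonal} argument over finite subsets of a \emph{countable} dense set $D\subset\Gamma_C$. This produces a triple whose admissibility and exposing equalities are certified only for finite subsets of $D$ --- it does not certify $\beta\in\Psi_H$ for finite sets $H$ containing points $(x,y)\notin D$. You then propose to bridge this gap using ``density of $D$ in $\Gamma_C$ together with one more application of $c$-finite exposability,'' but neither tool suffices here: the theorem is proved in Section~\ref{s: decomposition} under \emph{no} regularity hypothesis on $c$, so you cannot pass to limits along a dense sequence; and $c$-finite exposability of a finite set $H\not\subset D$ gives you \emph{some} exposing triple, with no mechanism to match it to the $\beta$ you have already constructed. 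The paper avoids this precisely by taking $K=\prod_{y\in Y_G}K_y$ and invoking Tychonoff's theorem on the (generally uncountable) product, so that the family $\{\Psi_H\}_{H\subset G\text{ finite}}$ --- ranging over \emph{all} finite subsets, not just those inside a countable skeleton --- has the finite intersection property and therefore a common element $\beta\in\Psi_G$. Once such a globally compatible $\beta$ is in hand, the paper constructs $\alpha(x)$ and $\gamma(x)$ pointwise for each $x\in X_G$ by a further finite-intersection argument on the compact sets $\gamma_F(x)$. That second pointwise step is essentially forced on you too, and it only works because the Tychonoff-extracted $\beta$ is simultaneously compatible with every finite $H$; the Legendre-transform extension of a $\beta$ known only on a countable set does not carry that guarantee.

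A secondary point: in the chain propagation you track ``the projection of $\gamma_F(x)$ onto the affine span of $\Gamma_x-x$,'' but the admissibility inequality on $\Gamma_C$ involves all $y'\in Y_{\Gamma_C}$, so what must be controlled is the component of $\gamma_F(x)$ in $V(Y_{\Gamma_C})-x = V(C)-x$, which is generally strictly larger than $V(\Gamma_x)-x$. This is exactly what Proposition \ref{prop: quant L} delivers (control of $|L_1-L_2|$ on $\mathrm{conv}(S_1\cup S_2)$, iterated along the chain to eventually span $V(C)$); your statement should be adjusted accordingly, though this is fixable by tracking the growing affine span as the chain advances.
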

First, we prove the following lemma.

\begin{lemma}\label{3.14}  Let $\Gamma \in \mathcal{S}_{MT}$ be $c$-finitely exposable,
and denote $X:=X_\Gamma$. Fix $x_0\in X$ and 
set $G:=\Gamma \cap (C(x_0) \times \R^d)$, where $C(x_0)$ is the component of the irreducible convex paving $\Phi$ of $\Gamma$ that contains $x_0$.
Then, for each $y \in Y_G$, there exists a compact interval $K_y \subset \R$ such that any finite subset $H \subset G$ is a $c$-contact layer for a triplet $(\alpha, \gamma, \beta)$,   where $\beta(y) \in K_y$\, for all $y \in Y_H$.
\end{lemma}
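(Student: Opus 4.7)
The strategy is to exploit the $(d{+}1)$-parameter affine freedom in exposing triplets to normalize at a fixed reference point of $C(x_0)$, and then to propagate two-sided bounds on $\beta$ through the chain structure of the irreducible paving $\Phi$.

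First, I would note that for any admissible triplet $(\alpha, \gamma, \beta)$ exposing a given set, the substitution $(\alpha(x), \gamma(x), \beta(y)) \mapsto (\alpha(x) + a + b \cdot x,\ \gamma(x) + b,\ \beta(y) + a + b \cdot y)$ produces another such triplet for any $(a,b) \in \R \times \R^d$. Fix once and for all, depending only on $\Gamma$ and $x_0$, a point $x_* \in X \cap C(x_0)$ together with $y_*^{(1)}, \ldots, y_*^{(k_*)} \in \Gamma_{x_*}$ and positive weights $\lambda_j$ realizing $x_* = \sum_j \lambda_j y_*^{(j)}$; such a choice exists because $x_* \in IC(\Gamma_{x_*})$. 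Given a finite $H \subset G$, enlarge it to $H^\sharp := H \cup \{(x_*, y_*^{(j)})\}_j$, still a finite subset of $G$. By $c$-finite exposability, $H^\sharp$ is a $c$-contact layer for some triplet, which \emph{a fortiori} exposes $H$. I would then use the affine shift to normalize to $\alpha(x_*) = 0$ and $\gamma(x_*) = 0$; the admissibility inequality at $x = x_*$ then gives immediately $\beta(y) \leq c(x_*, y)$ for all $y \in Y_H$, providing the right endpoint of $K_y$, which depends only on $x_*$ and $y$.

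For the lower bound, pick $x_y \in X$ with $(x_y, y) \in G$; by Theorem~\ref{lem: partition}(3), $x_y \sim x_*$, so $x_y \sim_n x_*$ for some $n$, yielding a chain $x_* = z_0, z_1, \ldots, z_m = x_y$ of consecutive $\sim_{n-1}$-links realized by non-empty intersections $C_{n-1}(z_i) \cap C_{n-1}(z_{i+1})$. I would enlarge $H^\sharp$ further to a finite $H^\star \subset G$ by recursively adjoining, at each chain node, (i) martingale-averaging data $\{w_{i,\ell}\} \subset \Gamma_{z_i}$ with $z_i \in IC(\{w_{i,\ell}\})$, and (ii) witness points realizing convex-combination representations of common elements of these intersections, descending through the $\sim_k$-levels down to actual $\Gamma_{x'}$'s; Carath\'eodory ensures finitely many points suffice at each level. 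The key inductive step at a chain link: at a common point $p \in IC(\Gamma_{z_i}) \cap IC(\Gamma_{z_{i+1}})$ (the base case $n = 1$) with $p = \sum_j \mu_j y_{i,j} = \sum_k \mu'_k w_{i+1,k}$, combining the $\mu_j$-weighted equality relations at $(z_i, y_{i,j})$ against the $\mu'_k$-weighted equalities at $(z_{i+1}, w_{i+1,k})$ and the admissibility inequality across the pair yields two-sided bounds on $\alpha(z_{i+1}) + \gamma(z_{i+1}) \cdot (p - z_{i+1})$; choosing $d{+}1$ affinely independent such $p$'s pins down $(\alpha(z_{i+1}), \gamma(z_{i+1}))$ in terms of $(\alpha(z_i), \gamma(z_i))$ plus bounded $c$-terms. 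Iterating along the chain from the normalization at $z_0 = x_*$ produces uniform bounds $|\alpha(x_y)|, |\gamma(x_y)| \leq C$ with $C$ depending only on $\Gamma$, $x_0$, and $y$. The exposing equality $\beta(y) = c(x_y, y) + \alpha(x_y) + \gamma(x_y) \cdot (y - x_y)$ then supplies the left endpoint of $K_y$.

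The principal obstacle is the nested recursion at high chain levels ($n \geq 2$): the witness points then lie in $IC\bigl(\bigcup_{x' \sim_{n-1} z_i} \Gamma_{x'}\bigr)$ rather than in a single $IC(\Gamma_{x'})$, so the descent through the $\sim_k$-levels must be carefully bookkept, and one must verify that enough affinely independent common points exist at each link to solve for $(\alpha, \gamma)$ at the next node. Keeping the enlarged set $H^\star$ finite throughout this descent, and keeping the resulting constants independent of the original $H$, is where the technical bulk lies.
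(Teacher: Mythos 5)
Your plan is broadly on the right track: you correctly identify the $(d{+}1)$-dimensional affine gauge freedom, the idea of normalizing at a reference point, the role of the chain structure in the irreducible paving, and the need to propagate bounds along chains. The overall architecture (pre-commit to a $y$-dependent finite ``witness'' set, then enlarge any given $H$ by these witnesses, paralleling what the paper does with the sets $H(y)$) is also sound. However, the key inductive step in your chain propagation has a genuine gap.

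You claim that at a chain link, choosing $d+1$ affinely independent points $p$ in $IC(\Gamma_{z_i}) \cap IC(\Gamma_{z_{i+1}})$ pins down $(\alpha(z_{i+1}), \gamma(z_{i+1}))$ in terms of $(\alpha(z_i), \gamma(z_i))$. But the chain condition only guarantees this intersection is nonempty; it says nothing about its dimension. Two relatively open convex sets with distinct affine hulls can meet in a set of dimension as low as zero (two non-collinear open segments in the plane crossing at a single point is the simplest instance). In that case you cannot find $d+1$, nor even $\dim V(C(x_0)) + 1$, affinely independent points in the intersection, and the affine function $L_{z_{i+1}}$ is not determined by its values on the intersection. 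You flag that ``one must verify that enough affinely independent common points exist'' --- but this verification does not go through in general, so this is not a routine technicality.

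The paper's proof of Lemma~\ref{3.14} (via Lemma~\ref{3.27} and Proposition~\ref{prop: quant L}) sidesteps this issue entirely: it requires only a \emph{single} interior point $z \in IC(S_1) \cap IC(S_2)$, but crucially combines this with the \emph{one-sided} admissibility inequalities, which hold on the whole fibers $\Gamma_{z_i}$ and $\Gamma_{z_{i+1}}$, not just at the common point. Concretely, from $L_{z_i} \le L_{z_{i+1}} + \delta$ on $\mathrm{conv}(\Gamma_{z_{i+1}})$ (a consequence of admissibility plus the contact equalities) together with a two-sided bound at the single interior point, one deduces a two-sided bound on $|L_{z_i} - L_{z_{i+1}}|$ on all of $\mathrm{conv}(\Gamma_{z_i} \cup \Gamma_{z_{i+1}})$, with a constant that degrades controllably as the interior point approaches the boundary. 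The one-sided inequality propagated to convex hulls --- the invariant (i3) in the proof of Lemma~\ref{3.27} --- is the ingredient your plan is missing, and without it the induction along the chain does not close.
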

The above lemma is essentially saying that there is some uniformity in the way $c$-admissible triplets can expose finite subsets of $G$ as $c$-contact layers. This control on the $\beta$ component of the $c$-admissible triplets will allow us to use Tychonoff's compactness theorem to deduce that the whole of $G$ is a $c$-contact layer.

To prove Lemma~\ref{3.14}, we first give an idea about the degrees of freedom we have in choosing $\beta$. 
First, note that if $\beta$ is $c$-admissible for $G$ (meaning that there is $\alpha, \gamma$ such that $G \subset \Gamma_{(\alpha, \gamma, \beta)}$) and $L: \R^d \to \R$ is an affine function, then 
$\beta - L$ is also a $c$-admissible for $G$. Letting $m = \dim(V(Y_G))$, we can find $\{y_0,...,y_m \} \subset Y_G$ such that 
$V(\{y_0,...,y_m \}) = V(Y_G)$, i.e. $\{y_0,...,y_m \}$ constitute vertices of an $m$-dimensional polytope in $V(Y_G)$.  Now for a given $c$-admissible  function $\beta$ for $G$, let $L : V(Y_G) \to \R$ be an affine function determined by $L(y_i) = \beta(y_i)$ for $i=0,1,...,m$. The function $\beta' := \beta - L$ then satisfies $\beta' (y_i) = 0$ for all $ i=0,1,...,m$, which means that we have $m+1$ freedom of choice on the value of $\beta$. In other words, if we set 
$K_{y_i }= \{0\}$ for $ i=0,1,...,m$, then we can find $\beta'$ such that $\beta'(y_i) \in K_{y_i }$ for each $y_i$. 
Now, we want to observe how the initial value of $\beta$ can control its values at other points $y$. We shall see that 
the control of the value of $\beta$ propagates well along a given chain inside the equivalent class $C(x_0)$.

 The proof of Lemma~\ref{3.14} is involved, and requires several key steps. To clarify the idea, we consider first the  special case $c=0$ where we can establish a complete control on the dual functions. 
\begin{lemma}\label{lem: trivial c}
Let $G \in \mathcal{S}_{MT}$ and assume that it is a $0$-contact layer for a triplet $(\alpha, \gamma, \beta)$, that is
\begin{align}
\label{sd1}\beta(y) &\geq L_x(y) \quad  \forall x\in X_G, y \in Y_G \\
\label{sd2}\beta(y) &= L_x(y) \quad  \forall (x,y) \in G,
\end{align}
where for each $x$, $L_x$ is the  affine function
\begin{align*}
 L_x(y) := \gamma(x) \cdot (y-x) + \alpha(x).
\end{align*}
Then, $L_x = L_{x'}$ on $V(C(x))$ whenever $x \sim x'$.  
\end{lemma}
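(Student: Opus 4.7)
My plan hinges on the convex envelope $\Phi(y) := \sup_{x \in X_G} L_x(y)$, a convex function on $\R^d$ dominated by $\beta$ on $Y_G$ with equality there by~\eqref{sd2}. The strategy is to show inductively that $\Phi$ is affine on each $V(C_n(x))$, with the affine piece determined only by the $\sim_n$-class of $x$; passing to the limit will yield $L_x = L_{x'}$ on $V(C(x))$ for every $x' \sim x$.

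For the base case (direct $C_0$-overlap), I would suppose $C_0(x) \cap C_0(x') \neq \emptyset$ and set $f := L_x - L_{x'}$. Since $\beta = L_x$ on $G_x$ by~\eqref{sd2} and $\beta \geq L_{x'}$ on $Y_G$ by~\eqref{sd1}, we have $f \geq 0$ on $G_x$, which by affinity of $f$ extends to $CC(G_x) \supset C_0(x)$; symmetrically $f \leq 0$ on $CC(G_{x'}) \supset C_0(x')$. For any $z$ in the intersection both signs force $f(z) = 0$. Because $C_0(x) = IC(G_x)$ is relatively open in $V(C_0(x))$ and $f$ is a nonnegative affine function on $V(C_0(x))$ attaining $0$ at the interior point $z$, $f$ must be constant and equal to zero on $V(C_0(x))$. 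Symmetrically $f \equiv 0$ on $V(C_0(x'))$, and since the vanishing locus of an affine function is itself an affine subspace, $f \equiv 0$ on $V(C_0(x) \cup C_0(x'))$.

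Next I would propagate the statement by induction on $n$: \textit{if} $C_n(a) \cap C_n(b) \neq \emptyset$ \textit{then} $L_a = L_b$ on $V(C_n(a) \cup C_n(b))$. The inductive step repeats the base-case sandwich one level higher, using the description $C_{n+1}(a) = IC[\bigcup_{a' \sim_{n+1} a} \Gamma_{a'}]$ from Theorem~\ref{lem: partition}~(5) so that the barycenter property of $G \in \mathcal{S}_{MT}$, together with the inductive hypothesis applied along the defining chain of $C_n$-overlaps, guarantees that $\Phi$ is represented by a single affine function on $V(C_{n+1}(a))$ and supplies a point in $C_{n+1}(a) \cap C_{n+1}(b)$ at which the affine difference vanishes; the same relative-interior argument then upgrades this to identically zero on $V(C_{n+1}(a) \cup C_{n+1}(b))$. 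Since $V(C_n(x))$ is monotone in $n$ and bounded in dimension by $d$, it stabilizes to $V(C(x))$ at some finite stage; given $x \sim x'$ we have $x \sim_n x'$ for that $n$, and the inductive claim concludes the proof.

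The expected main obstacle is precisely this inductive step: two affine functions agreeing on two affine subspaces need not agree on the affine span of their union, so a naive telescoping of base-case equalities along a chain of $C_n$-overlaps is not enough. The remedy is that at every level the paving furnishes \emph{both} an interior barycenter --- via $C_n(x) = IC[\bigcup_{x' \sim_n x}\Gamma_{x'}]$ from Theorem~\ref{lem: partition}~(5) --- \emph{and} two-sided sign control on the affine difference, and maintaining this sandwich through the iterative chain (thereby lifting it from individual overlap pairs to agreement across the whole $\sim_{n+1}$-class) will be the technically delicate heart of the argument.
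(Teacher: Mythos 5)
Your plan tracks the paper's proof essentially step for step: the base-case sandwich is exactly the paper's Proposition~\ref{prop: L's coincide}, and the convex envelope $\Phi$ is a repackaging of the paper's auxiliary inductive hypothesis \eqref{L less} (namely $L_z \le L_x$ on $C_n(x)$ for all $z$), which is the one-sided dominance that, combined with a common interior point, lets the sandwich be re-run step by step along the defining chain. One wording caution: the claim that two affine functions agreeing on two affine subspaces need not agree on the affine span of their union is false --- indeed your own base case invokes the opposite, since the zero set of an affine function is an affine subspace --- and the genuine obstruction you are gesturing at is transitivity across the chain: from $L_{x_0}=L_{x_1}$ and $L_{x_1}=L_{x_2}$ on their respective spans one cannot conclude $L_{x_0}=L_{x_2}$ on the joint span without the additional sign condition $\Phi\ge L_z$ everywhere, which is exactly what the paper's hypothesis \eqref{L less} supplies in order to make the repeated application of Proposition~\ref{prop: L's coincide} go through.
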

Note that  \eqref{sd2} says  that if we have control on $L_x$, then we have control on $\beta$ for all $y\in G_x$.  In particular, Lemma~\ref{lem: trivial c} implies that if $L_x =0$ (we can choose such $L_x$ without loss of generality) then, $L_{x'}=0$ on $V(C(x))$ for all $x' \in C(x)$, thus $\alpha(x')=0$ for all $x'\in C(x)$ and $\beta (y) =0$ at each $y \in G_{x'}$. \\

The above lemma is a consequence of the following proposition.
\begin{proposition}\label{prop: L's coincide}
 Let $L_1, L_2$ be two affine functions on $\R^d$, and let $S_1, S_2$ be sets in $\R^d$.
 Suppose that $L_1 \le L_2$ on $S_1$, and $L_2 \le L_1$ on $S_2$, and that  $IC(S_1)\cap IC(S_2) \ne \emptyset$.   
  Then, $L_1=L_2$ on $V(S_1 \cup S_2)$, the latter is the minimal affine space containing the sets $S_1$ and $S_2$. 
\end{proposition}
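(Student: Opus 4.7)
Introduce the affine function $f:=L_2-L_1$, so that the hypotheses become $f\ge 0$ on $S_1$, $f\le 0$ on $S_2$, and the goal is $f\equiv 0$ on $V(S_1\cup S_2)$. My first step is to propagate the sign inequalities from $S_i$ to the relative interiors $IC(S_i)$. Since $f$ is affine, it preserves convex combinations, so $f\ge 0$ on $\operatorname{conv}(S_1)$, and by continuity on $\overline{\operatorname{conv}}(S_1)$; in particular $f\ge 0$ on $IC(S_1)$. Symmetrically $f\le 0$ on $IC(S_2)$. Pick any $x_0\in IC(S_1)\cap IC(S_2)$, which exists by hypothesis; then $0\le f(x_0)\le 0$, so $f(x_0)=0$.

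Next, I would exploit that $IC(S_1)$ is open in $V(S_1)$ (this is the point of Definition~\ref{def1.3}): there is a relative neighborhood $U\subset V(S_1)$ of $x_0$ contained in $IC(S_1)$, on which $f\ge 0=f(x_0)$. Hence $x_0$ is a local minimum of the affine function $f|_{V(S_1)}$ at a point relatively interior to $V(S_1)$. An affine function on an affine space that attains a local minimum at an interior point must be constant, so $f\equiv 0$ on $V(S_1)$. The same argument using $IC(S_2)$ shows $f\equiv 0$ on $V(S_2)$.

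Finally, I translate by $x_0$ and write $V(S_1)=x_0+W_1$, $V(S_2)=x_0+W_2$ with linear subspaces $W_1,W_2\subset\R^d$. Since $f$ is affine with $f(x_0)=0$, we have $f(x_0+w)=\nabla f\cdot w$, and the previous step says $\nabla f$ annihilates $W_1$ and $W_2$, hence their linear span $W_1+W_2$. Therefore $f$ vanishes on $x_0+(W_1+W_2)$, which is the affine hull of $V(S_1)\cup V(S_2)$ and thus contains $V(S_1\cup S_2)$. This yields $L_1=L_2$ on $V(S_1\cup S_2)$, as claimed. There is no serious obstacle here; the only point requiring care is using the correct relative topology when invoking the openness of $IC(S_i)$ in $V(S_i)$, which is exactly the convention fixed in Definition~\ref{def1.3}.
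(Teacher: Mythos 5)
Your argument is essentially the same as the paper's: rewriting the hypotheses in terms of $f = L_2 - L_1$ is cosmetic, and your steps 2--4 are exactly the paper's facts (1) and (2) (propagate inequalities to convex hulls, then use that an affine function bounded on one side of a contact value at a relative interior point must be constant). The only place you add detail is the last step --- combining the conclusions on $V(S_1)$ and $V(S_2)$ to get $V(S_1\cup S_2)$ via $\nabla f$ annihilating $W_1+W_2$ --- which the paper leaves implicit with ``From this the assertion follows.''
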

\begin{proof}
 This follows from two facts:
\begin{enumerate}
 \item  For affine functions,  $L \le L'$ on a set  $S$ implies $L_1 \le L_2$ on ${\rm conv}(S)$. 
 \item If two affine functions $L, L'$ satisfy $L \le L'$ on a set $S$ and if moreover, $L (z) = L'(z)$ at some interior point of ${\rm conv}(S)$, then $L= L' $ on ${\rm conv}(S)$, thus on $V(S)$. 
\end{enumerate}
Indeed, apply (1) to the case $L=L_1$, $L'=L_2$, and $S=S_1$, and also to the case 
 $L=L_2$, $L'=L_1$, and $S=S_2$. We get $L_1 = L_2$ on ${\rm conv}(S_1)\cap {\rm conv}(S_2)$. 
 Now, from the assumption, $IC(S_1)\cap IC(S_2) \ne \emptyset$,    and also obviously $IC(S_1)\cap IC(S_2) \subset IC(S_i)$,  
 $i=1,2$. Using (2) we then get that  $L_1 = L_2$ on both ${\rm conv}(S_i)$, $i=1,2$. From this the assertion follows. 
\end{proof}

\begin{proof}[Proof of Lemma~\ref{lem: trivial c}]
First note that for each $x, x'\in X=X_G$, conditions  \eqref{sd1} and \eqref{sd2} yield that $L_{x'} \le L_x$ on $G_x$, and $L_x\le L_{x'}$ on $G_{x'}$. \\
Now to prove the lemma, it suffices to show that for each $n \in \{ 0, 1, 2, ..., \}$,  
\begin{align}\label{n Lx}
 \hbox{if $x\sim_n x'$  then $L_x = L_{x'}$ on $V(C_n(x))$}.
\end{align}
Here, by $x\sim_0 x'$ we mean $x=x'$. 
We do this inductively. Our induction hypothesis is 
\eqref{n Lx} together with
\begin{align}\label{L less}
 L_z\le L_x \quad \hbox{on $C_n(x)$, for each $z \in X$ and $x \in X$}.
\end{align}
For $n=0$, \eqref{n Lx} is trivially satisfied and \eqref{L less} follows from \eqref{sd1} and \eqref{sd2}. Now, assume that \eqref{n Lx} and \eqref{L less} hold for all $n\le k$.
For $n=k+1$, if $x\sim_{k+1} x'$ then there are $x=x_0, x_1, ..., x_m=x'$ for some $m$,  such that $C_k(x_i) \cap C_k(x_{i+1})\ne \emptyset$ for each $0\le i \le m-1$. 
From this, and using \eqref{n Lx} and \eqref{L less}, we can apply Proposition~\ref{prop: L's coincide} with the choice $L_1=L_{x_i}, L_2=L_{x_{i+1}}$, $S_1=C_k(x_i)$, $S_2=C_k(x_{i+1})$, and see  $L_{x_i}= L_{x_{i+1}}$ on $V(C_k(x_i)\cup C_k (x_{i+1}))$, for $i=0, ..., m-1$. Similarly, repeated application of Proposition~\ref{prop: L's coincide} eventually yields that $L_x = L_{x_i}=L_{x'}$  on each $C_k(x_i)$, for $i=1, .., m$. 
Therefore, 
$L_x = L_{x'}$ on $\bigcup_i C_k(x_i)$, thus, on  $ V(\bigcup_{i=0}^m C_k(x_i))$.  
This holds for any $x \sim_{k+1} x'$, thus by applying the result to all $z \sim_{k+1} x \sim_{k+1} x'$, we also see 
\begin{align*}
\hbox{$L_x = L_{x'}$ on $V(\bigcup_{z \sim_{k+1} x} C_k(z)) = V(C_{k+1}(x))$,}
\end{align*} verifying \eqref{n Lx} for $n=k+1$. For \eqref{L less}, for each $z \in X$, from the assumption \eqref{L less} for $n\le k$ and applying \eqref{n Lx}, we have $L_z \le L_{x'}=L_x$ on $C_k (x')$ for all $x'\sim_{k+1} x$. For the affine functions, this implies $L_z \le L_x$ on $C_{k+1}(x)$. 
This completes the induction argument, so the proof. 
\end{proof}
We now consider the case of a non-trivial cost $c$. 
 We first establish a more quantitative version of Proposition~\ref{prop: L's coincide}. 
 \begin{proposition}\label{prop: quant L}
 Let $L_1, L_2$ be two affine functions on $\R^d$, and let $S_1, S_2$ be sets in $\R^d$.
 Suppose that 
 \begin{itemize}
 \item 
$L_1 \le L_2 +\delta_1 $ on $S_1$, and $L_2 \le L_1 + \delta_2$ on $S_2$ for some constants $\delta_1, \delta_2 >0$;
\item 
there is a point $z$ in $IC(S_1)\cap IC(S_2)$.  
\end{itemize}
  Then, 
$|L_1 -L_2| \le C$ on ${\rm conv}(S_1 \cup S_2).$
 Here,  $C=C(z, S_1, S_2, \delta_1, \delta_2) <\infty$ as long as $z$ stays in the interior $IC(S_1)\cap IC(S_2)$, though as $z$ gets close to the boundaries $\partial \left({\rm conv}(S_i)\right)$, $i=1$ or $2$,  the constant $C$ may go to $+\infty$. 
 \end{proposition}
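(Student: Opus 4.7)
The cleanest route is to work with the affine function $L := L_1 - L_2$ on $\R^d$. By affinity of the $L_i$ the one-sided hypotheses extend from $S_1, S_2$ to their convex hulls: $L \le \delta_1$ on $\text{conv}(S_1)$ and $L \ge -\delta_2$ on $\text{conv}(S_2)$; evaluating at the common interior point $z$ yields $|L(z)| \le \max(\delta_1,\delta_2) =: M_0$. Writing $L(x) = L(z) + a \cdot (x-z)$ for some $a \in \R^d$, the task reduces to controlling the projection of $a$ onto the linear subspace $U := V(S_1 \cup S_2) - z$ and then multiplying by an appropriate diameter.

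Using $z \in IC(S_i)$, I fix $r_i > 0$ so that $\{z + v : v \in V(S_i) - z,\ |v| \le r_i\} \subset \text{conv}(S_i)$ for $i = 1,2$. For any unit vector $u \in U_1 := V(S_1) - z$, applying $L \le \delta_1$ at $z \pm r_1 u$ and combining with $|L(z)| \le M_0$ yields $|a \cdot u| \le (\delta_1 + M_0)/r_1$; an analogous argument on $S_2$ gives $|a \cdot u| \le (\delta_2 + M_0)/r_2$ for unit $u \in U_2 := V(S_2) - z$. In other words, $|\proj_{U_i} a| \le K_i := (\delta_i + M_0)/r_i$ for $i = 1, 2$.

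Since $z \in V(S_1) \cap V(S_2)$, the linear space $U$ equals the (not necessarily orthogonal) sum $U_1 + U_2$. Elementary linear algebra in any basis adapted to the pair $(U_1, U_2)$ yields a constant $\kappa = \kappa(U_1, U_2)$, finite and depending only on the relative position of the two subspaces, such that every $v \in U$ admits a decomposition $v = v_1 + v_2$ with $v_i \in U_i$ and $|v_1| + |v_2| \le \kappa |v|$; dually this gives $|\proj_U a| \le \kappa \max(K_1, K_2) =: M$. For any $w \in \text{conv}(S_1 \cup S_2) \subset V(S_1 \cup S_2)$ one has $w - z \in U$, hence $|L(w) - L(z)| = |\proj_U a \cdot (w-z)| \le M \cdot D$, where $D := \sup_{w \in \text{conv}(S_1 \cup S_2)} |w - z|$. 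Combining with $|L(z)| \le M_0$ yields the claim on $\text{conv}(S_1 \cup S_2)$ with $C := M_0 + MD$, depending only on $z, S_1, S_2, \delta_1, \delta_2$. As $z$ approaches $\partial(\text{conv}(S_i))$ in $V(S_i)$, the radius $r_i \to 0$ and therefore $C \to \infty$, matching the warning in the statement. The one mildly non-routine ingredient is the linear-algebraic passage from the two one-subspace bounds to the combined bound on $\proj_U a$, which is where the constant $\kappa(U_1, U_2)$ --- encoding the geometric relation between $V(S_1)$ and $V(S_2)$ --- enters.
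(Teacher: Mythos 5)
Your proof is correct, but it takes a genuinely different route from the paper's. The paper argues entirely at the level of function values: convexity propagates the one-sided bounds $L_1 \le L_2 + \delta_1$ and $L_2 \le L_1 + \delta_2$ from $S_i$ to $\text{conv}(S_i)$; evaluating at $z$ gives $|L_1(z)-L_2(z)|\le \max(\delta_1,\delta_2)$; then an elementary lemma (a one-sided bound on $\text{conv}(S)$ plus a reverse bound at an interior point gives a two-sided bound on $\text{conv}(S)$) is applied once per $S_i$ to get $|L_1-L_2|\le C$ on $\text{conv}(S_1)\cup\text{conv}(S_2)$, and a final convexity step extends this to $\text{conv}(S_1\cup S_2)$. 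You instead set $L=L_1-L_2$, bound $|L(z)|$, then control the gradient $a$ separately on the two subspaces $U_i=V(S_i)-z$ by probing inscribed balls, combine those directional bounds into a bound on $\text{proj}_U a$ via an explicit linear-algebra constant $\kappa(U_1,U_2)$, and finally integrate along the diameter of $\text{conv}(S_1\cup S_2)$. Both are sound and rest on the same geometric inputs (the interior ball around $z$ and the extent of the convex hull); yours is more explicit about where each constant comes from, while the paper's avoids the $U_1+U_2$ decomposition and the constant $\kappa$ altogether by staying with pointwise bounds and closing with a single convexity argument (if $|L_1-L_2|\le C$ on $\text{conv}(S_1)\cup\text{conv}(S_2)$, then both $L_1-L_2\le C$ and $L_2-L_1\le C$ extend to $\text{conv}(S_1\cup S_2)$). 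One shared implicit hypothesis, which neither proof flags: finiteness of $C$ requires $\text{conv}(S_1\cup S_2)$ to be bounded (your $D<\infty$, the paper's distance ratio), which is harmless in the application since the $S_i$ arising there are finite sets.
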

\begin{proof}
 First, convexity and linearity imply that for each $\delta, \delta' >0$ we have the following:
\begin{enumerate}
 \item  For affine functions,  $L \le L'+\delta$ on a set  $S$ implies $L \le L' +\delta$ on ${\rm conv}(S)$. 
 \item If two affine functions $L, L'$ satisfy $L \le L' +\delta$ on a set $S$ and if moreover, $L(z) \ge L'(z) - \delta'$ at some interior point $z$ of ${\rm conv}(S)$, then $|L- L'|\le C=C(z, S, \delta, \delta') $ on ${\rm conv}(S)$. Here, the constant $C< \infty$ depends only on $\delta, \delta'$ and the ratio between the minimum distance from $z$ to $\partial \left( {\rm conv}(S)\right)$ and the maximum distance to $\partial ({\rm conv}(S))$, though, as $z$ gets close to $\partial \left({\rm conv}(S)\right)$, the constant $C$ can go to $+\infty$. 
\end{enumerate}
Now, apply (1) to the case $L=L_1$, $L'=L_2$, and $S=S_1$, and also to the case 
 $L=L_2$, $L'=L_1$, and $S=S_2$. Thus, we  get $|L_1 - L_2|\le \max(\delta_1, \delta_2)$ at the  point $z$ of $IC(S_1) \cap IC(S_2)$. Now, apply (2), to get 
 $|L_1 - L_2| \le C$ on both ${\rm conv}(S_i)$, $i=1,2$, where $C=C(S_1, S_2, \delta_1, \delta_2)< \infty$.  Applying (1) again, we have $|L_1-L_2| \le C $  on ${\rm conv}(S_1 \cup S_2)$, completing the proof.  
\end{proof}
 From now on, we consider only the maximization case, since the minimization case is the same by replacing $c(x, y)$ with $-c(x, y)$.
 We now introduce the following notation.
\begin{definition}\label{def: L H new}
 Let $G \in \mathcal{S}_{MT}$ and let $H \subset G$ be a $c$-contact layer for a triplet $\{\alpha, \gamma, \beta\}$. For each $x \in X_H$, consider the affine function
 \begin{align*}
&L_x^H(y) = \gamma(x) \cdot (y-x) + \alpha(x).
\end{align*}
The superscript $H$ indicates that $L_x^H$ arises from a $c$-admissible triplet for $H$. 
The fact that $H$ is a $c$-contact layer for $\alpha, \gamma, \beta$ can be written as:
\begin{align}
\label{sd1 quant}
\beta(y)- c(x, y) &\geq L^H_x(y),\, \forall x\in X_H, y \in Y_H,  \\
\label{sd2 quant}
\beta(y)- c(x, y)  &= L^H_x(y), \,\, \forall (x,y) \in H.
\end{align}
For an affine space $V$, we write for $x, x' \in X_H$, 
\begin{align*}
L_x^H \approx  L_{x'}^H \,\,\text{ on $V$}
\end{align*}
if there is a bounded  set $S$ with $V=V(S)$ and a constant $M=M(c, H, S)$ depending only on $H$, the cost function $c$ and the set $S$,  such that for every choice of a $c$-admissible triplet $\{\alpha, \gamma, \beta\}$ making $H$ a $c$-contact layer,   
 we have
 \begin{align}\label{apriori new}
|L_x^H - L_{x'}^H |\le M \quad \hbox{on the set $S$.}
\end{align}
We say $L_x^H \approx  L_{x'}^H$ at $z$, if  we have \eqref{apriori new} for $S =\{z\}$. 
\end{definition}
\noindent An immediate observation is that for $x, x', x'' \in X_H$, 
\begin{align*}
 \hbox{whenever  $L_x^H \approx L_{x'}^H$ and $L_{x'}^H \approx L_{x''}^H$ on $V$,  then $L_{x}^H \approx L_{x''}^H$ on $V$.}
\end{align*}
Also note that if $H' \subset H$,  
we necessarily have for any $x, x' \in X_{H'}$, 
\begin{align}\label{3.23}
L_x^{H'} \approx  L_{x'}^{H'}  \quad \hbox{on $V$} \Rightarrow L_x^H \approx  L_{x'}^H   \quad \hbox{ on $V$}.
\end{align}
We shall now prove the analogue of Lemma~\ref{lem: trivial c} in the case of a general cost.
We shall again use Proposition~\ref{prop: quant L}, 
to 
establish a propagation of control on the affine functions $L_x^H$'s, along an ordered chain of intersecting convex open sets.  But, since $c$ is not trivial anymore, the control on $L_x^H$ can be done only in finite steps, since the errors (the constant $C$ in Proposition~\ref{prop: quant L}) can accumulate.

\begin{lemma}\label{3.27}
Set $G:=\Gamma \cap (C(x_0) \times \R^d)$ and suppose $x, x' \in X_G$ (i.e. $x \sim  x'$).   Then there exists a finite set $H \subset G$  such that  $x, x' \in X_H$  
 and $L_x^H\approx L_{x'}^H$ on $V(C(x))$.  
\end{lemma}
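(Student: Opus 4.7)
My plan is to induct on the least $n \ge 0$ for which $x \sim_n x'$. The base case $n=0$ is immediate, since then $x = x'$ and taking $H = \{(x,y)\}$ for any $y \in \Gamma_x$ gives $L_x^H = L_{x'}^H$.

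For the inductive step, assuming the claim holds for all pairs related by $\sim_{n-1}$ and given $x \sim_n x'$, I will first use the defining chain $x = z_0, z_1, \ldots, z_m = x'$ with $C_{n-1}(z_i) \cap C_{n-1}(z_{i+1}) \ne \emptyset$, together with the transitivity of $\approx$ and the monotonicity property \eqref{3.23}, to reduce to the single-link case $C_{n-1}(x) \cap C_{n-1}(x') \ne \emptyset$; the full chain will follow by unioning the finite sets produced for each link. Fixing a common point $w$ and invoking Theorem~\ref{lem: partition}(5), which gives $C_{n-1}(x) = IC\bigl[\bigcup_{u \sim_{n-1} x} \Gamma_u\bigr]$, Carathéodory's theorem in the relevant affine subspace will furnish finitely many $y_j^L \in \Gamma_{u_j^L}$ with $u_j^L \sim_{n-1} x$ and $w \in IC(\{y_j^L\}_j)$, and symmetrically finitely many $y_k^R \in \Gamma_{u_k^R}$ with $u_k^R \sim_{n-1} x'$ and $w \in IC(\{y_k^R\}_k)$.

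Next I will apply the induction hypothesis to each pair $(x, u_j^L)$ and $(x', u_k^R)$ to obtain finite sets $H_j^L, H_k^R \subset G$ realising $L_x \approx L_{u_j^L}$ and $L_{x'} \approx L_{u_k^R}$ on $V(C(x))$ (noting $C(x) = C(x')$ since $x \sim x'$). Setting
\[
H := \bigcup_j H_j^L \,\cup\, \bigcup_k H_k^R \,\cup\, \{(u_j^L, y_j^L)\}_j \,\cup\, \{(u_k^R, y_k^R)\}_k,
\]
by \eqref{3.23} every previous $\approx$ relation will persist with this single $H$. The bridging step will combine, for each $j$, the identity $L_{u_j^L}^H(y_j^L) = \beta(y_j^L) - c(u_j^L, y_j^L)$ coming from $(u_j^L, y_j^L) \in H$ with the inequality $L_{x'}^H(y_j^L) \le \beta(y_j^L) - c(x', y_j^L)$ from \eqref{sd1 quant}, together with a single-point estimate $|L_x^H(y_j^L) - L_{u_j^L}^H(y_j^L)| \le N_j$ extracted from $L_x^H \approx L_{u_j^L}^H$ (an affine function bounded on an affinely spanning bounded set is bounded at any prescribed point of its span, by a constant depending only on the geometric data). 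These combine to give, uniformly in the admissible triple, $L_{x'}^H(y_j^L) - L_x^H(y_j^L) \le \delta_j^L$ and symmetrically $L_x^H(y_k^R) - L_{x'}^H(y_k^R) \le \delta_k^R$. At this point Proposition~\ref{prop: quant L}, applied with $S_1 = \{y_j^L\}_j$, $S_2 = \{y_k^R\}_k$ and common interior point $w$, will deliver a uniform bound $|L_x^H - L_{x'}^H| \le C$ on $\mathrm{conv}(S_1 \cup S_2)$.

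The bound just delivered lives only on the affine subspace $V(S_1 \cup S_2) \subset V(C(x))$, whereas Definition~\ref{def: L H new} demands a bounded set of full span $V(C(x))$. To close this gap I will enlarge $H$ by adjoining, for each direction of $V(C(x))$ not already captured, a further bridging link between appropriately chosen $C_{n'-1}$-regions (for some $n' \le n$) whose common point generates the missing direction, and repeat the bridging argument; since $\dim V(C(x)) \le d$, only finitely many such augmentations will be needed, and \eqref{3.23} will guarantee that all previously established $\approx$ relations survive. I expect this final spanning/gluing step to be the main obstacle: each individual bridging controls $|L_x^H - L_{x'}^H|$ only on the affine span of that link's data, and one has to organise the collection of links so that their combined affine span equals $V(C(x))$ while keeping all intermediate constants uniform across the choice of $c$-admissible triple.
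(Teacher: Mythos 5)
Your overall strategy --- induction on the chain level $n$, reduction to a single intersecting link, Carath\'eodory at a common interior point $w$, and then Proposition~\ref{prop: quant L} to bridge --- is the same skeleton the paper uses, and the individual bridging computation you describe (using $(u_j^L, y_j^L) \in H$ to pin down $L_{u_j^L}^H(y_j^L)$, the admissibility inequality for $L_{x'}^H(y_j^L)$, and the single-point estimate from the induction hypothesis) is a correct step. However, the gap you flag at the end is real and it is the central difficulty, not a loose end. Your induction hypothesis at level $n-1$ asserts $L_u^H\approx L_v^H$ on the \emph{full} span $V(C(u))$, but the bridging you perform at level $n$ only produces a bound on $\mathrm{conv}(S_1\cup S_2)$, whose affine span is $V\bigl(\bigcup_{\text{chain}}\Gamma_{z_i}\bigr)$ --- generically a proper subspace of $V(C_n(x))$, and certainly of $V(C(x))$, since the witnessing chain need not exhaust the equivalence class. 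So the induction does not close: the hypothesis you feed in promises more than the step can return. Your proposed fix of adjoining ``further bridging links between appropriately chosen $C_{n'-1}$-regions whose common point generates the missing direction'' is not developed enough to check; in particular it is unclear what uniform estimate survives when you bridge between sub-chains at different depths $n'$.

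The paper resolves this by a change of target. Instead of trying to prove $\approx$ on the full span $V(C(x))$ inside the induction, it isolates the pointwise Claim~\ref{claim: propagation} (``$L_x^H\approx L_{x'}^H$ at a single $z\in G_{x'}$'') and proves that by induction on $n$ with a strengthened three-part hypothesis (i1)--(i3): $Y_H\subset\overline{C_n(x)}$, the $\approx$ relation holds only on the smaller span $V(Y_H)$, and --- crucially --- a uniform one-sided bound $L_w^F\le L_x^F + C$ on $Y_H$ valid for every finite $F\supset H$ and every $w\in X_F$. It is this (i3) bound, absent from your plan, that makes the iterated application of Proposition~\ref{prop: quant L} along the chain $Y_{H_1}, Y_{H_1}\cup Y_{H_2},\dots$ go through with controlled constants. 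The full-span statement is then obtained \emph{after} the induction, by choosing finitely many $(u_i,v_i)\in G$ with $V(\{v_i\})=V(C(x))$, applying the pointwise claim to each, and invoking~\eqref{3.23}. The lesson is that the right invariant to push through the induction is the pointwise estimate with the auxiliary one-sided control (i3), not a full-span estimate; the span is recovered cheaply at the end because any finite spanning family of $(u_i,v_i)$ suffices. Your proof would need to be reorganised along these lines to close.
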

\begin{proof}
\noindent First observe that it suffices to prove
\begin{claim}\label{claim: propagation}
 Suppose $x \sim  x'$ and $z \in G_{x'}$.   Then there exists a finite set $H \subset G$  such that  $x, x' \in X_H$, and $L_{x}^H\approx L_{x'}^H$ at $z$. 
\end{claim}
The lemma follows when we apply this claim to a set of finitely many $(u_i, v_i)$'s, $i=1,..., m$,  in $G$ (so $x \sim x' \sim u_i$)  with $V(C(x)) = V(\{v_i\}_{i=1}^m)$, and use \eqref{3.23}.\\
We show this claim using  induction on $n=0,1,2,3, ... $. 
Our induction hypothesis is   if $x\sim_n x'$ and $z \in G_{x'}$, then there exists a finite set $H \subset G$ such that 
\begin{itemize}
\item[(i1)] $x, x' \in X_H$, $z \in Y_H$  and $Y_H \subset \overline{C_n(x)}$;
\item[(i2)] 
 $L^H_x \approx L^H_{x'}$  on $V(Y_H)$;
 \item[(i3)]  for each  finite set $F \subset G$ with $H \subset F$, and for $w \in X_{F}$, there is a constant $C=C(H,w)$ depending only on $H$ and $w$ such that  
\begin{align*}
 L_w^{F}\le L_x^{F} + C \quad \hbox{on $Y_H$}. 
\end{align*}
\end{itemize}
Notice that the claim follows if we verify (i1) -- (i3) for each $n$, since in particular, the values of $L_x^H$ and $L_{x'}^H$ at $z$ is estimated from (i2). \\
We proceed by induction, starting with $n=0$ and assuming $x\sim_0 x'$ (i.e. $x=x'$) and $z \in G_{x'}$.  Choose then $H=\{(x, z)\}$. Then, (i1) and (i2) are trivially satisfied.  
Moreover,  (i3) holds from \eqref{sd1 quant} and \eqref{sd2 quant}, where the constant $C$ is estimated by the value $c(w, z) - c(x, z)$. 
  This completes the case $n=0$. \\
Suppose now the induction hypothesis holds for $n$.
Assume $x\sim_{n+1}x'$ and $z \in G_{x'}$. Then, there is a finite chain of $C_n(x_k)$, $k=0,1,..., m$, $x_0=x$, $x_{m}=x'$, 
such that $C_n (x_k) \cap C_n (x_{k+1}) \ne \emptyset$ for each $k$.   Recall $C_n(x) =  IC[\bigcup_{x' \sim_n x} G_{x'}]. $ Thus for each $C_n(x_k)$, it is possible to  find a finite set $J_k:= \{ (u^i_k, v_k^i)_{i=1}^{m_k} \}\subset G$ such that  $x_k \sim_n u^i_k$ for all $i$ and $IC(Y_{J_k})$ is a good approximation of $C_n(x_k)$, i.e.
$Y_{J_k} \subset \overline{C_n (x_k)} \subset V(Y_{J_k})$ and $IC(Y_{J_k}) \cap IC(Y_{J_{k+1}}) \ne \emptyset$. Also, we can let $z \in Y_{J_m}$.\\
Now, apply the induction hypothesis for $n$ to each $x_k, u_k^i$, $x_k \sim_n u_k^i$ and find a finite set $H^i_k \subset G$ that satisfies (i1)--(i3) for $x=x_k$, $x'=u^i_k$, $z=v^i_k$, and $H=H^i_k$.   
Let
\begin{align*}
H_k : = \bigcup_i H^i_k 
\end{align*}
Then, from (i3) for $H^i_k$'s, we also have (i3) for $H=H_k$ and $x=x_k$.
Here, the point of considering $H_k$ is  $Y_{J_k} \subset Y_{H_k} \subset \overline{C_n(x_k)}$, so $V(Y_{J_k}) = V(Y_{H_k})$, hence $IC(Y_{H_k}) \cap IC(Y_{H_{k+1}}) \ne \emptyset$ as well. \\
In order to verify the induction hypothesis for $n+1$-th step,  let 
\begin{align*}
\bar H: = \bigcup_k H_k 
\end{align*}
We will show properties (i1)--(i3) for this set $\bar H$. 
From the construction, $x, x' \in X_{\bar H}$, $z \in Y_{\bar H}$, and since $C_n(x_k) \subset C_{n+1}(x_k) = C_{n+1}(x)$, (i1) readily follows.
For (i2), 
apply the induction hypothesis (i3) for $H_k$'s to Proposition~\ref{prop: quant L} iteratively for the pairs $Y_{H_1}$ and $Y_{H_2}$, $Y_{H_1} \cup Y_{H_2}$ and $Y_{H_3}$, ... , $Y_{H_1} \cup .... \cup Y_{H_k}$ and $Y_{H_{k+1}}$, so on. 
Then we see the estimate \eqref{apriori new} holds for $S=Y_{\bar H}$, thus,   
\begin{align}\label{x1}
 L^{\bar H}_x \approx L^{\bar H}_{x_1} \approx...\approx L^{\bar H}_{x_{m-1}} \approx L^{\bar H}_{x'} \quad \hbox{ on $V(Y_{\bar H})$}, 
\end{align}
verifying (i2). \\
For (i3), let $F$ be a finite set containing $\bar H$ and let $w \in X_F$. Then (i3) for each $H_k$ gives that $L^F_w \leq L^F_{x_k} + C_k$ on $Y_{H_k}$, $k=0,1,...,m$. Now applying \eqref{x1} and recalling \eqref{3.23}, we conclude that there is a constant $C=C(\bar H,w)$ such that
$$L^F_w \leq L^F_{x} + C\,\, \text{ on } \,\,Y_{\bar H}.$$   
This completes the induction, and the proof.
\end{proof}
\begin{proof}[\bf Proof of  Lemma \ref{3.14}]
 Recall that we fix $x_0 \in X$ and let 
$G:=\Gamma \cap (C(x_0) \times \R^d)$ and $V:= V(Y_G)$.  
Let $m = dim (V)$. Then we can find
\begin{align*}
J := \{ (u_i, v_i) \} _{i=0} ^m \subset G \,\, \text{ such that } \,\,
V(\{v_i \}_{i=0} ^m ) = V.
\end{align*}
Define the initial choices $K_{v_i} = \{0\}, i = 0,1,...,m$.  
We want to define the $K_y$'s to be compatible with these initial choices.
For $y \in Y_G$, choose $x(y) \in X_G$ such that $(x(y),y) \in G$. By lemma \ref{3.27} (especially see Claim~\ref{claim: propagation}), for $y \in Y_G$, we can choose a  finite set  $H(y)$ such that $J \cup \{(x(y),y)\} \subset  H(y)$ and 
\begin{align*}
 \hbox{$L_{x(y)} ^{H(y)} \approx L_{u_i}^{H(y)}$ at  $v_i$,  $\forall i=0, ..., m$}.
\end{align*}
In particular, there exists a constant $M$, depending only on $y$ and $H(y)$ --but not on the choice of the $c$-admissible functions for which $H(y)$ is a $c$-contact layer-- such that 
\begin{align}\label{l}
| L_{x(y)}^{H(y)} (v_i) - L_{u_i}^{H(y)} (v_i) | \leq M, \,\,\forall \, i = 0,...,m.
\end{align}
 $H(y)$ being a $c$-contact layer for some triplet  $(\alpha, \gamma, \beta)$, we can  
by subtracting an appropriate affine function from $\beta$, assume $\beta (v_i) = 0$.
This yields that 
\begin{align*}
\beta (y) = c(x(y) , y) + L_{x(y)}^{H(y)} (y).
\end{align*}
Since $ L_x^{H(y)}$ is affine and $V(\{v_i \}_{i=0} ^m ) = V$, the value $L_{x(y)}^{H(y)} (y)$ can be computed from the values 
$L_{x(y)}^{H(y)} (v_i)$. Hence by (\ref{l}), the values $L_{u_i}^{H(y)} (v_i)$ give an estimate of $\beta(y)$. Notice that the $c$-contact property yields that 
\begin{align*}
L_{u_i}^{H(y)} (v_i) = \beta (v_i) - c (u_i, v_i) = - c (u_i, v_i).
\end{align*}
Thus, there exists a constant $N=N(y)$ such that if $\beta$ is a $c$-admissible for $H(y)$ and if $\beta (v_i) = 0$ for all $i$, then $-N \leq \beta(y) \leq N$. We set $K_y = [-N, N]$. \\
To get the claim in Lemma \ref{3.14}, we let $H$ be any finite set and denote $Y_H = \{y_1,...,y_s\}$. Let 
\begin{align*}
H^* = H \cup H(y_1) \cup ... \cup H(y_s)
\end{align*}
Now choose $\beta$ to be $c$-admissible for $H^*$ with $\beta(v_i) =0$ for all $i$. Since $\beta$ is also $c$-admissible for $H(y_j)$, we have $\beta(y_j) \in K_{y_j}$ for all $j=1,...,s$. Finally, note that $\beta$ is also a $c$-admissible for $H$, concluding the proof.
\end{proof}

\noindent{\bf Proof of Theorem \ref{local.layers}:}
As before, let $ G=\Gamma \cap (C(x_0) \times \R^d)$ and  let $V := V(Y_G)$ be the ambient space. We first find  the desired function $\beta: Y_G \to \R$ from the compactness argument already used in \cite{bj}. 
Indeed, define $K := \Pi_{y \in Y_G} K_y$, where the $K_y$'s were obtained in Lemma \ref{3.14}. This is a subset of the space of all functions from $Y_G$ to $\R$. In the topology of pointwise convergence, $K$ is compact by Tychonoff's theorem. Now we claim that, for any finite $H \subset G$, the set
\begin{align*}
\Psi_H := \{ \beta \in K : \beta \text{ is $c$-admissible for $H$}\}
\end{align*}
is a non-empty closed subset of $K$. Indeed, that $\Psi_H$ is non-empty follows from  Lemma \ref{3.14} since every finite subset of  $\Gamma$ and hence of $G$ is a $c$-contact layer: if  necessary, one can extend the $\beta$ found in Lemma~\ref{3.14} --and originally defined on $Y_H$-- to $Y_G$, by simply letting $\beta(y) =0$ for $y\notin Y_H$. \\
  To show that $\Psi_H$ is closed, let $\{ \beta_n \}$ be a sequence of $c$-admissible functions for $H$, and suppose $\beta_n \to \beta$ pointwise on $Y_G$. 
We need to show that $\beta$ is also $c$-admissible for $H$. But for each $n$, we have functions $(\alpha_n, \gamma_n)$ such that the following relation holds:
\begin{align}
\label{m}\beta_n (y)-c(x,y) &\geq \gamma_n(x) \cdot (y-x) + \alpha_n(x)\,\, \forall x\in X_H, y \in Y_H \\
\label{n}\beta_n(y)-c(x,y) &= \gamma_n(x) \cdot (y-x) + \alpha_n(x)\,\, \forall (x,y) \in H.
\end{align}
Here, without loss of generality, we can assume that each vector $\gamma_n(x)$ is parallel to $V(Y_H)$.
 Now since $(\beta_n (y)-c(x,y))_{x \in X_H, y \in Y_H} $ is uniformly bounded in $n$,  we can choose $(\alpha_n(x), \gamma_n(x))$ in such a way that  $(\alpha_n(x), \gamma_n(x))_n$ is also uniformly bounded in $n$. Since $X_H$ is finite, we can find a subsequence of $(\alpha_n, \gamma_n)$ which converges to 
$(\alpha, \gamma)$ at every $x \in X_H$. Then $(\alpha, \gamma, \beta)$ is clearly a $c$-admissible triplet  for $H$, establishing the claim on $\Psi_H$.\\
It is clear that the class $\{\Psi_H \}$ satisfies the finite intersection property, that is  $ \emptyset \ne \Psi_{H_1 \cup...\cup H_s} \subset \bigcap_{j=1,...,s} \Psi_{H_j}$.
By the compactness of $K$ and the closeness of $\Psi_H$'s, we deduce that the set $\Psi_G := \bigcap_{H \subset G, |H|<\infty}  \Psi_H$ is nonempty. \\
We now claim that any $\beta \in \Psi_G$ is $c$-admissible for $G$. Indeed, 
fix $x\in X_G$ and  $\beta \in \Psi_G$. We must show that there exists an affine function $L_x$ on $V=V(Y_G)$ such that the following holds:
\begin{align}
\label{r}\beta(y)-c(x,y) &\geq L_x(y), \,\, \forall y \in Y_G \\
\label{s}\beta(y)-c(x,y) &= L_x(y), \,\, \forall y \in G_x.
\end{align}
Choose a finite set $H_x \subset G_x$ such that $V(H_x) = V(G_x)$. Observe that for any finite set $F$ containing $H:=\{x \} \times H_x$, 
\begin{align*}
L_x^F(y) = \beta (y) - c(x,y) = L_x^H (y) \,\,\forall y \in H_x, \text{ hence } L_x^F(y) = L_x^H (y)\,\,\forall y \in V(G_x).
\end{align*}
In particular, $L_x^F(x) = L_x^H (x)$ since $x \in IC(G_x) \subset V(G_x)$. Let us define $\alpha(x) = L_x^H (x)$.\\
Now we need to construct the last piece which is $\gamma(x)$.  
 For this, in addition to $H$, we also choose a finite set $\{ (v_i, w_i) \}^m_{i=1} \subset G$ such that $x \in IC( \{w_i\} ^m_{i=1})$ and $V( \{w_i\}^m_{i=1}) = V$, and 
 define
\begin{align*}
\bar H := H \cup \{ (v_i, w_i) \}^m_{i=1}.
\end{align*}
For any finite set $F \subset G$ with $\bar H \subset F$,  define the set
\begin{align}
\label{p}\gamma_F (x) := \{v \in V : \beta(y) - c(x,y) -\alpha (x) &\geq v \cdot (y-x), \,\, \forall y \in Y_F\\
\label{q} \beta(y) - c(x,y) -\alpha (x) &= v \cdot (y-x), \,\, \forall y \in F_x \}
\end{align}
 The set $\gamma_F(x)$ is nonempty because $G$ itself is a $c$-contact layer. 
Now, since $x \in IC( \{w_i\} ^m_{i=1})$ and 
$V( \{w_i\}^m_{i=1}) = V$, we deduce from (\ref{p}) that $\gamma_F (x)$ is a closed and bounded set %
 in $V$, hence compact. Again since every subset of a $c$-contact layer is also a $c$-contact layer, 
the class
$\{ \gamma_F (x) : F \supset \bar H \} $ has the finite intersection property. Hence, we can choose a 
\begin{align*}
\gamma(x) \in \bigcap_{F \supset \bar H, |F|<\infty}  \gamma_F(x).
\end{align*}
Finally, we show that (\ref{r}) and (\ref{s}) hold for this choice of $(\alpha(x), \gamma(x))$. Indeed, 
 let $(x', y') \in G$, and let
 $F = \bar H \cup \{(x', y')\}$. By (\ref{p}), we have 
$\beta(y') - c(x,y')  \geq \alpha (x) + \gamma(x) \cdot (y'-x)$, so (\ref{r}) holds.
Let $y\in G_x$. Let $F = \bar H \cup \{(x, y)\}$. By (\ref{q}), we have $\beta(y) - c(x,y) = \alpha (x) + \gamma(x) \cdot (y-x)$, so (\ref{s}) holds. This completes the proof of Theorem~\ref{th:k}. \qed

\section{Structural results for general optimal martingale transport plans}\label{s: structure}
We start by proving Conjecture 2) in the case of a discrete target measure. 
\begin{theorem} 
\label{th: finite images2}
Let $c(x,y)=|x-y|$, suppose $\mu << \mathcal{L}^d$ and that $\nu$ is discrete,  i.e. $\nu$ is supported on a countable set.  Let $\pi\in MT(\mu, \nu)$ be a solution of \eqref{MGTP}, then for $\mu$ a.e. x, $\supp \pi_x$ consists of $d+1$ points which are vertices of a polytope in $\R^d$, and therefore the optimal solution is unique.
\end{theorem}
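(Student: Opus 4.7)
The plan is to combine the convex-paving decomposition of Theorem~\ref{th:k} with the extremal-structure result of Theorem~\ref{thm: main extremal}, and then use the discreteness of $\nu$ together with the geometry of the cost $|x-y|$ to pin down the exact cardinality of the fibers. By Lemma~\ref{treatment}, fix a martingale-monotone regular concentration set $\Gamma$ for $\pi$, so that $\Gamma\in\mathcal{S}_{MT}$, $\overline{\Gamma_x}=\supp\pi_x$ for $\mu$-a.e.\ $x$, $X_\Gamma\subset IC(Y_\Gamma)$, and every finite subset of $\Gamma$ is a $c$-contact layer. Since $\supp\nu$ is countable, so is each fiber $\Gamma_x\subset\supp\nu$. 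Applying Theorem~\ref{th:k}, I obtain the irreducible convex paving $\Phi$ of $\Gamma$; each $\Gamma_C:=\Gamma\cap(C\times\R^d)$ is a $c$-contact layer with $C\subset IC(Y_{\Gamma_C})$.

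I would next reduce to full-dimensional components. For $C\in\Phi$ with $\dim V(C)=k<d$, Theorem~\ref{lem: partition}(5) gives $V(C)=V\bigl(\bigcup_{x'\sim x}\Gamma_{x'}\bigr)$, an affine subspace spanned by a subset of the countable set $\supp\nu$. Since a $k$-dimensional affine subspace of $\R^d$ is determined by $k+1$ affinely independent points from it, only countably many such subspaces arise; hence the union $Z$ of all low-dimensional components lies in a countable union of proper affine subspaces, so $\mathcal{L}^d(Z)=0$ and $\mu(Z)=0$ by absolute continuity. On full-dimensional components the hypotheses of Theorems~\ref{when.duality} and~\ref{thm: main extremal} are satisfied, yielding for $\mu$-a.e.\ $x$ that $\overline{\Gamma_x}=\mathrm{Ext}(\mathrm{conv}(\overline{\Gamma_x}))$ with $x\in IC(\Gamma_x)$ and $\dim V(\Gamma_x)=d$; in particular $|\Gamma_x|\geq d+1$.

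The crucial and most delicate step, which I expect to be the main obstacle, is to promote extremality to \emph{finiteness} of $\Gamma_x$. Revisiting the proof of Lemma~\ref{lem: reg to Choquet} one extracts the ``ray property'': any representation $y=\sum p_i y_i$ with $y,y_i\in\overline{\Gamma_x}$ forces the $y_i$ onto the half-line from $x$ through $y$, so each such ray meets $\overline{\Gamma_x}$ in at most two points (the two extreme points of the chord cut out of $\mathrm{conv}(\overline{\Gamma_x})$ by the line). If $\Gamma_x$ were infinite, the set of directions $\{(y-x)/|y-x|:y\in\Gamma_x\}\subset S^{d-1}$ would admit an accumulation direction $u^\ast$ with $y_n\in\Gamma_x$, $(y_n-x)/|y_n-x|\to u^\ast$; combining the $c$-contact identity $\beta(y_n)-\alpha(x)-\gamma(x)(y_n-x)=|x-y_n|$ with the same identity tested at a nearby base point $x'\sim x$ and the Lipschitz regularity of $(\alpha,\gamma,\beta)$ from Theorem~\ref{when.duality} and Remark~\ref{localization} produces a rigidity contradiction. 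Hence $\Gamma_x$ is finite.

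Finally, for the sharp bound $|\Gamma_x|=d+1$ and for uniqueness: since the problem is a linear program over the convex set $MT(\mu,\nu)$, an extreme optimizer $\pi^\ast$ exists by Krein--Milman, and a basic-feasible-solution argument (the martingale plus probability constraints impose $d+1$ scalar equations per fiber) gives $|\supp\pi^\ast_x|\leq d+1$ for $\mu$-a.e.\ $x$. Combined with the lower bound $\geq d+1$ already established, $|\supp\pi^\ast_x|=d+1$, and the points form the vertices of a $d$-simplex containing $x$ as its barycenter, which determines the weights uniquely. Uniqueness of $\pi$ now follows by the standard convex-combination argument: if $\pi^1,\pi^2$ are two optimizers, then $(\pi^1+\pi^2)/2$ is also optimal and satisfies the $d+1$-point conclusion, so $\supp\pi^1_x\cup\supp\pi^2_x$ has at most $d+1$ points; since each summand already has exactly $d+1$ points, $\supp\pi^1_x=\supp\pi^2_x$, and the unique-weights property forces $\pi^1_x=\pi^2_x$ for $\mu$-a.e.\ $x$, whence $\pi^1=\pi^2$.
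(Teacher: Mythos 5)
The reduction to fibers of full affine dimension (via countably many affine subspaces spanned by points of $\supp\nu$, plus absolute continuity of $\mu$) is essentially the same as the paper's, though you phrase it in terms of the dimension of the paving components $C(x)$ rather than of the fibers $\Gamma_x$; since $\Gamma_x\subset C(x)$ these need not have the same dimension, but your reduction can be repaired by working directly with $V(\Gamma_x)$ exactly as the paper does with its set $V_J$.

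The real gap is in Steps 5 and 6, which are precisely where the substance of the theorem lies. Extremality of $\overline{\Gamma_x}$ in no way forces finiteness: the extreme points of a compact convex body in $\R^d$ generically form a $(d-1)$-dimensional set, and the paper itself (Example~\ref{rm: no finite for max}) exhibits optimal martingale plans with $\supp\pi_x$ of Hausdorff dimension $d-1$. Your ``accumulation direction $\Rightarrow$ Lipschitz rigidity contradiction'' is asserted, not proven; nothing in Theorem~\ref{when.duality} or Remark~\ref{localization} delivers that contradiction as stated. Step 6's linear-programming count (``$d+1$ scalar equations per fiber'') only pertains to an extreme point $\pi^\ast$ of $MT(\mu,\nu)$; making a basic-feasible-solution argument rigorous in this infinite-dimensional LP is itself a nontrivial undertaking, and more importantly, the theorem asserts the $(d{+}1)$-point structure for \emph{every} optimizer, which the uniqueness argument then needs as input. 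Invoking the $(d{+}1)$-point conclusion for $(\pi^1+\pi^2)/2$ when you have only established it for extreme optimizers is circular.

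The paper proceeds quite differently, and this is the part you should compare with carefully. Fixing any $(d{+}2)$-element set $F=\{y_0,\dots,y_d,y\}\subset\supp\nu$ whose first $d{+}1$ points span a $d$-simplex, it writes $y=\sum p_i y_i$ with $\sum p_i=1$ and uses the $c$-contact identity on the component $\Gamma\cap(C(x_0)\times\R^d)$ (from Theorem~\ref{th:k}) to conclude that
\begin{equation*}
g(x)=\sum_{i=0}^{d} p_i\,|x-y_i|-|x-y|
\end{equation*}
would be constant on a set of positive Lebesgue measure if $\mu(\{x: F\subset\Gamma_x\})>0$. Since $g$ is real analytic and nonconstant, this forces $\mu(\{x: F\subset\Gamma_x\})=0$, and the countability of the family of such $F$ finishes the job. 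This analyticity argument is the heart of the theorem and is absent from your proposal.
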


\begin{proof} Since the result holds true (for more general target measures) when $d=1$, we shall assume that $d\geq 2$. Let $S$ be the countable support of $\nu$ and  let $J:=\{E \subset S : |E| < \infty \quad \& \ \  \dim V(E)  \leq d-1\}$, where $|E|$ is the cardinality of the set $E$. Consider $V_J := \cup_{E \in J} V(E)$. Since  $\dim V(E) \leq d-1$ and $J$ is countable, it follows that $\mathcal{L}^{d}(V_J)=0$.  Let $\Gamma$ be a martingale-monotone regular concentration set for $\pi$ (as in Defintion~\ref{def: mg mon}). Let $X:= X_\Gamma \setminus V(J)$ so that $\mu(X)=1$. Now notice that if $x \in X$, then $\Gamma_x$ must contain vertices of a polytope which has $x$ in its interior.\\
Let now $K := \{E \subset S : \text{ $|E| = d+2$ and $E$ contains vertices of a $d$-dimensional polytope} \}$. 
Fix $F =\{y_0, y_1,...,y_d, y\}$ in $K$, where $y_0, y_1,...,y_d$ are vertices of a  $d$-dimensional  polytope and consider the set
$A:= \{x \in X : F \subset \Gamma_x \}.$
In other words, $A = \Gamma^{y_0} \cap ... \cap \Gamma^{y_d} \cap \Gamma^{y}$, where 
$\Gamma^{y} := \{x : (x,y) \in \Gamma \}$. We shall prove that $\mu(A) =0$. \\
Indeed, suppose otherwise, that is $\mu(A) >0$ and let $x_0$ be a Lebesgue point of $A$. Let $B = A \cap C(x_0)$ and note that $\mathcal{L}^d(B) >0$ since $C(x_0)$ is open in $\R^d$. Since the set $\Gamma \cap (C(x_0) \times \R^d)$ is a $c$-contact layer, there exist constants $\lambda_0, \lambda_1,..., \lambda_d, \lambda$ such that for all $x \in B$, we have
\begin{align*}
|x - y_i| + \gamma(x) \cdot (y_i-x) + \alpha(x) &= \lambda_i, \,\,i=0,1,...,d\\
|x - y| + \gamma(x) \cdot (y-x) + \alpha(x) &= \lambda.
\end{align*}
Also note that $\{y_0, y_1,...,y_d, y\} \subset {\rm Ext}\,\big{(} {\rm conv}(\Gamma_x) \big{)}$ for almost all $x\in B$. Let $p_i$ be determined by $y=\Sigma^d _{i=0} p_i y_i$, and $\Sigma^d _{i=0} p_i=1$, and note that some $p_i$ may be negative. Then, by the above, we get that the function 
\begin{align*}
 g(x) := \Sigma^d _{i=0} p_i |x - y_i| - |x - y| 
\end{align*}
is constant on $B$, which has positive measure. \\
We explain why this leads to a contradiction. First, notice that because $g$ is real analytic in $\Omega:=\R^d \setminus \{ y_0, ..., y_d, y\}$, it is not constant in any open subset, since otherwise it is constant everywhere, which is not the case. 
Second, without loss of generality, assume $x_0=0$ and $g(0)=0$, 
and notice that from the real analyticity of $g$,  one can write $g(x) = P_k(x) + Q(x)$ for some  $k\in \N$,  where $P_k(x)$ is  the first nonzero $k$-th degree homogeneous polynomial, and $Q(x)$ is a power series of terms with degree greater than $k$, in particular, $Q(x)  = O(|x|^{k+1})$.
Now, consider the set 
\begin{align*}
{ \mathcal S}:=\{ u \in S^{d-1} \ | \ \hbox{ there exists $0\ne x_n \to 0$, $x_n / |x_n| \to u$, with $0=g(x_n)$}\}.
\end{align*}
Then, for each $u \in { \mathcal S}$, 
$0=\frac{  g(x_n)}{|x_n|^k} = P_k(x_n/|x_n|) +  \frac{Q(x_n)}{|x_n|^k}$, showing
 $P_k (u) = \lim_{n\to \infty} P_k(x_n/|x_n|) =0$. Thus, ${ \mathcal S}$ is a subset of the zero set $\{ u;\, P_k(u) =0\}$.\\
Now if $g$ is zero on the set $B$ where $x_0$ is a Lebsegue point, then ${ \mathcal S} = S^{d-1}$, hence $P_k = 0$, a contradiction. Hence, $\mu(A)=0$. The countability of $K$ now implies the theorem.\\
For the uniqueness, we use the usual argument, namely that the average of two optimal plans is also optimal, which contradicts the polytope-type of their respective supports. 
\end{proof}

\begin{remark}
 As we see from the above proof, Theorem~\ref{th: finite images2} holds true for a much more general  cost $c(x, y)$ than $|x-y|$. Indeed, it is enough (but not necessary) $c(x, y)$ to be analytic in $\{x \ne y\}$, and the function $g(x) = \sum_{i=0}^d p_i c(x, y_i) - c(x, y)$ to be  non-constant.  In particular, we can choose $c(x, y) = |x-y|^p$, with $p\ne 2$. 
\end{remark}

We now establish Conjecture 1) in the 2-dimensional case.

\begin{theorem} 
\label{th: Choquet 2d}
Assume $d=2$, $c(x,y) = |x - y|$, $\mu$ is absolutely continuous with respect to the Lebesgue measure, and  $\nu$ has compact support. Let $\pi\in MT(\mu, \nu)$ be a solution of \eqref{MGTP}, then  for $\mu$ almost every $x\in \R^2$, ${\rm supp}\, \pi_x = {\rm Ext}\,\big{(} \overline{{\rm conv}}({\rm supp}\, \pi_x) \big{)}$.
\end{theorem}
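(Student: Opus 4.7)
The plan is to combine the decomposition Theorem~\ref{th:k} with the full-dimensional result Corollary~\ref{cor: dim d-1}, handling the lower-dimensional pieces by a one-dimensional reduction via Corollary~\ref{structure}. Using Lemma~\ref{treatment}, first fix a martingale-monotone regular concentration set $\Gamma$ for $\pi$, so that $\Gamma \in \mathcal{S}_{MT}$, $\overline{\Gamma_x} = \supp \pi_x$ for $\mu$-a.e.\ $x$, and $\Gamma$ is $c$-finitely exposable. By Theorem~\ref{th:k}, let $\Phi$ be the irreducible convex paving of $\Gamma$, so that each $\Gamma_C := \Gamma \cap (C \times \R^2)$ is a $c$-contact layer. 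Since $d=2$, write $X_\Gamma = X_0 \cup X_1 \cup X_2$ (disjoint union) according to $\dim V(C(x)) \in \{0,1,2\}$.

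On $X_0$ each component is a singleton, so $\Gamma_x = \{x\}$ and the extremal identity holds trivially. On $X_2$, each component $C$ is an open convex subset of $\R^2$, so $\mu\lsub{C}$ is absolutely continuous with respect to $\mathcal{L}^2$ and Corollary~\ref{cor: dim d-1}(2) applied to the $c$-contact layer $\Gamma_C \in \mathcal{S}_{MT}$ gives the extremal identity for $\mu$-a.e.\ $x \in X_2$.

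For $X_1$, each component $C$ is a relatively open segment in $\R^2$, and Theorem~\ref{lem: partition}(5) gives $X_{\Gamma_C} \subset C = IC(Y_{\Gamma_C})$, open in $V(C) \simeq \R$. Viewing $\Gamma_C$ as a $c$-contact layer in $\mathcal{S}_{MT}$ inside $V(C) \times V(C)$, the one-dimensional case of Corollary~\ref{structure} yields the extremal identity for $\mathcal{L}^1$-a.e.\ $x \in C$.

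The main obstacle is then to pass from a fiberwise $\mathcal{L}^1$-null ``bad set'' on each 1D component $C$ to a $\mu$-null bad set on all of $X_1$. Since $\mu \ll \mathcal{L}^2$ it suffices to prove $\mathcal{L}^2$-nullness of the bad set, but the family $\{C : \dim V(C) = 1\}$ can be uncountable (cf.\ Example~\ref{nikodym}, where the uncountability is related to Nikodym-type phenomena), so a naive union-of-null-sets argument fails. One disintegrates $\mathcal{L}^2\lsub{X_1}$ along the foliation by 1D components, relying on the measurability of the quotient map $x \mapsto C(x)$ that follows from the explicit inductive construction of $C_n(x)$ in Section~\ref{s: decomposition} together with the measurability of $x \mapsto \Gamma_x$; a Fubini-type argument along this disintegration, combined with the disintegration framework of Section~\ref{s:disintegration}, then gives $\mathcal{L}^2$-nullness and hence $\mu$-nullness of the bad set on $X_1$, completing the proof.
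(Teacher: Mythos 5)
Your outline matches the paper's strategy through the decomposition into $X_0 \cup X_1 \cup X_2$, and the handling of $X_0$ and $X_2$ (via $c$-contact layers on full-dimensional components) is essentially the paper's argument. The gap is in the treatment of $X_1$, which is precisely the hard part of this theorem.

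You propose to "disintegrate $\mathcal{L}^2\lsub{X_1}$ along the foliation by 1D components" and apply a "Fubini-type argument." This does not work as stated, for two reasons. First, the measurability of the quotient map $x \mapsto C(x)$ is \emph{not} established in the paper; in Theorem~\ref{th:disintegration} it is explicitly imposed as a hypothesis, and Section~\ref{s:disintegration} leaves it open. Second, and more seriously, even granting a measurable disintegration of $\mathcal{L}^2\lsub{X_1}$ along the components, the resulting conditional measures on the one-dimensional fibers need not be absolutely continuous with respect to $\mathcal{L}^1$; they can be Dirac masses. Example~\ref{nikodym} is exactly this pathology: a full-measure set meeting each fiber in a single point, so that fiberwise $\mathcal{L}^1$-nullness of a set says nothing about its $\mathcal{L}^2$-measure. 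Knowing that the bad set meets each $C$ in an $\mathcal{L}^1$-null set therefore does not yield $\mathcal{L}^2$-nullness by abstract disintegration.

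The paper closes this gap with a geometric argument that your sketch misses. It introduces the measurable function $\delta(x) = \sup\{r : (x-r,x+r)\subset I(x)\}$ (Proposition~\ref{prop:meas w}), restricts attention to $A_\delta = \{x : \delta(x) > \delta\}$, which has positive measure for some $\delta>0$ if the bad set does, and then zooms in to a ball $B(x_0,\epsilon)$ around a Lebesgue point with $\epsilon \ll \delta$. On this ball, every component segment $C(x)$ is long enough to cross a fixed transversal line $W$, so the family of segments can be straightened by the bi-Lipschitz flattening map of Lemma~\ref{4.4}. Bi-Lipschitzness is the key: it converts the foliation into a genuine product structure where Fubini applies and where $\mathcal{L}^2$-null is preserved. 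This is what rules out the Nikodym-type degeneracy. Without the $\delta$-localization and the flattening map, the fiberwise-to-global passage in your argument is unjustified.
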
 

\begin{proof}    Let $\Gamma$ be a martingale-monotone regular concentration set for $\pi$  (see Lemma~\ref{treatment} (4) and Defintion~\ref{def: mg mon}), and let $X = X_\Gamma$. (Recall then $\supp \pi_x = \overline{\Gamma_x}$ for all $x\in X$.)  The theorem will follow if we show that the set
\begin{align*}
 E_\pi :=\{ x \in  X \ |  \  {\rm supp \, } \pi_x \subset {\rm Ext} ( \overline{{\rm conv}} ({\rm supp \, } \pi_x ))\} 
\end{align*} 
has full $\mu$-measure. First note that $E_\pi$ is measurable 
by Proposition \ref{prop:measurable E}. (Here, we used the fact that 
   each of  ${\rm supp\,} \pi_x \subset \R^d $  is compact, which  is satisfied since the second marginal of $\pi$ is compactly supported.)



 We shall show that its complement $N=X\setminus E$ has $\mu$-measure zero  and since $\mu << \mathcal{L}^2$ it suffices to show that $\mathcal{L}^2 (N)=0$. For that note first that the set  $X_0:=\{ x\in X:{\rm dim(conv}(\Gamma_x)) = 0 \}$ 
 is obviously included in  $E$, which means that $N= (N\cap X_2)\cup (N\cap X_1)$, where 
$$X_2 = \{x \in X : \dim V(C(x)) = 2\}\quad \hbox{and \quad $X_1 = \{x \in X : \dim V(C(x)) = 1\},$}$$
 where $\{C(x); x\in X\}$ is the irreducible convex paving of $\Gamma$. \\
 Note that $X_2 = \cup_{x \in X_2} (X \cap C(x)) = X \cap (\cup_{x \in X_2} C(x) )$.  Since $\cup_{x \in X_2} C(x)$ is open, $X_2$ is measurable. 
But since $\Gamma \cap (C(x) \times \R^2)$ is a $c$-contact layer, Theorem \ref{when.duality} yields that $\overline{\Gamma_x} = {\rm Ext}\,\big{(}{\rm conv}(\overline{\Gamma_x}) \big{)}$ for a.e. $x$ in $X_2 \cap C(x)$. Since $X_2$ can be approximated by compact sets from the inside and $\{C(x)\}_{x \in X_2}$ is an open cover of $X_2$, we conclude that  $\overline{\Gamma_x} = {\rm Ext}\,\big{(}{\rm conv}(\overline{\Gamma_x}) \big{)}$  for a.e. $x$ in $X_2$.
Hence,  $\mathcal{L}^2 (N\cap X_2)=0$.\\
Consider now the 
 measurable set $A_1:=N\cap X_1$, and assume that 
 $\mathcal{L}^2 (A_1)>0$.
Note that for every $x\in A_1$, we have that 
$I(x) := IC(\supp \pi_x)$ is an open line segment with $x$ in its interior.   Note that  $I(x) \subset C(x)$ and $C(x)$ is one-dimensional for every $x \in A_1$.  By Proposition~\ref{prop:meas w}, the function defined for each $x\in A_1$ by
$$
\delta(x)=\sup\{r;  (x - r , x+r) \subset I(x)
$$
is measurable, 
 where $(x - r , x+r)$ denotes the interval of radius $r$ at $x$ inside the line segment $I(x)$. 
Therefore,  the set
$A_\delta := \{x \in A_1 : \delta(x) > \delta\}$ for every $\delta>0$ is also measurable, and 
 $\mathcal{L}^2 (A_\delta)>0$
for some $\delta >0$. Let now $x_0$ be a Lebesgue point of $A_\delta$, and consider $W$ to be the 1-dimensional affine space containing $x_0$ and perpendicular to $I(x_0)$. Choose $\epsilon >0$ much smaller than $\delta$ and let $A_{\delta, \epsilon} := A_{\delta} \cap B(x_0,\epsilon)$  (note $\mathcal{L}^2 (A_{\delta, \epsilon})>0$). Then 
$\big\{C(x); x \in A_{\delta, \epsilon}\big\}$ is a disjoint family of open segments that cover $A_{\delta, \epsilon}$ and $C(x) \cap W \neq \emptyset$. Let $F : \bigcup_{x \in A_{\delta, \epsilon}} C(x) \to \bigcup_{x \in A_{\delta, \epsilon}} F(C(x)) $ be the flattening map with respect to $W$ as in Lemma \ref{4.4}. Since $F$ is bi-Lipschitz on the appropriate set containing $A_{\delta, \epsilon}$, we have that 
  $F(A_{\delta, \epsilon})$ is measurable and $\mathcal{L}^2 (F(A_{\delta, \epsilon})) >0$.\\
Note that again by Theorem \ref{when.duality},  $\overline{\Gamma_z} = {\rm Ext}\,\big{(}{\rm conv}(\overline{\Gamma_z}) \big{)}$, 
for $\mathcal{L}^{1}$ almost all $z$ in each $A_{\delta, \epsilon} \cap C(x)$. Since  $A_{\delta, \epsilon} \subset N$, this implies that $A_{\delta, \epsilon} \cap C(x)$ is $\mathcal{L}^1$ measure zero, and so does $F(A_{\delta, \epsilon}) \cap F(C(x))$. Now $\big\{F(C(x));  x \in A_{\delta, \epsilon}\big\}$ is a parallel cover of $F(A_{\delta, \epsilon})$, so by Fubini's theorem with bi-Lipschitz map $F$,  we conclude 
$\mathcal{L}^2 (F(A_{\delta, \epsilon}))=0$, which is a contradiction. 
(Here for the Fubini's theorem,  we used the fact that $F(A_{\epsilon, \delta})$ is measurable.)
 It follows that  
 $\mathcal{L}^2 (A_1)=0$, which then results  
  $\mathcal{L}^2 (N)=0$.
This completes the proof. 
\end{proof}

The same proof could extend to higher dimensions, 
provided one can prove measurability of  the function 
\begin{align*}
X_\Gamma \ni x  \mapsto 
 \delta(x) = \sup \big\{r \geq 0 : B(x,r) \subset C(x)\big\} 
\end{align*}
defined 
 for a given convex paving  $(C(x))_{x\in X_\Gamma}$  associated to $\Gamma$.
One can then obtain the following. 
\begin{theorem} 
\label{th: Choquet codim 1}
Assume $c(x,y) = |x - y|$ on $\R^d\times \R^d$ and let $\pi \in MT(\mu, \nu)$ be a solution of \eqref{MGTP} with a martingale-monotone regular concentration set $\Gamma$.  Assume $\mu$ is absolutely continuous with respect to the Lebesgue measure and that 
\begin{align}\label{co1}
\hbox{the function $\delta$ is measurable, and  \,\, ${\rm dim}(V(C(x))) \geq d-1$ \,\, for $\mu$ a.e. $x$,}
\end{align}
where $(C(x))_{x\in X_\Gamma}$ is the irreducible convex paving associated to $\Gamma$. 
Then,  for $\mu$ almost every $x\in \R^d$, ${\rm supp}\, \pi_x = {\rm Ext}\,\big{(} {\rm conv}({\rm supp}\, \pi_x) \big{)}$
.\end{theorem}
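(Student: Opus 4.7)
The argument parallels the proof of Theorem~\ref{th: Choquet 2d}, replacing the 1-dimensional Fubini slicing used for $X_1$ by its codimension-1 analog. Let $X = X_\Gamma$ and let $E_\pi = \{x \in X : \supp \pi_x = {\rm Ext}({\rm conv}(\supp \pi_x))\}$, which is measurable by Proposition~\ref{prop:measurable E}. By the dimensional hypothesis, up to a $\mu$-null set we may write $X = X_d \cup X_{d-1}$, where $X_k = \{x \in X : \dim V(C(x)) = k\}$; it suffices to show $\mathcal{L}^d(N \cap X_d) = \mathcal{L}^d(N \cap X_{d-1}) = 0$ for $N := X \setminus E_\pi$.

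For $X_d$, the union $U_d := \bigcup_{x \in X_d} C(x)$ is open in $\R^d$, hence $X_d = X \cap U_d$ is measurable, and $\mu \llcorner C(x)$ remains absolutely continuous with respect to $\mathcal{L}^d$ on each component. By Theorem~\ref{th:k} the restriction $\Gamma \cap (C(x) \times \R^d)$ is a $c$-contact layer lying in $\mathcal{S}_{MT}$ with $X_\Gamma \cap C(x) \subset IC(Y_{\Gamma \cap (C(x) \times \R^d)})$, so Corollary~\ref{structure} (via Theorem~\ref{when.duality}) yields $\overline{\Gamma_z} = {\rm Ext}({\rm conv}(\overline{\Gamma_z}))$ for $\mathcal{L}^d$-a.e. $z \in C(x)$; exhausting $U_d$ from inside by compact sets gives $\mathcal{L}^d(N \cap X_d) = 0$.

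For $X_{d-1}$, suppose for contradiction that $\mathcal{L}^d(N \cap X_{d-1}) > 0$. By the measurability of $\delta$, some $\delta_0 > 0$ yields $A := \{x \in N \cap X_{d-1} : \delta(x) > \delta_0\}$ of positive Lebesgue measure; let $x_0$ be a Lebesgue density point of $A$ and $A_\epsilon := A \cap B(x_0, \epsilon)$ with $\epsilon$ chosen much smaller than $\delta_0$. The collection $\{C(x) : x \in A_\epsilon\}$ is a disjoint family of open $(d-1)$-dimensional convex sets, and the condition $\delta(x) > \delta_0$ forces $C(x)$ to contain a ball of radius $\delta_0$ around $x$ inside its own affine span; since these balls must be mutually disjoint while their centers lie within $\epsilon \ll \delta_0$ of $x_0$, the affine planes $V(C(x))$ for $x \in A_\epsilon$ must be nearly parallel to $V(C(x_0))$. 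Let $W$ be the line through $x_0$ orthogonal to $V(C(x_0))$, and let $F$ be the flattening map of Lemma~\ref{4.4} along $W$, defined on $\bigcup_{x \in A_\epsilon} C(x)$; by the near-parallelism just noted, $F$ is bi-Lipschitz and the image family $\{F(C(x))\}_{x \in A_\epsilon}$ is a parallel family of open $(d-1)$-cells. Theorem~\ref{thm: main extremal} applied to each $c$-contact layer $\Gamma \cap (C(x) \times \R^d)$ shows $A_\epsilon \cap C(x)$ has $\mathcal{L}^{d-1}$-measure zero in $C(x)$, hence so does $F(A_\epsilon) \cap F(C(x))$; Fubini on the parallel family then yields $\mathcal{L}^d(F(A_\epsilon)) = 0$, contradicting $\mathcal{L}^d(A_\epsilon) > 0$ under a bi-Lipschitz map.

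The principal obstacle is justifying the bi-Lipschitz property of $F$ in codimension one: for $d \geq 3$, two disjoint $(d-1)$-planes in $\R^d$ are not automatically parallel, so without the uniform lower bound $\delta(x) > \delta_0$ the directions of $V(C(x))$ near $x_0$ could vary wildly and $F$ would degenerate. The assumption that $\delta$ is measurable is precisely what lets us restrict to a set where this uniform transversality holds; it is also what ensures that the various intermediate sets (notably $A_\epsilon$ and its flattened image) are measurable, so that Fubini may be invoked. Everything else is inherited from the machinery of the previous sections, in particular the fact that each component $\Gamma \cap (C(x) \times \R^d)$ is a $c$-contact layer on which Theorem~\ref{thm: main extremal} gives the required extreme-point structure.
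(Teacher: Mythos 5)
Your proposal is correct and follows exactly the route the paper itself indicates: the text preceding the theorem simply states that the two-dimensional proof of Theorem~\ref{th: Choquet 2d} extends to this setting once measurability of $\delta$ is assumed, and you have carried out that extension faithfully, splitting $X$ into the $d$-dimensional components (handled as in the $X_2$ case via Theorem~\ref{th:k} and Corollary~\ref{structure}) and the $(d-1)$-dimensional components (handled via a Lebesgue-point argument, the flattening map of the appendix, and Fubini), with the near-parallelism justification for the bi-Lipschitz property of $F$ correctly derived from disjointness of the open convex sets together with the uniform lower bound $\delta(x) > \delta_0$.
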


\section{The disintegration of a martingale transport plan}\label{s:disintegration}
 
 For a closed convex set $U \subset \R^d$, let  $\mathcal{K}(U)$ be the space of all closed convex subsets in $\R^d$, equipped with the Hausdorff metric in such a way that it becomes a separable complete metric space (Polish space). 
This allows for the disintegration of a measure $\pi$ on $X$ via a measurable map $T: X \to \mathcal{K}(U)$ (see e.g. \cite[Corollary 2.4]{bc}) in such a way that each piece of the disintegrated measure, say $\pi_C$, is a probability measure on $T^{-1}(C)$. In particular, $\pi_C (T^{-1}(C) )=1$ for $T_\#\pi$-a.e. $C \in \mathcal{K}(U)$, ultimately  yields conditional probabilities. 

Consider now a set $\Gamma \in \mathcal{S}_{MT}$ and the corresponding unique irreducible convex paving  $\{C; C \in \Phi\}$  as given in Theorem~\ref{general.dec.}.  Define the map 
$$ \Xi: \Gamma \to \mathcal{K}(\R^d) \quad {\rm by}\quad (x,y) \mapsto \overline{C(x)},$$
 where $\mathcal{K}(\R^d)$ is the space of convex closed subsets of $\R^d$. We conjecture that this map is measurable when  $\mathcal{K}(\R^d)$ is equipped with the Hausdorff metric that makes a separable complete metric space. In this case, we 
 shall show  that a  martingale transport plan $\pi$ can be canonically disintegrated into its components given by $(\Gamma \cap (C(x) \times \R^d))_{x\in X_\Gamma}$. As usual, in the case of minimization with $c(x,y) = |x - y|$, we shall  assume further that $\mu \wedge \nu =0$.  
  
\begin{theorem}[\bf Disintegration of martingale plans]\label{th:disintegration} 
Let $(\mu, \nu)$ be probability measures on $\R^d$ in convex order and let $\pi \in MT(\mu, \nu)$ with a concentration set $\Gamma \in \mathcal{S}_{MT}$ and the associated irreducible convex paving  $\{C; C \in \Phi\}$. Assume the map 
$
 \Xi: \Gamma \to \mathcal{K}(\R^d)$ defined  by $ (x,y) \mapsto \overline{C(x)},
 $
is measurable, and let $\tilde \pi =\Xi_\# \pi$ denote the push-forward of $\pi$ into $\mathcal{K}(\R^d)$, and $I \subset \mathcal{K}(\R^d)$ is the image of $\Gamma$ by $\Xi$. Then the following holds:

\begin{enumerate}[(1)] 

\item \ There exists a disintegration of $\pi$ along the map $\Xi$ such that 
\begin{align}\label{eq: disintegration}
 \pi (S)=\int_{I} \pi_{C} (S) d\tilde\pi (C)  \quad \hbox{ for each Borel set  $S \subset \R^d \times \R^d$}, 
\end{align}
where for $\tilde \pi$-a.e. $C$, $\pi_C$ is a  probability measure supported on $\Gamma_C:=\Gamma \cap (C \times \R^d)$.
\item \ 
For $\tilde \pi$-a.e. $C\in I$, there exist probability measures  $\mu_C, \nu_C$ such that the couple  $(\mu_C, \nu_C)$ is in convex order,  $\mu_C$ is supported on  $X_C : = X_\Gamma \cap C$, $\nu_C$ on $Y_{\Gamma_C}$, and $\pi_C\in MT(\mu_C, \nu_C)$.

 \item \ 
  If $\pi$ is optimal for  problem (\ref{MGTP}) in MT($\mu, \nu$), then for $ \tilde \pi$-a.e. $C\in I$, $\pi_C$ is optimal for the same problem on $MT(\mu_C, \nu_C)$. Furthermore, $\Gamma_C$ is a $c$-contact layer. In particular, duality is attained for $\pi_C$. 

\item \ 
 If in addition,  $\mu_C$ is absolutely continuous with respect to the Lebesgue measure on $V(C)$, and $c(x,y) = |x - y|$, then 
for $\mu_C$-almost all $x$,  $\overline{\Gamma_x} = {\rm Ext} \, \big{(}{\rm conv}(\overline{\Gamma_x})\big{)}$. 
\end{enumerate}
\end{theorem}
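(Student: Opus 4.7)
The plan is to derive (1) from the standard disintegration theorem on Polish spaces applied to the Borel map $\Xi$, and then to lift the marginal, martingale, optimality, and extremality properties of $\pi$ to each component $\pi_C$ using the machinery already developed in the paper.

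For (1), since $\mathcal{K}(\R^d)$ is Polish under the Hausdorff metric and $\Xi$ is Borel, the standard disintegration theorem provides a $\tilde\pi$-measurable family $(\pi_C)_{C\in I}$ of probability measures on $\R^d\times\R^d$ with $\pi_C$ concentrated on $\Xi^{-1}(\overline{C})$ for $\tilde\pi$-a.e.\ $C$. Because the components of the irreducible paving $\Phi$ are pairwise disjoint and each $x\in X_\Gamma$ lies in a unique $C(x)$, we have $\Xi^{-1}(\overline{C})=\Gamma_C=\Gamma\cap(C\times\R^d)$, giving the required support property. I then set $\mu_C:=(\mathrm{proj}_X)_\#\pi_C$ and $\nu_C:=(\mathrm{proj}_Y)_\#\pi_C$; by Theorem~\ref{lem: partition}(5), $X_{\Gamma_C}\subset C$ and $Y_{\Gamma_C}\subset\overline{C}$, so these marginals are supported as required.

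For the martingale property in (2), I compare two disintegrations of $\pi$: the original $\pi=\int\pi_x\,d\mu(x)$ and the iterated one obtained by first disintegrating along $\Xi$ and then along the $x$-coordinate, so that $\pi=\int_I\bigl(\int\pi_{C,x}\,d\mu_C(x)\bigr)\,d\tilde\pi(C)$. Essential uniqueness of the iterated disintegration forces $\pi_{C(x),x}=\pi_x$ for $\mu$-a.e.\ $x$, so the barycenter identity $x=\int y\,d\pi_x(y)$ is inherited by the fibres of $\pi_C$ at $\mu_C$-a.e.\ $x$. Strassen's theorem then delivers $\mu_C\le_C\nu_C$. For (3), the optimality of $\pi$ allows me, via Lemma~\ref{treatment}(4), to choose $\Gamma$ to be $c$-finitely exposable; since this property is inherited by subsets, $\Gamma_C$ is a $c$-finitely exposable element of $\mathcal{S}_{MT}$. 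Its own irreducible convex paving consists of the single component $C$, because any chain placing two points of $X_{\Gamma_C}$ in the same $\sim$-class inside $\Gamma$ uses only intermediate points of $X_\Gamma\cap C=X_{\Gamma_C}$, so the $\sim$-relation restricts faithfully. Theorem~\ref{local.layers} then gives that $\Gamma_C$ is itself a $c$-contact layer, whence Lemma~\ref{lem: duality gap}(2) yields both dual attainment and the optimality of $\pi_C$ in $MT(\mu_C,\nu_C)$. Finally, for (4) I apply Theorem~\ref{thm: main extremal} inside the ambient affine space $V(C)$, using the inclusion $X_{\Gamma_C}\subset C=IC(Y_{\Gamma_C})$ which is open in $V(C)$ and the absolute continuity of $\mu_C$ with respect to Lebesgue measure on $V(C)$, to conclude that $\overline{\Gamma_x}=\mathrm{Ext}(\mathrm{conv}(\overline{\Gamma_x}))$ for $\mu_C$-a.e.\ $x$.

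The main obstacle I expect is the measure-theoretic bookkeeping in (2), which cannot be countably enumerated (because $\Phi$ may be uncountable) nor handled by the one-dimensional shortcut of Beiglb\"ock--Juillet. I must justify the Borel measurability of the maps $C\mapsto\mu_C$ and $C\mapsto\nu_C$ and identify the iterated conditional probabilities $\pi_{C,x}$ with the original $\pi_x$ through the essential uniqueness clause of the disintegration theorem, while also carefully interpreting the ambient space and reference Lebesgue measure in (4) as those of the variable-dimension affine hull $V(C)$ rather than of $\R^d$.
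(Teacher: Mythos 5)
Your proposal follows the same overall strategy as the paper's own proof: invoke the standard disintegration theorem along the Borel map $\Xi$ into the Polish space $\mathcal{K}(\R^d)$, obtain $\mu_C, \nu_C$ as marginals of $\pi_C$, inherit the martingale and optimality properties from $\pi$, invoke the $c$-finitely-exposable property (Lemma~\ref{treatment}(4)) together with Theorem~\ref{th:k}/\ref{local.layers} to make $\Gamma_C$ a $c$-contact layer, and then apply Theorem~\ref{thm: main extremal} for item (4). The paper's proof is terse; you fill in the measure-theoretic bookkeeping that the paper leaves implicit, and this is done correctly — in particular the essential-uniqueness argument identifying $\pi_{C(x),x}$ with $\pi_x$ is the right way to transfer the barycenter condition, and your care to read Theorem~\ref{thm: main extremal} inside the ambient space $V(C)$ with its own Lebesgue measure, using $C = IC(Y_{\Gamma_C})$ (from Theorem~\ref{lem: partition}(5)), is exactly the adaptation needed. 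One minor redundancy: for item (3) you argue that $\Gamma_C$'s own irreducible paving is the single component $\{C\}$ and then apply Theorem~\ref{local.layers} to $\Gamma_C$; this detour is unnecessary, since Theorem~\ref{th:k} already asserts directly that for each component $C$ of the paving of the $c$-finitely-exposable set $\Gamma$, the set $\Gamma\cap(C\times\R^d)$ is a $c$-contact layer — you can cite that and skip the restriction argument. Everything else is sound.
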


 \begin{proof}
The above discussion and the measurability hypothesis  of the map $ \Xi: \Gamma \to \mathcal{K}(\R^d)$ defined by $ (x,y) \mapsto \overline{C(x)}$,  
yield the disintegration of $\pi$ into $\pi_C d\tilde \pi(C)$ in \eqref{eq: disintegration}, with $\pi_C$ supported on $\Gamma_C$. 
 The measures $\mu_C, \nu_C$ are obtained by taking marginals of $\pi_C$. The  martingale and optimality properties of $\pi_C$ for $\tilde \pi$-a.e. $C$, follow from those properties of $\pi$ and the disintegration \eqref{eq: disintegration}. When $\pi$ is an optimal martingale transport, the concentration set $\Gamma$ can be chosen in such a way that it is $c$-finitely exposable,  hence the set $\Gamma_C$ is a $c$-contact layer by Theorem~\ref{th:k}.
This deals with items (1),  (2), and (3) of the theorem.
 Finally, (4)  follows immediately from  Theorem~\ref{thm: main extremal}.  
 \end{proof}
 
In order to apply this theorem and deduce global results from its local properties, one would like to know when we can disintegrate $\mu$ into absolutely continuous pieces $\mu_C$, so as to apply  Theorem~
\ref{thm: main extremal} 
on each partition. We start by a counterexample showing that this is not possible in general, at least in dimension $d\geq 3$. \\

\noindent{\bf Nikodym sets and martingale transports:}
Ambrosio, Kirchheim, and Pratelli \cite{akp} constructed a Nikodym set in $\R^3$ having full measure in the unit cube, and intersecting each element of a family of pairwise disjoint open lines only in one point. More precisely they showed  the following.

\begin{theorem}{\rm (Ambrosio, Kirchheim, and Pratelli \cite{akp}) }
\label{akp}
There exist a Borel set $M_N \subset [-1,1]^3$ with $|[-1,1]^3 - M_N| = 0$ and a Borel map
$f =(f_1,f_2): M_N \to [-2, 2]^2 \times [-2, 2]^2$ such that the following holds. If we define for $x \in M_N$ the open segment $l_x$ connecting $( f_1(x), -2)$ to $( f_2(x), 2)$, then
\begin{itemize}
\item $\{x\} = l_x \cap M_N$ for all $x \in M_N$,
\item $l_x \cap l_y = \emptyset$ for all $x\neq y \in M_N$.   
\end{itemize}
\end{theorem}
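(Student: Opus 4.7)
My plan is to construct $M_N$ and $f$ by an iterative geometric refinement, adapting Nikodym's original 2-dimensional argument to $\R^3$ with the added pairwise-disjointness constraint. First I would parametrize the space of ``admissible lines'' by pairs $(a,b) \in [-2,2]^2 \times [-2,2]^2$, with $l_{(a,b)}$ connecting $(a,-2)$ to $(b,2)$. A point $x = (x',x_3) \in [-1,1]^3$ lies on $l_{(a,b)}$ iff
\[
x' \;=\; \tfrac{2-x_3}{4}\, a \,+\, \tfrac{2+x_3}{4}\, b,
\]
so the lines through a fixed $x$ form a $2$-parameter family inside a $4$-dimensional parameter space, and the required map $f = (f_1, f_2)$ amounts to a Borel, injective selection of one admissible line per point of $M_N$ (injectivity being equivalent to the disjointness of the $l_x$'s).

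The construction would proceed by countable refinement. At stage $n$, partition $[-1,1]^3$ into small cubes $\{Q_i^{(n)}\}$ of side $2/N_n$ and, for each $Q_i^{(n)}$, prescribe a family of mutually disjoint admissible lines $\mathcal{L}_i^{(n)}$ sweeping through $Q_i^{(n)}$, whose total union $U_n := \bigcup_i \bigcup_{l \in \mathcal{L}_i^{(n)}} l$ has Lebesgue measure at most $2^{-n}$ in $[-1,1]^3$. The existence of such sparse families, and their compatibility between stages, would rest on a Besicovitch--Kakeya-type argument using the extra freedom of $\R^3$: since the admissible lines through each small region form a two-parameter family inside a four-parameter space, there is enough slack to keep the line families disjoint across all stages while still sweeping out a substantial volume. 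I would then set $M_N := [-1,1]^3 \setminus \bigcap_n U_n^{\mathrm{bad}}$, where $U_n^{\mathrm{bad}}$ collects the unavoidable intersections with previously chosen lines, and use Kuratowski--Ryll-Nardzewski measurable selection on the Polish space of admissible lines to extract, for each $x \in M_N$, a Borel choice of the line $l_x$ induced by the nested refinement.

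Verification then splits into three claims: (a) $|[-1,1]^3 \setminus M_N| = 0$ via Borel--Cantelli applied to the $2^{-n}$ measure estimates, (b) $l_x \cap M_N = \{x\}$ because at every stage the selected line exits the current refinement cube into a region removed from $M_N$, shrinking to $\{x\}$ in the limit, and (c) $l_x \cap l_y = \emptyset$ for $x \neq y$, which is automatic since the construction keeps the lines in each $\mathcal{L}_i^{(n)}$ disjoint and the nested structure propagates this to the limit. \textbf{The principal obstacle} is reconciling full Lebesgue measure of $M_N$ with genuine pairwise disjointness of the $l_x$: the classical 2D Nikodym set permits its lines to overlap freely, and imposing disjointness here forces one to truly exploit the third spatial dimension. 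Engineering the refinement so that sparsity, measure, and disjointness are all preserved simultaneously in the limit is the technical core of the argument.
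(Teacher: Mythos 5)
The paper does not prove this theorem: it is quoted directly from Ambrosio, Kirchheim, and Pratelli \cite{akp} and is used only as a black box to build the counterexample in Example~\ref{nikodym}. There is therefore no in-paper argument to compare your sketch against, and the proposal has to stand on its own as a reconstruction of the AKP result.

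As a standalone argument it has a genuine gap, which you yourself flag. The assertion that ``there is enough slack to keep the line families disjoint across all stages while still sweeping out a substantial volume'' is exactly the statement that needs to be proved; the informal count of parameters (a two-parameter pencil inside a four-parameter space of admissible lines) is a heuristic, not an estimate, and supplies no mechanism ensuring that pairwise disjointness of the selected lines is compatible with almost-everywhere coverage under countably many refinement stages. Your verification of claim~(b) is also not forced by the setup you describe: the line $l_x$ crosses the entire slab $[-1,1]^3$ and passes through many refinement cubes far from $x$, and nothing in the iterative scheme guarantees that $l_x$ misses the retained, full-measure part of $M_N$ inside those other cubes. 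Arranging $l_x \cap M_N = \{x\}$ for every $x$ simultaneously is a global constraint across all stages and all cubes, not something that follows cube by cube from local removal. Likewise the ``Borel--Cantelli'' step in (a) presupposes measure estimates on the sets $U_n^{\mathrm{bad}}$ that you stipulate but never derive. In short, the proposal names the right obstructions but leaves all the substantive work undone; what is missing is precisely the content of the AKP construction.
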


\begin{example}\label{nikodym}
\noindent One can use the above construction to construct an optimal martingale transport, whose equivalence classes are singletons, hence the disintegration of the first marginal along the partitions $C(x)$ is the Dirac mass $\delta_x$,
 which is obviously not absolutely continuous w.r.t. $\mathcal{L}^1$.\\
 Consider the obvious inequality
$\frac{1}{2\epsilon} (|x-y| - \epsilon)^2 \geq 0$, 
and its equivalent form
\begin{align}\label{u}
\frac{1}{2\epsilon} |y|^2 \geq |x-y| + \frac{1}{\epsilon} x \cdot (y-x) + \frac{1}{2\epsilon}|x|^2 - \epsilon.
\end{align}
Thus by letting $\alpha_\epsilon(x) = \frac{1}{2\epsilon}|x|^2 - \epsilon$, $\beta_\epsilon(y) = \frac{1}{2\epsilon} |y|^2 $ and $\gamma_\epsilon(x) = \frac{1}{\epsilon} x$, \eqref{u} yields that the set $\Gamma=\{(x,y); |x-y|=\epsilon\}$ is a $c$-contact layer, where $c(x,y)=|x-y|$ in the maximization problem. It follows that every martingale $\pi_\epsilon:= (X, Y)$ with $|X-Y|=\epsilon \,\,a.s.$ is optimal with its own marginals $X \sim \mu$ and $Y \sim \nu$.\\
Now fix $\epsilon >0$ small and let $X$ be a random variable whose distribution $\mu$ has  uniform density on $[-1,1]^3$.  We define $Y$ conditionally on $X$ by evenly distributing the mass along the lines $l_x$ considered in Theorem \ref{akp} and distance $\epsilon$, that is $Y$ splits equally in two pieces from $x\in X$ along $l_x$ with distance $\epsilon$. Then the martingale $(X,Y)$ is optimal for the maximization problem. But note that in this case, each equivalence class $[x]$ is the singleton $\{x\}$, so the disintegration of $\mu$ along the partitions $C(x)$ is the Dirac mass $\delta_x$, which is obviously not absolutely continuous w.r.t. $\mathcal{L}^1$. 
Hence, the decomposition is not useful in this case.   
One also notices that the convex sets associated to the irreducible paving of the martingale $(X,Y)$ have codimension 2. We leave it as an open problem whether one can do without assumption \eqref{co1} in Theorem \ref{th: Choquet codim 1}.
\end{example}

\begin{remark} By letting $\epsilon \to 0$, the above problem approaches the one considered in  Example \ref{ah}, that is the case when the marginals $\mu = \nu$ are equal, the only maximal martingale transport is the identity, and the value of the maximal cost is zero. On the other hand, note that  
$\int \beta_\epsilon(y) d\nu(y) - \int \alpha_\epsilon(x)  d\mu(x) = \epsilon$, which means that $( \alpha_\epsilon,  \beta_\epsilon,  \gamma_\epsilon)$ is a minimizing sequence for the dual problem. But neither of the sequences  
$\alpha_\epsilon,  \beta_\epsilon,  \gamma_\epsilon$ converge (neither pointwise nor in $L^1$). This is another manifestation of the non-existence of a dual in example \ref{ah}.
This said, for the minimization problem, we have no example  where duality is not attained. 
\end{remark}

\appendix
\section{A suitable concentration set for a martingale transport plan}
Here we prove the following lemma which was introduced in Section \ref{ss: structure from dual}.

\begin{lemma}
Let $\pi \in$ MT$(\mu, \nu)$ and let $\Lambda \subset \R^d\times \R^d$ be a Borel set with $\pi(\Lambda) =1$. Then there exists a Borel set $\Gamma \subset \Lambda$ with $\pi(\Gamma) =1$ such that the map $x\mapsto \pi_x$ is measurable and defined everywhere on $X_\Gamma$ in such a way that:
\begin{enumerate}
\item $\overline{\Gamma_x} = \supp \pi_x$  for all $x \in X_\Gamma$,
\item $\Gamma \in {\mathcal S}_{MT}$, that is $x \in IC(\Gamma_x)$ for all $x \in X_\Gamma$,
\item If we assume that $\mu << \mathcal{L}^d$, then $\Gamma$ can be chosen in such a way that $X_\Gamma \subset IC(Y_\Gamma)$.
\item If in addition $\pi$ is a solution of the optimization problem \eqref{MGTP}, then $\Gamma$ can be chosen to be finitely $c$-exposable.
\end{enumerate}
\end{lemma}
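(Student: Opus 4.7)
For property~(1), the plan is to build $\Gamma$ in stages, starting with a Borel disintegration $x \mapsto \pi_x$ of $\pi$ with respect to $\mu$ (which exists since $\R^d \times \R^d$ is Polish), each stage passing to a Borel subset of full $\pi$-measure. Modifying on a $\mu$-null set, one may arrange that $\pi_x(\Lambda_x)=1$ and that $x$ is the barycenter of $\pi_x$ for every $x$ in a Borel full-measure set $X_0$. Set
\[
\Gamma_1 := \{(x,y) \in \Lambda : x \in X_0,\ y \in \supp \pi_x\}.
\]
Effros--Borel measurability of $x \mapsto \supp \pi_x$ makes $\Gamma_1$ Borel with $\pi(\Gamma_1)=1$, and since $\pi_x(\Lambda_x \cap \supp \pi_x)=1$, the fiber $(\Gamma_1)_x$ is dense in $\supp \pi_x$, giving $\overline{(\Gamma_1)_x} = \supp \pi_x$.

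For property~(2), I would invoke the classical fact that the barycenter of a probability with finite first moment lies in the relative interior of the convex hull of its support: if $x$ lay on the relative boundary, a supporting hyperplane of $\mathrm{conv}(\supp\pi_x)$ in $V(\supp\pi_x)$ through $x$ would force $\supp\pi_x$ into a proper affine subspace of $V(\supp\pi_x)$, contradicting minimality of that span. Combined with the density of $(\Gamma_1)_x$ in $\supp\pi_x$, this yields $x \in IC(\supp \pi_x) = IC((\Gamma_1)_x)$ for every $x \in X_0$, so $\Gamma_1 \in \mathcal{S}_{MT}$.

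For property~(3), assuming $\mu \ll \mathcal{L}^d$, every $x \in \supp\mu$ is the barycenter of a measure on $\supp\nu$, so $\supp\mu \subset \mathrm{conv}(\supp\nu)$. Absolute continuity then forces $\mathrm{conv}(\supp\nu)$ to have positive Lebesgue measure, hence $V(\supp\nu)=\R^d$; since the topological boundary of a convex set in $\R^d$ with nonempty interior is Lebesgue-null, $\mathrm{int}(\mathrm{conv}(\supp\nu))$ has full $\mu$-measure. Put
\[
\Gamma_2 := \Gamma_1 \cap \bigl(\mathrm{int}(\mathrm{conv}(\supp\nu)) \times \R^d\bigr).
\]
Since only a $\mu$-null set of first coordinates is removed, the second marginal of $\pi|_{\Gamma_2}$ is still $\nu$, so $\supp\nu \subset \overline{Y_{\Gamma_2}}$; combining this with the standard identity $\mathrm{int}(\overline{C}) = \mathrm{int}(C)$ for convex sets with nonempty interior yields
\[
\mathrm{int}(\mathrm{conv}(\supp\nu)) \subset \mathrm{int}(\mathrm{conv}(Y_{\Gamma_2})) = IC(Y_{\Gamma_2}),
\]
hence $X_{\Gamma_2} \subset IC(Y_{\Gamma_2})$, while (1) and (2) persist.

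Finally, for property~(4), I would invoke Proposition~\ref{pp} to start already with a $c$-finitely exposable concentration set $\Lambda$; this property manifestly passes to any Borel subset, so the $\Gamma$ produced by the previous stages remains $c$-finitely exposable. The main technical worry is in Stage~3: the trimming that places $X_\Gamma$ inside $IC(Y_\Gamma)$ could in principle shrink $Y_\Gamma$ too much to keep the inclusion. The resolution is precisely the observation made above --- removing a $\mu$-null set of first coordinates leaves the second marginal $\nu$ untouched, which pins down $\mathrm{conv}(\supp\nu)$ as a lower bound for $\overline{\mathrm{conv}(Y_{\Gamma_2})}$.
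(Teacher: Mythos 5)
Your proof is correct and follows essentially the same skeleton as the paper for properties (1), (2), and (4): restrict to the pairs $(x,y)$ with $y\in\supp\pi_x$, invoke the barycenter--relative-interior fact, and for (4) note that $c$-finite exposability (from Proposition~\ref{pp}) passes to Borel subsets. The only genuinely different step is (3). The paper trims $X_\Gamma$ to its set of Lebesgue density points $X'$: since $\mu\ll\mathcal{L}^d$ gives $\mu(X')=1$ and any density point $x$ of $X'$ satisfies $x\in\mathrm{int}(\mathrm{conv}(X'))=IC(X')$, while $X'\subset\mathrm{conv}(Y_{\Gamma'})$ gives $IC(X')\subset IC(Y_{\Gamma'})$; this is a short and self-contained argument about the first marginal alone. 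You instead argue via the second marginal, restricting $X$ to $\mathrm{int}(\mathrm{conv}(\supp\nu))$ and then tracking that the second marginal is unchanged and $\supp\nu\subset\overline{Y_{\Gamma_2}}$. This also works, but it is a longer route and has a small cosmetic imprecision: the barycenter property gives $\supp\mu\subset\overline{\mathrm{conv}}(\supp\nu)$, not $\supp\mu\subset\mathrm{conv}(\supp\nu)$ (the convex hull need not be closed). That does not invalidate your argument --- the absolute continuity of $\mu$ still forces full dimensionality and makes the boundary $\mu$-null --- but the clean statement should use the closed convex hull. The paper's Lebesgue-point device is worth internalizing, as it gives (3) in one line and also explains why the hypothesis $\mu\ll\mathcal{L}^d$ is exactly what is needed.
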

\begin{proof}  Let $(\pi_x)_x$ be the unique disintegration  of $\pi$ with respect to $\mu$. It is well known that this yields a well-defined measurable map $x\mapsto \pi_x$ on a Borel set $E$ in $\R^d$ with $\mu (E)=1$ such that each $x$ in $E$ is the barycenter of $\pi_x$ and $\pi_x(\Lambda_x) =1 $.  It is clear that $x \in CC(\Lambda_x)$. However, it is not necessarily in $IC(\Lambda_x)$. Note however that 
for any Borel set $B$ in $\R^d$, the map $x \mapsto \pi_x(B)$ is Borel measurable, hence for each $r >0$, the  set $B_r:=\{(x,y) \,|\,  x \in E, \, \pi_x(B_r(y))>0\}$ is Borel (Here, $B_r(y)$ is the open ball with center $y$ and radius $r$ in $\R^d$) and consequently the set 
$\Theta := \{ (x,y) \, | \, x \in E, \, y \in {\rm supp}\, (\pi_x) \} = \bigcap_{n=1}^\infty  B_{1/n}$
  is also Borel. Letting $\Gamma := \Lambda \cap \Theta$, it is clear that  $\pi(\Gamma) = 1$ and $\pi_x(\Gamma_x)=1$ for all $x \in E$. Finally note that the probability measure $\pi_x$ has its barycenter at $x$ and that ${\Gamma_x} \subset {\rm supp}\,(\pi_x)$, and since $\pi_x(\Gamma_x) =1$, we have that $\overline{\Gamma_x} = \supp \pi_x$. Hence in particular, $x \in IC(\Gamma_x)$ for $x \in E$, proving (1) and (2).\\
Item (3) can be obtained by considering another subset of $\Gamma$. Indeed, let $X'$ be the set of Lebesgue points of $X_\Gamma$. Then as $\mu << \mathcal{L}^d$, we have  $\mu (X') = 1$. 
 Let $\Gamma' := \Gamma \cap (X' \times \R^d)$. Then,  $\Gamma' \in S_{MT}$,  $\pi (\Gamma') =1$ 
 and $X' \subset IC(X') \subset IC(Y_{\Gamma'})$, as claimed. \\
 For item (4), we use  \cite{bj}, \cite{Z}, where it is shown that for an optimizer $\pi$, there exists $\Lambda$ with  $\pi(\Lambda)=1$, that is finitely $c$-exposable (also called finitely $c$-monotone in \cite{bj}; see Definition \ref{exposable}) . We then restrict $\Lambda$ to get $\Gamma$ which also satisfies (1), (2) and (3) by the above procedure.
 \end{proof}

\section{An estimate for convex functions} 
We prove here a technical result --used in Section \ref{S: local dual}-- that allows us to control the maximum of a convex function by the integral of its second derivatives. 
Namely, 
\begin{proposition}\label{prop: Lap bound}
 Let $B_r$ denote the closed ball of radius $r$ centred at the origin $0$. Let $\phi$ be a (smooth) convex function such that $\phi (0)=0$ and $\phi \ge 0$. 
 Then, 
 \begin{align}\label{eq: Lap bound}
 \int_{B_{\sqrt{2}r}} \Delta \phi  \ge C_0 r^{d-2} \max_{B_r} \phi, 
\end{align}
where the constant $C_0>0$ depends only on the dimension $d$. 
\end{proposition}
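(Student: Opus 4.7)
The plan is to combine the divergence theorem on $B_R := B_{\sqrt 2 r}$ with two elementary consequences of the convexity of $\phi$: one that lets the Laplacian integral dominate a boundary average of $\phi$, and one that lets a subgradient at the point of maximum propagate the value $M := \max_{B_r}\phi$ onto a full spherical cap of $\partial B_R$.

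\medskip

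First I would reduce to the case $M>0$ and pick $x_0 \in B_r$ with $\phi(x_0)=M$. Setting $p := \nabla\phi(x_0)$, the inequalities
\begin{align*}
0 = \phi(0) &\geq \phi(x_0) + p\cdot (0-x_0), \\
\phi(y) &\geq \phi(x_0) + p\cdot(y-x_0) \quad \text{for all } y,
\end{align*}
give $p\cdot x_0 \geq M$ and hence $\phi\geq M$ on the half-space $H := \{y: p\cdot(y-x_0)\geq 0\}$. By Cauchy--Schwarz, the distance from the origin to $\partial H$ is $p\cdot x_0/|p| \leq |x_0| \leq r = R/\sqrt 2$. Thus $H\cap \partial B_R$ is a spherical cap whose ``height'' is at least $R-r = R(1-1/\sqrt 2)$, and so its $(d{-}1)$-dimensional surface measure is at least $c_1(d)\,R^{d-1}$ for a purely dimensional constant $c_1(d)>0$.

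\medskip

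Next I would apply the divergence theorem and convexity to bound $\int_{B_R}\Delta\phi$ from below by a boundary integral of $\phi$. Using the tangent-plane inequality $\phi(0)\geq \phi(y)+\nabla\phi(y)\cdot(0-y)$ one gets $\nabla\phi(y)\cdot y \geq \phi(y)$ for every $y$, so on $\partial B_R$,
\begin{equation*}
\partial_\nu \phi(y) \;=\; \nabla\phi(y)\cdot \frac{y}{R} \;\geq\; \frac{\phi(y)}{R}.
\end{equation*}
Combining this with the previous step,
\begin{equation*}
\int_{B_R}\Delta\phi \;=\; \int_{\partial B_R}\partial_\nu\phi \;\geq\; \frac{1}{R}\int_{\partial B_R}\phi \;\geq\; \frac{M}{R}\cdot c_1(d)\,R^{d-1} \;=\; c_1(d)\, M\,R^{d-2},
\end{equation*}
which is \eqref{eq: Lap bound} (with $R=\sqrt 2 r$, absorbing the factor $(\sqrt 2)^{d-2}$ into $C_0$).

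\medskip

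The only non-routine point is the explicit geometric bound on the surface area of the cap $H\cap \partial B_R$, but this is a standard compactness/monotonicity statement about spherical caps in dimension $d\geq 2$; for $d=1$ the same scheme reduces to the one-dimensional identity $\int_{-R}^{R}\phi'' = \phi'(R)-\phi'(-R)$, and the convexity chord inequalities directly give $\phi'(R)-\phi'(-R) \geq M/r$, matching the stated exponent $r^{d-2}$.
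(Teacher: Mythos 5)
Your proof is correct, and it takes a genuinely different route from the paper's. You pass the problem to the boundary via the divergence theorem, writing $\int_{B_R}\Delta\phi=\int_{\partial B_R}\partial_\nu\phi$, and then use convexity twice on $\partial B_R$: the tangent-plane inequality at $y$ with $\phi(0)=0$ gives the pointwise lower bound $\partial_\nu\phi(y)\geq \phi(y)/R$, and the subgradient at the maximizer $x_0$ shows $\phi\geq M$ on the half-space $\{p\cdot(y-x_0)\geq 0\}$, whose intersection with $\partial B_R$ is a spherical cap of area $\gtrsim_d R^{d-1}$ because the cutting hyperplane sits within distance $r<R$ of the origin. The paper instead stays in the interior: it drops the nonnegative quantity $\Delta\phi-D^2_{11}\phi$, restricts the integral to a cylinder $K_r$ around the axis through the maximum point (taken on $\partial B_r$ by the maximum principle), integrates $D^2_{11}\phi$ along that axis to get a difference of first derivatives, and uses the chord inequalities for the convex $\phi$ to lower-bound that difference at each cross-section point, then integrates over the disk cross-section. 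Both give the same $R^{d-2}M$ scaling; your version is shorter and avoids the cylinder bookkeeping, at the cost of needing the (elementary but geometric) surface-area estimate for the spherical cap, which the paper's line-integration scheme sidesteps. One small remark: you don't actually need the maximum to lie on $\partial B_r$ — the bound $|x_0|\leq r$ suffices — whereas the paper invokes the maximum principle to place it there; this is a harmless difference.
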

\begin{proof}
Denote  $M_r=  \max_{B_r} \phi$.
By the maximum principle a point, say $p \in \partial B_r$,  can be chosen from the boundary so that 
$ \phi (p ) = M_r $.
 Choose an orthonormal basis $\eta_1, ..., \eta_d$ such that 
$ p = r\eta_1, $ and define a cylindrical set (of radius $r/2$)
\begin{align*}
 K_r : = \left\{  \sum_{j=1}^d t_j \eta_j  \ \Bigg| \  - r \le t_1 \le  r ,  \sqrt{\sum_{j\ne 1,} t_j^2}  \le r /2 \right\}.  
\end{align*}
We will show that 
\begin{align}\label{eq: K r est}
 \int_{K_r} D_{11}^2 \phi  \ge C_0 r^{n-2} \max_{B_r} \phi
\end{align}
for a constant $C_0>0$ depending only on the dimension $d$. This will immediately imply the desired estimate \eqref{eq: Lap bound} because $K_r \subset B_{\sqrt{2} r}$ and that $0 \le D_{11}^2 \phi \le \Delta \phi $ for the convex function $\phi$. \\
To show \eqref{eq: K r est}, we let $H$ denote the hyperplane  $\{ z_1 =0\}$. 
Notice that $\phi(0)=0$ and $\phi \le M_r $ on $B_r$, thus from convexity of $\phi$, we see that 
\begin{align}\label{eq: phi less M r}
 \phi(z) \le  \frac{|z|}{r} \left(M_r - \phi(0)\right) =  \frac{|z|}{r}M_r \quad \hbox{for each $z \in B_r$.}
\end{align}
Also, notice the fact that $p =r\eta_1$ is a maximum point of $\phi$ in $B_r$ and that  the hyperplane $r \eta_1+ H$ stays outside the interior of $B_r$, i.e. $r\eta_1 +H \subset \R^d \setminus (int B_r)$. So, from convexity of $\phi$, we have 
\begin{align*}
 \phi(z + r\eta_1) \ge M_r \quad \hbox{for each $z \in  H$.}
\end{align*}
Combining this with \eqref{eq: phi less M r} and using convexity of $\phi$ again, we can estimate the derivative $D_1 \phi$ on the set $r\eta_1 + (H\cap B_r)$. Namely, for each $z \in H\cap B_r$, 
\begin{align*}
  & D_1 \phi(z+ r\eta_1) \ge \frac{1}{r} \left( \phi(z+ r\eta_1) - \phi(z) \right) \ge  \frac{1}{r} \left( M_r - \frac{|z|}{r} M_r \right) = \frac{1}{r^2}( r- |z|)M_r. 
  \end{align*}
Similarly, use  \eqref{eq: phi less M r}, the fact that $\phi \ge 0$, in particular on $-r\eta_1+ H$, and the convexity of $\phi$  to see that 
\begin{align*}
 D_1 \phi (z - r\eta_1) \le \frac{1}{r} \left(\phi(z) - \phi (z-r\eta_1)\right) \le \frac{|z|}{r^2} M_r , \quad \hbox{ for each $z \in H\cap B_r$}. 
\end{align*}
From these estimates on $D_1 \phi$, we have that for each $z \in H\cap B_r$, 
 \begin{eqnarray*}
 \int_{-r}^{r}  D_{11}^2 \phi  ( z + t \eta_1 ) dt &=& D_1 \phi  ( z + r  \eta_1 ) - D_1\phi  ( z- r \eta_1)
 \\
 & \ge & \frac{1}{r^2}( r- |z|)M_r  - \frac{|z|}{r^2} M_r  \\
 &=& \frac{1}{r^2} (r - 2|z| ) M_r.
\end{eqnarray*}
Now, 
\begin{align*}
 \int_{K_r} D^2_{11} \phi dz 
& = \int_{z\in H\cap B_{r/2} } \int_{-r}^{r} D^2_{11} \phi (z+ t \eta_1) dt dz  \\
 & \ge  \int_{z\in H\cap B_{r/2}}  \frac{1}{r^2} (r - 2|z| ) M_r dz\\
& = C_0 r^{d-2} M_r 
\end{align*}
where 
$$
 C_0 =r^{2-d} \int_{H\cap B_{r/2}}  \frac{1}{r^2} (r - 2|z| )dz
  =   \int_{H\cap B_{1/2}}  (1 - 2|z| )dz$$
is independent of $r$.  Notice that $C_0 > 0$  because $|z|$ varies from $0$ to $1/2$ on $H\cap B_{1/2}$. 
This completes the proof.
\end{proof}

\section{A bi-Lipschitz flattening map}  

The following lemma, which describes a bi-Lipschitz ``flattening map"  was used in Section \ref{s: structure}.

\begin{lemma}
\label{4.4} Let $\R^d = V \times W$, where $V=\R^{d-1}$ and $W=\R$. Let $\delta > 0$ and let $A$ be a subset of $W$. Suppose that for each $h \in A$, there is a set $D_h$ which is contained in a hyperplane $H_h$ with $H_h \cap W = \{0,...,0,h\}$. Suppose further that $\{D_h\}_{h\in A}$ are mutually disjoint and the projection of every $\{D_h\}$ on $V$ contains the ball $B_{R}$ with center $0$ and radius $R$ in $V$. Finally, suppose that the angle between $H_h$ and $W$ is bounded; there is $\eta < \pi /2$ such that the normal direction of $H_h$ and the direction of $W$ has angle less than $\eta$ for every $h \in A$. 

Now define the flattening map $F : \cup_h D_h \to \R^d$ as follows: for $x=(v,w) \in D_h$, $F(v,w) = (v,h)$. Then $F$ is bi-Lipschitz on the set $N := (\cup_h D_h) \cap (B_r \times W)$, where $r < R$.
\end{lemma}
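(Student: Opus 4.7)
The plan is to parameterize each hyperplane $H_h$ as an affine graph over $V$ and then to extract two quantitative estimates: a uniform slope bound from the angle hypothesis, and a control on how the slope varies with $h$ that comes from the disjointness of the $D_h$ over $B_R$. Once these are in place, showing $F$ is bi-Lipschitz on $N$ will reduce to a scalar comparison between the $W$-coordinates $w_i$ and the labels $h_i$, because $F$ fixes the $V$-component.

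First I would write $H_h = \{(v, \phi_h(v)) : v \in V\}$ with $\phi_h(v) = h + a_h \cdot v$ for some $a_h \in V$. Such a parameterization exists because the normal of $H_h$ makes angle less than $\eta < \pi/2$ with the $W$-axis, so $H_h$ is transverse to $W$; the same angle hypothesis yields the uniform bound $|a_h| \leq \tan\eta =: L$, while the assumption $H_h \cap W = \{(0,\dots,0,h)\}$ is encoded in the normalization $\phi_h(0) = h$.

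The key new input is disjointness. Because $\mathrm{proj}_V D_h \supset B_R$ and $D_h \subset H_h$, the set $D_h$ necessarily contains the whole graph of $\phi_h$ over $B_R$. Thus for $h_1 \neq h_2$ in $A$, the affine function
\[
 v \mapsto (h_1 - h_2) + (a_{h_1} - a_{h_2}) \cdot v
\]
is nowhere zero on the connected open set $B_R$ and therefore has constant sign; minimizing its absolute value over $B_R$ in the direction $-(a_{h_1}-a_{h_2})$ yields the core estimate
\[
 R\,|a_{h_1} - a_{h_2}| \leq |h_1 - h_2|.
\]
With both bounds in hand, for two points $(v_i, w_i) \in D_{h_i} \cap N$ I have $|v_i| < r$ and $w_i = h_i + a_{h_i}\cdot v_i$, so expanding
\[
 w_1 - w_2 = (h_1 - h_2) + a_{h_1}\cdot(v_1 - v_2) + (a_{h_1} - a_{h_2})\cdot v_2
\]
and using $|a_{h_1}|\leq L$ and $|a_{h_1}-a_{h_2}|\leq |h_1-h_2|/R$ gives
\[
 |h_1 - h_2| \leq |w_1 - w_2| + L\,|v_1 - v_2| + \tfrac{r}{R}\,|h_1 - h_2|,
\]
and symmetrically $|w_1-w_2| \leq (1+\tfrac{r}{R})|h_1-h_2| + L\,|v_1-v_2|$. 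Since $r < R$, the factor $r/R$ is absorbed on the left and one obtains two-sided linear comparisons between $|h_1-h_2|$ and $|w_1-w_2|+|v_1-v_2|$, which combined with $|v_1-v_2|$ appearing identically on both sides gives the bi-Lipschitz property of $F$ on $N$.

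The main obstacle is the quantitative transversality step $R|a_{h_1}-a_{h_2}|\leq |h_1-h_2|$: turning the purely qualitative disjointness hypothesis into a uniform modulus is the one place the geometric content of the lemma is used, and it is also the unique step that forces the strict inequality $r < R$, since the absorbing coefficient $r/R$ must be strictly less than one for the rearrangement to close. As $r \to R$ the Lipschitz constants diverge, which is consistent with the fact that separation of the $D_h$'s is only guaranteed over a ball of radius $R$.
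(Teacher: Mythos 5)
Your proof is correct, and it is cleaner and more quantitative than the paper's. The two arguments share the same essential ingredients: the angle bound gives a uniform slope bound $|a_h| \le \tan\eta$ for the graphs $\phi_h(v) = h + a_h\cdot v$, and disjointness over $B_R$ gives a modulus relating slope differences to label differences. But you make the second ingredient explicit — the estimate $R\,|a_{h_1}-a_{h_2}|\le|h_1-h_2|$, obtained by minimizing the sign-definite affine function $\phi_{h_1}-\phi_{h_2}$ over $B_R$ — and then deduce bi-Lipschitzness from a single algebraic expansion, with no case split on whether $v_1=v_2$.

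This actually improves on the paper's argument. In the case $v_1\neq v_2$, the paper reduces the claim to "$|w_1-w_2|\approx|h_1-h_2|$", but this comparison is false as stated: one can have two disjoint affine graphs with constant gap $1$ over $B_R$ (so $|h_1-h_2|=1$) that are evaluated at far-apart points $v_1,v_2$, producing arbitrarily large $|w_1-w_2|$. The correct statement, which your expansion
\[
w_1-w_2 = (h_1-h_2) + a_{h_1}\cdot(v_1-v_2) + (a_{h_1}-a_{h_2})\cdot v_2
\]
yields, is the weaker but sufficient bound
\[
\bigl|\,|w_1-w_2|-|h_1-h_2|\,\bigr| \le \tfrac{r}{R}|h_1-h_2| + L|v_1-v_2|,
\]
which carries the extra $|v_1-v_2|$ term. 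After absorbing the $r/R$ factor (possible precisely because $r<R$), this and the identity $|F(x_1)-F(x_2)|^2=|v_1-v_2|^2+|h_1-h_2|^2$ give the two-sided comparison directly. So your route is not only valid but also repairs an imprecision in the published proof, and it makes visible exactly where the hypothesis $r<R$ is needed and why the Lipschitz constants blow up as $r\to R$.
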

\begin{proof} First, note that by the disjointness of $\{D_h\}$ the map $F$ is bijective, so $F^{-1}$ is well-defined. 
The lemma is intuitively clear; the map $F$ cannot move two nearby points too far away, because the hyperdiscs $\{D_h\}$ are disjoint.\\
First of all, from the bounded angle assumption, $F$ is clearly bi-Lipschitz on each $F(D_h)$ with the same Lipschitz constant for all $h \in A$. Hence, 
for $x_1=(v_1,w_1)$, $x_2=(v_2,w_2)$, we will assume that $x_1, x_2$ are contained in $D_{h_1}, D_{h_2}$ respectively, and $h_1 \neq h_2$.\\
We consider the case $v_1 = v_2 \in V$ and $|v_1| = |v_2|  \leq r$. Let $L$ be the 1-dimensional subspace of $V$ containing $0$ and $v_1$. Regarding $D_{h_1}, D_{h_2}$ as affine functions on $V$, since their graphs on $L \cap B_R$ are disjoint and linear and $r<R$, it is clear that $|w_1 - w_2| \approx |h_1 - h_2|$; i.e.
\begin{align}\label{ba}
  C_1 |h_1 - h_2|  \leq |w_1 - w_2|\leq C_2 |h_1 - h_2|\, \text{ for some } \,C_1, C_2 >0.
\end{align}
Next we consider the case $v_1 \neq v_2$. We want to show $|x_1 - x_2| \approx |F(x_1) - F(x_2)|$, or equivalently,
\begin{align*}
|w_1 - w_2| \approx |h_1 - h_2|.
\end{align*}
Let $L$ be the 1-dimensional subspace of $V$ containing $v_1$ and $v_2$. Regarding $D_{h_1}, D_{h_2}$ as affine functions on $V$, since their graphs on $L \cap B_R$ are disjoint and linear, it is clear that 
\begin{align*}
|w_1 - w_2| = | D_{h_1}(v_1) -  D_{h_2}(v_2)|      \leq max \big{(} |D_{h_1}(v_1) - D_{h_2}(v_1)|, |D_{h_1}(v_2) - D_{h_2}(v_2)|\big{)}.
\end{align*}
But by \eqref{ba}, we have 
\begin{align*}
\max \big{(} |D_{h_1}(v_1) - D_{h_2}(v_1)|, |D_{h_1}(v_2) - D_{h_2}(v_2)|\big{)} \leq C_2|h_1 - h_2|, 
\end{align*}
which shows that $F^{-1}$ is Lipschitz on $F(N)$. On the other hand, by \eqref{ba}, we have
\begin{align*}
|h_1 - h_2|   \leq( 1 / {C_1} ) \min \big{(} |D_{h_1}(v_1) - D_{h_2}(v_1)|, |D_{h_1}(v_2) - D_{h_2}(v_2)|\big{)}
\end{align*}
and, again regarding $D_{h_1}, D_{h_2}$ as disjoint linear graphs on $L \cap B_R$, we have
\begin{align*}
\min \big{(} |D_{h_1}(v_1) - D_{h_2}(v_1)|, |D_{h_1}(v_2) - D_{h_2}(v_2)|\big{)} \leq | D_{h_1}(v_1) -  D_{h_2}(v_2)| = |w_1 - w_2|  
\end{align*}
which shows that $F$ is Lipschitz on $N$, and the proof is complete.
\end{proof}

\section{Proofs of measurability}
%

We now establish  the following proposition which was used in the proofs of Section \ref{s: structure}. 

\begin{proposition}\label{prop:measurable E}
Let $\pi$ be a Borel measure on the product space $\R^d \times \R^d$  and let $A \subset \R^d$ be a concentration set for its first marginal. Let $x \mapsto \pi_x$ be the corresponding disintegration map from  $A$ to $P(\R^d)$ and assume that for each $x \in A$, the set  ${\rm supp\,} \pi_x \subset \R^d $  is compact -- which is satisfied in particular, if the second marginal of $\pi$ is compactly supported.
 Then, the set
\begin{align*}
 E_\pi :=\{ x \in  A \ |  \  {\rm supp \, } \pi_x \subset {\rm Ext} ( \overline{\rm conv} ({\rm supp \, } \pi_x ))\}
\end{align*}
 is a Borel measurable set in $\R^d$.  
\end{proposition}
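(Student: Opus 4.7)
The plan is to express the complement $A \setminus E_\pi$ as the first-coordinate projection of a Borel set whose $x$-sections are $\sigma$-compact, and then invoke the Arsenin--Kunugui projection theorem to conclude this projection is Borel, whence $E_\pi$ is Borel.

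First I would establish measurability of the support and closed convex hull as multifunctions. Since the disintegration yields that $x \mapsto \pi_x(U)$ is Borel for every open $U \subset \R^d$, the set $\{x \in A : \supp \pi_x \cap U \ne \emptyset\} = \{x : \pi_x(U) > 0\}$ is Borel, so the compact-valued map $K(x) := \supp \pi_x$ is Effros--Borel measurable and admits a Castaing representation $K(x) = \overline{\{\phi_n(x)\}_{n \in \N}}$ by countably many Borel selectors. A standard argument then shows that the graph $\mathrm{Gr}(K) := \{(x,y) : y \in K(x)\}$ is Borel in $A \times \R^d$. Writing
\[
H(x) := \overline{\mathrm{conv}}(K(x)) = \overline{\Bigl\{\textstyle\sum_{i=1}^{d+1} \lambda_i \phi_{n_i}(x) : n_i \in \N,\ \lambda_i \in \mathbb{Q}_{\ge 0},\ \sum \lambda_i = 1\Bigr\}}
\]
(via Carath\'eodory) exhibits $H$ as the closure of countably many Borel functions, so $H$ is Effros--Borel measurable with Borel graph $\mathrm{Gr}(H) \subset A \times \R^d$.

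Next, a short convex-analysis argument shows that for a compact convex set $H$, a point $y \in H$ fails to be extreme if and only if $y = \tfrac{1}{2}(a+b)$ for some distinct $a,b \in H$: given any decomposition $y = \lambda a' + (1-\lambda) b'$ with $\lambda \in (0,1)$ and $a' \ne b'$, taking $s = \tfrac{1}{2}\min(\lambda,1-\lambda)$ and $a = y - s(a'-b')$, $b = y + s(a'-b')$ realizes $y$ as the midpoint of two distinct points of the segment $[a',b'] \subset H$. Consequently,
\[
A \setminus E_\pi = \mathrm{proj}_x(\widetilde T), \quad \widetilde T := \{(x,y,a,b) \in A \times (\R^d)^3 : y \in K(x),\ a,b \in H(x),\ a \ne b,\ y = \tfrac{1}{2}(a+b)\},
\]
and $\widetilde T$ is Borel by the first step. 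For each fixed $x$, the section $\widetilde T_x$ is the increasing countable union, over $n \ge 1$, of the sets $\{(y,a,b) : y \in K(x),\ a,b \in H(x),\ |a-b| \ge 1/n,\ y = \tfrac{1}{2}(a+b)\}$, each of which is compact because $K(x)$ and $H(x)$ are compact. Hence $\widetilde T_x$ is $\sigma$-compact.

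The main obstacle is passing from the Borel set $\widetilde T$ with $\sigma$-compact $x$-sections to the Borelness of its projection: in general, projections of Borel sets are only analytic. This is precisely the content of the Arsenin--Kunugui theorem (see, e.g., Kechris, \emph{Classical Descriptive Set Theory}, Thm.~35.46), which guarantees that the projection of a Borel subset of $X \times Y$ whose vertical sections are $K_\sigma$ is Borel in $X$. Applying this to $\widetilde T$ yields that $\mathrm{proj}_x(\widetilde T)$ is Borel in $\R^d$, and hence $E_\pi = A \setminus \mathrm{proj}_x(\widetilde T)$ is a Borel subset of $\R^d$, as required.
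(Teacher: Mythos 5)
Your proof is correct, and it follows a genuinely different route from the paper's. The paper constructs a Borel set $N$ sandwiched between $A\setminus E_\pi$ and $A\setminus E_\pi$ (via two ``Claims'') by hand, using a countable dense subset $Q\subset\R^d$ and families of $(\epsilon,\delta)$-admissible simplices; the characterization of non-extreme points it invokes is the Carath\'eodory-type one, namely that $z$ fails to be extreme iff $z$ lies in the relative interior of an $r$-simplex with vertices in $\supp\pi_x$, and the measurability of the final set $N=\bigcup_k\bigcap_j N_{2^{-j-k},2^{-k}}$ comes from an explicit countable union/intersection of level sets of $x\mapsto\pi_x(B_\epsilon(q))$. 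You instead reduce to a projection: you establish Effros--Borel measurability of $x\mapsto\supp\pi_x$ and of $x\mapsto\overline{\mathrm{conv}}(\supp\pi_x)$ via a Castaing representation (a cleaner way to get Borel graphs than the paper's bare-hands manipulation), use the simpler midpoint characterization of non-extreme points in place of the simplex one, organize everything into a Borel set $\widetilde T$ with $\sigma$-compact sections, and then invoke Arsenin--Kunugui to conclude the projection is Borel rather than merely analytic. Both arguments are valid; the trade-off is that the paper's is entirely elementary and self-contained (no descriptive set theory beyond basic measurability of $x\mapsto\pi_x(B)$), while yours is shorter and more conceptual but black-boxes a nontrivial theorem (Arsenin--Kunugui) whose proof is considerably deeper than anything the paper uses. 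One small point to be explicit about: the problem statement does not say $A$ is Borel, but both your proof and the paper's implicitly assume it (the paper's sets $S_{\epsilon,\delta}(q)$ are subsets of $A$); your final line $E_\pi=A\setminus\mathrm{proj}_x(\widetilde T)$ needs $A$ Borel as well, so it is worth stating that assumption.
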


\begin{proof}
Let $N_\pi =  A \setminus E_\pi$, that is, 
\begin{align*}
 N_\pi =\{ x \in  A \ |  \  {\rm supp \, } \pi_x \not \subset {\rm Ext} ( \overline{{\rm conv}} ({\rm supp \, } \pi_x ))\} . 
\end{align*}
We will show that there is a measurable set $N$ in $\R^d$ such that $N_\pi \subset N$ and $E_\pi \cap N =\emptyset$, which then implies that the set  $E_\pi = A  \setminus N$ is measurable, as desired.

We shall use a classical result of Carath\'eodory, which implies that 
a point $z\in {\rm supp}\, \pi_x$ is not an extremal point of the convex hull of ${\rm supp \, } \pi_x $ if and only if it 
 lies in the relative {\em interior} of an $r$-simplex ($1\le r\le d$) with vertices in ${\rm supp \, } \pi_x$. 
First choose a countable dense subset $Q \subset \R^d$ and associate to each $q \in Q$ 
an $(\epsilon, \delta)$-admissible $r$-simplex $S \subset \R^d$, defined as follows: 
\begin{enumerate}
\item all the vertices of $S$ belong to $Q$, 
 \item $q$ is $\epsilon$-close to a (relative) interior point of $S$, and
 \item  all vertices of $S$ are $\delta$-away from $q$. 
\end{enumerate}
Let ${\mathcal A}_{\epsilon, \delta}(q)$ denote the countable set of  all $(\epsilon, \delta)$-admissible simplices for $q$. 
Now define the set 
\begin{align*}
 S_{\epsilon, \delta} (q) & : = \{  x \in A  \ |   \  \pi_x (B_\epsilon (q)) >0  \  \& \  \hbox{there exists $S\in {\mathcal A}_{\epsilon, \delta}(q)$ such that}\\
 & \quad \quad \quad \quad \quad \hbox{ for each vertice $q_j$ of $S$, }  \pi_x  (B_\epsilon (q_j)) >0  \}. 
\end{align*}
This set $S_{\epsilon, \delta} (q)$ contains all those points $x$ in $A$, such that ${\rm supp}\, \pi_x$ include, up to an $\epsilon$-error, both the point $q$ and the vertices of an $(\epsilon, \delta)$-admissible simplex for $q$. 
 Since the map $x \mapsto \pi_x \in P(\R^d)$ is measurable, each set $S_{\epsilon, \delta} (q)$ is measurable, since it can be written as the countable union of measurable sets. 
Define 
$ N_{\epsilon, \delta} : = \bigcup_{q\in Q} S_{\epsilon, \delta} (q), $
 and set
 \begin{align*}
N= \bigcup_{k \ge 1} \bigcap_{j \ge 1} N_{2^{-j-k}, 2^{-k}}. 
\end{align*}
It is clear that $N$ is measurable. We now show that it has the desired properties. 

\vspace{3mm}
\noindent{\bf Claim 1:} $N_\pi \subset N$. 
Indeed, for any $x \in N_\pi$,  there exists a $z \in {\rm supp\, } \pi_x$ lying in the relative interior of an $r$-simplex, say $S$,  with vertices in  ${\rm supp \, } \pi_x$. 
Let $\delta_0 >0$ be a lower bound for the distances from $z$ to the vertices of $S$ as well as the distances between any two vertices. 
Fix $k \in \N$ large enough so that $\delta_0 \ge \delta = 2^{-k+1}$. 
Since $Q$ is dense, one can find  for each $\epsilon=2^{-j -k}$, $j \ge 1$, a point $q \in B_\epsilon (z)$ and 
an $(\epsilon, \delta)$-admissible simplex $S_j$ for $q$ whose vertices are $\epsilon$ close to the vertices of $S$. This implies that for each $j \in \N$, $x \in S_{\epsilon, \delta} (q)$ where $\epsilon=2^{-j -k}$ and $\delta = 2^{-k+1}$ . This shows that $x \in \bigcap_{j\ge 1} N_{2^{-j-k}, 2^{-k}} \subset N$ as desired. 

\vspace{3mm}

\noindent {\bf Claim 2:} $E_\pi \cap N =\emptyset$. 
Indeed,  suppose not  
then there exists $x \in E_{\pi} \cap  \bigcap_{j \ge 1} N_{2^{-j-k}, 2^{-k}}$ for some $k \in \N$. Let $\delta = 2^{-k}$ and $ \epsilon_j =2^{-j-k}$ for each $j\ge 1$. Then, we see that 
 for each $j \ge 1$, there exists  $q_{j} \in Q$ and a simplex, say $S_j$, that is $(\epsilon_j, \delta)$-admissible for $q_j$ such that $q_j$ and the vertices of $S_j$ are $\epsilon_j$ close to ${\rm supp}\, \pi_x$. Since ${\rm supp}\, \pi_x$ is compact by assumption, there exists a convergent subsequence of $\{q_j\}$,  as well as a convergent subsequence of the simplices $\{S_j\}$ (in the Hausdorff topology since their vertices converge). Let $q_\infty$, $S_\infty$ denote their limits (as $j \to \infty$), respectively. 
Note  that $q_\infty \in {\rm supp}\, \pi_x$ and that  $S_\infty$ is a simplex with vertices in ${\rm supp}\, \pi_x$.  
By the definition of $(\epsilon_j, \delta)$-admissibility, we also have that 
  $q_\infty$ belongs to the closure of $S_\infty$, while being $\delta$-away from its vertices. This implies that 
  the point $q_\infty \in {\rm supp}\, \pi_x$ is not an extremal point of the convex hull of ${\rm supp}\, \pi_x$. This contradicts  the fact that $x\in E_\pi$, thus completing the proof of Claim 2 and the proposition.  
\end{proof}

Next, we show the following.

\begin{proposition}\label{prop:meas w}
Let $\pi$ be a Borel measure on the product space $\R^d \times \R^d$  and let $x   \mapsto \pi_x \in P(\R^d)$ be its disintegration along the first marginal. Let $A\subset \R^d$ be a Borel measurable set  that is a concentration set for the first marginal of $\pi$, and denote for each $x \in A$, the set $I(x) = IC ({\rm supp}\, \pi_x)$.

 Assume that $I(x)$ is bounded and that $x \in I(x)$ for each $x \in A$. If $A_1 \subset A$ is a measurable set such that for each $x \in A_1$,  $\dim I(x) =1$, then   
 the function $w: A_1 \to \R_+$ defined by 
 \begin{align*}
 w (x) : = \min [ \dist(x, y_0), \dist(x, y_1)]
\end{align*}
where $y_0, y_1$ are the end points of the segment $I(x)$, is Borel measurable.
 \end{proposition}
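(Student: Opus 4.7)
The plan is to express $w|_{A_1}$ as a countable supremum of Borel measurable functions built from the support functions of the measures $\pi_x$. For each unit vector $v \in S^{d-1}$, define
\[
\phi_v(x) := h_x(v) - v \cdot x, \qquad h_x(v) := \sup\{v \cdot z : z \in \supp \pi_x\}.
\]
The measurability of the disintegration $x \mapsto \pi_x$ implies that for each Borel set $B$, the map $x \mapsto \pi_x(B)$ is Borel, and the identity
\[
\{x : h_x(v) \leq c\} = \{x : \pi_x(\{z : v \cdot z > c\}) = 0\}
\]
then shows that $x \mapsto h_x(v)$, and hence $x \mapsto \phi_v(x)$, is Borel measurable for each fixed $v$.

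The core of the argument is the geometric identity
\[
w(x) \;=\; \sup_{v \in S^{d-1}} \min\bigl(\phi_v(x),\, \phi_{-v}(x)\bigr), \qquad x \in A_1.
\]
To verify it, fix $x \in A_1$ with $I(x) = (y_0, y_1)$. Since $I(x)$ is bounded, $\supp \pi_x$ is a compact subset of the closed segment $[y_0, y_1] = \overline{{\rm conv}}(\supp \pi_x)$, and Milman's theorem forces the extreme points $y_0, y_1$ to belong to $\supp \pi_x$. Therefore $h_x$ agrees with the support function of the segment $[y_0, y_1]$. Writing $u := (y_1 - y_0)/|y_1 - y_0|$ and decomposing $v = (\cos\alpha)\, u + (\sin\alpha)\, u^\perp$, a direct case analysis on the sign of $\cos\alpha$ yields
\[
\min\bigl(\phi_v(x),\, \phi_{-v}(x)\bigr) \;=\; |\cos\alpha|\, \min(|x-y_0|,\, |x-y_1|) \;=\; |\cos\alpha|\, w(x),
\]
whose supremum over $v \in S^{d-1}$ equals $w(x)$, attained at $v = \pm u$.

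To conclude, observe that the compactness of $\supp \pi_x$ makes $v \mapsto h_x(v)$ Lipschitz, so $v \mapsto \min(\phi_v(x), \phi_{-v}(x))$ is continuous on $S^{d-1}$ for each fixed $x$. Fixing a countable dense subset $D \subset S^{d-1}$ then allows the supremum over $S^{d-1}$ to be replaced by the supremum over $D$:
\[
w(x) = \sup_{v \in D} \min\bigl(\phi_v(x),\, \phi_{-v}(x)\bigr), \qquad x \in A_1.
\]
Since this exhibits $w$ as a countable supremum of Borel measurable functions, $w$ is Borel measurable on $A_1$. The main obstacle I anticipate is the geometric identity itself --- specifically the step that $h_x$ equals, rather than merely dominates, the support function of $[y_0, y_1]$. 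That step depends on locating the extreme points $y_0, y_1$ inside $\supp \pi_x$ via Milman's theorem applied to the compact convex set $\overline{{\rm conv}}(\supp \pi_x)$; once this is established, the case analysis on $\cos\alpha$ and the density-based reduction are routine.
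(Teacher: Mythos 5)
Your proposal is correct, and it takes a genuinely different route from the paper. The paper's proof mirrors the strategy of Proposition~\ref{prop:measurable E}: it fixes a countable dense subset $Q \subset \R^d$, introduces $(\epsilon,\delta)$-admissible segments with endpoints in $Q$, and writes the superlevel set $\{x : w(x)\geq \delta\}$ as a countable intersection over $j$ of countable unions over $q \in Q$ of sets of the form $\{x : \pi_x(B_\epsilon(p_i))>0\}$; the measurability of $x \mapsto \pi_x(B)$ for each Borel $B$ is the only ingredient used. Your proof instead passes through the support function $h_x(v) = \sup\{v\cdot z : z \in \supp \pi_x\}$, proves the clean identity $w(x) = \sup_{v \in S^{d-1}} \min(\phi_v(x), \phi_{-v}(x))$, and then reduces to a countable supremum via continuity of $v \mapsto \min(\phi_v,\phi_{-v})$ on the compact sphere. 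The one subtlety you correctly flagged — that $h_x$ is the support function of $[y_0,y_1]$, not merely dominated by it — is supplied by Milman's partial converse to Krein--Milman, which is applicable because $\supp \pi_x$ is a closed subset of the bounded segment $\overline{\rm conv}(\supp\pi_x)=[y_0,y_1]$ and hence compact. The paper's approach is more elementary (no appeal to Milman or support-function theory) and is structurally parallel to the measurability argument it runs just before; your approach is shorter, gives an explicit formula for $w$ rather than just a description of its superlevel sets, and isolates the geometric content (the endpoints lie in the support) in a single named theorem rather than spreading it across an $\epsilon$--$\delta$ bookkeeping argument. Both are valid proofs of the same proposition.
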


\begin{proof}
It is enough to show that for each $\delta>0$, the set 
$ M_\delta = \{ x \in A_1 \ | \ w (x) \ge \delta \} $
is Borel measurable. 
For that, we again consider
a countable dense subset $Q \subset \R^d$. For $\epsilon, \delta >0$, we say that a closed segment $S=[q_0, q_1]$ connecting two points $p_0, p_1 \in \R^d$ is  $(\epsilon, \delta)$-admissible  for $q \in Q$
if 
\begin{enumerate}
\item $p_0, p_1 \in  Q$;
 \item $q \in N_\epsilon (S)$, the latter being the $\epsilon$-tubular neighborhood of $S$; 
 \item  $ \dist (p_i, q) \ge \delta  $, for $i=0,1$. 
 \end{enumerate}
Let ${\mathcal A}_{\epsilon, \delta}(q)$ denote the countable set of  $(\epsilon, \delta)$-admissible segments for $q$, 
and define the set 
\begin{align*}
 S_{\epsilon, \delta} (q) & : = \{  x \in A_1  \ |  \  \dist (x, q) \le \epsilon \  \& \  \hbox{there exists $[p_0, p_1] \in {\mathcal A}_{\epsilon, \delta}(q)$ with }  \pi_x  (B_\epsilon (p_i)) >0 , \ \ i=0,1 \}. 
\end{align*}
The set $S_{\epsilon, \delta} (q)$ contains those points $x$ in $A_1$, such that $x$ is $\epsilon$-close to $q$, while 
  ${\rm supp}\, \pi_x$ includes up to $\epsilon$,  
the end points of an $(\epsilon, \delta)$-admissible segment for $q$. 
Again, each set $S_{\epsilon, \delta} (q)$ is measurable, since the map $x \mapsto \pi_x \in P(\R^d)$ is measurable.
Define the  set 
$ M_{\epsilon, \delta} : = \bigcup_{q\in Q} S_{\epsilon, \delta} (q)$, 
and set
 \begin{align*}
\bar M_\delta= \bigcap_{j \ge 1} M_{2^{-j}, \delta}. 
\end{align*}
It is obvious that $\bar M_\delta$ is measurable.
We claim that 
\begin{align}\label{M}
 M_\delta = \bar M_\delta  . 
\end{align}
Indeed, we first verify that $\bar M_\delta \subset M_\delta$. To see this, consider an arbitrary point $x \in \bar M_\delta$, and let $y_0, y_1$ be the two end points of the segment $I(x)$.  Then for each $0< \epsilon < \delta$, there is $q\in Q$ and $S= [p_0, p_1] \in {\mathcal A}_{\epsilon/3, \delta} (q)$ such that $x \in B_{\epsilon/2} (q)$ and $\pi_x (B_{\epsilon/2}(p_i)) >0$ for $i=0,1$. 
From the last condition, we see that $p_0, p_1  \in N_{\epsilon/2} ({\rm supp}\, \pi_x)$ and hence $S \in N_{\epsilon/2} (I(x))$. Moreover, from the item (3) for the $(\epsilon, \delta)$-admissibility of $S$ together with $x \in B_{\epsilon/2} (q)$, we see that 
 $\dist(p_i, x) \ge \delta-\epsilon/2$, 
which
 then implies that
$\dist(x, y_i)  \ge \delta -\epsilon.$
Since $\epsilon >0$ was arbitrary, this  implies that $x\in M_\delta$ as desired. \\
For the reverse inclusion $M_\delta \subset  \bar M_\delta$, note that  for each $x \in M_\delta$, we have $\dist(y_i, x) \ge \delta $, $i=0,1$, where $y_0, y_1$ are the end points of the segment $I(x)$. 
Also, notice that $y_i \in {\rm supp}\, \pi_x $, $i=0,1$. 
Since $Q \subset \R^d$ is dense, one can find  for each $0< \epsilon < \delta $,  a point $q\in Q$ and a segment $S=[p_0, p_1]  \in A_{\epsilon, \delta} (q)$ such that $q \in B_\epsilon (x)$, and $p_i \in B_\epsilon(y_i))$, $i=0,1$. It follows that 
$x \in S_{\epsilon, \delta} (q)$
which implies
$ x \in M_{\epsilon, \delta} $ for all $0< \epsilon < \delta, \hbox{ thus } x \in \bar M_\delta$.
This completes the proof.
\end{proof}


\begin{thebibliography}{30}

\bibitem{ap}
L. ~Ambrosio and A. ~Pratelli.
\newblock Existence and stability results in the $L^1$ theory of optimal transportation.
\newblock {\em Optimal transportation and applications. Springer Berlin Heidelberg.},
Volume 1813 of the series Lecture Notes in Mathematics (2003) 123-160.



\bibitem{akp}
L. ~Ambrosio, B. ~Kirchheim, and A. ~Pratelli.
\newblock Existence of optimal transport maps for crystalline norms.
\newblock {\em Duke Mathematical Journal.},Volume 125, Number 2 (2004)  207-241.

\bibitem{BeHePe11}
M.~Beiglb{\"o}ck, P.~Henry-Labordere, and F.~Penkner.
\newblock Model-independent bounds for option prices: A linear programming
  approach.
\newblock {\em Finance and Stochastics.}, 17 (2013) 477-501.

\bibitem{BeHeTo}
M.~Beiglb{\"o}ck, P.~Henry-Labordere, and N.~Touzi.
\newblock Monotone martingale transport plans and Skorohod Embedding.
\newblock Preprint (2015).


\bibitem{bj}
M.~Beiglb{\"o}ck and N.~Juillet.
\newblock On a problem of optimal transport under marginal martingale constraints.
\newblock http://arxiv.org/abs/1208.1509 (2014).

\bibitem{bch}
M.~Beiglb{\"o}ck, A.M.G.~Cox, and M.~Huesmann.
\newblock Optimal transport and Skorokhod embedding.
\newblock http://arxiv.org/abs/1307.3656, 2014.

\bibitem{bc}
S.~Bianchini and F.~Cavalletti.
\newblock The Monge Problem for Distance Cost in Geodesic Spaces.
\newblock {\em Comm. Math. Phys.}, Volume 318, Issue 3 (2013) 615-673.

\bibitem{br}
Y.~Brenier.
\newblock Polar factorization and monotone rearrangement of vector-valued functions.
\newblock {\em Communications on pure and applied mathematics}, Volume 44, Issue 4 (1991) 375-417.

\bibitem{Ch59}
G.~Choquet.
\newblock Forme abstraite du th{\'e}or{\`e}me de capacitabilit{\'e}.
\newblock {\em Ann. Inst. Fourier. Grenoble}, 9 (1959) 83--89. 

\bibitem{ds1}
Y.~Dolinsky and H.M.~Soner.
\newblock Martingale optimal transport and robust hedging in continuous time.
\newblock {\em Probab. Theory Relat. Fields.}, 160 (2014) 391-427.

\bibitem{ds2}
Y.~Dolinsky and H.M.~Soner.
\newblock Martingale optimal transport in the Skorokhod space.
\newblock {\em Stochastic Processes and their Applications.}, 125(10) (2015) 3893-3931.

\bibitem{Falk}
N.~Falkner
\newblock The distribution of Brownian motion in $\R^n$ at a natural stopping time.
\newblock {\em Advances in Mathematics}, 40 (1981) 97-127


\bibitem{FiHo80}
P.~Fischer and J.~A.~R. Holbrook.
\newblock Balayage defined by the nonnegative convex functions.
\newblock {\em Proc. Amer. Math. Soc.}, 79(3) (1980) 445-448.

\bibitem{GaHeTo11}
A.~Galichon, P.~Henry-Labordere, and N.~Touzi.
\newblock A Stochastic Control Approach to No-Arbitrage Bounds Given
  Marginals, with an Application to Lookback Options.
\newblock {\em Annals of Applied Probability.}, Volume 24, Number 1 (2014) 312-336.

\bibitem{gm}
W.~Gangbo and R.J.~McCann.
\newblock The geometry of optimal transportation.
\newblock {\em Acta Math.}, Volume 177, Issue 2 (1996) 113-161. 

\bibitem{GKL1}
N.~Ghoussoub, Y-H. ~Kim, and T. ~Lim.
\newblock Optimal Skorokhod Embedding for cost functional $\E f(|X-Y|)$ under radially symmetric marginals on $\R^d$.
\newblock Preprint (2015).


\bibitem{HiPrRoYo11}
F.~Hirsch, C.~Profeta, B.~Roynette, and M.~Yor.
\newblock {\em Peacocks and associated martingales, with explicit
  constructions}, volume~3 of {\em Bocconi \& Springer Series}.
\newblock Springer, Milan (2011).

\bibitem{Ho11}
D.~Hobson.
\newblock The {S}korokhod embedding problem and model-independent bounds for
  option prices.
\newblock In {\em Paris-{P}rinceton {L}ectures on {M}athematical {F}inance
  2010}, volume 2003 of {\em Lecture Notes in Math.}, Springer,
  Berlin (2011)  267-318.

\bibitem{HoKl12}
D.~Hobson and M.~Klimmek.
\newblock , Robust price bounds for the forward starting straddle.
\newblock {\em Finance and Stochastics.}, Volume 19, Issue 1 (2014) 189-214.


\bibitem{HoNe11}
D.~Hobson and A.~Neuberger.
\newblock Robust bounds for forward start options.
\newblock {\em Mathematical Finance.}, Volume 22, Issue 1 (2012) 31-56..


\bibitem{Ke84}
H.~Kellerer.
\newblock Duality theorems for marginal problems.
\newblock {\em Z. Wahrsch. Verw. Gebiete}, 67(4) (1984) 399-432.


\bibitem{Lim}
T.~Lim.
\newblock Optimal martingale transport under radially symmetric marginals on $\R^d$.
\newblock {Preprint}, 2015.



\bibitem{Mo1781}
G.~Monge.
\newblock M\'emoire sur la th\'eorie des d\'eblais et des remblais.
\newblock {\em Histoire de l'acad\'emie {R}oyale des {S}ciences de {P}aris},
  1781.
  
\bibitem{Obloj}
J.~Obloj
\newblock  The Skorokhod embedding problem and its offspring
\newblock {\em  Probability Surveys}, 1 (2004) 321-392

\bibitem{Rock}
R.T.~Rockafellar
\newblock 
 Characterization of the subdifferentials of convex functions.
 \newblock {\em Pacific J. Math}, 17 (1966) 497-510. 
 
\bibitem{RuRa90}
L.~R{\"u}schendorf and S.~T. Rachev.
\newblock A characterization of random variables with minimum {$L^2$}-distance.
\newblock {\em J. Multivariate Anal.}, 32(1) (1990) 48-54.

\bibitem{RuUc00}
L.~R{\"u}schendorf and L.~Uckelmann.
\newblock Numerical and analytical results for the transportation problem of
  {M}onge-{K}antorovich.
\newblock {\em Metrika}, 51(3) (electronic) (2000) 245-258.


\bibitem{St65}
V.~Strassen.
\newblock The existence of probability measures with given marginals.
\newblock {\em Ann. Math. Statist.}, 36 (1965) 423-439. 

\bibitem{Vi03}
C.~Villani.
\newblock {\em Topics in optimal transportation}, Volume~58, {\em Graduate
  Studies in Mathematics}.
\newblock American Mathematical Society, Providence, RI, 2003.

\bibitem{Vi09}
C.~Villani.
\newblock {\em Optimal Transport. Old and New}, Vol. 338, {\em Grundlehren
  der mathematischen Wissenschaften}.
\newblock Springer, 2009.

\bibitem{Z}
D.~Zaev.
\newblock On the Monge-Kantorovich problem with additional
linear constraints.
\newblock { http://arxiv.org/abs/1404.4962v1}, 2014.
\end{thebibliography}
\end{document}